
\documentclass[10pt,a4paper,reqno]{amsart}
\pdfoutput=1
\usepackage{Kosanovic-embcalc-gropes}


\title{Embedding calculus and grope cobordism of knots}
\author{Danica Kosanovi\'c}

\address{Max-Planck-Institut f\"ur Mathematik, Bonn}
\email{danica@mpim-bonn.mpg.de}
\curraddr{ETH Z\"urich, Department of Mathematics, R\"amistrasse 101, 8092 Z\"urich, Switzerland}
\email{danica.kosanovic@math.ethz.ch}

\newcommand{\red}[1]{{\color{black}#1}}

\begin{document}
\begin{abstract}
  We show that embedding calculus invariants $ev_n$ are \emph{surjective} for long knots in an arbitrary $3$-manifold. This solves some remaining open cases of Goodwillie--Klein--Weiss connectivity estimates, and at the same time confirms one half of the conjecture that for classical knots $ev_n$ are universal additive Vassiliev invariants over the integers. In addition, we give a sufficient condition for this conjecture to hold over a coefficient group, which is by recent results of Boavida de Brito and Horel fulfilled for the rationals and for the $p$-adic integers in a range. Therefore, \emph{embedding calculus invariants are strictly more powerful than the Kontsevich integral}. 
  
  Furthermore, our work shows they are more computable as well. Namely, the main theorem computes \emph{the first possibly non-vanishing invariant $ev_n$ of a knot which is grope cobordant to the unknot} to be precisely equal to \emph{the equivalence class of the underlying decorated tree of the grope in the associated graph complex}. Actually, our techniques apply beyond dimension $3$, offering a description of the layers in embedding calculus for long knots in a manifold of any dimension, and suggesting that certain generalised gropes realise the corresponding graph complex classes.
\end{abstract}

\maketitle

\thispagestyle{empty}

\setcounter{section}{-1}

\section{Introduction}\label{sec:intro}

In contrast to the study of spaces $\Map(P,M)$ of maps between topological spaces, which gave rise to numerous techniques of homotopy theory, spaces $\Emb(P,M)$ of smooth embeddings of manifolds seem less tractable from the algebraic viewpoint. Here usually purely {geometric} arguments are used, especially at the level of components, that is, for the study of isotopy classes of embeddings. 

In this paper we attempt to reconcile these two viewpoints in the case of {long knots in a compact oriented $3$-manifold $M$ with boundary}. Namely, \red{for $I\coloneqq[0,1]$ 
}
we study the space
\begin{equation}\label{eq-def:knots}
  \Knots(M)\coloneqq\Emb_\partial(I,M)\coloneqq\big\{\,K\colon I\hra M \;|\; K\equiv\U\text{ near }\partial I\,\big\},
\end{equation}
where $\U\colon I\hra M$ is a fixed neat embedding, i.e.\ transverse to $\partial M$ and with $\U^{-1}(\partial M)=\{0,1\}$. Our results on one hand relate previous such homotopy theoretic approaches to knot theory, and on the other hand, offer an explicit geometric interpretation of the involved algebraic tools. 

The first approach, \emph{Vassiliev's theory of finite type knot invariants}~\cite{Vass90,Vass-arb-mfld}, starts from the observation that having understood the space of smooth maps, we can try to describe its subspace of embeddings by studying homotopy types of the strata of the complement $\Map_\partial(I,M)\sm\Knots(M)$. In its more geometric version~\cite{Gusarov,Habiro,CT1} this theory gives a sequence of equivalence relations $\sim_n$ for $n\geq1$ on the set $\KK(M)\coloneqq\pi_0\Knots(M)$, defined in terms of either \emph{claspers or gropes}.

The second approach, \emph{the embedding calculus of Goodwillie and Weiss}~\cite{WeissI,GWII}, builds on the idea 
that, since we understand embeddings of disjoint unions of disks, we could use them to approximate the space $\Emb(P,M)$; \red{this is related to Smale--Hirsch immersion theory. 
}
The outcome is a tower of spaces \red{$\dots\to\pT_{n+1}(M)\to\pT_n(M)\to\cdots$ 
}
and maps $\ev_n\colon\Knots(M)\to\pT_n(M)$, for $n\geq1$. 

See Section~\ref{subsec-intro:brief-background} for a brief survey of both approaches, and Sections~\ref{app:finite-type} and~\ref{sec:gropes-punc-knots} for more details.
It was suggested early on~\cite{GWII,GKW} that these theories should be closely related for $M=I^3$, in which case $\KK(I^3)\cong\pi_0\Emb(\S^1,\R^3)$ is precisely the abelian monoid of classical knots. The study of the relationship was initiated in~\cite{BCSS}, where the following conjecture was stated, and proven in the first non-trivial degree $n=3$ (see Example~\ref{ex:n-3-classical} below for another proof).
\begin{conj}[\cite{BCSS}]\label{conj:universal}
  For each $n\geq 1$ the map $\pi_0\ev_n\colon\KK(I^3)\to\pi_0\pT_n(I^3)$ is a universal additive Vassiliev invariant of type $\leq n-1$ over $\Z$.
\end{conj}
As explained \red{in Section~\ref{subsec:geom-finite-type-theory}, the conjecture equivalently says 
}
that $\pi_0\ev_n$ is a monoid homomorphism which factors \red{through 
}
${\KK(I^3)/\sim_n}$ and the induced map \red{${\KK(I^3)/\sim_n}\to\pi_0\pT_n(I^3)$ 
}
is \emph{an isomorphism of groups.} In other words, paths in the space $\pT_n(I^3)$ should precisely encode the \red{Gusarov--Habiro
}
$n$-equivalence relation $\sim_n$.

\red{Some evidence comes from the fact that the same \emph{graph complexes} appear in both theories. Namely, Conant~\cite{Conant} studied the spectral sequence $E^*_{-n,t}(I^3)$ for the homotopy groups of the tower of fibrations $p_{n+1}\colon\pT_{n+1}(I^3)\to\pT_n(I^3)$ and showed that the terms $E^2_{-(n+1),n+1}(I^3)$ on the diagonal of the second page are isomorphic to the group of \emph{Jacobi trees} $\A^T_n$ (see Definition~\ref{def:jac-trees} and Theorem~\ref{thm:conant-d1}). This implies that the kernel of $\pi_0p_{n+1}$ is isomorphic to a quotient of $\A^T_n$. On the other hand, $\A^T_n$ is related to Vassilev's chord diagrams and graph complexes, see Remark~\ref{rem:graph-complexes}. Voli\'c ~\cite{Volic} and Turchin~\cite{Turchin-otherside} showed similar results for the corresponding spectral sequences in (co)homology.%
}

More recently a part of the conjecture was confirmed in~\cite{BCKS}. They show that each $\pi_0\ev_n$ is an \emph{additive invariant of type $\leq n-1$}, namely, $\pi_0\pT_n(I^3)$ has a group structure so that $\pi_0\ev_n$ is homomorphism of monoids, and the mentioned factorisation exists. \red{Our Theorem~\ref{thm:KST} 
}
reproves the latter claim from the perspective of gropes.

One of the main results of the present paper is the proof of the `surjectivity part' of Conjecture~\ref{conj:universal}.
\begin{theorem}\label{cor:surj-classical}
  For each $n\geq 1$ the homomorphism $\pi_0\ev_n\colon\KK(I^3)\to\pi_0\pT_n(I^3)$ is surjective.
\end{theorem}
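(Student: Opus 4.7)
The strategy is induction on $n\geq 1$, exploiting the tower $\pT_n(I^3)\to\pT_{n-1}(I^3)$. The base case $n=1$ is essentially formal: $\pT_1(I^3)$ encodes only first-order, immersion-theoretic data that every knot realises. For the inductive step, assume $\pi_0\ev_{n-1}$ is surjective and fix $x\in\pi_0\pT_n(I^3)$. Its image $\bar x\in\pi_0\pT_{n-1}(I^3)$ is hit by some knot $K$ by the inductive hypothesis, and by exactness of the fibration sequence associated to $\pT_n\to\pT_{n-1}$ (with fibre the $n$th layer $L_n$), the difference between $x$ and $\ev_n(K)$ is detected by an element of $\pi_0 L_n$.

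The paper identifies $\pi_0L_n$ with an explicit group of decorated trees. Choose a tree $T$ representing the class that maps to $x-\ev_n(K)\in\pi_0\pT_n(I^3)$; additive notation is legitimate because of the monoid (in fact group) structure on $\pi_0\pT_n(I^3)$ established in \cite{BCKS}. The main theorem recalled in the excerpt now provides the geometric realisation: there is a knot $K_T$ grope-cobordant to the unknot with grope having underlying decorated tree $T$, and $\ev_n(K_T)$ is precisely the image of $[T]$ in $\pi_0\pT_n(I^3)$. Since $\pi_0\ev_n$ is a monoid homomorphism (by \cite{BCKS}, reproved via gropes in \cite{KST}), the long-knot concatenation satisfies
\[
\pi_0\ev_n(K\# K_T)=\pi_0\ev_n(K)+\pi_0\ev_n(K_T)=x,
\]
closing the induction.

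The principal obstacle is the realisation step: for every class in $\pi_0L_n$ one must exhibit a knot obtained from the unknot by a capped grope cobordism whose underlying tree represents that class and whose $n$th embedding-calculus invariant picks it out. This is precisely the content of the paper's main theorem, whose proof relies on the geometric interpretation of the layers of embedding calculus via capped grope cobordisms developed jointly with Shi and Teichner. One minor technical point is the indeterminacy coming from $\partial\colon\pi_1\pT_{n-1}\to\pi_0L_n$, which is harmless here: any lift of $x-\ev_n(K)$ to $\pi_0L_n$ supplies a suitable tree $T$, and the resulting identity $\pi_0\ev_n(K\# K_T)=x$ holds on the nose in $\pi_0\pT_n(I^3)$. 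Once the realisation is in hand, the induction is driven entirely by known additivity and the exactness of the tower.
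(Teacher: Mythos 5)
Your proof is correct, but it takes a genuinely different route from the paper's. The paper proves the more general Theorem~\ref{thm:surj} (for an arbitrary compact oriented $3$-manifold $M$), from which the $I^3$ case follows; and it does so \emph{without} invoking the group structure on $\pi_0\pT_n$ or the additivity of $\pi_0\ev_n$. The mechanism is different: rather than subtracting $\ev_n(K)$ from $x$ inside $\pi_0\pT_n(I^3)$ and using exactness of the $\pi_0$-sequence, the paper observes that the basepoint $\U\in\Knots(M)$ was chosen arbitrarily at the outset, so one can simply \emph{relocate} the basepoint: given $x\in\pT_{n+1}(M)$ with $y=p_{n+1}(x)$, it finds $K$ with a path from $\ev_n(K)$ to $y$ (inductive hypothesis), re-declares $\U:=K$, and applies Corollary~\ref{cor:surj-fibre} (surjectivity of $\pi_0\emap_{n+1}$ onto the fibre over $\ev_n\U$). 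This is a change-of-basepoint argument on the fibration, with no appeal to exactness of groups.

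What your route buys is conciseness and a clean use of the long exact sequence, but at the cost of two extra inputs that the paper is careful not to use: the abelian group structure on $\pi_0\pT_n(I^3)$ from \cite{BCKS} (or equivalently \cite{BW2}/\cite{Tikhon}), and the fact that $\pi_0\ev_n$ is a monoid homomorphism with respect to connected sum. These are special to $I^3$ (there is no monoid structure on $\KK(M)$ for general $M$), so your argument would not deliver Theorem~\ref{thm:surj} in the stated generality. Two cosmetic points: (i) since $\pi_0\pF_n(I^3)\cong\Lie(n-1)$ is a quotient of the \emph{free abelian group} on trees, ``choose a tree $T$'' should be ``choose a signed linear combination of trees, realised by a grope \emph{forest}'' (this is exactly why the paper proves Theorem~\ref{thm:main-extended} and not just Theorem~\ref{thm:main-thm}); (ii) you should verify that the identity element of $\pi_0\pT_{n-1}(I^3)$ is indeed $[\ev_{n-1}(\U)]$, which follows since $\pi_0\ev_{n-1}$ is a monoid homomorphism sending $[\U]$ to it — otherwise the exactness step ``$x-\ev_n(K)$ lies in the image of $\pi_0L_n$'' would not be justified. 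With those repairs, your argument is a valid alternative proof for $M=I^3$, resting on the same geometric realisation input (Corollary~\ref{cor:surj-fibre} / Theorem~\ref{thm:main-extended}) as the paper's.
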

This is just a special case of Theorem~\ref{thm:surj} which says that the same holds for an arbitrary $3$-manifold.

Starting from a geometric viewpoint, we use different models than the mentioned thread of work: \emph{capped gropes} for finite type theory~\cite{CT1} and Goodwillie's \emph{punctured knots model} for embedding calculus (see~\cite{Sinha-cosimplicial}). Moreover, in~\cite{Volic} Voli\'{c} asks `Can one in general understand the geometry of finite type invariants using the evaluation map?', and we make a step forward in that direction.
Namely,
our main Theorem~\ref{the-main} implies that (see \red{\eqref{eq:key-statement} and 
}
Corollary~\ref{cor:detect-tree} for the precise statement)
\begin{equation}\label{key-statement}
    \textit{the evaluation map detects the underlying tree of a grope/clasper in the graph complex.}
\end{equation}
Analogous results hold for universal rational Vassiliev knot invariants of Kontsevich~\cite{Kont-Vassiliev} and Bott--Taubes~\cite{Bott-Taubes,Altschuler-Freidel}, as well as for similar invariants of families of diffeomorphisms of $\S^4$, shown by Watanabe~\cite{Watanabe} to detect the underlying graph of a family constructed using similar claspers. However, a crucial difference is that while all these invariants use integrals over configuration spaces -- and so can provide results only in characteristic zero -- embedding calculus is a purely topological technique for studying homotopy types of embedding spaces themselves.

Indeed, Theorem~\ref{the-main} can be used both to confirm Conjecture~\ref{conj:universal} rationally and show that Kontsevich and Bott--Taubes integrals factor through the tower (see Remark~\ref{rem:integrals-factor}), and also more generally:
\begin{cor}\label{cor:collapse-implies-universal}
    Let $A$ be a torsion-free abelian group. If the spectral sequence $E^*_{-n,t}(I^3)\otimes A$ for the \red{homotopy groups 
    }
    of the Taylor tower of $\Knots(I^3)$ collapses at the $E^2$-page along the diagonal, then $\pi_0\ev_n$ is a universal additive Vassiliev invariant of type $\leq n-1$ over $A$, meaning that
    \[\begin{tikzcd}
        \pi_0\ev_n\otimes A\colon\:\faktor{\KK(I^3)}{\sim_n}\otimes A\arrow[]{r}{\cong} & \pi_0\pT_n(I^3)\otimes A.
    \end{tikzcd}
    \]
\end{cor}
\red{The mentioned collapse condition 
}
is equivalent to the claim that \emph{the canonical projection from the group
$E^2_{-(n+1),n+1}\cong\A^T_n$ of Jacobi trees to $E^\infty_{-(n+1),n+1}=\ker(\pi_0p_{n+1})$ is an isomorphism over $A$}.
This has already been confirmed in some cases\red{, so we obtain the following corollaries.
}
\begin{cor}\label{cor:rational-universal} \hfill
\begin{enumerate}
    \item $\pi_0\ev_n$ is a universal additive Vassiliev invariant of type $\leq n-1$ over $\Q$.
    \item\label{item:2} For any prime $p$, the evaluation map $\pi_0\ev_n$ is a universal additive Vassiliev invariant of type $\leq n-1$ over the $p$-adic integers $\Z_{p}$ if $n\leq p+2$.
    \item\label{item:3} $\pi_0\ev_n$ is a universal additive Vassiliev invariant of type $\leq n-1$ over $\Z$ if $n\leq 7$.
\end{enumerate}
\end{cor}
Namely, Boavida de Brito and Horel~\cite{BH} show vanishing of higher differentials in a range for this spectral sequence in positive characteristic, implying \textsl{(2)}. Their results also imply that $E^*_{-n,t}(I^3)\otimes\Q$ collapses at the whole $E^2$-page (this could also be deduced from~\cite{FTW}). Moreover, some existing low-degree computations show that the group $\A^T_n$ is torsion-free, giving \textsl{(3)}. For proofs of both corollaries and further details see Appendix~\ref{app:finite-type}.
\begin{theorem}\label{thm:surj}
  For a compact oriented $3$-manifold $M$ the map of sets $\pi_0\ev_n\colon\KK(M)\to\pi_0\pT_n(M)$ is surjective for any $n\geq 1$.
\end{theorem}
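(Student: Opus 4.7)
The plan is to proceed by induction on $n$, using the Taylor tower fibrations $p_{n+1}\colon\pT_{n+1}(M)\to\pT_n(M)$. The base case $n=1$ is standard: $\pT_1(M)$ is controlled by immersion-theoretic (linking) data which is unobstructed to realise by an embedded knot in any $3$-manifold, so every class in $\pi_0\pT_1(M)$ lifts to $\KK(M)$.

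For the inductive step, fix $y\in\pi_0\pT_{n+1}(M)$. By the hypothesis pick a knot $K\in\KK(M)$ with $\pi_0\ev_n(K)=\pi_0 p_{n+1}(y)$, so that both $y$ and $\pi_0\ev_{n+1}(K)$ project to the same class in $\pi_0\pT_n(M)$. Exploiting the $\KK(I^3)$-action on $\KK(M)$ via connect sum with local knots, and the compatibility of this action with the tower (from \cite{BCKS}), it suffices to establish the following statement in the classical case: every element $\alpha\in\ker\bigl(\pi_0 p_{n+1}\colon\pi_0\pT_{n+1}(I^3)\to\pi_0\pT_n(I^3)\bigr)$ is realised as $\pi_0\ev_{n+1}(K_\alpha)$ for some knot $K_\alpha\in\KK(I^3)$ grope cobordant to the unknot. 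Indeed, granted this, the difference $\alpha:=y-\pi_0\ev_{n+1}(K)$ (taken in the torsor/module sense) corresponds to a local knot $K_\alpha$, and the connect sum $K\#K_\alpha$ has $\pi_0\ev_{n+1}(K\#K_\alpha)=y$, closing the induction.

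The heart of the argument is thus the above realisability claim, which is where the main theorem (Theorem~\ref{the-main}) enters. Combined with Conant's identification of the relevant entry of the $E^2$-page of the Taylor spectral sequence with the group $\A^T_n$ of Jacobi trees (Theorem~\ref{thm:conant-d1}), the main theorem asserts that for a knot $K_T$ obtained from the unknot by a capped grope cobordism with underlying tree $T\in\A^T_n$, the invariant $\pi_0\ev_{n+1}(K_T)$ is the image of $T$ under the natural edge map $\A^T_n=E^2_{-(n+1),n+1}\twoheadrightarrow E^\infty_{-(n+1),n+1}$ into $\ker(\pi_0 p_{n+1})$. Since every Jacobi tree is geometrically realised by some capped grope cobordism from the unknot \cite{CT1,KST}, and this edge map surjects onto the kernel by construction of the spectral sequence, every $\alpha$ is hit.

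The main obstacle is precisely the identification in Theorem~\ref{the-main} between the geometric grope-cobordism modification and the abstract algebraic kernel element, whose proof requires a careful comparison between the punctured-knots model of embedding calculus and the capped-grope model of finite-type theory. Once this is in hand, the theorem follows by the induction described above. A secondary subtlety is the transition from $M=I^3$ to a general compact oriented $3$-manifold, handled by the $\KK(I^3)$-module structure on $\KK(M)$; checking that the connect-sum reduction is compatible with $\ev_{n+1}$ (not merely $\ev_n$) in this generality is what prevents a purely formal argument from the classical case.
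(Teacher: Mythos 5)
Your overall plan — induct on $n$ using the tower fibration $p_{n+1}$ and invoke Theorem~\ref{the-main} to realise classes in the layer — matches the paper's strategy at a high level, but the reduction step has a genuine gap for non-simply-connected $M$. You reduce the realisability claim to $M=I^3$: you want every element of $\ker(\pi_0 p_{n+1}\colon\pi_0\pT_{n+1}(I^3)\to\pi_0\pT_n(I^3))$ to be hit by $\pi_0\ev_{n+1}$ of a \emph{local} knot, then connect-sum this local knot onto $K$. But for a general compact oriented $3$-manifold $M$, the relevant fibre has $\pi_0\pF_{n+1}(M)\cong\Lie_{\pi_1M}(n)\cong\Lie(n)\otimes\Z[(\pi_1M)^n]$, which is strictly larger than $\Lie(n)\cong\pi_0\pF_{n+1}(I^3)$ whenever $\pi_1M\neq1$. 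Connect sum with local knots can only ever move within the trivially-decorated ($g_i=1$) part of this group; the classes corresponding to trees with non-trivial decorations $g_i\in\pi_1M$ need gropes whose arms wind around loops in $M$, and these are in no sense local knots from $I^3$. So the statement ``it suffices to establish the realisability claim in the classical case'' is false, and the $\KK(I^3)$-module structure cannot carry you the rest of the way.

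There is a related softer issue with the phrase ``$\alpha:=y-\pi_0\ev_{n+1}(K)$ (taken in the torsor/module sense)'': for general $M$ the set $\pi_0\pT_{n+1}(M)$ has no group structure, and the torsor interpretation over $\ker(\pi_0 p_{n+1}(I^3))$ presupposes a transitivity statement that is essentially what you are trying to prove. You also invoke Theorem~\ref{thm:conant-d1} (Conant's $\A^T_n\cong E^2$), which is stated only for $I^3$ and is not actually needed for surjectivity; what the paper uses is Corollary~\ref{cor:surj-fibre}, i.e.\ that $\pi_0\emap_{n+1}\colon\pi_0\H_n(M)\to\pi_0\pF_{n+1}(M)$ is onto, which follows directly from the $\rho_n$-statement in Theorem~\ref{the-main} applied in $M$ itself. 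The paper's actual argument sidesteps all of this: given $x\in\pT_{n+1}(M)$ lying over $y=p_{n+1}(x)$, by induction there is a knot $K$ and a path $\gamma$ from $\ev_n(K)$ to $y$; one then uses that the definition of the basepoint $\U$ was arbitrary and \emph{resets $\U:=K$}, upon which $\emap_{n+1}^{\ev_nK}=\emap_{n+1}$ and $\pi_0$-surjectivity of $\emap_{n+1}$ (which holds for every $M$ with the $\pi_1M$-decorated grope forests) finishes the proof. No connect sum and no reduction to $I^3$ is used, precisely because gropes in $M$ already realise the full decorated group.
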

This was expected to hold by analogy to the famous Goodwillie–Klein connectivity formula (see Theorem~\ref{thm:GK}), which predicts that for a $1$-dimensional source manifold and a $3$-dimensional target the map $\ev_n$ is $0$-connected. For a connected source this is precisely Theorem~\ref{thm:surj}, and in future work we plan to investigate if our methods can be extended to links.
\begin{remm}
   The corollaries of Theorem~\ref{the-main} stated above apply to some extent also for knots in a general $3$-manifold $M$, using analogous groups $\A^T_n(M)$ as studied in~\cite{K-thesis}; see also Remark~\ref{rem:comparison-general-M}.
\end{remm}
Let us now introduce some notation, so that we can state Theorem~\ref{the-main} and deduce Theorem~\ref{thm:surj}. We will actually study the punctured knots model $\pT_n(M)$ even more generally: for $M$ any \emph{connected compact smooth manifold of dimension $d\geq3$ with non-empty boundary}. The space $\pT_n(M)$ is defined as a homotopy limit over a certain finite category and we will study its properties in detail in Section~\ref{sec:punc-knot-model}. We only restrict to oriented $3$-manifolds when we later consider gropes.

Let us pick \emph{an arbitrary knot} $\U\in\Knots(M)$ as our basepoint and call \red{$\U$ 
}
the \emph{unknot}, and let $\ev_n(\U)$ be the basepoint of $\pT_n(M)$. There is a natural map $p_{n+1}\colon\pT_{n+1}(M)\to\pT_n(M)$ which is a surjective fibration and satisfies $p_{n+1}\circ\ev_{n+1}=\ev_n$, so preserves basepoints. The fibres $\pF_{n+1}(M)\coloneqq\fib_{\ev_n\U}(p_{n+1})$ are called the \emph{layers of the Taylor tower}. Moreover, we also consider the \emph{homotopy fibre}
\begin{align*}
  \H_n(M)&\coloneqq\hofib_{\ev_n\U}(\ev_n)\\
  &\coloneqq\big\{\,(K,\gamma) \in \Knots(M)\times \Map([0,1],\pT_n(M))\;|\; \gamma(0)=\ev_n(K),\gamma(1)=\ev_n(\U)\,\big\}.
\end{align*}
and in Section~\ref{subsec:pn-surj-fib} we construct a map $\emap_{n+1}$ making the following diagram commute:
\begin{equation}\label{diag:hofib-hn}
    \begin{tikzcd}[column sep=0.7cm]
     \H_n(M)\arrow{d}{}\arrow{rr}{\emap_{n+1}} && \pF_{n+1}(M)\arrow[hook]{d} \\
    \Knots(M)\arrow{d}[swap]{\ev_n} \arrow{rr}{\ev_{n+1}} && \pT_{n+1}(M)\arrow[two heads]{d}{\;p_{n+1}} \\
    \pT_n(M)\arrow[equal]{rr} && \pT_nM)
\end{tikzcd}
\end{equation}
Remarkably, both $\pF_{n+1}(M)$ and $\H_n(M)$ will be related -- for a priori different reasons -- to the set $\Tree_{\pi_1M}(n)$ of \emph{$\pi_1M$-decorated trees}, where $\Gamma^{g_{\ul{n}}}\in\Tree_{\pi_1M}(n)$ consists of a rooted planar binary tree $\Gamma$ with enumerated leaves which are also decorated by elements $g_i\in\pi_1(M)$, for $i\in\ul{n}\coloneqq\{1,\dots,n\}$. See Section~\ref{subsec-prelim:trees} for all definitions related to trees. 
For example, 
\[
\Gamma^{g_{\ul{3}}}\;\coloneqq\quad\begin{tikzpicture}[baseline=2ex,scale=0.35,every node/.style={font=\bfseries}]
        \clip (-2.4,-0.21) rectangle (2.4,4.1);
        \draw[fill=black!30!white]
            (-0.15, -0.2) rectangle ++ (0.3,0.2);
        \draw[thick]
            (0,0) -- (0,1) --
            (-1,2) -- (-2,3) node[pos=1,above]{$1$} node[Decoration, below=0.1cm]{$g_1$}
            (-1,2) -- (0,3)    node[pos=1,above]{$3$} node[Decoration, below=0.1cm]{$g_3$}
                    (0,1) -- (2,3)  node[pos=1,above]{$2$} node[Decoration, below=0.1cm]{$g_2$};
    \end{tikzpicture}\quad\in\: \Tree_{\pi_1M}(3).
\]
Indeed, \textit{on one hand}, the set $\pi_0\pF_{n+1}(M)$ will be isomorphic to the group $\Lie_{\pi_1M}(n)$ of Lie trees, defined as the quotient of the $\Z$-span of $\Tree_{\pi_1M}(n)$ by the antisymmetry $(AS)$ and Jacobi relations $(IHX)$. \textit{On the other hand}, from a grope $\TG$ in a $3$-manifold $M$ we will construct a point $\psi(\TG)\in\H_n(M)$, and the underlying combinatorics of $\TG$ will also be described by $\ut(\TG)\in\Tree_{\pi_1M}(n)$. \red{Then Theorem~\ref{the-main} says that $[\emap_{n+1}(\psi(\TG))]\in\pi_0\pF_{n+1}(M)$ is precisely the class of $\ut(\TG)$ modulo $AS,IHX$.%

To state this theorem, let $\B(\ul{n})$ be an additive 
}
basis for the free Lie algebra $\L(x^i:i\in\ul{n})$ \red{(more precisely, a Hall basis, see Appendix~\ref{subsec:HM}), 
}
and $\N\B(\ul{n})\subseteq\B(\ul{n})$ the subset of \red{Lie 
}
words in which each letter $x^i$ for $i\in\ul{n}$ appears at least once. Let $l_w$ be the word length of $w\in\N\B(\ul{n})$. Further, for $k\geq1$ denote by $M^{\times k}$ the $k$-fold product of $M$ with itself and by $\Omega M^{\times k}$ the space of based loops in it; for a space $X$ let $X_+\coloneqq X\sqcup\{*\}$ and $\Sigma^k(X_+)$ its $k$-fold reduced suspension. The weak product $\prod^{\circ}$ is defined as the filtered colimit of products over finite subsets.
\begin{introthm}\label{the-main}
    \red{For any smooth manifold $M$ of dimension $d\geq3$ with boundary there is an explicit zig-zag of 
    }
    homotopy equivalences
    \[
        \mathsf{F}_{n+1}(M)\simeq\Omega^{n}\sideset{}{^{\circ}}\prod_{w\in \N\B(\ul{n})}\Omega\Sigma^{1 + l_w(d-2)}(\Omega M^{\times l_w})_+\:.
    \]
    It follows that $\mathsf{F}_{n+1}(M)$ is $(n(d-3)-1)$-connected and $\pi_{n(d-3)}\mathsf{F}_{n+1}(M)\cong\mathsf{Lie}_{\pi_1M}(n)$. Moreover, for $d=3$ capped gropes in $M$ give a map of sets
    \[
        \rho_n\colon\Z[\Tree_{\pi_1M}(n)] \to\pi_{n(d-3)}\H_n(M)
    \]
    such that $\pi_{n(d-3)} \emap_{n+1}\circ\rho_n$ is the canonical quotient map by the relations $AS$ and $IHX$.
\end{introthm}
In other words, the two roles of trees, homotopy theoretic for $\pF_{n+1}(M)$ and geometric for $\H_n(M)$, are compatible: we let $\rho_n(\Gamma^{ g_{\ul{n}} })\coloneqq[\psi(\TG)]$ for \red{any 
}
grope $\TG$ with the underlying tree $\ut(\TG)=\Gamma^{\ul{g}}$ and show that $[\emap_{n+1}\psi(\TG)]\in\pi_0\pF_{n+1}(M)\cong\Lie_{\pi_1M}(n)$ is precisely the class of $\ut(\TG)$ modulo $AS$, $IHX$.
\begin{remm}\label{rem:operadic}
  There is an isomorphism $\Lie_{\pi_1M}(n)\cong\big(\Z[\pi_1M^n]\big)^{(n-1)!}$. If $M$ is simply connected, we obtain $\Lie(n)\cong\Z^{(n-1)!}$, which is the arity $n$ of the Lie operad. Interestingly, in Goodwillie's homotopy calculus the $n$-th Taylor layer of a functor $F\colon\mathsf{Top}_*\to\mathsf{Top}_*$ is computed in terms of a spectrum $\partial_{n}(\Id)$, which turn out to form an operad $\partial_*(\Id)$ with homology precisely the Lie operad.
\end{remm}
The following is an immediate corollary of Theorem~\ref{the-main}, and implies Theorem~\ref{thm:surj}.
\begin{cor}\label{cor:surj-fibre}
    For $d=3$ and $n\geq1$, $\pi_0\emap_{n+1}\colon\pi_0\H_n(M)\to\pi_0\pF_{n+1}(M)$ is a surjection of sets.
\end{cor}
\begin{proof}[Proof of Theorem~\ref{thm:surj} assuming Theorem~\ref{the-main}]
It is a standard fact (see~\eqref{eq:deg1-lim-holim} below) that $\pT_1(M)$ is homotopy equivalent to the loop space $\Omega(\S M)$ on the unit tangent bundle $\S M$ of $M$. Thus, $\pi_0\pT_1(M)\cong\pi_1M$, and since each class here can be represented by an embedded loop (as $d=3$), the map $\pi_0\ev_1$ is surjective. Assume by induction that $\pi_0\ev_n$ is surjective for some $n\geq1$.

Let us pick $x\in\pT_{n+1}(M)$ and show it is in the image of $\pi_0\ev_{n+1}$. Denote $y\coloneqq p_{n+1}(x)\in\pT_n(M)$ and the corresponding fibres $\pF_{n+1}^y(M)\coloneqq\fib_y(p_{n+1})$ and $\H_n^y(M)\coloneqq\hofib_y(\ev_n)$. Since by definition $x\in\pF_{n+1}^y(M)$ and $\emap^y_{n+1}\colon\H_n^y(M)\to\pF_{n+1}^y(M)$ as in \eqref{diag:hofib-hn}, it suffices to prove $\pi_0\emap^y_{n+1}$ is surjective.

However, it is instead enough to check that $\pi_0\emap^{\ev_nK}_{n+1}$ is surjective, where $K\in\Knots(M)$ is any knot such that there is a path $\gamma$ in $\pT_n(M)$ from $\ev_n(K)$ to $y$ (this exists by the induction hypothesis). Indeed, $\emap_{n+1}$ is equivalent to the map induced on the homotopy fibres, and changing the basepoint on both sides using $\gamma$ induces homotopy equivalences which commute with $\emap_{n+1}$. As our choice of $\U$ was arbitrary, we can take $\U\coloneqq K$, so $\emap^{\ev_nK}_{n+1}=\emap_{n+1}$. Now apply Corollary~\ref{cor:surj-fibre}.
\end{proof}

\red{
Thus, the only results from this introduction whose proofs we owe in the rest of the paper are Theorem~\ref{the-main} and Corollaries~\ref{cor:collapse-implies-universal} and~\ref{cor:rational-universal}. 
}

Finally, let us point out that our techniques easily extend to show that for the space of arcs $\Knots(M)$ with $\dim(M)=d\geq4$, the analogous realisation map $\rho_n$ is inverse to $\pi_{n(d-3)}\ev_{n+1}$, see also Remark~\ref{rem:other-d}. \red{This is discussed in work in preparation~\cite{K-families}. 
}
Remarkably, these spaces \red{of arcs in codimension bigger than two 
}
also show relevant in low-dimensional topology: \red{in joint work with Peter Teichner%
}
~\cite{KT-highd,KT-4dLBT} we show that the space of certain properly embedded $2$-disks in a $4$-manifold is homotopy equivalent to \red{the loop space 
}
$\Omega\Knots(M)$, allowing us to classify such disks up to isotopy using $\rho_1$. This fully answers questions posed by David Gabai~\cite{Gabai-disks} and reproves~\cite{Gabai-spheres}. In fact, $\ev_n$ are defined for all spaces of embeddings and diffeomorphisms, and can be expected to detect analogous clasper-like geometric constructions.

\subsubsection*{Acknowledgements}
My deepest thanks go to my thesis advisor Peter Teichner for his guidance, most inspiring conversations and selfless sharing of time, knowledge, support and advice. I benefited greatly from our joint work with Yuqing Shi and I thank her for interesting discussions. Thanks also to Geoffroy Horel and Greg Arone for useful remarks, and Pedro Boavida de Brito, Thomas Goodwillie, Gijs Heuts, Dev Sinha, Victor Turchin and Tadayuki Watanabe for showing interest in this work. I also wish to acknowledge the generous support of the Max Planck Institute for Mathematics in Bonn throughout my doctoral studies, during which these results were obtained. Thank you also to the referees for useful comments and suggestions.

\subsubsection*{The outline}
    \red{The paper has six sections and three appendices. Sections~\ref{sec:intermediate-results} and~\ref{sec:prelim} contain further results and preliminary discussions. In Sections~\ref{sec:punc-knot-model},~\ref{sec:delooping} and ~\ref{sec:htpy-type} we study the punctured knots model of the Taylor tower for arcs in an arbitrary $d$-manifold $M$ with boundary, $d\geq3$. In Section~\ref{sec:gropes-punc-knots} we define gropes in $3$-manifolds and points that they give in the corresponding Taylor tower. Finally, in Section~\ref{sec:main} we prove the main result, Theorem~\ref{the-main}. We now give a more detailed outline.
    }

    In Section~\ref{sec:intermediate-results}, after giving a brief survey of the literature \red{(Section~\ref{subsec-intro:brief-background}), 
    }
    we rephrase Theorem~\ref{the-main} as several intermediate results. Namely, Theorem~\ref{thm:W-iso} \red{(Section~\ref{subsec-intro:careful-study}) 
    }
    provides for any $d\geq3$ an explicit description of the homotopy groups of $\pF_{n+1}(M)$, and Theorem~\ref{thm:KST} \red{(Section~\ref{subsec-intro:gropes}) 
    }
    constructs points $\psi(\TG)\in\H_n(M)$ for $d=3$. The compatibility of the two roles of decorated trees is shown in Theorem~\ref{thm:main-thm} and Theorem~\ref{thm:main-extended} \red{(Section~\ref{subsec-intro:merge}).
    }
    Along the way, we give proof ingredients which may be of independent interest, and some corollaries. We end Section~\ref{subsec-intro:merge} with a proof of Theorem~\ref{the-main}.

    \red{Next, in Section~\ref{subsec-prelim:trees}
    we define several variants of trees, and in Section~\ref{subsec-prelim:holims} we define homotopy limits and total homotopy fibres, and give proofs of their properties which are used in the paper.
    }

    In Section~\ref{sec:punc-knot-model} we study the punctured knots model $\pT_n(M)$; \red{in particular, in Section~\ref{subsec:fn-hn} we give a working model for the layers $\pF_{n+1}(M)$ and the homotopy fibres $\H_n(M)$.
    }
    In Section~\ref{sec:delooping} we show that each $\pF_{n+1}(M)$ is an iterated loop space, while in Section~\ref{sec:htpy-type} we determine its homotopy type, proving Theorem~\ref{thm:W-iso}. We describe geometrically the generators of the lowest non-trivial homotopy group \red{$\pi_{n(d-3)}\pF_{n+1}(M)$ 
    }
    in Section~\ref{subsec:gen-maps}, and in Section~\ref{subsec:strategy} we outline the proof of Theorem~\ref{thm:main-thm}.

    In Section~\ref{sec:gropes-punc-knots} we are concerned with $3$-manifolds. Firstly, Section~\ref{subsec:gropes} \red{defines several variants of gropes that we use. 
    }
    Then in Section~\ref{subsec:grope-paths} we prove Theorem~\ref{thm:KST} and its extension for grope forests $\forest$, and in Section~\ref{subsec:grope-points} we describe points $\emap_{n+1}\psi(\forest)\in\pF_{n+1}(M)$ explicitly.
    Finally, main Theorems~\ref{thm:main-thm} and~\ref{thm:main-extended} are proven in Section~\ref{sec:main}; the proof of Theorem~\ref{thm:main-thm} is by induction, using two auxiliary lemmas.

    The construction of an explicit homotopy equivalence $\chi$ (from total fibres of certain cubes to iterated loop spaces, \red{used for delooping $\pF_{n+1}(M)$) 
    }
    is deferred to Appendix~\ref{app:proofs}, while Appendix~\ref{app:samelson} provides background on Samelson products and contains two important lemmas about their inductive behaviour. In Appendix~\ref{app:finite-type} we survey finite type theory, \red{by explaining how Vassiliev's singular knots relate to claspers and gropes, and the meaning of a universal Vassiliev invariant. 
    }
    In Section~\ref{subsec:further-cor} we prove Corollaries~\ref{cor:collapse-implies-universal} and~\ref{cor:rational-universal} \red{related to graph complexes. 
    }

\subsubsection*{Reader's guide}
    Throughout the paper we aim to make both homotopy theory and geometry accessible without assuming much background. \red{In particular, Sections~\ref{subsec-prelim:holims} and Appendices~\ref{app:proofs} and~\ref{app:samelson} give a self-contained account of homotopy theory needed in this paper, and Section~\ref{subsec:gropes} is a self-contained account of gropes used in this paper. 

    The paper unfortunately contains a lot of notation. Let us point out that important maps are displayed and labelled (in brackets), and important spaces are depicted in figures, and we encourage the reader to refer to them using cross-references. Furthermore, at the end of the paper there is an extensive table of notation, see~\nameref{Notation}.

    Moreover, it could be helpful to first understand  
    }
    the lowest degree computation, $n=1$, that is the induction base in the proof of Theorem~\ref{thm:main-thm}. It is explained in the mutually related Examples \ref{ex:cube-deg-2}, \ref{ex:n=2-part1}, \ref{ex:n=2-part2}, \ref{ex:n=2-part3}, \ref{ex:n=2-part4}, \ref{ex:grope-path-deg-1}, and Figure~\ref{fig:deg-1-layer-pt}. The induction step is outlined in Example~\ref{ex:proof-main}, for $n=2$.

\begin{spacing}{0.1}
    \tableofcontents
\end{spacing}


\section{Intermediate results}\label{sec:intermediate-results}
\subsection{Brief overview of background}\label{subsec-intro:brief-background}
\subsubsection*{Finite type theory}
Vassiliev's study of the strata of the discriminant $\Map_\partial(I,I^3)\sm\Knots(I^3)$ gave rise to the filtration $\VV^*_n(A)$ by type $n\geq 1$ of the group $H^0(\Knots(I^3);A)$ of knot invariants with values in an abelian group $A$, as formulated by~\cite{Birman-Lin}. A new, very active field emerged: it was shown that quantum invariants give rise to invariants of finite type~\cite{Lin} (for example, for the Jones polynomial $J(q)$ the coefficient next to $h^n$ in $J(e^h)$ is of type $\leq n$); that for $A=\Q$ there is a universal such invariant -- the Kontsevich integral~\cite{Kont-Vassiliev,Le-Murakami}; and a comprehensive treatment of its target, the rational Hopf algebra of chord diagrams, was given in~\cite{Bar-Natan}.

A geometric approach to the field was introduced by Gusarov~\cite{Gusarov} and Habiro~\cite{Habiro} independently, as a sequence of knot operations called \emph{surgeries on claspers} (or variations) of degree $n\geq1$. This gives a sequence of equivalence relations $\sim_n$ on the monoid $\KK(I^3)\coloneqq\pi_0\Knots(I^3)$ and a decreasing filtration $\KK_n(I^3)\coloneqq\{K\in\KK(I^3): K\sim_n\U\}$ by submonoids. Stanford~\cite{Stanford98} exhibits a close connection of this filtration with the lower central series of the pure braid group.

By the work of Conant and Teichner~\cite{CT1,CT2} one can instead of claspers equivalently use \emph{capped gropes}, and this is the approach we take. Gropes first appeared in the theory of topological $4$-manifolds, and can be viewed as a tool for detecting `embedded commutators'~\cite{Teichner-what-is}.

Notably, it has been shown \cite{Habiro} that the map $\KK(I^3)\to\Z[\KK(I^3)]=H_0(\Knots(I^3);\Z)$ defined by $K\mapsto K-\U$, takes $\KK_n(I^3)$ into $\VV_n(\Z)$, the dual of the Vassiliev filtration for $A=\Z$. Hence, this indeed gives a geometric version of the theory (or its primitive/additive part), since one works with knots instead of their linear combinations or invariants. See Appendix~\ref{app:finite-type} for a survey and comparison of these two approaches, and Section~\ref{subsec:gropes} for background on gropes; see Remark~\ref{rem:claspers-gropes} for an advantage of using gropes over claspers.

Additionally, the quotient of $\KK(I^3)$ by $\sim_n$ is actually \emph{an abelian group} and the projection
\begin{equation}\label{eq:KK-sim-n}
    \begin{tikzcd}
    \nu_n\colon\KK(I^3)\arrow[two heads]{r}{} & \faktor{\KK(I^3)}{\sim_n}
\end{tikzcd}
\end{equation}
is a \emph{universal additive invariant of type $\leq n-1$ \red{over $\Z$}
}
~\cite[Thm.~6.17]{Habiro} -- meaning that any additive invariant $v\colon\KK(I^3)\to A$ of type $\leq n-1$ factors through $\nu_n$. However, the target ${\KK(I^3)}/{\sim_n}$ is a mysterious group and \red{the goal is to 
}
have something combinatorial instead, \red{e.g.\ the 
}
primitive part of the mentioned algebra of chord diagrams or the group of Jacobi trees (see Section~\ref{subsec:jacobi-trees}).

\subsubsection*{Embedding calculus}
The pioneering approach of Goodwillie and Weiss~\cite{WeissI,GWII} for studying embedding spaces\footnote{One can take compact manifolds with a fixed boundary condition for all embeddings, or closed manifolds.} $\Emb_\partial(P,M)$ produces a tower of spaces, called \emph{the Taylor tower},
\[
\cdots\to\T_{n+1}\Emb_\partial(P,M)\to\T_n\Emb_\partial(P,M)\to\cdots\to\T_1\Emb_\partial(P,M)
\]
and the \emph{evaluation maps} $\ev_n\colon\Emb_\partial(P,M)\to\T_n\Emb_\partial(P,M)$, \red{defined in Section~\ref{sec:punc-knot-model} below for $P=I$. If $\dim P<\dim M$ then this starts 
}
with $\T_1\Emb_\partial(P,M)\simeq\Imm_\partial(P,M)$, the space of immersions. Since the definition of these objects is homotopy theoretic -- analogously to the description of immersions due to Hirsch and Smale -- we obtain an inductive way for studying the homotopy type of $\Emb_\partial(P,M)$, using a variety of tools. Indeed, a fundamental result in the field is the following theorem of Goodwillie and Klein (announced in~\cite{GWII}). Recall that a map is $k$-connected if it induces an isomorphism on homotopy groups below degree $k$ and a surjection on $\pi_k$.
\begin{theorem}[\cite{GKmultiple}]\label{thm:GK}
  The map $\ev_n$ is $\big(1-\dim P + n(\dim M-\dim P-2)\big)$-connected, except if $\dim P=1$, $\dim M=3$. Hence, the induced map $\ev_\infty\colon\Emb_\partial(P,M)\to\lim\T_n\Emb_\partial(P,M)$ is a weak homotopy equivalence if $\dim M-\dim P>2$ (`the tower converges to the embedding space').
\end{theorem}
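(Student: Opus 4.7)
The plan is to proceed by induction on $n$, using the layered structure of the Taylor tower. For the base case $n=1$ I would invoke Hirsch--Smale theory: $\T_1\Emb_\partial(P,M)\simeq\Imm_\partial(P,M)$, and the connectivity of the inclusion $\Emb_\partial(P,M)\hookrightarrow\Imm_\partial(P,M)$ coming from general position gives exactly the claimed bound. The inductive step is controlled by the layer $L_{n+1}:=\fib(\T_{n+1}\Emb_\partial(P,M)\to\T_n\Emb_\partial(P,M))$: if $\ev_n$ satisfies the stated connectivity and the induced map from $\Emb_\partial(P,M)$ into $L_{n+1}$ is sufficiently connected, the long exact sequence of the fibration $\T_{n+1}\Emb\to\T_n\Emb$ upgrades the estimate by one stage.

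To control $L_{n+1}$ I would use the punctured-manifolds model of $\T_n\Emb_\partial(P,M)$: fix $n+1$ pairwise disjoint small disks $D_1,\dots,D_{n+1}\subset\mathrm{int}(P)$ and form the $(n+1)$-cube whose vertex at $S\subseteq\{1,\dots,n+1\}$ is $\Emb_\partial(P\setminus\bigsqcup_{i\in S}\mathrm{int}(D_i),M)$, with matching boundary conditions on the newly created spherical boundaries. A short manipulation identifies $L_{n+1}$ with an $n$-fold loop space on the total homotopy fibre of this cube, and factors $\ev_{n+1}$ through the canonical map from $\Emb_\partial(P,M)$ into its initial corner. The heart of the argument is then a \emph{multiple disjunction theorem}: the cube above should be $(1-\dim P+(n+1)(\dim M-\dim P-2))$-cartesian.

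The hardest part will be establishing the disjunction estimate with the optimal constants. I would argue by induction on a relative handle decomposition of $P$: elementary pieces (disks) are handled by general position and codimension counting, and handle attachments propagate the bound through the fibration sequences coming from restriction to complementary regions. The exceptional case $(\dim P,\dim M)=(1,3)$ is forced precisely here, because $\dim M-\dim P-2=0$ makes the increment in $n$ vanish so that looping cannot absorb codimension-two knotting. Assembling the layerwise connectivity estimates and passing to the inverse limit then gives the convergence statement for $\ev_\infty$ in the non-exceptional range.
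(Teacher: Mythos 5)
This theorem is cited from \cite{GKmultiple}; the paper does not prove it, so there is no internal argument to compare your proposal against. I will instead assess whether your sketch is a plausible route to the Goodwillie--Klein theorem itself.

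Your overall architecture is right: induction on $n$, a punctured-manifolds $(n{+}1)$-cube whose total fibre controls the layer, and a multiple disjunction estimate as the engine. But the step you flag as hardest -- proving the disjunction estimate ``by induction on a relative handle decomposition of $P$, with elementary pieces handled by general position and codimension counting, and handle attachments propagating the bound'' -- drastically understates what is involved, and this is a genuine hole. Single disjunction is already a deep result requiring concordance-embedding spaces, smoothing theory, and a double induction over handles in both $P$ and $M$; the multiple version additionally needs a higher Blakers--Massey excision theorem for cubical diagrams with sharp constants. None of this reduces to general position once the handle index is positive. If completed, your outline would amount to a re-exposition of \cite{GKmultiple}, not an alternative proof; as it stands the entire substantive content is deferred.

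Two secondary corrections. The delooping $L_{n+1}\simeq\Omega^n(\text{tofib})$ you invoke is not an ingredient of the Goodwillie--Klein argument, and for general $P$ it is itself a theorem that would need proof (the present paper establishes it for $P=I$ in Theorem~\ref{thm:delooping-layer}); the actual argument works directly with the total fibre. And the exclusion of $(\dim P,\dim M)=(1,3)$ is not explained by the connectivity increment vanishing: the disjunction proof relies on handle and surgery manipulations that require ambient codimension at least $3$, and these steps fail outright when $\dim M-\dim P=2$, even though the formula still predicts $0$-connectivity. Supplying the missing $\pi_0$-surjectivity in exactly that excluded case, by a completely different grope-theoretic method, is what the present paper's Theorem~\ref{thm:surj} does.
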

This result inspired a great deal of research on Taylor towers for various pairs $(P,M)$.

To mention just a few, in~\cite{LTV-homology,AT14} the rational homology of spaces $\Emb_\partial(\D^k,\D^{k+c})$ of disks of codimension $c>2$ was expressed as the homology of certain graph complexes, and similarly for the rational homotopy groups~\cite{ALTV,AT15,FTW}. The spaces $\T_n\Emb_\partial(\D^k,\D^{k+c})$ were shown to be iterated loop spaces in~\cite{DH,Turchin-Delooping,BW2,Duc-Turchin}. Another (cosimplicial) model for $\T_n\Emb_\partial(I,M)$ was constructed in~\cite{Sinha-cosimplicial} and studied in~\cite{Sinha-operads,Scannell-Sinha,LT-GC} for $M=I^d$, $d\geq4$.

Note that the excluded case in Theorem~\ref{thm:GK} is precisely the setting of knot theory. Moreover, by an argument of Goodwillie the tower for classical long knots $\Knots(I^3)\coloneqq\Emb_\partial(I,I^3)$ does not converge (see Proposition~\ref{prop:non-conv}). Nevertheless, it still remains a source of interesting knot invariants: taking path components gives a tower of sets to which the monoid $\KK(I^3)$ maps.

Furthermore, the delooping results of~\cite{Turchin-Delooping,BW2} apply in this case as well: for $n\geq 2$ each $\T_n\Knots(I^3)$ is weakly equivalent to a double loop space, so each $\pi_0\T_n\Knots(I^3)$ is actually \emph{an abelian group} (for $n=1$ trivial as $\T_1\Knots(I^3)\simeq\Omega\S^2$). Moreover,~\cite{Tikhon} showed that $\ev^{BW}_n\colon\Knots(I^3)\to \T_n\Knots(I^3)$, the model from~\cite{BW2}, is a map of $H$-spaces. Thus, $\pi_0\ev^{BW}_n$ is a monoid map (an additive invariant).

\subsubsection*{Relating the two theories}
A different approach by~\cite{BCKS} uses the model $AM_n$ for $\T_n\Knots(I^3)$ from~\cite{Sinha-cosimplicial} to equip $\pi_0\T_n\Knots(I^3)$ directly with an abelian group structure, so that the corresponding $\pi_0ev_n$ is also a monoid map, as predicted by Conjecture~\ref{conj:universal} of~\cite{BCSS}. It has been an open problem whether the group structures of~\cite{BW2} and~\cite{BCKS} on $\pi_0\T_n\Knots(I^3)$ agree. We confirm this is indeed the case, see Proposition~\ref{prop:group-structures}.

The authors of~\cite{BCKS} also show that $\pi_0ev_n$ is of Vassiliev type $\leq n-1$, and we reprove this below for any $3$-manifold $M$. See Remark~\ref{rem:comparison-general-M}.

\subsection{A careful study of the layers in the Taylor tower}\label{subsec-intro:careful-study}
In the homotopy theoretic part of the paper we study the space $\pF_{n}(M)$ for $M$ \emph{any smooth manifold with non-empty boundary and dimension $\dim(M)=d\geq 3$}. The upshot is the following theorem, which reformulates the first part of Theorem~\ref{the-main}.
\begin{introthm}\label{thm:W-iso}
  If $n\geq 1$ there is a weak equivalence $\pF_{n+1}(M)\simeq\Omega^n\textstyle\prod^{\circ}_{w\in \N\B(\ul{n})}\Omega\Sigma^{1 + l_w(d-2)}(\Omega M^{\times l_w})_+$. Moreover, this space is $(n(d-3)-1)$-connected and there are explicit isomorphisms
\[\begin{tikzcd}[column sep=large]
    \Lie_{\pi_1M}(n)\arrow[]{r}{W}[swap]{\cong} & \pi_{n(d-2)}\tofib\big(\Omega(M\vee\S_{\bull}),\Omega\coll\big)\arrow[]{rr}{(\retr\circ\deriv\circ\chi)_*^{-1}}[swap]{\cong} && \pi_{n(d-3)}\pF_{n+1}(M).
\end{tikzcd}
\]
\end{introthm}
Let us give more details. \emph{Firstly}, $\pF_{n+1}(M)$ in Section~\ref{subsec:fn-hn} is described as the total homotopy fibre
\[
    \pF_{n+1}(M)=\tofib_{S\subseteq\ul{n}}\big(\FF^{n+1}_{S},r\big)
\]
of the $n$-cube\footnote{An $n$-cube $X_{\bull}$ consists of a space $X_S$ for each $S\subseteq\ul{n}$ and a compatible collection of maps $x^k_S\colon X_S\to X_{Sk}$ for $k\notin S$. The total homotopy fibre $\tofib(X_{\bull},x)$ generalises the notion of a homotopy fibre of a $1$-cube, see Section~\ref{subsec-prelim:holims}.}
of spaces\footnote{Of course, $\FF^{n+1}_S$ depends on $M$, but we omit it from the notation.}
$\FF^{n+1}_S\coloneqq\Emb_\partial([0,1],M_{0S})$, for $S\subseteq\ul{n}$. Here $M_{0S}\subseteq M$
is obtained by removing $|S|{+}1$ $d$-dimensional balls from $M$ \red{and embeddings satisfy a certain fixed boundary condition $\{0,1\}\hra\partial M_{0S}$. 
}
The map $r^k_S$ is induced from the inclusion $\rho^k_S\colon M_{0S}\hra M_{0Sk}$, which adds certain material between two balls (see Figure~\ref{fig:M-0S})\red{; the collection $(M_{0\bull},\rho)$ is also a cube of spaces. 
}
\emph{Secondly}, in Section~\ref{sec:delooping} we show that $\pF_1(M)\simeq\Omega\S M$ and for $n\geq1$:
\begin{equation*}\label{eq:for-the-main}
\begin{tikzcd}[column sep=scriptsize]
    \pF_{n+1}(M)\arrow{r}{\chi}[swap]{\sim} & \Omega^{n}\tofib\big(\FF^{n+1}_{\bull},l\big)\arrow{r}{\deriv}[swap]{\sim} & \Omega^{n}\tofib\big(\Omega M_{\bull}, \Omega\lambda\big)\arrow{r}{\retr}[swap]{\sim} & \Omega^{n}\tofib\big(\Omega\big(M\vee\S_{\bull}\big),\Omega\coll\big).
\end{tikzcd}
\end{equation*}
\red{Let us explain these objects and maps.
}
\begin{itemize}
    \item     
        For $\chi$ and its inverse see Theorem~\ref{thm:delooping-layer}: the map $l^k_S$ is defined using the left homotopy inverse $\lambda^k_S\colon M_{0Sk}\to M_{0S}$ for $\rho^k_S$ \red{(this means $\lambda^k_S\circ\rho^k_S\simeq\Id$), 
        }
        which adds back the $k$-th ball and rescales (see Figure~\ref{fig:M-erase}). \red{The collections $(M_{0\bull},\lambda)$ and $(\FF^{n+1}_{\bull},l)$ are contravariant cubes of spaces.
        }
    \item    
        In Theorem~\ref{thm:final-delooping} taking unit derivatives is shown to give a homotopy equivalence of contravariant cubes $\deriv_{\bull}\colon(\FF^{n+1}_{\bull},l)\to(\Omega\S M_{\bull},\Omega\S\lambda)$, where $M_S\supseteq M_{0S}$ are obtained by gluing in a ball \red{(so $(M_{\bull},\lambda)$ is a subcube of $(M_{0\bull},\lambda)$).
        }
        In the total homotopy fibre of the cube \red{$(\Omega\S M_{\bull},\Omega\S\lambda)$ 
        }
        the unit tangent data can be omitted, giving \red{the map 
        }
        $\deriv$ as displayed above.
    \item    
        Finally, \red{we have a contravariant cube $(M\vee\S_{\bull},\coll)$ 
        }
        with $M\vee\S_S\coloneqq M\vee\bigvee_{i\in S}\S^{d-1}_i$ and the map $\coll^k_S\colon M\vee\S_S\to M\vee\S_{S\sm k}$ given by collapsing a sphere. There are deformation retractions $\retr_S\colon M_S\xrightarrow{\sim} M\vee\S_S$, such that \red{$\retr_{S\sm k}\circ\lambda^k_{S\sm k}=\coll^k_S\circ \retr_S$, 
        }
        so we have an equivalence of contravariant cubes $\retr\colon(\Omega M_{\bull}, \Omega\lambda\big)\to(\Omega\big(M\vee\S_{\bull}\big),\Omega\coll)$.
\end{itemize}
\emph{Thirdly}, in Section~\ref{subsec:proof-ThmB} we describe the homotopy type of $\tofib\big(\Omega(M\vee\S_{\bull}),\Omega\coll\big)$, using a generalisation of the Hilton--Milnor theorem due to Gray~\cite{Gray} and Spencer~\cite{Spencer} \red{(this is reviewed in Appendix~\ref{subsec:HM}). 
}
Namely, we find a weak equivalence (see Theorem~\ref{thm:thmB-1}):
\begin{equation}\label{eq:tofib-hm-announce}
\begin{tikzcd}
    \sideset{}{^{\circ}}\prod_{w\in \N\B(\ul{n})}\Omega\Sigma^{1 + l_w(d-2)}(\Omega M^{\times l_w})_+ \arrow[]{rr}{\mu\circ hm}[swap]{\sim} && \tofib\Big(\Omega(M\vee\S_{\bull}),\Omega\coll\Big).
\end{tikzcd}
\end{equation}
Here $M^{\times l_w}$ is the cartesian product of $M$ with itself $l_w$ times, and $X_+$ means addition of a disjoint basepoint to a space $X$.
This proves the first statement of Theorem~\ref{thm:W-iso}. \red{We note that 
}
this formula differs slightly from the one we gave in~\cite{K-thesis}, but that one can be recovered using $\Sigma^k(X\times Y)_+\simeq\S^k\vee\Sigma^k(X\times Y)\simeq\S^k\vee\Sigma^kX\vee\Sigma^kY\vee\Sigma^k(X\wedge Y)$, and once again applying the Hilton--Milnor theorem.
\begin{remm}
    If $M\simeq\Sigma Y$ is homotopy equivalent to a suspension, the homotopy type of $\pF_{n+1}(M)$ was calculated in~\cite{GWII}; we can recover their result using the James splitting $\Sigma\Omega\Sigma Y\simeq\bigvee_{i=1}^\infty \Sigma Y^{\wedge i}$. See also~\cite{WeissI} for other description of the layers, and~\cite{BCKS} for $M=I^3$.
    
    However, in neither of those approaches could we understand the comparison map, which is crucial for the proof of Theorem~\ref{thm:main-thm}; we hope that our equivalence $\chi$ might be of independent interest.
\end{remm}
For the second statement \red{of Theorem~\ref{thm:W-iso}, 
}
we easily find that $\tofib(M\vee\S_{\bull})$
is $(n(d-2)-1)$-connected in Section~\ref{subsec:gen-maps}, and then construct an isomorphism
\[
    W\colon\Lie_{\pi_1M}(n)\to \pi_{n(d-2)}\tofib(\Omega(M\vee\S_{\bull}),\Omega\coll).
\]
It takes a decorated tree $\Gamma^{g_{\ul{n}}}$ to a certain Samelson product $\Gamma(x_i^{g_i})\colon\S^{n(d-2)}\to\Omega(M\vee\S_{\ul{n}})$, according to the word described by $\Gamma$ and using classes $g_{\ul{n}}\in(\pi_1M)^{n}$, \red{of the inclusions $x_i\colon \S^{d-2}\hra M\vee\S_{\ul{n}}$.
}

\red{Finally, we have the induced isomorphism
\[
    (\retr\deriv\chi)_*\colon \pi_{n(d-3)}\pF_{n+1}(M)\to\pi_{n(d-2)}\tofib(\Omega(M\vee\S_{\bull}),\Omega\coll),
\]
and we would like to make its inverse explicit. In other words, we would like to explicitly describe maps $\S^{n(d-3)}\to\pF_{n+1}$ that represent the generators $(\retr\deriv\chi)_*^{-1}W(\Gamma^{g_{\ul{n}}})$. For $d=3$ we will have such a map (point) $\ev_{n+1}(\psi(\TG))\in\pF_{n+1}(M)$ obtained from a grope $\TG$ (see Section~\ref{subsec-intro:gropes} below), and we will directly check that this satisfies
\begin{equation}\label{eq:key-statement}
    W^{-1}(\retr\deriv\chi)_*[\ev_{n+1}(\psi(\TG))]=\ut(\TG),
\end{equation}
the underlying tree of $\TG$, modulo $AS,IHX$ (see Section~\ref{subsec-intro:merge} below).
In~\cite{K-families} we will define gropes in all dimensions $d\geq3$ and show that the proof given here still applies.

However, for any $d\geq3$ we can at least directly invert $\retr$, i.e.\ construct maps that represent the generating classes $\retr_*^{-1}W(\Gamma^{g_{\ul{n}}})$. Namely, 
}
there is an obvious map $m_i\colon\S^{d-2}\to\Omega(M\sm\ball^d_i)\to\Omega M_{\ul{n}}$ satisfying $\retr\circ m_i\simeq x_i$, which simply `swings a lasso' around the missing ball, see Figure~\ref{fig:deg-2-m-i}. 
\red{The following is then proven in Section~\ref{subsec:gen-maps} and will be useful in the proof of the main Theorem~\ref{thm:main-thm}. 
}
\begin{manualthm}{\ref*{thm:W-iso}'}{\label{lem:W-iso'}}
    The generators of $\pi_{n(d-2)}\tofib(\Omega M_{\bull},\Omega\lambda)$ are \red{represented 
    }
    by the canonical extensions to the total homotopy fibre of the Samelson products $\Gamma(m_i^{\varepsilon_i\gamma_i})\colon\S^{n(d-2)}\to\Omega M_{\ul{n}}$ of the maps $m_i^{\varepsilon_i\gamma_i}\colon \S^{d-2}\to\Omega M_{\ul{n}}$ for $\varepsilon_i\in\{\pm1\}$ and $\gamma_i\in\Omega M$. \red{Explicitly, 
    }
    $\Gamma(m_i^{\varepsilon_i\gamma_i})(\vec{t})\coloneqq\gamma_i\cdot m_i(\vec{t})^{\varepsilon_i}\cdot\gamma_i^{-1}$. 
\end{manualthm}

\subsubsection{About the convergence}

\vspace{-5pt}

It is worth pointing out that our results are obtained from scratch, starting with the definition of the punctured knots model and assuming only the Hilton--Milnor--Gray--Spencer theorem. In particular, independently of the rest of the literature, we have reproved the following.
\begin{cor}\label{cor:tower-converge}
  The Taylor tower for the space $\Knots(M)$ of long knots in a $d$-manifold $M$ converges if $d\geq4$, i.e.\ the connectivity of $p_{n+1}\colon\pT_{n+1}(M)\to\pT_n(M)$ increases with $n\geq1$.
\end{cor}
\red{However, 
}
Goodwillie--Klein Theorem~\ref{thm:GK} \red{says in addition that 
}
the \emph{tower converges precisely to $\Knots(M)$} for $d\geq4$: the homotopy groups of $\H_n(M)$ in degrees below $(n+1)(d-3)$ agree with those of $\pF_{n+1}(M)$. We believe that our map $\rho_n$ will provide a geometric inverse to the isomorphism $\pi_{n(d-3)}\ev_n$, see Remark~\ref{rem:other-d}.

For a tower of (surjective) based fibrations there is an associated Bousfield--Kan (non-`fringed') spectral sequence built out of long exact sequences for the homotopy groups of a fibration~\cite[Sec.~IX.4]{BK}: the first page is given by $E^1_{-n,t}\coloneqq\pi_{t-n}\pF_n(M)$ and the differential is the composite
\begin{equation}\label{eq-def:sp-seq}
\begin{tikzcd}
    d^1_{-n,t}\colon E^1_{-n,t}(M)=\:\pi_{t-n}\pF_n(M)\arrow[]{r}{} & \pi_{t-n}\pT_n(M)\arrow[]{r}{}  & \pi_{t-n-1}\pF_{n+1}(M)\coloneqq E^1_{-(n+1),t}(M)
\end{tikzcd}
\end{equation}
of the fibre inclusion and the connecting map $\delta$ for $p_{n+1}$. A vanishing slope for this spectral sequence was determined in~\cite[Thm.~7.1]{Sinha-cosimplicial}; see also~\cite{Scannell-Sinha} for $M=I^d$. By Theorem~\ref{thm:W-iso} we have
\[
  E^1_{-(n+1),t}\coloneqq\pi_{t-n-1}\pF_{n+1}(M)\,\cong\bigoplus_{w\in\N\B(n)} \pi_t\Sigma^{1+l_w(d-2)}(\Omega M^{\times l_w})_+\;.
\]
\begin{cor}\label{cor:ss}
    The group $E^1_{-(n+1),t}(M)$ vanishes for $t\leq n(d-2)$, and for each $l=n,n+1,\dots$ all entries in the strip $1+l(d-2)\leq t\leq (l+1)(d-2)$ are generated by Samelson products using words of length at most $l$ in which all letters appear. In particular, the first non-vanishing slope is
\[
    E^1_{-(n+1),1+n(d-2)}(M)\,\cong\,\Lie_{\pi_1M}(n).
\]
\end{cor}

\subsubsection{About configuration spaces}

\vspace{-5pt}

\red{Embedding calculus is closely related to configuration spaces $\Conf_S(M)\coloneqq\Emb(S,M)$ where $S$ is a finite; see~\cite{Sinha-cosimplicial,BW2,FTW} to mention just a few references. For example, 
}
Sinha~\cite{Sinha-cosimplicial} uses certain compactifications of $\Conf_S(M)$ to construct the mentioned model $AM_n(M)$ for $\T_n\Knots(M)$, then employed in~\cite{BCKS}; see Remark~\ref{rem:bcks} for a comparison to our approach. Configuration spaces were also used by Koschorke~\cite{Koschorke-milnor} to construct invariants of link maps in arbitrary dimensions. His results are very similar in spirit to ours, showing that certain invariants related to Samelson products agree with Milnor invariants for classical links. These space are behind the scenes in our approach as well, and we record related corollaries. 
\begin{cor}\label{cor:conf-spaces}
    For a compact $d$-manifold $M$ with $\partial M\neq\emptyset$ there is an additive isomorphism
    \[
      \pi_*\Conf_n(M)\cong (\pi_*M)^n\oplus\bigoplus_{i=0}^{n-1}\bigoplus_{w\in\B(\ul{i})}\pi_{*+1}\big(\Sigma^{1+l_w(d-2)}(\Omega M^{\times l_w})_+\big).
    \]
\end{cor}
\begin{proof}
    Since $\partial M\neq\emptyset$ there is an isomorphism $\pi_*\Conf_n(M)\cong \bigoplus_{i=0}^{n-1}\pi_*(M\sm\ul{i})$, where $\ul{i}\coloneqq\{1,\dots,i\}$ is a set of $i$ distinct points in $M$~\cite{Levitt}. By Lemma~\ref{lem:retr} there is a retraction $\retr\colon M\sm S\to M\vee \S_S$ for any finite set $S$ (recall that $\S_S\coloneqq\bigvee_{S}\S^{d-1}$), and $\Omega(M\vee\S_S)\simeq\Omega M\times\prod^{\circ}_{w\in\B(S)}\Omega\Sigma^{1+l_w(d-2)}(\Omega M^{\times l_w})_+$ by \eqref{eq:gray-cor} and \eqref{eq:hm-cor} from Section~\ref{subsec:proof-ThmB}.
\end{proof}
\begin{cor}
    For a compact $d$-manifold $M$ with $\partial M\neq\emptyset$, if $\mc{s}^k_S\colon \Conf_S(M)\to\Conf_{S\sm k}(M)$ denotes the map forgetting the $k$-th point in the configuration, $k\in S\subseteq\ul{n}$, then
    \[
    \Omega\tofib\big(\Conf_{\bull}(M),\mc{s}\big)\simeq\sideset{}{^{\circ}}\prod_{w\in \N\B(\ul{n-1})}\Omega\Sigma^{1 + l_w(d-2)}(\Omega M^{\times l_w})_+\;.
    \]
    Hence, the first non-trivial homotopy group is $\pi_{(n-1)(d-2)+1}\tofib(\Conf_{\bull}(M),\mc{s})\cong\Lie_{\pi_1M}(n-1)$.
\end{cor}
\begin{proof}
    Each map $\mc{s}^n_{S\cup n}$ for $S\subseteq\ul{n-1}$ is a fibre bundle whose fibre is homeomorphic to $M\sm S\simeq M\vee\S_S$. By taking fibres first in the direction of $\mc{s}^n$-maps, the total fibre of the $n$-cube $(\Omega\Conf_{\bull}(M),\Omega\mc{s})$ is equivalent to that of the $(n-1)$-cube $\tofib(\Omega(M\vee\S_{\bull}),\Omega\coll)$, and this was computed in \eqref{eq:tofib-hm-announce}.
\end{proof}
    Note that one can show that the contravariant $n$-cube $(\Conf_{\bull}(M),\mc{s})$ is $((n-1)(d-2)+1)$-cartesian using the Blakers--Massey theorem \red{(see for instance~\cite[Ex.6.2.9]{MV}), 
    }
    but we could not find a computation of the whole homotopy type in the literature.

\subsection{Gropes give points in the layers of the Taylor tower}\label{subsec-intro:gropes}
In this geometric part we specialise to $d=3$ (but this restriction is not essential, see Remark~\ref{rem:other-d}). \red{For any oriented $3$-manifold $M$ we construct 
}
points in $\H_n(M)$ from \emph{(simple capped genus one) grope cobordisms} of degree $n$ of~\cite{CT1}.

\subsubsection*{Gropes}
In Section~\ref{subsec:gropes} we discuss grope cobordisms, which are certain $2$-complexes in $M$ modelled on trees, that `witness' $n$-equivalence of the two knots on `the boundary of a cobordism'.

More precisely, one first defines (see Definition~\ref{def:abstract-grope}) the \emph{abstract grope $G_\Gamma$} modelled on an undecorated tree $\Gamma\in\Tree(n)$ as a $2$-complex built by inductively attaching surface stages according to $\Gamma$: each leaf contributes a disk (called a \emph{cap}), and each trivalent vertex a \red{punctured torus (genus one surface with one boundary component). The upper stages are attached along the two simple closed curves of the punctured torus that generate its fundamental group, see the left part of Figure~\ref{fig:grope-intro}. 
}
We define the boundary $\partial G_\Gamma$ as the boundary of the bottom stage, which is the stage corresponding to the root of $\Gamma$; we also fix an oriented subarc $a_0\subseteq\partial G_\Gamma=\S^1$. Moreover, there is a canonical embedding $\Gamma\hra G_\Gamma$ of the tree into this $2$-complex.

A \emph{grope cobordism \red{(or a grope for short) 
}
on a knot $K\in\Knots(M)$ modelled on $\Gamma$} is a map $\G\colon G_\Gamma\to M$ which embeds all stages mutually disjointly and disjointly from $K$ except that $\G(a_0)\subseteq K$ and for $i\in\ul{n}$ the $i$-th cap intersects $K$ transversely in a point $p_i$, so that $\G(a_0)<p_1<\dots<p_n$ in $K$. The \emph{degree of $\G$} is $n$, the number of its caps. See Definition~\ref{def:grope-cob} for details and orientation conventions.

\begin{figure}[!htbp]
    \centering
    \includegraphics[width=0.8\linewidth]{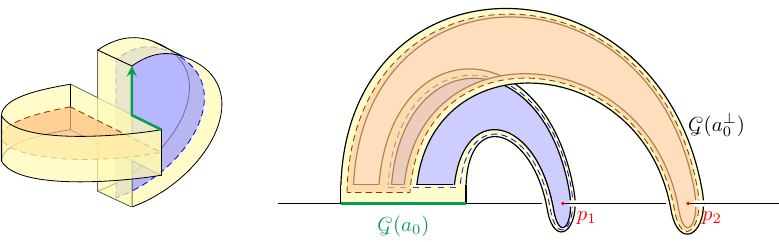}
    \vspace{-10pt}

    \caption[An abstract grope and a grope cobordism.]{\textit{Left}: The abstract grope $G_\Gamma$ modelled on $\Gamma=\tree$ is the union of the yellow punctured torus and the two disks, orange and blue. \textit{Right}: A grope cobordism $\G\colon G_\Gamma\to I^3$ on $K=\U$, the horizontal line. The knot $\partial^\perp\G$ is the union of $\U\sm\G(a_0)$ and the long black arc $\G(a_0^\perp)$, and is isotopic to the trefoil. \red{Although our pictures sometimes seem not smooth, the corners are there only for drawing convenience.
    }
    }
    \label{fig:grope-intro}
\end{figure}
A simple example of a grope cobordism of degree $2$ is shown on the right of Figure~\ref{fig:grope-intro}. Note how the two caps or `arms' could instead be twisted and tied into knots, producing non-isotopic grope cobordisms on $K$ which are all modelled on the same tree $\Gamma$. 

Moreover, we define $a_0^\perp\coloneqq\partial G_\Gamma\sm a_0$ and the \emph{output knot of $\G$} by
\[
    \partial^\perp\G\coloneqq(K\sm\G(a_0))\cup\G(a_0^\perp).
\]
Thus, a grope describes a modification of the knot $K$ by replacing its arc $\G(a_0)\subseteq K$ by $\G(a_0^\perp)$. We say that \red{the knot $K$ is $n$-equivalent to the knot $\partial^\perp\G$, and write
}
\begin{equation}\label{eq-def:n-equiv}
    K\sim_n\partial^\perp\G.
\end{equation}
In general, \emph{two knots are $n$-equivalent} if there is a finite sequence of grope cobordisms of degree $n$ from one knot to the other. This gives the variant due to~\cite{CT1} of the Gusarov--Habiro filtration $\KK_n(M;\U)\coloneqq\{K\in\KK(M):K\sim_n\U\}$ mentioned \red{at the start of Section~\ref{sec:intermediate-results}; 
}
see also Section~\ref{subsec:geom-finite-type-theory}.

Another important notion related to a grope cobordism $\G$ modelled on $\Gamma\in\Tree(n)$ is its \emph{signed decoration} $(\varepsilon_{i},\gamma_{i})_{i\in\ul{n}}$, where $\varepsilon_i\in\{\pm1\}$ is the sign of the intersection point $p_i\in K$ of $K$ and the $i$-th cap of $\TG$, and $\gamma_i\in\Omega M$ is the loop from $K(0)$ to $p_0$ first following the unique path in the tree $\G(\Gamma)$, then going back along $K$ (see Definition~\ref{def:underlying-decor-tree}). Finally, let $\upvarepsilon\coloneqq\sqcap_i\varepsilon_i$ and $g_i=[\gamma_i]\in\pi_1M$ and define the \emph{underlying decorated tree}
\[
    \ut(\G)\coloneqq\upvarepsilon\Gamma^{g_{\ul{n}}}\:
    \in\{\pm1\}\times\Tree_{\pi_1M}(n).
\]
\red{
Any $\pm\Gamma^{g_{\ul{n}}}\in\Z[\Tree_{\pi_1M}(n)]$ is realised by many mutually non-isotopic gropes, see Proposition~\ref{prop:uf-map}.%
}

\subsubsection*{Thickened gropes}
For the relation to the Taylor tower it is convenient to consider thickenings (tubular neighbourhoods) of grope cobordisms which we call \emph{thickened gropes}, see Definition~\ref{def:thick}. They can be described as embeddings $\TG\colon\ball_\Gamma\hra M$ of a certain model ball $\ball_\Gamma\cong\ball^3$ which contains $G_\Gamma$, so that $\TG(G_\Gamma)$ is a grope cobordism on $K$.
\begin{introthm}\label{thm:KST}
  If $\TG$ is a thickened grope of degree $n\geq1$ in $M$ on a knot $K$, then there is a path $\Psi^\TG\colon I\to\pT_n(M)$ from $\ev_n(\partial^\perp\TG)$ to $\ev_n(K)$. \red{In particular, for $K=\U$ we obtain a point
  \[
  \psi(\TG) \coloneqq (\partial^\perp\TG,\Psi^{\TG})\quad\in\H_n(M). 
  \]
  }
\end{introthm}
We give the proof in Section~\ref{subsec:grope-paths}\red{, following an idea for $M=I^3$ first discussed with Peter Teichner and Yuqing Shi, and which appeared in~\cite{Shi}. 
}
This uses the crucial \emph{isotopy between the two surgeries on a capped torus} (Lemma~\ref{lem:sym-surg-isotopy}). Namely, combining these isotopies for each stage of $G_\Gamma$ gives an $(n-1)$-parameter family of disks $\D_u$ contained in the model ball $\ball_\Gamma$ and with $\partial\D_u=\partial G_\Gamma$. Moreover, the interior of each disk $\TG(\D_u)$ intersects the knot $K$ only inside of certain subarcs of $K$, and so that the homotopy of $\TG(a_0^\perp)$ back to $\TG(a_0)$ across $\TG(\D_u)$ defines a path in $\pT_n(M)$.

The theorem immediately implies that there is a factorisation
\begin{equation}\label{eq:kst-factor}
\begin{tikzcd}[row sep=small]
    \KK(M)\arrow{rr}{\pi_0\ev_n}\arrow{dr} && \pi_0\pT_n(M)\\
    &\faktor{\KK(M)}{\sim_n}\arrow{ru}[swap]{}&
\end{tikzcd}
\end{equation}
Indeed, if $K\sim_n K'$, there is a sequence of thickened gropes witnessing it, so concatenation of the corresponding paths in $\pT_n(M)$ is a path from $\ev_nK$ to $\ev_nK'$. In particular, as mentioned in the discussion after Conjecture~\ref{conj:universal}, for $M=I^3$ this is \emph{equivalent} to the claim that $\pi_0\ev_n$ is a Vassiliev invariant of type $\leq n-1$ (this was first shown by~\cite{BCKS}). \red{The remaining part of Conjecture~\ref{conj:universal} says that if $\ev_nK$ is in the path component of $\ev_n\U$, then there exists a sequence of paths between them induced from thickened gropes (or a grope forest).
}
\begin{remm}\label{rem:comparison-general-M}
  This equivalence to Vassiliev's theory is discussed in Section~\ref{subsec:geom-finite-type-theory}. In contrast, it is an open problem (see Remark~\ref{conj:habiro-vassiliev}) if such an equivalence holds for any oriented $3$-manifold $M$: the factorisation \eqref{eq:kst-factor} just says that $\pi_0\ev_n$ is an invariant of $n$-equivalence of knots in $M$.
\end{remm}


\subsubsection*{Grope forests}
We will also need to realise arbitrary linear combinations of decorated trees. \red{On the geometric side this corresponds to mentioned sequences of grope cobordisms, or a grope cobordism of `higher genus', see \cite{CT2}. 
}
We instead define a slightly different notion, called a \emph{grope forest} (see Definition~\ref{def:forest}). This is an embedding
\[
\forest\colon\bigsqcup_{1\leq l\leq N}\ball_{\Gamma_l}\hra M
\]
such that $\forest|_{\ball_{\Gamma_l}}$ for $1\leq l\leq N$ are mutually disjoint thickened gropes on $K$ whose arcs $\forest|_{\ball_{\Gamma_l}}(a_0)\subseteq K$ appear in the order of their label $l$. 
Taking underlying trees of all the thickened gropes in a grope forest gives the \emph{underlying decorated tree map}
\[\begin{tikzcd}
  \ut\colon\;\{\text{degree $n$ grope forests on $\U$ in $M$}\}\arrow[two heads]{r}{} & \Z[\Tree_{\pi_1M}(n)].
\end{tikzcd}
\]
This is a \emph{surjection} of sets: any linear combination of $\pi_1M$-decorated trees is realised by a grope forest on $\U$ (see Proposition~\ref{prop:uf-map}). Furthermore, we extend the map $\psi$ from Theorem~\ref{thm:KST} to grope forests to obtain points $\psi(\forest)\in \H_n(M)$ (see Proposition~\ref{prop:extended-KST}).

\begin{remm}\label{rem:other-d}
  One can generalise gropes to any $d\geq3$ by simply replacing the model $3$-ball $\ball_\Gamma$ by a $d$-dimensional ball obtained as a $(d-2)$-thickening of the $2$-complex $G_\Gamma$. Then a thickened grope in $M$ is again an embedding $\ball_\Gamma\hra M$ so that for each $1\leq i\leq n$ the neighbourhood of the $i$-th cap ($\cong\D^2\times\D^{d-2}$) intersects $K$ in a neighbourhood of a single point $p_i\in K$.

  One can similarly construct maps $\psi(\TG)\colon\S^{n(d-3)}\to\H_n(M)$, using that $(d-1)$-dimensional normal disks to $K$ at $p_i$'s give an $n(d-2)$-family of arcs. This gives points $\emap_{n+1}\psi(\TG)\in\Omega^{n(d-3)}\pF_{n+1}(M)$ and we believe the proofs of our main theorems below readily extend to show that $\emap_{n+1}$ is a surjection onto the first non-trivial group $\pi_{n(d-3)}\pF_{n+1}(M)$ for any $d\geq3$. We will explore this in future work.
\end{remm}

\subsection{The key step: the underlying tree is detected in the Taylor tower}\label{subsec-intro:merge}
The first step on the journey relating the homotopy theory of punctured knots and the geometry of gropes was to connect them both to the language of decorated trees: they generate the group of components of the layers and also underlie gropes. It remains to show their compatibility via
\[\begin{tikzcd}
  \{\text{degree $n$ grope forests on $\U$ in $M$}\}\arrow[]{r}{\psi} & \H_n(M)\arrow[]{r}{\emap_{n+1}} & \pF_{n+1}(M).
\end{tikzcd}
\]
\begin{introthm}[Main Theorem]\label{thm:main-thm}
For a thickened grope $\TG$ of degree $n$ on $\U$ in $M$ the connected component of the point
$\emap_{n+1}\psi(\TG)\in\pF_{n+1}(M)$ is given by the class of its underlying tree:
\[
    [\emap_{n+1}\psi(\TG)]=[\ut(\TG)]\in\Lie_{\pi_1M}(n).
\]
\end{introthm}
This is the statement~\eqref{key-statement} from the introduction, see also Corollary~\ref{cor:detect-tree}. See Section~\ref{sec:main} for the proof of this key step and Section~\ref{subsec:gen-maps} for its outline. We now give a very brief sketch.

More explicitly, for a thickened grope $\TG\colon\ball_\Gamma\to M$ on $\U$ with the underlying tree $\upvarepsilon\Gamma^{g_{\ul{n}}}$, we claim
\[
    \big[\emap_{n+1}\psi(\TG)\big] =\big[\upvarepsilon\Gamma^{g_{\ul{n}}}\big] \quad\in\pi_0\pF_{n+1}(M)\cong\Lie_{\pi_1M}(n).
\]
We prove this in Section~\ref{sec:main} using the above Lemma~\ref{lem:W-iso'}: it is enough to show that the Samelson product $\Gamma(m_i^{\varepsilon_i g_i})\colon \S^n\to \Omega M_{\ul{n}}$ is homotopic to the map (the initial coordinate of $\deriv\big(\chi\emap_n\psi(\TG)\big)$):
\[
    \deriv\big(\chi\emap_n\psi(\TG)\big)^{\ul{n}}\colon
    \:\S^n\to \Omega M_{\ul{n}}.
\]
The maps $\deriv$ and $\chi$ were constructed in Theorem~\ref{thm:W-iso}. The idea of the proof is to use inductive descriptions of both Samelson products (see Lemma~\ref{lem:samelson-inductive}) and thickened gropes, to reduce to checking that $\deriv\big(\emap_n\psi(\TG)\big)^{\ul{n}}$ is homotopic to a certain pointwise commutator map.

A crucial step for this reduction is our description of the map $\chi$ in Appendix~\ref{app:proofs} as
\[
  \big(\chi f\big)^{\ul{n}}=\glueOp_{S\subseteq\ul{n}}(f^{\ul{n}})^{h^S}.
\]
This is a map on $\S^n$ obtained by gluing together along faces $2^n$ different maps $(f^{\ul{n}})^{h^S}$, each defined on $I^n$ as a certain `$h^S$-reflection' of the original map $f^{\ul{n}}$ across a face of $I^n$.

Hence, every generator of $\pi_0\pF_{n+1}(M)$ is in the image of $\pi_0\emap_{n+1}$. However, to prove that this map of sets is surjective we must also realise linear combinations. Recall that they arise as underlying trees of grope forests, so the following is all we need (for a proof see the end of Section~\ref{sec:main}).
\begin{introthm}\label{thm:main-extended}
  For any grope forest $\forest$ of degree $n$ on $\U$ in $M$ we have
  \[
    [\emap_{n+1}\psi(\forest)]=[\ut(\forest)]\in\Lie_{\pi_1M}(n).
  \]
\end{introthm}
\begin{proof}[Proof of Theorem~\ref{the-main}]
  The statements about $\pF_{n+1}$ are contained in Theorem~\ref{thm:W-iso}. It only remains to construct the map $\rho_n$ making the lower triangle in the following diagram commute:
  \begin{equation}\label{eq:main-extended}
      \begin{tikzcd}[column sep=large,row sep=large]
        \{\text{degree $n$ grope forests on $\U$ in $M$}\}\arrow[]{d}[swap]{\psi}\arrow[two heads]{r}{\ut} & \Z[\Tree_{\pi_1M}(n)]\arrow[two heads]{d}{\text{mod}~(AS,IHX)}\arrow[dashed]{dl}[swap]{\rho_n}\\
      \pi_0\H_n(M)\arrow[]{r}{\pi_0\emap_{n+1}} & \pi_0\pF_{n+1}(M)
  \end{tikzcd}
  \end{equation}
  Observe that Theorem~\ref{thm:main-extended} is equivalent to the commutativity of the outer square. Thus, it is enough to pick any set-theoretic section of $\ut$. In other words, \red{for $T\in\Z[[\Tree_{\pi_1M}(n)]$ let $\rho_n(T)\coloneqq[\psi(\forest)]$ for any grope forest $\forest$ on $\U$
  }
  with the underlying tree $\ut(\forest)=T$.
\end{proof}

\subsection{Further consequences and examples}
The rational collapse along the diagonal \red{of the spectral sequence from Corollary~\ref{cor:rational-universal} 
}
is used in the following argument of Goodwillie, which we include for completeness.
\begin{prop}[Goodwillie]\label{prop:non-conv}
 The set $\pi_0\pT_\infty(I^3)$ is uncountable, so the Taylor tower does not converge to $\Knots(I^3)$, i.e. the map $\ev_\infty\colon\Knots(I^3)\to\pT_\infty(I^3)\coloneqq\lim\pi_0\pT_n(I^3)$ is \emph{not} a weak equivalence.
\end{prop}
\begin{proof}
We claim that $\pi_0\pT_\infty(I^3)$ is uncountable, whereas $\pi_0\Knots(I^3)$ is countable. We have a surjection of sets $\pi_0\pT_\infty(I^3)\twoheadrightarrow\lim\pi_0\pT_n(I^3)$, so it is enough to show that $\lim\pi_0\pT_n(I^3)$ is uncountable.

Indeed, $\pi_0p_{n+1}\colon\pi_0\pT_{n+1}(I^3)\twoheadrightarrow\pi_0\pT_n(I^3)$ is surjective for every $n\geq1$, but not injective since we saw in Corollary~\ref{cor:rational-universal} that $\ker(\pi_0p_{n+1})\otimes\Q\cong\A^T_n\otimes\Q$, and these groups are non-trivial for all $n\geq2$.
\end{proof}
Nevertheless, if we denote the Gusarov-Habiro completion by $\wh{\KK(I^3)}_\Z\coloneqq\lim\faktor{\KK(I^3)}{\sim_n}$ (see Section~\ref{subsec:geom-finite-type-theory}), we have an exact sequence
\begin{equation}\label{eq:limits}
    \begin{tikzcd}
        \lim\big(\ker\pi_0\ev_n\big) \arrow[hook]{r}{} & \wh{\KK(I^3)}_\Z\arrow[]{r}{\ol{\ev}_\infty} & \lim\pi_0\pT_n(I^3)\arrow[two heads]{r}{} & \textstyle\lim^1\big(\ker\pi_0\ev_n\big).
    \end{tikzcd}
\end{equation}
This is obtained by taking limits in \eqref{eq:two-ses}, using Milnor's $\lim^1$-sequence and $\lim^1\big(\faktor{\KK(I^3)}{\sim_n}\big)=0$, as the maps in that tower are surjective. Thus, if Conjecture~\ref{conj:universal} is true then $\ol{\ev}_\infty$ is an isomorphism.

\begin{prop}\label{prop:group-structures}
Two group structures on the set of components of Taylor stages constructed by~\cite{BW2} and~\cite{BCKS} are equivalent, i.e. $\pi_0\T_n^{BW}\cong\pi_0 AM_n$ as abelian groups. More generally, any two group structures on $\pi_0\T_n\Knots(I^3)$ respecting the connected sum of knots must agree.
\end{prop}
\begin{proof}
  The models are weakly equivalent, so there is a bijection of sets $f\colon\pi_0 AM_n\to\pi_0\T_n^{BW}$ so that $f\circ\pi_0ev_n=\pi_0\ev^{BW}_n$. Since $\pi_0ev_n$ is a \emph{surjective} monoid map, for $x_i\in\pi_0AM_n$ we find $K_i\in\KK(I^3)$ with $\pi_0ev_n(K_i)=x_i$; then $x_1+x_2=\pi_0ev_n(K_1\# K_2)$. Using that $\pi_0\ev^{BW}_n$ is also a monoid homomorphism we have
  \[
    f(x_1+x_2)=\pi_0\ev^{BW}_n(K_1\# K_1) =\pi_0\ev^{BW}_n(K_1)+\pi_0\ev^{BW}_n(K_2)=f(x_1)+f(x_2).\qedhere
  \]
\end{proof}

\begin{figure}[!htbp]
        \centering
        \includegraphics[width=0.9\linewidth]{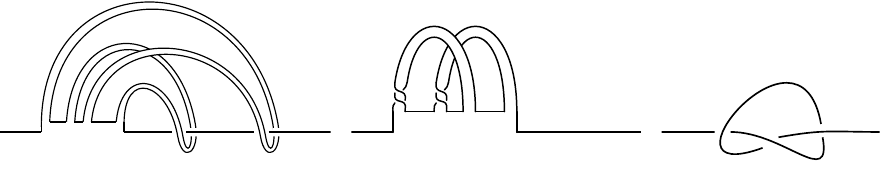}
    \caption[The trefoil.]{
        \textit{Left}: The knot $\partial^\perp\G$ for $\G$ from Figure~\ref{fig:grope-intro}. \textit{Middle}: `Swinging' the bands of $\partial^\perp\G$ is an isotopy which introduces twists into the bands giving a projection of the right-handed trefoil, cf.\ Example~\ref{ex:n-3-classical}. \textit{Right}: The standard diagram of the right-handed trefoil.
    }
    \label{fig:trefoil}
\end{figure}
\begin{example}\label{ex:n-2-classical}
Let $n=2$. The grope cobordism $\G$ on the unknot $\U$ from Figure~\ref{fig:grope-intro} is modelled on the unique tree of degree $2$, and has $K=\partial^\perp\G$ the right handed trefoil $(RHT)$, see Figure~\ref{fig:trefoil}. Thus, $RHT$ is 2-equivalent to the unknot, $RHT\sim_2\U$, \red{as defined in~\eqref{eq-def:n-equiv}. 
}
Actually, every knot is $2$-equivalent to the unknot (see e.g.~\cite{Murakami-Nakanishi}), so
\[
    \faktor{\KK(I^3)}{\sim_2}=\big\{[\U]\big\}.
\]
Moreover,~\cite{BCSS} show that $\pT_2(I^3)\simeq*$,
confirming Conjecture~\ref{conj:universal} in degree $n=2$.
\end{example}

\begin{example}\label{ex:n-3-classical}
\red{From the previous example it follows that 
}
the first non-trivial classical knot invariant from embedding calculus is
\[
  \pi_0\ev_3\colon\;\KK(I^3)\to\pi_0\pT_3(I^3)\cong\pi_0\pF_3(I^3)\cong\Lie(2)\cong\Z.
\]
Using the linking of certain `colinearity submanifolds' of configuration spaces,~\cite{BCSS} show that $\pi_0\ev_3$ agrees with the unique Vassiliev invariant $v_2$ of type $\leq2$ taking value $1$ on $RHT$. Classically, $v_2$ is given as the second coefficient of the Conway polynomial (lifting the Arf invariant) and induces
\[
v_2\colon\;\faktor{\KK(I^3)}{\sim_3}\xrightarrow{\cong} \Z.
\]
Our approach not only recovers $\pi_0\ev_3=v_2$ but also lifts this computation to the fibres via the map $\emap_3\colon\H_2(I^3)\to\pF_3(I^3)$. Namely, by Example~\ref{ex:n-2-classical} for any $K\in\KK(I^3)$ there exists a grope forest $\forest$ of degree~$2$ from $K$ to $\U$. By the extension of Theorem~\ref{thm:KST} for grope forests we get a point $\psi(\forest)\in\H_2(I^3)$ and by definition $\pi_0\ev_3(K)=[\emap_3\psi(\forest)]\in\Lie(2)$. Now, our main Theorem~\ref{thm:main-extended} says that this element is the class of the underlying trees of $\forest$. Actually, in Example~\ref{ex:f-for-n=3} we do this computation for the case $K=RHT$, as a warm-up problem for the proof of that theorem.

We get precisely $\pi_0\ev_3(RHT)=\tree$, implying $\pi_0\ev_3(K)=v_2(K)\cdot\tree$ by the uniqueness of $v_2$. If we then for computing the coefficient $v_2(K)$ use the Hopf invariant $\Lie(2)\subseteq\pi_3(\S^2\vee\S^2)\to\Z$ given as the linking number $\mathrm{lk}(f^{-1}(p_1),f^{-1}(p_2))$ for a suitable representative $f\colon \S^3\to\S^2\vee\S^2$ of the desired homotopy class, then we are in the colinearity story of~\cite{BCSS}.
\end{example}


\section{Preliminaries}\label{sec:prelim}
\subsection{Trees}\label{subsec-prelim:trees}

\subsubsection*{Lie trees} Fix a finite non-empty set $S$ and an integer $d\geq2$.
\begin{defn}\label{def:trees}
    A \textsf{(vertex-oriented uni-trivalent) tree} is a connected simply connected graph with vertices of valence three or one and with cyclic order of the edges incident to each trivalent vertex, called the \textsf{vertex orientation}. In the pictures this is specified by the positive orientation of the plane.
    
    A \textsf{rooted tree} $\Gamma\in\Tree(S)$ is a tree with one distinguished univalent vertex (the \textsf{root}) and all other univalent vertices (the \textsf{leaves}) labelled in bijective manner by the set $S$. \red{The degree of $\Gamma$ is the cardinality $|S|$ of $S$. 
    }
    The \textsf{grafting} of two rooted trees $\Gamma_{\myj}\in \Tree(S_{\myj})$ for $\myj=1,2$ is the rooted tree
    \[
    \grafted\in\Tree(S_1\sqcup S_2)
    \]
    obtained by gluing the two roots together and `sprouting' a new edge with a new root.
    
    Define the \textsf{group of Lie trees} $\Lie(S)\coloneqq\faktor{\Z\left[\Tree(S)\right]}{AS,IHX}$ as the quotient of the free abelian group on the set of rooted trees by the following relations (dots represent the remaining unchanged part of an arbitrary tree):
    \begin{equation}\label{eq-def:AS-IHX}
    \begin{aligned}
        \includegraphics{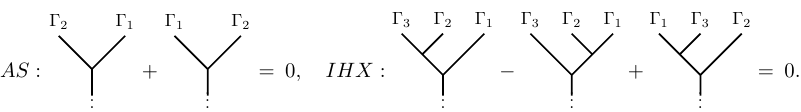}
    \end{aligned}
    \end{equation}
\end{defn}
We now relate Lie trees to words (Lie monomials) in the free Lie algebra.

\begin{defn}\label{def:lie-algebras}
    Let $\L_d(S)=\L(x^k:k\in S)$ be the free $\mathbb{N}_{0}$-\emph{graded} Lie algebra over $\Z$, with each $x^k$ having degree $|x^k|=d-2$. Thus, the degree of a word $w\in\L_d(S)$ is $|w|=l_w(d-2)$ where $l_w$ is the length of $w$, that is, the total number of letters in $w$.
    
    The \textsf{normalised Lie algebra} $\N\L_d(S)$ is the Lie subalgebra of $\L_d(S)$ generated by the words in which every letter appears at least once.\footnote{That is, $\N\L_d(S)\coloneqq\bigcap_{k\in S} \ker(\mc{s}^k\colon\L_d(S)\to\L_d(S\sm k))$, where $\mc{s}^k$ replaces each appearance of $x^k$ by zero.} Let $\Lie_d(S)\subseteq\N\L_d(S)$ be its subgroup generated by the words in which each letter appears exactly once. This is precisely the part of degree $|S|(d-2)$ of $\N\L_d(S)$.
\end{defn}
If $d=2$ one can assign to a tree $\Gamma\in\Lie(S)$ a Lie word $\omega_2(\Gamma)\in\Lie_2(S)$ using vertex orientations, so that the grafting of trees precisely corresponds to the Lie bracket. As relations in~\eqref{eq-def:AS-IHX} correspond to the antisymmetry and Jacobi relations in $\Lie_2(S)$ we get an isomorphism $\Lie(S)\cong\Lie_2(S)$.

However, Lie words for a general $d\geq2$ satisfy the \emph{graded} antisymmetry and Jacobi relations:
\begin{align}
    &[w_1,w_2]+(-1)^{|w_1||w_2|}[w_2,w_1]=0,\nonumber\\
    &[w_1,[w_2,w_3]]-[[w_1,w_2],w_3]-(-1)^{|w_1||w_2|}[w_2,[w_1,w_3]]=0,\label{eq:graded-jacobi}
\end{align}
whereas the relations~\eqref{eq-def:AS-IHX}, which are inspired by applications in geometric topology (see~\cite{CST} for instance), never involve graded signs. Nevertheless, as we learned from~\cite{Conant} and~\cite{Robinson-part-cx} a correspondence can be obtained as follows. Let us denote
\[
    (1|2)_d\coloneqq(d-2)\cdot\left|\big\{(i_1,i_2)\in S_1\times S_2: i_1>i_2 \big\}\right|.
\]
\begin{lemma}\label{lem:lie-tree}
    If $S$ is ordered, there is an isomorphism of abelian groups $\omega_d\colon\Lie(S)\xrightarrow{\cong} \Lie_d(S)$ defined inductively on $|S|$ by $\ichord{i}\:\mapsto\: x^i$ and for $S=S_1\sqcup S_2$ and $\Gamma_{\myj}\in\Tree(S_{\myj})$ by
    \[
        \grafted\:\mapsto\: (-1)^{(1|2)_d}\big[\omega_d(\Gamma_1),\omega_d(\Gamma_2)\big].
    \]
\end{lemma}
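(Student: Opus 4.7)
The plan is to first lift the formula to a map $\tilde\omega_d\colon\Z[\Tree(S)]\to\Lie_d(S)$ and then verify it kills the $AS$ and $IHX$ relators, finally exhibiting an inverse. Any rooted tree $\Gamma\in\Tree(S)$ decomposes uniquely as a grafting $\grafted$ at the trivalent vertex adjacent to the root, with the vertex orientation distinguishing $\Gamma_1$ from $\Gamma_2$. So the inductive formula defines $\tilde\omega_d$ on generators; the induction base $\ichord{i}\mapsto x^i$ lands in $\Lie_d(S)$ because each $x^i$ appears exactly once in the output by construction.

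For $AS$: swapping $\Gamma_1$ and $\Gamma_2$ corresponds to negating the tree in $\Lie(S)$, while on the Lie side it swaps the sign factor from $(-1)^{(1|2)_d}$ to $(-1)^{(2|1)_d}$ and brackets in the opposite order. Using $(1|2)_d+(2|1)_d=(d-2)|S_1||S_2|$ and the graded antisymmetry in $\Lie_d$, the total parity is $(-1)^{(d-2)|S_1||S_2|+|w_1||w_2|}=(-1)^{(d-2)(d-1)|S_1||S_2|}$, which is always $+1$ since $(d-2)(d-1)$ is even. This gives $\tilde\omega_d(\Gamma)+\tilde\omega_d(\mathrm{swap}(\Gamma))=0$.

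For $IHX$: the three trees differ only in the placement of the subtree labelled $\Gamma_3$ at an internal vertex two edges above the root. Expand each using the recursion twice (once to split off $\Gamma_1$ or $\Gamma_2$ at the root, again at the next trivalent vertex), and read off the three resulting Lie words. They match the three terms of the graded Jacobi identity~\eqref{eq:graded-jacobi} up to a common sign arising from the $(-1)^{(1|2)_d}$ factors. Checking that the relative signs between the three terms are consistent with graded Jacobi is the main computational step; it reduces, as in the $AS$ case, to the parity identity $(d-2)(d-1)\equiv 0\pmod 2$ together with the counting identity for $(1|2)_d$. Hence $\tilde\omega_d$ descends to $\omega_d\colon\Lie(S)\to\Lie_d(S)$.

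To invert $\omega_d$, note that $\Lie_d(S)$ is spanned by iterated brackets of the letters $x^k$ in which each $x^k$ appears exactly once (by the definition of the normalised Lie subalgebra restricted to the multilinear part). Define $\omega_d^{-1}$ on such a bracket by sending an outermost expression $[w_1,w_2]$ to $(-1)^{(1|2)_d}$ times the grafting of the trees for $w_1$ and $w_2$, recursively. Well-definedness on $\Lie_d(S)$, i.e.\ compatibility with graded $AS$ and Jacobi, is exactly the computation above read in reverse, using the same parity observation. Since $\omega_d$ and $\omega_d^{-1}$ are inverse on generators by construction, they assemble into an isomorphism of abelian groups. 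The principal obstacle in executing this plan is keeping the sign bookkeeping under control in the $IHX$ step, where three distinct partitions $S_1\sqcup S_2$ occur simultaneously; I expect the cleanest treatment is to reduce first to an identity purely among the exponents $(i|j)_d$ and only at the end invoke $(d-2)(d-1)$ being even.
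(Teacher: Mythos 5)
Your proposal follows the same plan as the paper: define $\omega_d$ on trees, check vanishing on $AS$ and $IHX$ by factoring out a common $(-1)^{(1|2)_d}$ and invoking the graded relations, then build the inverse analogously. Your key parity observation that $(d-2)|S_1||S_2|+|w_1||w_2|=(d-2)(d-1)|S_1||S_2|$ is even is equivalent to the paper's simplification $(-1)^{|S_1||S_2|(d-2)}=(-1)^{|\omega_{\Gamma_1}||\omega_{\Gamma_2}|}$, so the $AS$ check matches exactly. For the $IHX$ step you correctly anticipate that the argument reduces to bookkeeping with the exponents $(i|j)_d$; the identities you would want to write down are $(1|23)+(2|3)=(1|2)+(1|3)+(2|3)=(12|3)+(1|2)$ and $(2|13)+(1|3)=(2|1)+(2|3)+(1|3)$, together with one application of $AS$ to put the third $IHX$ term into the form that graded Jacobi produces — but the ingredients you name are the right ones, so this is a matter of writing out the arithmetic rather than a missing idea.
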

Hence, one can think of graded Lie words also as Lie trees, but keeping in mind that for odd $d$ the isomorphism $\omega_d$ introduces a sign. \red{We prove this lemma in Appendix~\ref{subsec:lem:lie-tree}.
}

For $\ul{n}\coloneqq\{1,\dots,n\}$ we write $\Tree(n)\coloneqq\Tree(\ul{n})$ and $\Lie(n)\coloneqq\Lie(\ul{n})$. Their elements can alternatively be drawn in the plane as in Figure~\ref{fig:Lie-trees}: the root and leaves are attached to a fixed horizontal line according to their increasing label, with the root labelled by $0$ (the edges might intersecting, but this is not part of the data). The vertex orientation is still induced from the plane.
\begin{figure}[!htbp]
    \centering
    \includegraphics{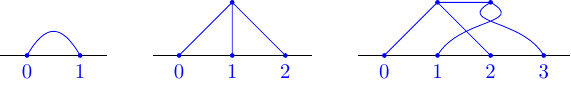}
    \caption[Drawing Lie trees in the plane.]{Trees $\omega\colon\ichord{i}\mapsto x^i,\tree\mapsto [x^1,x^2],\begin{tikzpicture}[baseline=0.2ex,scale=0.21,every node/.style={scale=0.78,font=\bfseries}]
        \clip (-2.2,-0.21) rectangle (2.2,4.13);
        \draw[fill=black!30!white]
            (-0.15, -0.2) rectangle ++ (0.3,0.2);
        \draw[thick]
            (0,0) -- (0,1) --
            (-1,2) -- (-2,3) node[pos=1,above]{$1$}
            (-1,2) -- (0,3)    node[pos=1,above]{$3$}
                    (0,1) -- (2,3)  node[pos=1,above]{$2$};
    \end{tikzpicture}\mapsto(-1)^{d-2}[x^2,(-1)^{d-2}[x^3,x^1]]$ drawn in the plane.}
    \label{fig:Lie-trees}
\end{figure}

Using the $AS$ and $IHX$ relations repeatedly one can show that $\Lie(n)\cong\Z^{(n-1)!}$, with a basis given by trees from Figure~\ref{fig:tree-left-normed} (ignoring red decorations for now) for various permutations $\sigma\in\mc{S}_{n-1}$, corresponding to left-normed Lie words $[x^{\sigma(1)},[x^{\sigma(2)},[\dots[x^{\sigma(n-1)},x^n]\dots]]]$. However, $\Lie(n)$ is also an interesting $\mc{S}_n$-representation, by permuting the leaf labels, giving the arity $n$ of the \emph{Lie operad}.

\subsubsection*{Decorated trees}
For manifolds with non-trivial fundamental group we need more general trees. Let $S$, $d$ be as above, and $\pi$ be a set. We write $\pi^S\coloneqq\Map(S,\pi)$ (so $\pi^{\ul{n}}=\pi^n$ is the cartesian product), and if $\pi=\pi_1M$, then we assume $d\coloneqq\dim M$ by convention.
\begin{defn}\label{def:decorated-lie}
    Define the abelian group $\Lie_{\pi}(S)$ of \textsf{$\pi$-decorated Lie trees} as
    \[
        \Lie_{\pi}(S)\coloneqq\Lie(S)\otimes\Z[\pi^S].
    \]
\end{defn}
If we let $\Tree_\pi(S)\coloneqq\Tree(S)\times \pi^S$, then $\Lie_{\pi}(S)$ is the quotient of $\Z[\Tree_{\pi}(S)]$ by the relations analogous to $AS,IHX$ from~\eqref{eq-def:AS-IHX}, which respect decorations in the natural way. We denote elements of $\Tree_\pi(S)$ by $\Gamma^{g_S}$, where $\Gamma\in\Tree(S)$ and $g_S\coloneqq(g_i)_{i\in S}\in\pi^S$, and call them \textsf{$\pi$-decorated trees}. Namely, these are rooted trees whose leaves are labelled bijectively by $S$, and additionally for each $i\in S$ the edge incident to the leaf $i$ is assigned an element $g_i\in \pi$, called a \textsf{decoration}.
\begin{figure}[!htbp]
    \centering
    \includegraphics{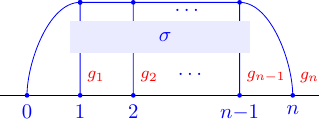}
    \caption[A left-normed decorated tree.]{A left-normed tree $\Gamma^{g_{\ul{n}}}\in\Tree_{\pi_1M}(n)$, with $g_{\ul{n}}\coloneqq(g_1,\dots,g_n)\in(\pi_1M)^{n}$ and $\sigma\in\mc{S}_{n-1}$.}
    \label{fig:tree-left-normed}
\end{figure}

When $\pi=\pi_1M$ we view a $\pi_1M$-decorated tree $\Gamma^{g_{S}}$ as a \emph{homotopy class of a map} $x\colon\Gamma\to M$ which takes the root and leaves to some fixed arc $K$ in $M$. \red{In our applications $K$ is a long knot in $M$, and the tree will be the underlying tree of a grope cobordism $\G\colon G_\Gamma\hra M$ on $K$; recall from Section~\ref{subsec-intro:gropes} that $\Gamma$ can be embedded into $G_\Gamma$, see Definition~\ref{def:underlying-decor-tree} for details. 
}
The decoration $g_i\in\pi_1M$ for the $i$-th leaf \red{will then be 
}
the homotopy class of the loop $\gamma_i$, which goes from $x(0)$ (the basepoint) to the leaf $x(i)$ along the unique path in the tree connecting the $i$-th leaf and the root, then back from $x(i)$ to $x(0)$ along $K$. More precisely, the basepoint of $M$ is fixed in some $p_0\in K$ and the described loop $\gamma_i$ should be conjugated by the piece of $K$ between $p_0$ and $x(i)$.

\red{
    The $\pi$-decorated Lie trees will for us exactly arise in this manner: we will fix an embedding of a tree $\Gamma$ into the $3$-ball $\ball^3$, and consider certain embeddings of this model ball (denoted by $\ball^3_\Gamma$) into our $3$-manifold $M$ with a particular intersection pattern with a fixed knot $K$ in $M$, so that we will obtain elements $g_i\in\pi_1M$ as explained in the previous paragraph.
}

\begin{remm}
The group $\Lie_{\pi}(n)$ is equal to $\Lambda_{n-1}(\pi,n+1)$ from the work of Schneiderman and Teichner~\cite{Schneiderman-Teichner14}, where the more general groups $\Lambda_n(\pi,m)$ were used as targets for obstruction invariants for pulling apart $m$ surfaces in $M$; see~\cite[Lem.~2.1]{Schneiderman-Teichner14} for this identification.
\end{remm}

\subsection{Homotopy limits}\label{subsec-prelim:holims}
A \textsf{diagram} over a small category $\mc{C}$ is a functor $X_{\bull}\colon \mc{C}\to\mathsf{Top}_*$ to the category of based topological spaces. Let $\mathsf{Top}_*^{\mc{C}}$ denote the category of diagrams over $\mc{C}$. The categories we will be using are the cube $\mc{C}=\Cube(S)$ and punctured cube categories,  $\mc{C}=\PCube(S)$, see below.

We will need the notion of a \textsf{homotopy limit} of a diagram $X_{\bull}\in\mathsf{Top}_*^{\mc{C}}$; for \red{details see~\cite[Ch.~XI]{BK} or~\cite[Ch.~8]{MV}. 
}
This is the space $\holim(X_{\bull})\in\mathsf{Top}_*$, also written $\holim_{c\in\mc{C}}X_c$, defined as the mapping space
\[
    \holim(X_{\bull})\coloneqq\Map_{\mathsf{Top}_*^{\mc{C}}}\big(\,|\mc{C}\downarrow\bull|,\,X_{\bull} \big).
\]
Firstly, $|\mc{C}\downarrow\bull|\in\mathsf{Top}_*^{\mc{C}}$ is the diagram which sends $c\in\mc{C}$ to the classifying space of the category $(\mc{C}\downarrow c)$, called the overcategory, whose objects are morphisms $c'\to c$ in $\mc{C}$, and arrows are triangles over $c$ in $\mc{C}$. The classifying space $|\mc{D}|$ of a category $\mc{D}$ is the geometric realisation of the nerve of $\mc{D}$ -- the simplicial set whose $k$-simplices are sets of $k$-composable arrows in $\mc{D}$. Finally, the mapping space between two objects in $\mathsf{Top}_*^{\mc{C}}$ is defined as the set of natural transformations between the two diagrams and is seen as a subspace of $\prod_{c\in\mc{C}} \Map\big(|\mc{C}\downarrow c|,X_c\big)$ from which it inherits the topology. 

In other words, a point $f\in\holim(X_{\bull})$ consists of a collection of maps $f^c\colon|\mc{C}\downarrow c|\to X_c$ \red{for every $c\in\mc{C}$ 
}
which are compatible with respect to the morphisms in $\mc{C}$. \red{As mentioned, the cases relevant to us are the cube and punctured cube categories, and we describe the respective homotopy limits more explicitly in~\eqref{eq-def:holim-cube} and in~\eqref{eq-def:holim-punc-cube} below.
}

The crucial property of a homotopy limit is its \emph{homotopy invariance}: if $X_{\bull}\to Y_{\bull}$ is a map of diagrams such that each $X_c\to Y_c$ is a weak equivalence, then the induced map $\holim X_{\bull}\to\holim Y_{\bull}$ is a weak equivalence as well.

For a finite non-empty set $T$ we consider the cube category $\mathcal{C}=\Cube(T)$, which is the poset of all subsets of $T$, and the punctured cube category $\mc{C}=\PCube(T)$, which is the poset of all non-empty subsets of $T$. In this paper we will use both $T=[n]\coloneqq\{0,1,2,\dots,n\}$ and $T=\ul{n}\coloneqq\{1,2,\dots,n\}$.

\subsubsection*{Punctured cubes}
A \textsf{punctured $|T|$-cube} is a diagram over $\PCube(T)$ and will be denoted by
\[
    (X_{\bull},x)\colon\PCube(T)\to\mathsf{Top}_*.
\]
It is given by spaces $X_S$ for $\emptyset\neq S\subseteq T$ and mutually compatible maps $x^k_S\colon X_S\to X_{S\cup k}$ for $k\in T\sm S$. Note that this diagram indeed has a shape of a cube of dimension $|T|$, with one vertex removed (corresponding to the initial object of $\Cube(T)$), see the first and third pictures in Figure~\ref{fig:simplex}. Moreover, $|\PCube(S)|$ is the geometric realisation of the simplicial set obtained by barycentric subdivision of the standard simplex $\Delta^S$, see Figure~\ref{fig:simplex}. We denote by $\Delta^S$ the standard simplex on the vertex set $S$, which has dimension $|S|-1$.

\begin{figure}[!htbp]
    \centering
    \includegraphics[width=0.9\linewidth]{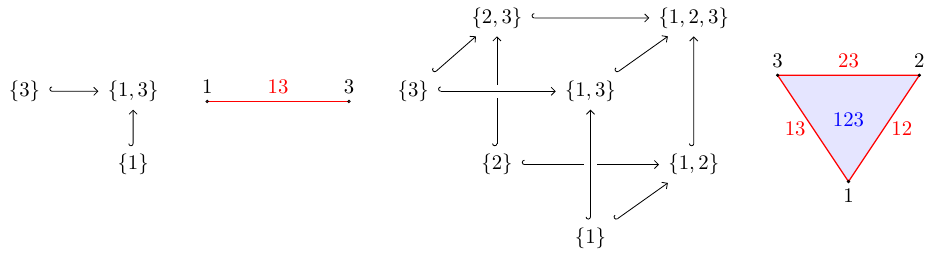}
    \caption[Examples of punctured cubes and barycentrically subdivided simplex.]{Examples of $\PCube(S)\cong\PCube(T)\downarrow S$ and $\Delta^S$ for $S=\{1,3\}$ and $S=\ul{3}$.}
  \label{fig:simplex}
\end{figure}
Let us describe a point in $\holim(X_{\bull},x)$. First observe that the overcategory $\PCube(T)\downarrow S$ is isomorphic to $\PCube(S)$. Thus, there is a levelwise homeomorphism of punctured cubes
\begin{equation}\label{eq:punc-cube-simplex}
    |\PCube(T)\downarrow \bull|\cong\Delta^{\bull}.
\end{equation}
Indeed, the maps in $\PCube(T)\downarrow \bull$ precisely correspond to inclusions $\iota_S^k\colon\Delta^S\hra\Delta^{S\cup k}$ for $k\notin S$ of the face whose $k$-the barycentric coordinate is zero. Hence, we have
\begin{equation}\label{eq-def:holim-punc-cube}
    \holim(X_{\bull},x)
    =\Map_{\mathsf{Top}_*^{\PCube(T)}} \big(\Delta^{\bull},X_{\bull} \big).
\end{equation}
\begin{cor}\label{cor:holim-pt}
    For a punctured $|T|$-cube $X_{\bull}$ a point $f\in\holim(X_{\bull},x)$ consists of maps $f^S\colon\Delta^S\to X_S$ such that for all $k\notin S\subseteq T$ the following diagram commutes
\[\begin{tikzcd}
    \Delta^S\arrow[]{r}{\iota_S^k}\arrow[]{d}[swap]{f^S} &\Delta^{S\cup k}\arrow[]{d}{\;f^{S\cup k}}\\
    X_S\arrow[]{r}{x^k_S} & X_{S\cup k}
\end{tikzcd}
\]
\end{cor}
For $T=[1]=\{0,1\}$ the limit of a diagram over $\PCube(T)$ is called the \textsf{pullback}, and its homotopy limit is the \textsf{homotopy pullback}. We present
\begin{equation}\label{eq:holim-pic}
f=(f^0,f^1,f^{01})\in\holim\left(\begin{tikzcd}
    {\color{blue}X_1} \arrow{r}{x^0_1} & {\color{green!50!black}X_{01}} \\
    {} & {\color{orange}X_0}\arrow{u}[swap]{x^1_0}
\end{tikzcd}\right)
\quad
\text{ pictorially by }
\begin{tikzpicture}[scale=0.8,baseline=1.1cm,every node/.style={scale=0.9}]
\clip (-1.5,-0.2) rectangle (3.2,3.6);
    \draw[thick,blue]
        (-0.9,3) circle (0.9pt) node[left]{$f^1$};
    \draw[thick,orange!90!black]
        (2,0.2) circle (0.9pt) node[right]{$f^0$};
    \draw[green!50!black]
        (0,3) circle (0.9pt) node[above]{$x_1^0(f^{1})$}
        -- (2,3)  node[right]{$f^{01}$}
        -- (2,1) circle (0.9pt) node[right]{$x_0^1(f^{0})$};
\end{tikzpicture}
\quad.
\end{equation}

\subsubsection*{Cubes}
Similarly, a \textsf{$|T|$-cube} is a diagram
\[
    (Y_{\bull},y)\colon\Cube(T)\to\mathsf{Top}_*,
\]
which consists of spaces $Y_S$ for $S\subseteq T$ and mutually compatible maps $y^k_S\colon Y_S\to Y_{S\cup k}$ for $k\in T\sm S$.
\begin{remm}\label{rem:contravar-cubes}
    We will also use diagrams over $\Cube(T_1)\times\Cube^{op}(T_2)$; in particular, we call a diagram over $\Cube^{op}(T)$ a contravariant $|T|$-cube. However, all these categories are isomorphic to $\Cube(T_2\sqcup T_2)$ if we appropriately rename the vertices. For example,
    \[\begin{tikzcd}[column sep=large]
        \emptyset\times\{1\}  & \{1\}\times\{2\}\lar[hook][swap]{Id_1\times(i^2)^{op}} \\
        \emptyset\times\emptyset \uar[hook]{Id_\emptyset\times i^1} & \emptyset\times\{2\}\uar[hook][swap]{i^1\times Id_2}\lar[hook]{Id_\emptyset\times(i^2)^{op}}
    \end{tikzcd}\quad\cong\quad
    \begin{tikzcd}
        \{a,b\}  & \{b\}\lar[hook][swap]{i^a} \\
        \{a\} \uar[hook]{i^b} & \emptyset\uar[hook]{i^b} \lar[hook]{i^a}
    \end{tikzcd}
    \]
\end{remm}

Similarly as before, we have $\Cube(T)\downarrow S\cong\Cube(S)$, and there is a levelwise homeomorphism of cubes
\begin{equation}\label{eq:cube-cube}
    |\Cube(T)\downarrow \bull|\cong I^{\bull},
\end{equation}
since $|\Cube(T)\downarrow S|\cong I^S$ is an $|S|$-dimensional cube whose coordinates are indexed by $S$, and the map $\iota_S^k\colon I^S\hra I^{S\cup k}$ for $k\notin S\subseteq T$ is the inclusion of the face whose $k$-the coordinate is zero.  Hence,
\begin{equation}\label{eq-def:holim-cube}
    \holim(Y_{\bull},y)
    =\Map_{\mathsf{Top}_*^{\Cube(T)}} \big(I^{\bull},Y_{\bull} \big).
\end{equation}
Observe now that since $\emptyset\in\Cube(T)$ is the initial object, all maps $I^S\to Y_S$ can be contracted to $I^\emptyset\to Y_\emptyset\to Y_S$ (see also the proof of Lemma~\ref{lem:goo-tofib}), so the homotopy limit of $Y_{\bull}$ is uninteresting:
\begin{equation}\label{eq:holim-cube-contr}
    \holim(Y_{\bull})\simeq Y_{\emptyset}.
\end{equation}
However, it turns out to be useful to instead take the homotopy limit of the punctured $|T|$-cube obtained from $Y_{\bull}$ by omitting $Y_{\emptyset}$, and compare this to $Y_{\emptyset}$. This is called the total homotopy fibre of $Y_{\bull}$ and is discussed next.

\begin{example}\label{ex:mapping-path-space}
    Let us compute the homotopy limit of a map $y_\emptyset^1\colon Y_\emptyset\to Y_1$, so for a diagram $Y_{\bull}$ over the category $\mc{C}=\Cube\{1\}=\emptyset\to\{1\}$. Since $\mc{C}\downarrow\emptyset$ has a single object with a single (identity) morphism, $|\mc{C}\downarrow\emptyset|=\{0\}$. On the other hand, $\mc{C}\downarrow \{1\}$ has two objects, $\emptyset\to\{1\}$ and $\{1\}\to\{1\}$, and apart from their identity morphisms, exactly one morphism between them. Thus, $|\mc{C}\downarrow \{1\}|=I$.
    
    Then, a point $f=(f^\emptyset,f^1)$ in $\holim(Y_{\bull})\subseteq\Map(\{0\},Y_\emptyset) \times\Map(I,Y_1)$ consists of a point $f^\emptyset\in Y_\emptyset$ and a path $f^1\colon I\to Y_1$ that starts at $f^1(0)=y_\emptyset^1(f^\emptyset)$. Thus, $\holim Y_{\bull}$ is precisely the \textsf{mapping path space} (also called mapping cocylinder) of the map $y$:
    \[
        E_{y_\emptyset^1}\coloneqq\{\,(K,\gamma)\in Y_\emptyset\times \Path Y_1\;|\; \gamma(0)=y_\emptyset^1(K)\,\}.
    \]
    Since the natural map $p\colon E_{y_\emptyset^1}\to Y_1$, $p(K,\gamma)=\gamma(1)$, is a fibration, and $E_{y_\emptyset^1}\simeq Y_\emptyset$, we replaced $y_\emptyset^1$ by a fibration $p$. Define the \textsf{homotopy fibre} of $y_\emptyset^1$ over $\U\in Y_1$ as the fibre of $p$: $\hofib_\U(y_\emptyset^1)\coloneqq\fib_\U(p)$. Thus, a point in this space is a pair $(K,\gamma)$ such that $\gamma(0)=y_\emptyset^1(K)$ and $\gamma(1)=\U$. 
\end{example}

\subsubsection*{Total homotopy fibres}
\begin{defn}\label{def:tofib}
    The \textsf{total homotopy fibre} of a $|T|$-cube $(Y_{\bull},y)\colon\Cube(T)\to \mathsf{Top}_*$ is the space
\[
    \tofib(Y_{\bull},y)\coloneqq\hofib_{c(\U_\emptyset)} \Big(Y_\emptyset\xrightarrow{c} \holim (Y|_{\PCube(T)}) \Big).
\]
\end{defn}
Here $Y|_{\PCube(T)}$ denotes the \red{restriction} to the subdiagram over $\PCube(T)\subseteq\Cube()$ $\U_{S}\in Y_S$ are the basepoints and $c$ is the natural map sending $K\in Y_\emptyset$ to the collection of constant maps $c(K)^S\colon\Delta^S\to Y_S$, each equal to the image of $K$ under $y^S_\emptyset\colon Y_\emptyset\to Y_S$. This factors as
\[\begin{tikzcd}[row sep=small]
    Y_\emptyset=\lim(Y_{\bull},y) \arrow{d}[swap]{\const}{\simeq}\arrow{r}{c}  & \holim(Y|_{\PCube(T)}) \\
    \holim(Y_{\bull},y) \arrow{ru}[swap]{c'} &
\end{tikzcd}
\]
where $c'$ is the restriction map to the homotopy limit of a subdiagram and $\const$ is the canonical map from the limit to the homotopy limit. Since $\Cube(T)$ has an initial object, $\const$ is a weak equivalence, so $\hofib(c)$ and $\hofib(c')$ are weakly equivalent. In fact, something stronger is true:

\begin{lemma}\label{lem:goo-tofib}
    The map $c'$ is a fibration and there is a {homeomorphism}  
    \[
        \fib_{c(\U_\emptyset)}(c')\cong\hofib_{c(\U_\emptyset)}(c)\eqqcolon\tofib(Y_{\bull},y).
    \]
\end{lemma}
We will use this to describe in Corollary~\ref{cor:tofib-pt} a point in the total homotopy fibre of a cube, analogously to what we had in Corollary~\ref{cor:holim-pt} for homotopy limits of punctured cubes.
\begin{proof}
    See~\cite{GooII} for several descriptions of total homotopy fibres and inspiration for this proof. Consider the mapping path space $E_{c}$ and the natural projection $p\colon E_c\to\holim(Y|_{\PCube(T)})$, which is a fibration (see Example~\ref{ex:mapping-path-space}). 
    Then by definitions $\fib_{c(\U_\emptyset)}(p)=\hofib_{c(\U_\emptyset)}(c)=\tofib(Y_{\bull},y)$.
    
    We will construct a \emph{homeomorphism} $q\colon E_c\to\holim(Y_{\bull},y)$ and a commutative diagram:
    \[\begin{tikzcd}[row sep=tiny]
    &Y_\emptyset \arrow[hook]{dl}[swap]{\simeq}\arrow{rrd}{c}\arrow{dd} && \\
    E_c \arrow[crossing over]{rrr}[xshift=3pt,yshift=-3pt]{p}\arrow[dotted]{dr}[swap]{q} &&& \holim(Y|_{\PCube(T)})\\
    & \holim(Y_{\bull},y) \arrow{rru}[swap]{c'} &&
    \end{tikzcd}
    \]
    It will immediately follow that $c'$ is a fibration as well (as the composition of a fibration and a homeomorphism) with the fibre homeomorphic to $\tofib(Y_{\bull},y)$. Note that this also implies~\eqref{eq:holim-cube-contr}.
    
    Let $(K,\gamma)\in E_{c}$, so $K\in Y_\emptyset$ and $\gamma\colon I\to\holim Y|_{\PCube(T)}$ with $\gamma(0)=c(K)$. Equivalently, $\gamma$ is a collection $\gamma(-)^S\colon I\times\Delta^S\to Y_S$ for $S\neq\emptyset$, which is on $\{0\}\times\Delta^S$ constantly equal to $y^S_\emptyset(K)$.
    
    Hence, $\gamma(-)^S$ factors through the quotient of $I\times\Delta^S$ by $\{0\}\times\Delta^S$, so is a map on the cone $\mathsf{C}\Delta^S$. Let us set $\mathsf{C}\Delta^\emptyset\coloneqq I^0$ and $\gamma(-)^\emptyset=K\colon \mathsf{C}\Delta^\emptyset\to Y_\emptyset$. Then our point gives a map of cubical diagrams
    \[
    \gamma(-)^{\bull}\colon \mathsf{C}\Delta_{bar}^{\bull}\to Y_{\bull}
    \]
    where we define $\mathsf{C}\Delta_{bar}^S$ as the simplicial set obtained as the cone on the barycentric subdivision of $\Delta^S$, and the maps in the cubical diagram $\mathsf{C}\Delta_{bar}^{\bull}$ are given by face inclusions.\footnote{\label{fnote:cone-bary}If we would not subdivide, we would have $\mathsf{C}\Delta^{\bull}\cong\Delta^{\bull\cup \alpha}$, where $\alpha$ is a new index labelling the cone point, and this forms not a cube but a punctured cube, cf.\ Figure~\ref{fig:simplex} and Lemma~\ref{lem:iterated}.}

    On the other hand, by \eqref{eq-def:holim-cube} a point in $\holim(Y_{\bull},y)$ is a compatible collection of maps $I^S\to Y_S$. 
    By Lemma~\ref{lem:bar-cone-is-cube} below, there is a homeomorphism $\mc{h}^{\bull}\colon I^{\bull}\to \mathsf{C}\Delta_{bar}^{\bull}$ of cubes. Thus, we can define
    \[
      q\colon E_c\to\holim(Y_{\bull},y),\quad q(K,\gamma)\coloneqq\gamma(-)^{\bull}\circ \mc{h}^{\bull}.
    \]
    This is also a homeomorphism, and makes the diagram above commute.
\end{proof}

\begin{lemma}\label{lem:bar-cone-is-cube}
  There is a levelwise homeomorphism of cubes
  \[
      \mc{h}^{\bull}\colon I^{\bull}\to \mathsf{C}\Delta_{bar}^{\bull}.
  \]
\end{lemma}
\begin{proof}[Sketch of proof]
    We saw in \eqref{eq:punc-cube-simplex} that the nerve of the category $\PCube(T)\downarrow S$ is the simplicial set $\Delta^S_{bar}$. Similarly, $|\Cube(T)\downarrow S|\cong I^S$ by \eqref{eq:cube-cube}. Moreover, $\Cube(T)\downarrow S$ is obtained from $\PCube(T)\downarrow S$ by adding the object $\emptyset$ and an arrow to every other object, which precisely corresponds to coning (with $\emptyset$ corresponding to the cone point). 
    We omit writing out explicit formulae for $\mc{h}^S$.
\end{proof}

Here is the promised corollary.
\begin{cor}\label{cor:tofib-pt}
    A point $f\in\tofib(Y_{\bull},y)\cong\fib_{c(\U_\emptyset)}(c')$ consists of maps $f^S\colon I^S\to Y_S$, $S\subseteq T$:
    \begin{itemize}
        \item which are compatible on the $0$-faces $y^k_s\circ f^S=f^{Sk}|_{t_k=0}$,
        \item which send the $1$-faces $\{(t_i)_{i\in S}\in I^S:\exists k\in S,t_k=1\}\subseteq I^S$ to the basepoint $\U_S\in Y_S$.
    \end{itemize}
\end{cor}
\begin{proof}
    By definition, $f\in\fib_{c(\U_\emptyset)}(c')$ is a point $f\in\holim(Y_{\bull},y)$ which maps to $c(\U_\emptyset)$ under $c'$. By the definition~\eqref{eq-def:holim-cube} of the homotopy limit of a cubical diagram, $f$ consists of maps $f^S\colon I^S\to Y_S$. The map $c'$ forgets the data $f^\emptyset$ and restricts each $f^S$ to the union of faces which have at least one coordinate equal to one. This is homeomorphic to $\Delta^S$, see the proof of Lemma~\ref{lem:goo-tofib}. 
\end{proof}
For $T=[1]$ we present
\begin{equation}\label{eq:tofib-pic}
f\in\tofib\left(\begin{tikzcd}
    {\color{blue}Y_1} \arrow{r}{y^0_1} & {\color{green!50!black}Y_{01}} \\
    {\color{red}Y_\emptyset} \arrow{u}{y^1_\emptyset} \arrow{r}{y^0_\emptyset} & {\color{orange}Y_0}\arrow{u}[swap]{y^1_0}
\end{tikzcd}\right)
\quad \text{ pictorially by }
\begin{tikzpicture}[scale=0.8,baseline=1.1cm,every node/.style={scale=0.9}]
\clip (-2.7,-0.5) rectangle (3.7,3.6);
    \fill[thick,red,draw=red]
        (-1.1,0) circle (0.9pt) node[left]{$f^\emptyset$};
    \draw[thick,blue]
        (-1.1,1) circle (0.9pt) node[left]{\footnotesize$y^1(f^\emptyset)$}
        -- (-1.1,3) circle (0.9pt) node[pos=0.5,left]{$f^1$} node[left]{\footnotesize$\U_1$};
    \draw[thick,orange!90!black]
        (0,0) circle (0.8pt) node[below]{\footnotesize$y^0(f^\emptyset)$}
        -- (2,0) circle (0.8pt) node[pos=0.5, below]{$f^0$} node[below]{\footnotesize$\U_0$};
    \draw[fill=green!10!white,text=green!40!black]
        (0,1) circle (1pt) node[below]{\footnotesize$yyf^\emptyset$}
        -- (2,1) node[pos=0.5, below]{$y^1f^0$}
        -- (2,3) node[pos=0.5,right]{$\const_{\U_{01}}$}
        -- (0,3) node[pos=0.5, above]{$\const_{\U_{01}}$}
        -- (0,1) node[pos=0.5,left=0.01cm]{\small$y^0f^1$} node[pos=0.5, right=0.5cm, green!30!black]{$f^{01}$};
\end{tikzpicture}.
\end{equation}

\subsubsection*{Iterative descriptions}
\begin{lemma}\label{lem:iterated}
    For any finite set $T$ and $k\in T$ there are homeomorphisms
    \begin{equation}\label{eq:iterated-holim}
    \holim_{S\in\PCube(T)}X_S\cong\holim\left(
            \begin{tikzcd}
            X_k \arrow{r}{c} &   \holim_{S\in\PCube(T\sm\{k\})}X_{S\cup\{k\}}  \\
             &  \holim_{S\in\PCube(T\sm\{k\})}X_S\arrow{u}[swap]{x^k_*}
        \end{tikzcd}
        \right),
    \end{equation}
    \begin{equation}\label{eq:iterated-tofib}
        \tofib_{S\in\Cube(T)}Y_S
        \cong \hofib
        \left(
        \begin{tikzcd}
        \tofib_{S\in\Cube(T\sm\{k\})}Y_S\rar{y_*^k} &
        \tofib_{S\in\Cube(T\sm\{k\})}Y_{S\cup\{k\}}
        \end{tikzcd}
        \right).
    \end{equation}
\end{lemma}
\begin{proof}
     The homeomorphism \eqref{eq:iterated-holim} uses $\Delta^{S\cup\{k\}}\cong \mathsf{C}\Delta^S$, which assemble into a homeomorphism $\mathsf{C}\Delta^{\bull}\to\Delta^{\bull}$, cf.\ Lemma~\ref{lem:bar-cone-is-cube} and Footnote~\ref{fnote:cone-bary}. Therefore, similarly to the proof of Lemma~\ref{lem:goo-tofib}, a collection $f^S\colon\Delta^S\to X_S$ is determined by a point $f_k\in X_k$, and for $S\subseteq T\sm\{k\}$ the collections $f^S\colon\Delta^S\to X_S$ and $f^{S\cup\{k\}}\colon\mathsf{C}\Delta^S\to X_{S\cup\{k\}}$. The last is equivalent to the data of $I\to\holim_{S\in\PCube(T\sm\{k\})}X_{S\cup\{k\}}$, and one can see that together this exactly gives a point in the homotopy pullback on the right hand side of \eqref{eq:iterated-holim}. See also~\cite{GooII} or~\cite[Lemma 5.3.6]{MV}.

     We then have that \eqref{eq:iterated-tofib} follows from  \eqref{eq:iterated-holim}, using Definition~\ref{def:tofib}. Alternatively, use the description from Corollary~\ref{cor:tofib-pt} and homeomorphisms $I^{S\cup\{k\}}=I\times I^S$.
\end{proof}

In other words,~\eqref{eq:iterated-holim} says that a homotopy limits over a punctured cube can be computed as an iterated homotopy pullback. Similarly,~\eqref{eq:iterated-tofib} says that total homotopy fibres of cubes can be computed as iterated homotopy fibres.

\begin{remm}\label{rem:tofib-hofib}    
    One can interpret the last result as saying that homotopy limits commute, or that total homotopy fibres and homotopy fibres commute. For example, if $\Psi_{\bull}\colon (Y_{\bull},y)\to (Z_{\bull},z)$ is a map of cubes, and $yz_S\colon\hofib\Psi_S\to\hofib\Psi_{Sk}$ is the induced map, then we have
    \[
        \tofib\big(\hofib(\Psi_{\bull}),yz\big)\simeq \hofib\big(\tofib(Y_{\bull},y)\to\tofib(Z_{\bull},z)\big).
    \]
    This follows by applying Lemma~\ref{lem:iterated} to the map $(Y_{\bull},y)\to (Z_{\bull},z)$ viewed as an $(n+1)$-cube.
    In particular, we have that `total homotopy fibres commute with loop spaces':
    \[
        \tofib(\Omega X_{\bull},\Omega x)\simeq \Omega\tofib(X_{\bull},x).
    \]
\end{remm}


\section{The punctured knots model}\label{sec:punc-knot-model}
Throughout Sections~\ref{sec:punc-knot-model}--\ref{sec:htpy-type} of this paper $M$ is a fixed \emph{connected compact smooth manifold of dimension $d\geq3$ with non-empty boundary}.
We fix \red{a neat embedding $\U\colon I=[0,1]\hra M$ and consider the space of smooth neat embeddings (with the Whitney $C^\infty$ topology)
}
\begin{equation*}
  \Knots(M)\coloneqq\Emb_\partial(I,M)\coloneqq\big\{\,K\colon I\hra M \;|\; K|_{I\sm[L_0,R_\infty]}=\U|_{I\sm[L_0,R_\infty]}\,\big\}
\end{equation*}
whose elements we call \emph{knots}. \red{Here we fix two points $L_0,R_\infty\in I$ and require that all embeddings agree with $\U$ outside $[L_0,R_\infty]\subseteq I$, so on a fixed collar of $\partial I$. This gives a space homotopy equivalent to the one we previously gave in~\eqref{eq-def:knots}.
}
We \red{take
}
$\U\in\Knots(M)$ for the basepoint. 

\subsection{The stages}\label{subsec:stages}
The $n$-th Taylor approximation of $\Knots(M)$, for $n\geq 0$, is defined as the homotopy limit
\[
    \T_n\Knots(M)\coloneqq\holim_{V\in\mc{O}_n(I)^{op}}\Emb_\partial(V,M).
\]
Here the category $\mc{O}_n(I)$ is the poset of those open subsets $V$ of $I$ which are homeomorphic to the union of the collar of $\partial I$ and at most $n$ open intervals. Equivalently, $V$ is of the shape $I\sm V'$, where $V'\subseteq I\sm\partial I$ consists of at most $n+1$ closed subintervals. The space $\Emb_\partial(V,M)$ consists of embeddings $V\hra M$, which near $\partial I$ agree with $\U$, that is, on $I\sm[L_0,R_\infty]$, and the maps in the diagram are restrictions of embeddings to submanifolds.

Now, as observed by Goodwillie, computing this homotopy limit over a certain \emph{finite subposet} gives a homotopy equivalent space $\pT_n(M)$ which we define next; see~\cite[Example 10.2.18]{MV} for a proof. Namely, the desired subposet contains only sets $V=I\sm J_S$ for $\emptyset\neq S\subseteq[n]\coloneqq\{0,1,\dots,n\}$, where $J_S\coloneqq\bigsqcup_{i\in S}J_i$ for a fixed collection of disjoint closed subintervals $J_i=[L_i,R_i]\subseteq I$ \red{with $0<L_i<R_i<L_{i+1}$ and the sequence $R_i$ converges to $R_\infty<1$.

Since the poset of such $V=I\sm J_S$ is equivalent to the opposite of the punctured cube category $\PCube[n]$ (whose objects are $\emptyset\neq S\subseteq[n]\coloneqq\{0,1,\dots,n\}$ and morphisms are inclusions of sets),
}
we have a punctured cubical diagram $\big(\Emb_\partial(I\sm J_{\bull},M),r\big)$, where for $k\notin S\subseteq[n]$ the restriction map
\[
    r^k_S\colon\Emb_\partial(I\sm J_S,M)\to \Emb_\partial(I\sm J_{S\cup \{k\}},M)
\]
introduces a `puncture' at $J_k$. The space
\begin{equation}\label{eq-def:pt-n}
    \pT_n(M)\coloneqq\holim_{S\in\PCube[n]}\Emb_\partial(I\sm J_S,M)
\end{equation}
is called the \textsf{punctured knots model} for $\T_n\Knots(M)$. Indeed, by the definition of homotopy limits from Section~\ref{subsec-prelim:holims}, a point $f\coloneqq\{f^S\}_{\emptyset\neq S\subseteq[n]}\in\pT_n(M)$ consists of a compatible collection of maps $f^S\colon\Delta^S\to\Emb_\partial(I\sm J_S,M)$. That is, each $f^S$ is a $\Delta^S$-family of knots punctured at $J_i$, for $i\in S$.
\begin{example}\label{ex:cube-deg-2}\renewcommand{\qedsymbol}{}
In degree $n=2$ the space $\pT_2(M)$ is the homotopy limit of the punctured $3$-cube
\begin{equation*}
\begin{tikzcd}[column sep=tiny,row sep=small]
        &
        \Emb_\partial(I\sm J_{12},M) \arrow{rr}\arrow[from=dd] &&
        \Emb_\partial(I\sm J_{012},M)
        \\
        \Emb_\partial(I\sm J_2,M)\arrow[crossing over]{rr}\arrow{ur} &&
        \Emb_\partial(I\sm J_{02},M) \arrow{ur}
        \\
        &
        \Emb_\partial(I\sm J_1,M) \arrow{rr} &&
        \Emb_\partial(I\sm J_{01},M)  \arrow{uu}
        \\
     &&
        \Emb_\partial(I\sm J_0,M) \arrow{ur} \arrow[crossing over]{uu}
    \end{tikzcd}
\end{equation*}
Thus, $f\in\pT_2(M)$ consists of three once-punctured knots, for each two of them an isotopy between their restrictions to twice-punctured knots, and three two-parameter isotopies of thrice-punctured knots connecting restrictions of respective isotopies of twice-punctured knots.
\end{example}
\begin{notation}\label{notat:punctured-model}
\red{
To save space we introduce the following notation; the ambient manifold $M$ is fixed as above and mostly omitted from the notation. 

For a knot $K\in\Knots(M)$ and $S\in\PCube[n]$ we denote $K$ punctured at each $J_i$ for $i\in S$ by
\begin{equation}\label{eq-def:K-punctured}
    K_{\wh{S}}\coloneqq K|_{I\sm J_S}\in\mc{E}_S.
\end{equation}
For example, since $J_i=[L_i,R_i]$ for $\emptyset\neq S=\{i_1<i_2<\dots<i_d\}\subseteq [n]$ we have 
\[
    \U_{\wh{S}}=\U|_{[0,L_0]}\sqcup \U_{[R_0,L_{i_1}]}\sqcup  \U_{[R_{i_1},L_{i_2}]}\sqcup\dots\sqcup  \U_{[R_{i_n},L_n]}\sqcup \U|_{[R_n,1]},
\]
see Figure~\ref{fig:punctured-U}. 
For any $f\in\mc{E}_S$ we assume $f|_{[0,L_0]}=\U_{[0,L_0]}$ and $f|_{[R_\infty,1]}=\U_{[R_\infty,1]}$. Let us denote by $u_{i}\in M$ the image under $\U$ of the midpoint of the interval $[R_i,L_{i+1}]$.

Next, we let
\begin{equation}\label{eq-def:E-S}
    \mc{E}_{S}\coloneqq\Emb_\partial(I\sm J_S,M)
    \quad\text{and}\quad 
    \mc{E}_{S k}\coloneqq\mc{E}_{S\cup \{k\}}\;.
\end{equation}
We equip each $\mc{E}_S$ with the basepoint $\U_{\wh{S}}$, so $r^k_S\colon\mc{E}_S\to\mc{E}_{S k}$ with $k\notin S\subseteq[n]$ is a based map. Then $(\mc{E}^n_{\bull},r)=(\mc{E}_S,r^k_S)_{\emptyset\neq S\subseteq[n],k\notin S}$ is a punctured $(n+1)$-cube, and by definition~\eqref{eq-def:pt-n} we have
\begin{equation}\label{eq-def:E-cube}
    \pT_n(M)\coloneqq\holim(\mc{E}^n_{\bull},r).
\end{equation}
}
Using $[n]\subseteq[n+1]$ we consider two inclusions $\PCube[n]\hra \PCube[n+1]$ \red{of posets 
}
given by $\Id\colon S\mapsto S$ and $\Id\cup n{+}1\colon S\mapsto S\cup \{n+1\}$. Note that the punctured $(n+1)$-cube $\mc{E}^{n+1}_{\bull}\circ \Id$ is precisely  $\mc{E}^n_{\bull}$.
 Let us denote the other \red{punctured $(n+1)$-cube by
\[
    \mc{E}_{\bull\cup n{+}1}^n\coloneqq\mc{E}^{n+1}_{\bull}\circ (\Id\cup n{+}1)= (\Emb_\partial(I\sm J_{S n+1},M),r^k_S)_{\emptyset\neq S\subseteq[n],k\notin S}\;.
\]}
Hence, the punctured $(n+2)$-cube $\mc{E}^{n+1}_{\bull}$ decomposes as
\begin{equation}\label{eq:cube-subcubes}
    \mc{E}^{n+1}_{\bull}=
    \begin{tikzcd}
        \mc{E}_{n+1}\arrow{r}{r_{n+1}} & \mc{E}_{\bull\cup n{+}1}^n  \\
        & \mc{E}^n_{\bull}\arrow{u}[swap]{r^{n+1}_{\bull}}
    \end{tikzcd}
\end{equation}
\red{and the upper row forms an $(n+1)$-cube, which we will denote by $\mc{E}_{\bull\cup n{+}1}$.
}
\end{notation}
\begin{figure}[!htbp]
    \centering
    \includegraphics{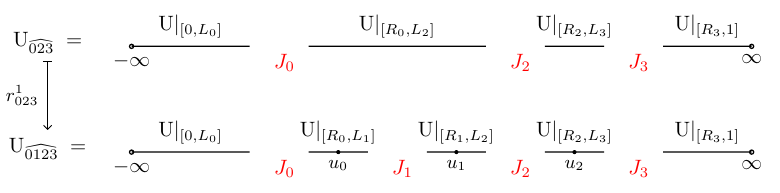}
    \caption[Examples of the punctured unknot and the restriction map.]{Examples of $\U_{\wh{S}}$ and the restriction map $r^j_S$ for $n=3,S=\{0,2,3\},j=1$.}
    \label{fig:punctured-U}
\end{figure}

For $K\in\Knots(M)$ the restrictions $K_{\wh{S}}$ are mutually compatible, so assemble to a map $\Knots(M)\to\lim\mc{E}_{\bull}$. Composing it with the canonical map from the limit to the homotopy limit gives
\begin{equation}\label{eq-def:ev-n}
\begin{tikzcd}[column sep=large]
    \Knots(M)\arrow{r}{\cong}\arrow[bend left=16pt]{rr}{\ev_n} & \lim \mc{E}^n_{\bull}\arrow{r}{\const} & \holim \mc{E}^n_{\bull} = \pT_n(M).
\end{tikzcd}
\end{equation}
called \textsf{evaluation map}.
More explicitly, for $S\in\PCube[n]$ this is given as the constant family
\[
    \ev_n(K)^S\colon\Delta^S\to \mc{E}_S,\quad \vec{t}\mapsto K_{\wh{S}}\;.
\]
Actually, for $n\geq 2$ there is a homeomorphism $\Knots(M)\cong\lim\mc{E}^n_{\bull}$. This follows since having at least three different punctures ensures that all $f^S\in\mc{E}_{S}$ are pairwise disjoint, apart from agreeing on intersections. However, for $n=1$ this does not hold, since $f^{\{0\}}|_{J_1}$ and $f^{\{1\}}|_{J_0}$ potentially intersect. Instead, we shall see \red{in~\eqref{eq:deg1-lim-holim} below that there is a homotopy equivalence
}
$\lim\mc{E}^1_{\bull}\simeq\pT_1(M)$.
\begin{remm}\label{rem:palais}
By a family version of the isotopy extension theorem, $r^k_S$ is a locally trivial fibre bundle~\cite{Palais}. In particular, it is a Serre fibration, whose fibre is the space of embeddings of $J_k$ into $M$ which miss the punctured unknot $\U_{\wh{S k}}$, namely $\fib(r^k_S)=\Emb_\partial(J_k,M\sm \U_{\wh{Sk}})$.
\end{remm}

\subsubsection*{The zeroth and first Taylor stages}\label{subsec:zero-first-stage}
Given a point $\mc{E}_i$ for some $i\geq 0$ we can start `shortening' its both ends until only the parts \red{$\U|_{[0,L_0]}$ and $\U|_{[R_\infty,1]}$ remain. 
}
In other words, there is a deformation retraction of $\mc{E}_i=\Emb_\partial(I\sm J_i,M)$ onto the point $\U_{\wh{i}}\in\mc{E}_i$. So $\pT_0(M)=\mc{E}_0$ is contractible, and also:
\begin{equation}\label{eq:E-i-contractible}
    \red{\mc{E}_i\simeq *.
    }
\end{equation}
Next, as we mentioned above, the limit of the diagram
\begin{equation}\label{eq:diag-first-stage}
    \mc{E}^1_{\bull}=
    \begin{tikzcd}[column sep=scriptsize, row sep=scriptsize]
        \Emb_\partial(I\sm J_1,M) \arrow{r} & \Emb_\partial(I\sm J_{01},M)  \\
        & \Emb_\partial(I\sm J_0,M) \arrow{u}
    \end{tikzcd}
\end{equation}
is not homeomorphic to the space of knots. Instead, it is given as
\begin{equation}\label{eq:special-immersions}
    \lim\mc{E}^1_{\bull}=\{\,(f^0,f^1)\;|\; f^0_{I\sm J_{01}}=f^1_{I\sm J_{01}}\,\}=\{K\in\Imm_\partial(I,M)\;|\; K_{\wh{0}}, K_{\wh{1}}\text{ are embeddings}\}
\end{equation}
-- the space of those immersions which are embeddings when restricted to $I\sm J_0$ or $I\sm J_1$. Actually, since \emph{both maps in the diagram $\mc{E}^1_{\bull}$ are fibrations} (by Remark~\ref{rem:palais}), the limit is equivalent to the homotopy limit.\footnote{\label{fnote:holim-lim}\red{See \cite[Prop.~3.2.13]{MV} for a proof. 
}
Warning: although all maps in cubes for higher $n$ are also fibrations, one cannot conclude that $\holim\mc{E}^n_{\bull}\simeq\lim\mc{E}^n_{\bull}$ as for $n=1$.} Hence we have the upper row in the commutative diagram
\begin{equation}\label{eq:deg1-lim-holim}
\begin{tikzcd}[row sep=tiny]
\Knots(M)\arrow{r}\arrow[hook]{dd}[swap]{\iota}\arrow[bend left=20pt]{rr}{\ev_1} & \lim \mc{E}^1_{\bull} \arrow{r}{\const}[swap]{\simeq}\arrow[hook]{dd}[swap]{\iota} & \holim \mc{E}^1_{\bull} \eqqcolon\pT_1(M)\arrow{dd}{\simeq} \arrow{rd}{\simeq} & \\
&&& \Omega(\S M)\\
\ImmK(I,M)\arrow[equals]{r} & \lim\mc{I}^1_{\bull}\arrow{r}{\simeq}[swap]{Smale} &  \holim\mc{I}^1_{\bull}\arrow{ru} &
\end{tikzcd}
\end{equation}
Let us now explain the rest of the diagram. As mentioned in \red{Section~\ref{subsec-prelim:holims}, 
}
the homotopy limit can be computed from any levelwise homotopy equivalent diagram:
\begin{equation}\label{eq:emb-imm}
\mc{E}^1_{\bull}\quad\:\simeq\:
        \begin{tikzcd}
        \Imm_\partial(I\sm J_1,M)\arrow{r}{} & \Imm_\partial(I\sm J_{01},M)  \\
        & \Imm_\partial(I\sm J_0,M) \arrow{u}
    \end{tikzcd}\:\simeq\: \begin{tikzcd}[column sep=large]
        \Path_*(\S M) \arrow{r}{|_{\mathrm{U}|_{[R_0,L_1]}}} & \Path (\S M)  \\
         & \Path_*(\S M) \arrow{u}[swap]{|_{\mathrm{U}|_{[R_0,L_1]}}}
    \end{tikzcd}
\end{equation}
The first equivalence is induced from the weak equivalences $\Emb(V,M)\to \Imm(V,M)$ for $V$ a disjoint union of disks, see for instance~\cite{Cerf}. The second equivalence is induced from the unit derivative maps, giving paths in the unit tangent bundle $\S M$, with $\Path\S M\simeq\S M$ (both endpoints free) and $\Path_{*}\S M\simeq*$ (one endpoint fixed). One can check that the homotopy limit of the rightmost diagram is \red{the loop space 
}
$\Omega(\S M)$, so one has the triangle in \eqref{eq:deg1-lim-holim}.

On the other hand, the strict limit in the middle diagram is clearly $\lim\mc{I}^1_{\bull}\cong\Imm_\partial(I,M)$. The fact that this is also \emph{its homotopy limit} is non-trivial: by a theorem of Smale~\cite{Smale} the restriction maps for immersions are \emph{also fibrations}. This also implies that immersions form a polynomial functor of degree at most $1$, that is, $\T_n\ImmK(I,M)\simeq\ImmK(I,M)$ for all $n\geq 1$. Here we similarly define $\T_n\ImmK(I,M)\coloneqq\holim\mc{I}^n_{\bull}$, using $\mc{I}_S\coloneqq\Imm_\partial(I\sm J_S,M)$. See~\cite{weiss-survey,GWII}.

However, to obtain $\pT_1(M)\simeq\Omega(\S M)$ we did not need Smale's result. Finally, observe that as a consequence of this discussion the inclusion $\iota\colon\lim\mc{E}^1_{\bull}\hra \lim\mc{I}^1_{\bull}$ of `special' immersions \eqref{eq:special-immersions} into all immersions is -- maybe surprisingly -- a weak equivalence.


\subsection{The projection and evaluation maps}\label{subsec:pn-surj-fib}
Fix $n\geq 1$ and let
\begin{equation}\label{eq-def:p-n+1}
    p_{n{+}1}\colon\pT_{n{+}1}(M)\to\pT_n(M)
\end{equation}
be the map induced \emph{by forgetting the last puncture} $J_{n{+}1}$, i.e. the map induced on homotopy limits from the inclusion of diagrams $\mc{E}^n_{\bull}\subseteq\mc{E}^{n+1}_{\bull}$ (see Notation~\ref{notat:punctured-model}). We clearly have $p_{n{+}1}\circ\ev_{n{+}1}=\ev_n$, so $p_{n{+}1}$ respects the basepoints $p_{n{+}1}(\ev_{n{+}1}\U)=\ev_n\U$.
\begin{prop}\label{prop:pn-fibration}
    The map $p_{n{+}1}\colon\pT_{n{+}1}(M)\to\pT_n(M)$ is a fibration. Furthermore, its fibre
    \begin{equation}\label{eq-def:F-n+1}
        \pF_{n{+}1}(M)\coloneqq\fib_{\ev_n\U}(p_{n{+}1})
    \end{equation}
    is {homeomorphic} to the total homotopy fibre of the $(n+1)$-cube $\mc{E}_{\bull\cup n{+}1}$:
    \begin{equation}\label{eq:F-n+1-tofib}
        \pF_{n{+}1}(M)\cong\tofib_{\Cube[n]}(\mc{E}_{\bull\cup n{+}1}).
    \end{equation}
\end{prop}

\begin{proof}
For the definition of a total homotopy fibre and its properties see Section~\ref{subsec-prelim:holims}.

    Using the decomposition of $\mc{E}^{n+1}_{\bull}$ into subcubes from \eqref{eq:cube-subcubes} and \red{Lemma~\ref{lem:iterated} saying that homotopy limits can be computed iteratively, 
    }
    we obtain a homeomorphism:
     \begin{equation}\label{eq:T-n+1-iteratively}
         \pT_{n{+}1}(M)=\holim\left(
        \begin{tikzcd}
            \mc{E}_{n+1} \arrow{r}{r^{\bull}_{n+1}} &  \mc{E}_{\bull\cup n{+}1}^n  \\
            &  \mc{E}^n_{\bull}\arrow{u}[swap]{r^{n+1}_{\bull}}
        \end{tikzcd}\right) \,\cong\, \holim\left(
            \begin{tikzcd}
            \mc{E}_{n+1} \arrow{r}{c} &   \holim_{\PCube[n]}(\mc{E}_{\bull\cup n{+}1}^n)  \\
             &  \pT_n(M)\arrow{u}[swap]{r^{n+1}_*}
        \end{tikzcd}
        \right)
     \end{equation}
    
    The map $c$ is an analogue of $\ev_n$ but for $J_{n+1}$-punctured knots $\mc{E}_{n{+}1}\coloneqq\Emb_\partial(I\sm J_{n+1},M)$, while $r^{n+1}_*\colon\pT_n(M)\to\holim(\mc{E}_{\bull\cup n{+}1}^n)$ is the induced map on homotopy limits from the maps $r^{n+1}_S$, so it punctures at $J_{n+1}$ every punctured knot in the family.
    
    The homotopy pullback is homeomorphic to the \emph{pullback} of the same diagram with $c$ replaced by a fibration \red{(see Footnote~\ref{fnote:holim-lim}), 
    }
    for example the fibration $c'$ from Lemma~\ref{lem:goo-tofib}, so we have a (strict) pullback square:
     \begin{equation}\label{diag:pn-as-pullback}
        \begin{tikzcd}
            \holim_{\Cube[n]}(\mc{E}_{\bull\cup n{+}1}) \arrow{r}{c'} &  \holim_{\PCube[n]}(\mc{E}_{\bull\cup n{+}1}^n)  \\
            \pT_{n{+}1}(M)\arrow[dashed]{r}{p_{n{+}1}}\arrow[dashed]{u} &  \pT_n(M)\arrow{u}[swap]{r_*^{n+1}}
        \end{tikzcd}
     \end{equation}
    Since $c'$ is a fibration, $p_{n+1}$ is also\footnote{\red{In other words, `pullbacks preserve fibrations': it is elementary to check that if $c'$ satisfies homotopy lifting property for a space then $p_{n+1}$ does as well.
    }
    }, and the fibres are homeomorphic (check from definitions):
    \[
        \pF_{n+1}(M)\coloneqq\fib_{\ev_n\U}(p_{n+1})\:\cong\: \fib_{c(\U_{\wh{n+1}})}(c')\cong \tofib(\mc{E}_{\bull\cup n{+}1}),
    \]
    where the last homeomorphism is by Lemma~\ref{lem:goo-tofib}.
\end{proof}
The diagram \eqref{eq:T-n+1-iteratively} from the proof can be seen as an inductive definition of the Taylor tower. Moreover, if we denote $\B\pF_{n+1}(M)\coloneqq\holim(\mc{E}_{\bull\cup n{+}1}^n)$, and use $\mc{E}_{n+1}\simeq *$ from \eqref{eq:E-i-contractible} of Section~\ref{sec:punc-knot-model}, we get $\pT_{n+1}(M)\simeq\holim(* \to \B\pF_{n+1}(M) \leftarrow \pT_n(M))$
and $\pF_{n+1}(M)\simeq\hofib(* \to \B\pF_{n+1}(M))$. Thus, $\B\pF_{n+1}(M)$ is indeed a delooping of $\pF_{n+1}(M)$, explaining the notation. Moreover, $\B\pF_{n+1}(M)$ is connected, so $p_{n+1}$ is surjective, see~\cite{K-thesis}.
\begin{remm}
    In Section~\ref{sec:delooping} we will see that $\B\pF_{n+1}(I^3)$ is an $n$-fold loop space, so one can try to show that $\pT_{n+1}(I^3)=\hofib(\pT_n(I^3)\to\B\pF_{n+1}(I^3))$ is a double loop space by induction on $n\geq1$ and showing that $r_*^{n+1}$ is a map of double loop spaces. Such deloopings were shown in other models by~\cite{Turchin-Delooping} and~\cite{BW2}, but in this approach one could check if they also exist for other $M$.
\end{remm}


Next, define $\H_n(M)\coloneqq\hofib_{\ev_n\U}(\ev_n))$ as the homotopy fibre of the map $\ev_n\colon\Knots(M)\to\pT_n(M)$ over the basepoint $\ev_n\U$.
Since $\Knots(M)=\Emb_\partial(I,M)=\mc{E}_\emptyset$ and $\pT_n(M)\coloneqq\holim_{\PCube[n]}(\mc{E}^n_{\bull})$, the homotopy fibre of $\ev_n$ is the total homotopy fibre by definition (see Definition~\ref{def:tofib}):
\begin{equation}\label{eq-def:H-n}
    \H_n(M)=\tofib_{\Cube[n]}(\mc{E}^n_{\bull}),
\end{equation}
\red{Using the identification $\pF_{n+1}(M)\cong\tofib_{\Cube[n]}(\mc{E}_{\bull\cup n+1})$ 
}
from Proposition~\ref{prop:pn-fibration}, let us define
\begin{equation}\label{eq-def:emap-n+1}
    \emap_{n+1}\colon\H_n(M)\to\pF_{n+1}(M)
\end{equation}
as the map induced on total homotopy fibres from the map
$r^{n+1}_{\bull}\colon\mc{E}^n_{\bull}\to \mc{E}_{\bull\cup n+1}$ of $(n+1)$-cubes. This again `punctures at $J_{n+1}$ every punctured knot in the family'. The following completes the diagram~\eqref{diag:hofib-hn} from the introduction.
\begin{lemma}
    The composite
\[
    \H_n(M)\coloneqq\hofib(\ev_n)\xrightarrow{\emap_{n+1}}
    \hofib(c)\coloneqq\fib(c')\cong\fib(p_{n+1})\eqqcolon\pF_{n+1}(M)
    \hra\hofib(p_{n+1})
\]
    is homotopic to the canonical map $\hofib(\ev_n)\to\hofib(p_{n+1})$ given by $(K,\eta)\mapsto(\ev_{n+1}K,\eta)$.
\end{lemma}
\begin{proof}
     We use Proposition~\ref{prop:pn-fibration} to identify $\pT_{n+1}(M)$ as a homotopy pullback, and write its elements using the notation~\eqref{eq:holim-pic} as triples $f=(\{f^S\}_{S\in\PCube[n]},f^{n+1},\delta)$, with $\{f^S\}_{S\in\PCube[n]}\in\pT_n(M)=\holim\mc{E}^n_{\bull}$, $f^{n+1}\in\mc{E}_{n+1}$ and $\delta\colon I\to\holim(\mc{E}^n_{\bull\cup n+1})$, so that $r^{n+1}_*(\{f^S\})=\delta(0)$ and $c(f^{n+1})=\delta(1)$.
     
     For $(K,\eta)\in\hofib(\ev_n\colon\mc{E}_\emptyset\to\holim\mc{E}^n_{\bull})$ we have
    \[
        \emap_{n+1}(K,\eta)=(r^{n+1}K,r^{n+1}_*\eta)\in\hofib(c\colon\mc{E}_{n+1}\to\holim\mc{E}^n_{\bull\cup n+1}).
    \]
    Under $\hofib(c)\coloneqq\fib(c')\cong\fib(p_{n+1})=\pF_{n+1}(M)\subseteq\pT_{n+1}(M)$ this corresponds to the triple
    \[
        (\ev_n\U,r^{n+1}K,r^{n+1}_*\eta).
    \]
    On the other hand, since $p_{n+1}$ is a fibration, the inclusion $\fib(p_{n+1})\hra\hofib(p_{n+1})$ is a homotopy equivalence. Moreover, we can identify an explicit homotopy inverse: it sends a point $((\{f^S\},f^{n+1},\delta),\eta) \in\hofib(p_{n+1}\colon\pT_{n+1}(M)\to\pT_n(M))$ to the triple $(\ev_n\U,f^{n+1},\delta\cdot (r^{n+1}_*\eta))$ in $\fib(p_{n+1})\subseteq\pT_{n+1}$, where the dot denotes the concatenation of paths. It is straightforward to construct homotopies proving that this is a homotopy inverse.

    The canonical map takes our point $(K,\eta)\in\hofib(\ev_n)$ to $(\ev_{n+1}K,\eta)\in\hofib(p_{n+1})$ for $\ev_{n+1}K=(\ev_nK,r^{n+1}K,\{\const_{r^SK}\}_{S\in\PCube[n]})$, where $\const_{r^SK}\colon\Delta^S\to\mc{E}_S$ is the constant map equal to $r^SK=K_{\wh{S}}$, see~\eqref{eq-def:ev-n}. Our homotopy equivalence then maps this to 
    \[
        (\ev_n\U,r^{n+1}K,\{\const_{r^SK}\}\cdot r^{n+1}_*\eta)\in\fib(p_{n+1}),
    \]
    which agrees with the triple displayed above (up to reparametrisation).
\end{proof}

\subsection{Another description of the (homotopy) fibres}\label{subsec:fn-hn}
\red{We will now simplify the descriptions $\pF_{n+1}(M)=\tofib\mc{E}_{\bull\cup n{+}1}$ and $\H_n(M)=\tofib\mc{E}^n_{\bull}$.
}

Recall from Lemma~\ref{lem:iterated} that total homotopy fibre of an $(n+1)$-cube can also be computed `iteratively', by first taking homotopy fibres in one arbitrary direction and then finding the total fibre of the resulting $n$-cube. For the first direction we choose the \red{restriction map $r^0_S$ 
}
which `punctures at zero', that is, we take homotopy fibres of $r^0_S$ \red{and then the total homotopy fibre of the resulting cube. In fact, since 
}
by Remark~\ref{rem:palais} the restriction maps are fibrations, we can instead take their actual fibres.
\begin{defn}\label{def:FF}
    For each $S\subseteq\ul{n}\coloneqq\{1,2,\dots,n\}$ define
    \begin{align*}
    \FF_S&\coloneqq\fib\big(r^0_{S}\colon\mc{E}_{S}\to\mc{E}_{0S}\big)\:\quad\text{and}\quad\:
    \FF^{n+1}_S\coloneqq\fib\big(r^0_{Sn+1}\colon\mc{E}_{Sn+1}\to\mc{E}_{0Sn+1}\big).
    \end{align*}
    The map $r^{n+1}_S\colon\FF_S\to\FF^{n+1}_S$ gives the right vertical map in the commutative diagram
    \[
    \begin{tikzcd}
        \H_n(M)\arrow[shift right=2.5cm]{d}[swap]{\emap_{n+1}}  \cong\tofib_{S\subseteq[n]}(\mc{E}_{S})\cong \tofib_{S\subseteq\ul{n}}\Big(\hofib(r^0_{S})\Big)& \tofib_{S\subseteq\ul{n}}\Big(\FF_S\Big) \arrow[hook]{l}{\simeq}\arrow[]{d}{}
        \\
        \pF_{n+1}(M)\cong\tofib_{S\subseteq [n]}(\mc{E}_{Sn+1})  \cong \tofib_{S\subseteq\ul{n}}\big(\hofib(r^0_{Sn+1})\big) & \tofib_{S\subseteq\ul{n}}\big(\FF^{n+1}_S\big)\arrow[hook]{l}{\simeq}
    \end{tikzcd}\qedhere
    \]
\end{defn}
The basepoint of $\mc{E}_{S}$ is $\U_{\wh{S}}\coloneqq\U|_{I\sm J_{S}}$, so writing the fibre \red{of $r^0_S$ 
}
out, we get
\begin{align}\label{eq-def:FF-S}
    \FF_S&=\fib_{\U_{\wh{0S}}}\Big(r^0_{S}\colon\Emb_\partial(I\sm J_{S},M) \to \Emb_\partial(I\sm J_{0S},M)\Big) \nonumber\\
    &=(r^0_{S})^{-1}(\U_{\wh{0S}})=\big\{\; K:I\sm J_{S}\hra M \;|\; K_{\wh{0S}}=\U_{\wh{0S}}\;\big\}
    \cong \Emb_\partial(J_0,M\sm \U_{\wh{0S}}).
\end{align}
Thus, $\FF_S$ is the space of embeddings of the arc $J_0$ into the complement in $M$ of the punctured unknot $\U_{\wh{0S}}$ with condition that near the boundary $\partial J_0$ they agree with $\U_0$, see \red{the top of Figure~\ref{fig:element-of-ffs}. The maps $r^k_{S}\colon\FF_S\to\FF_{Sk}$ in the $n$-cube $(\FF_{\bull},r)$ are induced by the restriction maps: they are clearly given as inclusions of an embedding $J_0\hra M\sm\U_{\wh{0S}}$ into the manifold $M\sm\U_{\wh{0Sk}}=(M\sm\U_{\wh{0S}})\cup\U(J_k)$.
}

Similarly, the $n$-cube $(\FF^{n+1}_{\bull},r)$ can be given by
\begin{equation}\label{eq-def:FF-n+1-S}
    \FF^{n+1}_S\cong\Emb_\partial(J_0,M\sm \U_{\wh{0Sn+1}})
\end{equation}
with the restriction maps $r^k_{Sn+1}\colon\FF^{n+1}_S\to\FF^{n+1}_{Sk}$. See \red{the bottom of 
}
Figure~\ref{fig:element-of-ffs}. 

\red{Moreover, the map $\emap_{n+1}$ is induced on total homotopy fibres by the similar maps $r^{n+1}_S\colon\FF_S\to\FF^{n+1}_S$, that postcompose with $M\sm\U_{\wh{0S}}\hra M\sm\U_{\wh{0Sn+1}}$, cf.\ the top and bottom of Figure~\ref{fig:element-of-ffs}.
}
\begin{figure}[!htbp]
    \centering
    \includegraphics[width=\linewidth]{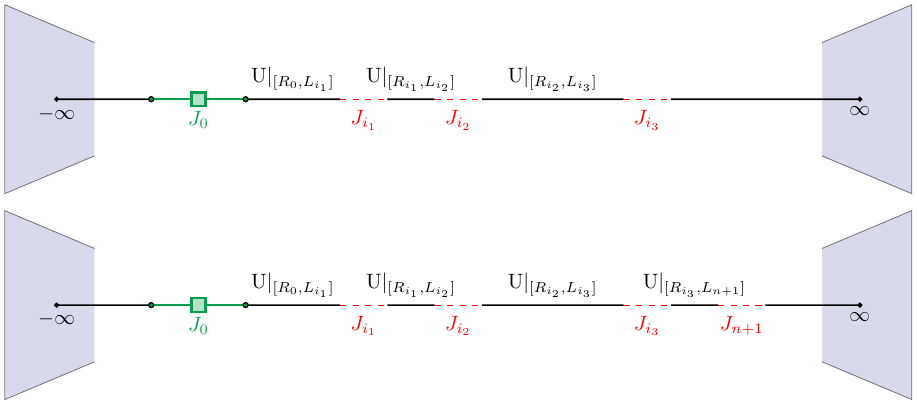}
    \caption[Points in \protect$\FF_S$ and \protect$\FF^{n+1}_S$.]{ Points in $\FF_S$ and $\FF^{n+1}_S$ respectively, for $S=\{i_1,i_2,i_3\}\subseteq\ul{n}$. The small square on $J_0$ denotes that there some knotting, as well as linking with the rest of the black open intervals, can occur. Dashed intervals are punctures, i.e.\ the embedded $J_0$ can intersect them.}
    \label{fig:element-of-ffs}
\end{figure}
\red{
Observe that an embedding $J_0\hra M\sm\U_{\wh{0T}}$ for $T\subseteq\ul{n+1}$ misses also a tubular neighbourhood of $\U_{\wh{0T}}$, by compactness of $J_0$. We now fix one such neighbourhood.
\begin{notation}\label{notat:M-0S}
    Firstly, for $i\geq0$ let $\S_i\coloneqq\S^{d-1}_i\subseteq M$ be the sphere with the diameter $\mathrm{U}|_{[R_i,L_{i+1}]}$ and centre $u_i$, see Figure~\ref{fig:nbhd-of-U-S}.
\begin{figure}[!htbp]
    \centering
    \includegraphics[width=\linewidth]{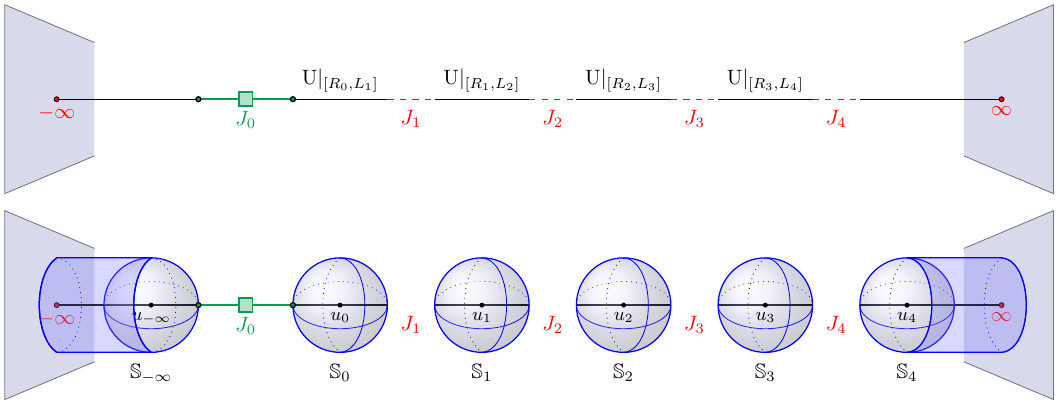}
    \caption[The punctured knot $\U_{\wh{01234}}$ and its chosen neighbourhood.]{The punctured knot $\U_{\wh{01234}}$ and its chosen neighbourhood, respectively.}
    \label{fig:nbhd-of-U-S}
\end{figure}

    Next, fix $n\geq0$ and a set $T=\{i_0<i_1<\dots<i_q\}\subseteq[n+1]$ for some $q\geq 1$. By convention, let $i_{-1}=-\infty$ and $i_{q+1}=i_1$. We now define certain $(d-1)$-dimensional spheres $\S_{i_pi_{p+1}}$ in $M$ for $-1\leq p\leq q$, see Figure~\ref{fig:M-0S}.
  Firstly, for $0\leq p\leq q-1$ let
  \begin{equation}\label{eq-def:S-ii}
    \S_{i_pi_{p+1}}\subseteq M
  \end{equation}
  be the codimension 1 submanifold consisting of the cylinder $[u_{i_p},u_{i_{p+1}-1}]\times\S^{d-2}$ together with the west hemisphere of $\S_{i_p}$ and the east of $\S_{i_{p+1}-1}$. Moreover, let $\S_{-\infty i_0}$ (respectively $\S_{i_q\infty}$) be the codimension 1 submanifold consisting of the cylinder $[-\infty,u_{i_0}]\times\S^{d-2}$ (respectively $[u_{i_q},\infty]\times\S^{d-2}$) together with a tubular neighbourhood of $-\infty\in\partial M$ (respectively $\infty\in\partial M$). 
  
  Let $\ball_{i_pi_{p+1}}$ be the region interior to the submanifold $\S_{i_pi_{p+1}}$, for $-1\leq p\leq q$. Thus, this is homeomorphic to a $d$-ball and contains $\mathrm{U}|_{[R_{i_p},L_{i_{p+1}}]}$. 

  Finally, for $S=\{i_1,\dots,i_q\}\subseteq\ul{n}$ let us denote by
  \begin{equation}\label{eq-def:S-S}
    \S_S\coloneqq\S_{i_1i_2}\vee\dots\vee\S_{i_qn+1}
  \end{equation}
  the wedge of $|S|$ copies of the $(d-1)$-dimensional sphere, each corresponding to one of our embedded sphere in the manifold. 
\end{notation}

\begin{cor}
    We have homotopy equivalences $\H_n(M)\simeq\tofib\FF_{\bull}$ and $\pF_{n+1}(M)\simeq\tofib\FF^{n+1}_{\bull}$ where for $S=\{i_1<\dots<i_q\}$ we have
\begin{align*}
    \FF_S &\cong\Embp\Big(J_0,M\sm\big(\ball_{-\infty0}\sqcup\ball_{0i_1}\sqcup\dots\sqcup \ball_{i_{q-1}i_q} \sqcup \ball_{i_q\infty}\big)\Big),\\
    \FF^{n+1}_S &\cong\Embp\Big(J_0,M\sm\big(\ball_{-\infty0}\sqcup\ball_{0i_1}\sqcup\dots\sqcup \ball_{i_{q-1}i_q} \sqcup \ball_{i_qn+1}\sqcup \ball_{n+1\infty}\big)\Big).
\end{align*}
\end{cor}
For a point in $\FF_{\ul{4}}$ as in the top of Figure~\ref{fig:nbhd-of-U-S} the corresponding embedding is in the bottom of Figure~\ref{fig:nbhd-of-U-S}. As another example,  the point in $\FF^{n+1}_S$ that was depicted at the bottom of Figure~\ref{fig:element-of-ffs} correspond to the bottom of Figure~\ref{fig:M-0S}, with $n+1=6$ and $S=\{2,3,5\}$. 

Moreover, observe that for $S=\emptyset$ we have
\[
    \FF_\emptyset
    \cong\Embp(J_0,M\sm(\ball_{-\infty0}\sqcup\ball_{0\infty})
    \cong\Emb_{\partial_0=-\infty,\partial_1=\infty}(I,M)\eqqcolon\Knots(M)
\]
since the removed space is a half-tubular neighbourhood of two points in the boundary, see the top of Figure~\ref{fig:M-emptyset}. In other words, the cube computing $\H_n(M)$ has \emph{the space of knots itself} as a vertex. 
\begin{figure}[!htbp]
    \centering
    \includegraphics[width=\linewidth]{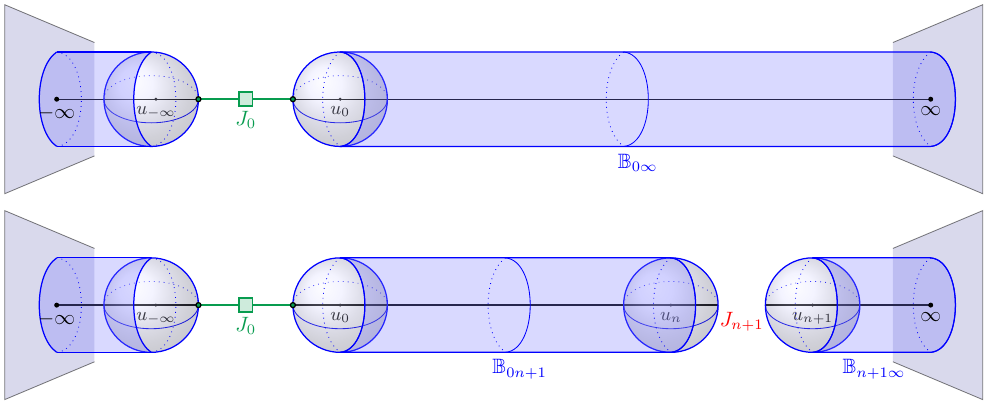}
    \caption[An example of a manifold $M$ minus our chosen neighbourhood of $\U_{\wh{0S}}$.]{For $S=\emptyset$ the spaces $\FF_S$ and $\FF^{n+1}_S$ consist of embeddings of the green arc into the complement of the blue submanifolds at the top and bottom pictures respectively.}
    \label{fig:M-emptyset}
\end{figure}

This is in contrast with the cube $\FF^{n+1}_{\bull}$ computing $\pF_{n+1}(M)$, for which at least one ball in the interior of $M$ is always removed. For example,
\[
    \FF^{n+1}_\emptyset
    \cong \Embp(J_0,M\sm(\ball_{-\infty0}\sqcup\ball_{0n+1}\sqcup\ball_{n+1\infty})
    \cong \Emb_{\partial_0=-\infty,\partial_1=\U(R_0)}(I,M\sm\ball_{0n+1}).
\]
Precisely this will allow us to compute the homotopy type of $\pF_{n+1}(M)$ in Section~\ref{sec:htpy-type}. In fact, in the following Section~\ref{subsec:delooping-vertices} we first describe the homotopy type of the spaces $\FF^{n+1}_S$ (and $\FF_S$, $S\neq\emptyset$).
}
\begin{example}[n=1]\label{ex:n=2-part1}
    The punctured cube $\mc{E}^2_{\bull}$ computing $\pT_2(M)$ was displayed in Example~\ref{ex:cube-deg-2}. On one hand, $\pF_2(M)\coloneqq\fib(p_2\colon\pT_2(M)\to\pT_1(M))$ is the total homotopy fibre of the top square:
\[\begin{tikzcd}[column sep=tiny,row sep=small]
        & \FF^2_{1}\arrow[dashed]{rr} &&
        \Emb_\partial(I\sm J_{12},M) \arrow{rr}[font={\footnotesize}]{r^0_1} &&
        \Emb_\partial(I\sm J_{012},M)
        \\
        \FF^2_{\emptyset}\arrow[dashed]{rr} && \Emb_\partial(I\sm J_2,M)\arrow[crossing over]{rr}[font={\footnotesize}]{r^0_\emptyset}
        \arrow[near start]{ur}[font={\footnotesize}]{r^1_{2}} &&
        \Emb_\partial(I\sm J_{02},M) \arrow{ur}[swap,font={\footnotesize}]{r^1_{02}}
    \end{tikzcd}
\]
Equivalently, $\pF_2(M)$ is the (total) homotopy fibre of the induced map $r^1_\emptyset\colon\FF^2_\emptyset\to\FF^2_1$.

On the other hand, if we complete $\mc{E}^2_{\bull}$ with the initial vertex $\Knots(M)\coloneqq\Emb_\partial(I,M)$, then the space $\H_1\coloneqq\hofib(\Knots(M)\to\pT_1(M))$ is the total fibre of the bottom square:
\[\begin{tikzcd}[column sep=tiny,row sep=small]
        & \FF_{1}\arrow[dashed]{rr} &&
        \Emb_\partial(I\sm J_{1},M) \arrow{rr}[font={\footnotesize}]{r^0_1} &&
        \Emb_\partial(I\sm J_{01},M)
        \\
        \FF_{\emptyset}\arrow[dashed]{rr} && \Emb_\partial(I,M)\arrow[crossing over]{rr}[font={\footnotesize}]{r^0_\emptyset}
        \arrow[near start]{ur}[font={\footnotesize}]{r^1_{\emptyset}} &&
        \Emb_\partial(I\sm J_{0},M) \arrow{ur}[swap,font={\footnotesize}]{r^1_{0}}
  \end{tikzcd}
\]
Equivalently, $\H_1(M)$ is the (total) homotopy fibre of the induced map $r^1_\emptyset\colon\FF_\emptyset\to\FF_1$.

Moreover, the map $\emap_2\colon\H_1(M)\to\pF_2(M)$ is the obvious upward map \red{between the displayed squares. Therefore, we have%
}
\[
\begin{tikzcd}
    \pF_2(M)\rar[dashed] & \FF^2_\emptyset\rar & \FF^2_1\\
    \H_1(M)\uar{\emap_2}\rar[dashed] & \FF_\emptyset\rar\uar & \FF_1\uar
\end{tikzcd}
\]
\end{example}

\subsection{Delooping the vertices}\label{subsec:delooping-vertices}
\red{
For a fixed $n\geq1$ (that will be clear from context) and any $S=\{i_1<\dots<i_q\}\subseteq\ul{n}$ define 
\begin{equation}\label{eq-def:M-0S}
    M_{0S}\coloneqq M\sm\big(\ball_{-\infty0}\sqcup\ball_{0i_1}\sqcup\dots\sqcup \ball_{i_{q-1}i_q}\sqcup \ball_{i_qn+1} \sqcup \ball_{n+1\infty}\big).
\end{equation}
Thus, we have a homeomorphism $\FF^{n+1}_S\cong\Emb_\partial(J_0,M_{0S})$,
under which the restriction map $r^k_S$ corresponds to the postcomposition with the obvious inclusion
\begin{equation}\label{eq-def:rho}
      \rho^k_S\colon M_{0S}\hra M_{0Sk}=M_{0S}\cup \big(\ball_{k-1,k+1}\sm (\ball_{k-1,k}\sqcup\ball_{k,k+1})\big).
\end{equation}
Roughly speaking, this adds the material corresponding to the $k$-the puncture, see Figure~\ref{fig:M-0S} (we omit drawing $\ball_{-\infty0}$ and $\ball_{n+1\infty}$ from now on). 
\begin{figure}[!htbp]
    \centering
    \includegraphics[width=\linewidth]{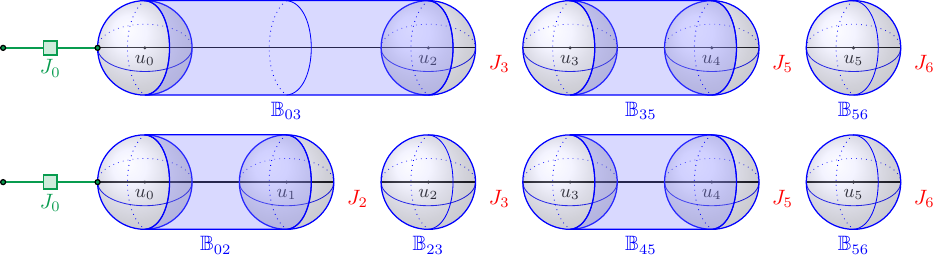}
    \caption[An example of a manifold $M_{0S}$.]{ For $n+1=6$ the manifold $M_{035}$ (resp.\ $M_{0235}$) is the complement of the three (resp.\ four) balls. The map $\rho^2_{04}\colon M_{035}\hra M_{0235}$ from top to bottom adds $\ball_{13}\sm (\ball_{12}\cup\ball_{23})=\ball_{03}\sm (\ball_{02}\cup\ball_{23})$.}
    \label{fig:M-0S}
\end{figure}

Moreover, the spaces $M_{0S}$ and maps $\rho^k_S$ obviously form an $n$-cube. From this and the discussion of the previous section we conclude the following. 
\begin{cor}\label{cor:r-rho} 
    The space $\pF_{n+1}(M)$ is homotopy equivalent to the total homotopy fibre of the $n$-cube $(\FF^{n+1}_{\bull},r)$, which is levelwise homeomorphic to the cube obtained from the $n$-cube $(M_{0S},\rho^k_S)$ by applying the functor $\Emb_\partial(J_0,-)$.
\end{cor}
}
\red{We will determine the homotopy type of $\FF^{n+1}_S$, 
}
motivated by the following observation for $n=1$.
\begin{example}
    Consider the following pullback square from \eqref{eq:diag-first-stage} in Section~\ref{sec:punc-knot-model}:
\[\begin{tikzcd}
        \Emb_\partial(I\sm J_1,M)\arrow{r}{r^0_1} & \Emb_\partial(I\sm J_{01},M)  \\
        \pT_1(M)\coloneqq\lim\mc{E}^1\arrow[dotted]{r}{p}\arrow[dotted]{u} & \pT_0(M) \arrow{u}{r^1_0}
    \end{tikzcd}
\]
    \red{The fibres of the two horizontal maps are homeomorphic, so by the definition \eqref{eq-def:FF-n+1-S} of $\FF^1_\emptyset$ we have 
    }
\[
    \pF_1(M)\coloneqq\fib(p)=\fib(r^0_1)\cong \Emb_\partial(J_0,M\sm \U_{\wh{01}})\simeq\Emb_\partial(J_0,M_{01})\cong\FF^1_\emptyset.
\]
    \red{Using the equivalences \eqref{eq:emb-imm}, we can compare $r^0_1$ to the analogous restriction map for immersions: 
    }
\begin{equation}\label{eq:final-delooping-n0}
    \begin{tikzcd}
        \pF_1(M)\arrow[dashed]{r}\arrow[hook]{d} & \Emb_\partial(I\sm J_1,M)\arrow{r}{r^0_1} \arrow[hook]{d}[swap]{\simeq} & \Emb_\partial(I\sm J_{01},M) \arrow[hook]{d}[swap]{\simeq} \\
        \Imm_\partial(J_0,M)\arrow[dashed]{r}\arrow{d} & \Imm_\partial(I\sm J_1,M)\arrow{r}{r^0_1} \arrow{d}{\deriv_{[L_0,w_{01}]}}[swap]{\simeq} & \Imm_\partial(I\sm J_{01},M) \arrow{d}{\deriv_{w_{01}}}[swap]{\simeq} \\
        \Omega\S M\arrow[dashed]{r} & \Path_*\S M\arrow{r} & \S M
    \end{tikzcd}
\end{equation}
    \red{Namely, for $\fib(r^0_1)$ in the case of immersions the disjointness condition with $\U_{\wh{01}}$ is lost and we just have $\Imm_\partial(J_0,M)$. 
    }
    The resulting equivalence $\pF_1(M)\simeq\Imm_\partial(J_0,M)\simeq\Omega\S M$ takes the unit derivative $\deriv f\colon I\to\S M$ of $f\colon J_0\hra M$ and makes it into a closed loop based at $(R_0,\vec{e})$, by concatenating it with the unit derivative of any arc $\wt{\U_0}$, which agrees with $\U_0$ except near endpoints, at which its derivative is $-\vec{e}$ instead (the endpoints of $\deriv f$ are $(L_0,\vec{e})$ and $(R_0,\vec{e})$, see Figure~\ref{fig:M-i}).
\end{example}
\begin{figure}[!htbp]
    \centering
    \includegraphics[width=0.95\linewidth]{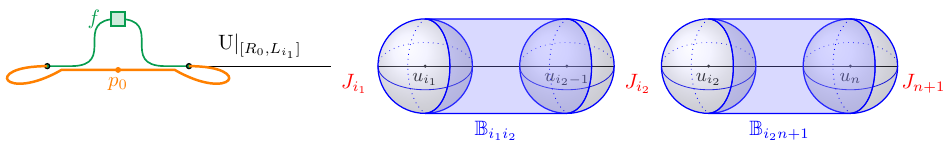}
    \caption[An example of a manifold $M_S$.]{The manifold $M_{i_1i_2}\coloneqq M\sm\big(\ball_{-\infty0}\sqcup\ball_{i_1i_2}\sqcup\ball_{i_2n+1}\sqcup\ball_{n+1\infty}\big)$. An example of $\wt{\U_0}$ is in orange.}
    \label{fig:M-i}
\end{figure}
Let us now analogously determine the homotopy type of each $\FF^{n+1}_S\coloneqq\fib(r^0_S)\cong\Emb_\partial(J_0,M_{0S})$ for $S=\{i_1<\dots<i_q\}\subseteq\ul{n}$ (possibly empty, for $q=0$). Namely, recall the space $M_{0S}$ from \eqref{eq-def:M-0S} and define
\begin{equation}\label{eq-def:M-S}
     \red{M_S\coloneqq  M\sm\big(\ball_{-\infty0}\sqcup\ball_{i_1i_2}\sqcup\dots\sqcup \ball_{i_qn+1}\sqcup\ball_{n+1\infty}\big) =M_{0S}\cup \ball_{0i_1}. 
     }
\end{equation}
We analogously have the composite
\begin{equation}\label{eq:FFS-Imm}
  \begin{tikzcd}[column sep=small]
    \deriv_S\colon\FF^{n+1}_S\cong\Emb_\partial(J_0,M_{0S})\arrow[hook]{r} & \Imm_\partial(J_0, M_{0S})\arrow[hook]{r} & \Imm_\partial(J_0, M_S)\arrow{r} & \Omega \big(\S M_S\big)
\end{tikzcd}
\end{equation}
of the inclusions which forget embeddedness and the disjointness between $J_0$ and $\U_{\wh{0n+1}}$, and a similar derivative map $f\mapsto (\deriv\wt{\U_0}) \cdot (\deriv f)_{1-t}$. To see that $\deriv_S$ is a weak equivalence we can simply replace $M$ by $M_S$ in the diagram \eqref{eq:final-delooping-n0}. This is actually a homotopy equivalence since the spaces are of the homotopy type of $CW$ complexes; for an alternative proof see~\cite{K-thesis}, where $\deriv_S$ is shown to arise also as a homotopy equivalence $\chi$ for a certain left homotopy inverse of $r^0_S$.

\begin{cor}\label{cor:delooping-vertices}
    There is a homotopy equivalence $\deriv_S\colon\FF^{n+1}_S\to\Omega \big(\S M_S\big)$.
\end{cor}

At this point, one must wonder if the whole cube $(\FF^{n+1}_S,\rho^k_S)$ can be delooped, i.e.\ if there are maps $\Omega\S M_S\to\Omega\S M_{Sk}$ which would correspond to our maps $r^k_S\colon\FF^{n+1}_S\to\FF^{n+1}_{Sk}$. We will see in Remark~\ref{rem:no-rho-cube} that this is not possible. However, in the next section we will be able to deloop this cube, and in fact $(n+1)$-times!
\red{
\begin{example}\label{ex:n=2-part2}
    In Example~\ref{ex:n=2-part1} we saw $\pF_2(M)\simeq\hofib(r^2_\emptyset\colon\FF^2_\emptyset\to\FF^2_1)$, and Corollary~\ref{cor:delooping-vertices} gives $\FF^2_\emptyset\simeq\Omega\S M_\emptyset\simeq\Omega\S M$ and $\FF^2_1\simeq\Omega\S M_1\simeq\Omega \S(M\sm\ball_{12})$. However, there is no map $M\to M\sm\ball_{12}$ which would correspond to $r^2_\emptyset$. Instead, we will show that $\pF_2(M)\simeq\Omega\hofib(l^2_\emptyset\colon\FF^2_1\to\FF^2_\emptyset)$ and $l^2_\emptyset$ corresponds to the map induced by $\lambda^2_\emptyset\colon M\sm\ball_{12}\to M$, homotopic to the inclusion. See Example~\ref{ex:n=2-part3}.
\end{example}
}


\section{Delooping the layers}\label{sec:delooping}
\red{In this section we prove that $\pF_{n+1}(M)$ is homotopy equivalent to an explicit $(n+1)$-fold loop space. We do this in several steps: first in Theorem~\ref{thm:delooping-layer} we will deloop $n$ times, then in Theorem~\ref{thm:final-delooping} once more, and then in Corollary~\ref{cor:gather-h-e} we will simplify the resulting space.
}

\begin{theorem}\label{thm:delooping-layer}
    For the $(n+1)$-st layer $\pF_{n+1}(M)\simeq\tofib(\FF^{n+1}_{\bull},r)$ of the Taylor tower for $\Knots(M)$, $n\geq0$, there is a contravariant $n$-cube $(\FF^{n+1}_{\bull},l)$ and an explicit homotopy equivalence
    \[\begin{tikzcd}
        \chi\colon\tofib_{\Cube(\ul{n})}\big(\FF^{n+1}_{\bull},r\big) \arrow{r}{\sim} & \Omega^n\tofib_{\Cube(\ul{n})^{\mathsf{op}}}\big(\FF^{n+1}_{\bull},l\big).
    \end{tikzcd}
    \]
\end{theorem}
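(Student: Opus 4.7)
The argument has three steps: constructing $l$, assembling $\chi$, and verifying that it is a weak equivalence by induction on $n$.

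First, for each $k \notin S \subseteq \underline{n}$ I would define $\lambda^k_S \colon M_{0Sk} \to M_{0S}$ by a radial \emph{collapse-and-rescale} map: away from a neighborhood of the position-$k$ region it is the identity, and inside it merges the two small excised balls $\ball_{k-1,k}$ and $\ball_{k,k+1}$ of $M_{0Sk}$ back into the single larger ball $\ball_{k-1,k+1}$ of $M_{0S}$. Composition with $\lambda^k_S$ then yields the maps $l^k_S \colon \FF^{n+1}_{Sk} \to \FF^{n+1}_S$ of the contravariant cube $(\FF^{n+1}_\bullet, l)$. By construction $\lambda^k_S \circ \rho^k_S$ carries a canonical homotopy $H^k_S$ to $\mathrm{id}_{M_{0S}}$, which induces a homotopy $h^k_S \colon l^k_S \circ r^k_S \simeq \mathrm{id}$ on embedding spaces.

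For the definition of $\chi$, represent $f \in \tofib(\FF^{n+1}_\bullet, r)$ via Lemma~\ref{lem:goo-tofib} as compatible maps $f^S \colon I^S \to \FF^{n+1}_S$ with $f^{Sk}|_{t_k = 0} = r^k_S \circ f^S$ and $1$-faces collapsed to the basepoint. The key step uses $h^k_S$ to \emph{reflect} $f^S$ in the $k$-direction: on $t_k \in [0, 1/2]$ one reads $l^k_S$ applied to $f^{Sk}$ (or a smaller-indexed piece), and on $t_k \in [1/2, 1]$ one applies $h^k_S$ to $f^S$ traversed backward. Performing all $2^n$ combinations of such reflections over $k \in S \subseteq \underline{n}$ produces reflected copies $(f^{\underline{n}})^{h^S}$ which glue along shared faces into a map on $[0,2]^n$; the entire outer boundary collapses to the basepoint, so after rescaling one obtains $\chi(f)^{\underline{n}} \colon \S^n \to \FF^{n+1}_{\underline{n}}$. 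The lower-degree components of $\chi(f)$ arise from the analogous procedure on sub-cubes, and together assemble into an element of $\Omega^n \tofib(\FF^{n+1}_\bullet, l)$.

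To show $\chi$ is a weak equivalence, induct on $n$, with the case $n = 0$ tautological. For the step, iterate $\tofib$ in the $n$-th direction using the standard identification $\tofib(C_\bullet) \simeq \hofib\big(\tofib(C|_{S \not\ni n}) \to \tofib(C|_{S \ni n})\big)$ for both the $r$- and $l$-cubes, and apply the induction hypothesis. The remaining input is the classical fact that if $f \colon X \to Y$ has a left homotopy inverse $g \colon Y \to X$, then $f_\ast$ is split injective in the homotopy category and $\hofib(f) \simeq \Omega \hofib(g)$. Applying this direction-wise and combining with the naturality of $\chi$ delivers the extra $\Omega$ factor at each step, hence the overall $\Omega^n$.

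\emph{Main obstacle.} The principal challenge is a cubical coherence statement: the $2^n$ reflected pieces in $\chi(f)^{\underline{n}}$ must glue consistently along their shared faces via the homotopies $H^k_S$, and these homotopies must respect the cube's face inclusions. Making this precise, together with producing the explicit formula $(\chi f)^{\underline{n}} = \mathrm{glue}_{S \subseteq \underline{n}}(f^{\underline{n}})^{h^S}$, is the role of Appendix~\ref{app:proofs} in the paper; the formula is subsequently essential in proving Theorem~\ref{thm:main-thm}.
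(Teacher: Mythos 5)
Your overall strategy mirrors the paper's: construct left homotopy inverses $\lambda^k_S$ on the manifold level, transport them to $l^k_S$ on embedding spaces, build $\chi$ from reflections across the homotopies $h^k_S$, and show the result is a weak equivalence by delooping one direction at a time. The gluing formula you describe matches Proposition~\ref{prop:chi-is-glued}.

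However, there is a genuine gap, and you have actually put your finger on it without resolving it. The ``cubical coherence'' you identify as the main obstacle is not a finishing detail deferred to the appendix --- it is the place where the real work happens, and it constrains the \emph{definition} of $\lambda^k_S$ in a way your sketch glosses over. You propose a symmetric ``merge both small balls and rescale'' map supported near $J_k$. But conditions \eqref{cond:lr} and \eqref{cond:ll} of Definition~\ref{def:m-left-inverse-cube} force asymmetry: for $k > Si$ one needs $\lambda^k_{Si} \circ \rho^i_{Sk} = \rho^i_S \circ \lambda^k_S$ \emph{strictly}, which means $\lambda^k$ must not disturb any material to the left of $J_k$, since that is where $\rho^i$ acts. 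The paper's $\lambda^k_S$ therefore erases only the ball $\ball_{k\,i_{p+1}}$ to the right of $J_k$ and drags only to the right (Lemma~\ref{lem:defn-of-lambda}); a symmetric merge would fail \eqref{cond:lr} on the nose. Worse, the homotopies $h^k_S(t)$ must commute with the cube maps for every fixed $t$ --- condition \eqref{cond:htpies-of-diagrams} --- and verifying this is the content of Theorem~\ref{thm:lambda-cube}, which requires a careful recursive choice of the dragging parametrisations (the formula \eqref{eq:drag-def}). Without this, the $2^n$ reflected pieces have no reason to agree along shared faces, and $\chi$ is not well-defined as a map into the homotopy limit. Your appeal to the ``classical fact'' $\hofib(f)\simeq\Omega\hofib(g)$ is also insufficient as stated: the paper needs the \emph{explicit} equivalence of Lemma~\ref{lem:retractions}, not just the abstract statement, because the explicit formula is what is used later to compare with grope points.

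One smaller imprecision: your inductive step says ``iterate $\tofib$ in the $n$-th direction and apply the induction hypothesis,'' but after delooping the $n$-th direction one has an $n$-cube $(\FF^{n+1}_\bullet, l^n, r^{<n})$ with mixed arrows, not an $(n-1)$-cube. The paper instead iterates a single lemma direction-by-direction over the cubes $D^m, D^{m-1}, \dots, D^0$, each still an $m$-cube, and explicitly warns that trying to deloop from ``below'' runs into the fact that the inclusion $\Omega Y \hookrightarrow \hofib(r) \xrightarrow{\chi} \Omega\hofib(l)$ is not a loop map. So the induction you describe needs to be reorganised into that direction-wise iteration to be sound.
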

\red{
Recall from Remark~\ref{rem:contravar-cubes} that a contravariant $n$-cube is a functor $\Cube(\ul{n})^{\mathsf{op}}\to\mathsf{Top}_*$ and that there is an isomorphism $\Cube(\ul{n})^{\mathsf{op}}\cong\Cube(\ul{n})$. 
We first prove in Proposition~\ref{prop:cube-retraction} that a homotopy equivalence $\chi$ as in the theorem exists for any cube which has an \emph{$n$-fold left homotopy inverse}. In Section~\ref{subsec:left-htpy-inv} 
we define this notion and state that proposition, and in Section~\ref{subsec:delooping-initial} we proceed to construct maps $l^k_S$ giving such an inverse $(\FF^{n+1}_{\bull},l)$ for our cube $(\FF^{n+1}_{\bull},r)$, proving Theorem~\ref{thm:delooping-layer}. All proofs about left homotopy inverses (including the proof of Proposition~\ref{prop:cube-retraction}) are deferred to Appendix~\ref{app:proofs}.

In Section~\ref{subsec:delooping-final} we will then observe that, in contrast to $r^k_S$, the maps $l^k_S$ are compatible with the homotopy equivalences $\deriv_S$ from Corollary~\ref{cor:delooping-vertices}. That is, we will have commutative diagrams
}
\[
    \begin{tikzcd}[column sep=large]
        \FF^{n+1}_S\arrow{d}[swap]{\deriv_S} & \FF^{n+1}_{Sk} \arrow{l}[swap]{l^k_{S}} \arrow{d}{\deriv_{Sk}}\\
        \Omega(\S M_S) & \Omega\S(M_{Sk}) \arrow{l}[swap]{\Omega(\S\lambda^k_S)}
    \end{tikzcd}
\]
\red{
for any $k\notin S\subseteq\ul{n}$, where $\lambda^k_S\colon M_{Sk}\to M_S$ is a certain smooth map and $\Omega(\S\lambda^k_S)$ is the induced map on the loop space of the unit sphere bundle. We will then conclude the following.
}

\begin{theorem}\label{thm:final-delooping}
  The map $\deriv_{\bull}\colon\big(\FF^{n+1}_{\bull},l\big)\to \big(\Omega\S M_{\bull},\Omega\S\lambda\big)$ is a homotopy equivalence of cubes. Moreover, this gives a homotopy equivalence
  \[\begin{tikzcd}
  \deriv\colon\tofib\big(\FF^{n+1}_{\bull},l\big)\arrow{r}{\sim} & \tofib\big(\Omega M_{\bull},\Omega\lambda\big).
  \end{tikzcd}
  \]
\end{theorem}
\red{
Finally, we will observe that the manifold $M_S$ retracts onto the wedge $M\vee\S_S$, of $M$ with $|S|$ copies of the sphere $\S^{d-1}$. Then in Lemma~\ref{lem:retr} we show that $l^k_S$ corresponds under these retractions to the map $\coll^k_S$, which collapses the sphere corresponding to the index $k$ to the wedge point (and is the identity on all other wedge factors). In other words, there are commutative diagrams
}
\[
    \begin{tikzcd}[column sep=large]
        M_S\arrow{d}[swap]{\retr_S} & M_{Sk} \arrow{l}[swap]{l^k_{S}} \arrow{d}{\retr_{Sk}}\\
        M\vee\S_S & M\vee\S_{Sk} \arrow{l}[swap]{\coll^k_S}
    \end{tikzcd}
\]
\red{
Note that in the last total homotopy fibre in Theorem~\ref{thm:final-delooping} the unit sphere bundle is omitted. Moreover, recall from Remark~\ref{rem:tofib-hofib} that $\tofib(\Omega M_{\bull},\Omega\lambda)\simeq \Omega\tofib(M_{\bull},\lambda)$. Collecting all these results we have the following.
}
\begin{cor}\label{cor:gather-h-e}
  There are homotopy equivalences $\pF_1(M)\simeq\Omega(\S M)$ and for $n\geq 1$:
  \[\begin{tikzcd}
    \pF_{n+1}(M)\arrow{r}{\chi}[swap]{\sim} & \Omega^{n}\tofib(\FF^{n+1}_{\bull},l)\arrow[]{r}{\deriv}[swap]{\sim} & \Omega^{n+1}\tofib(M_{\bull},\lambda)\arrow[]{r}{\retr}[swap]{\sim} & \Omega^{n+1}\tofib(M\vee\S_{\bull},\coll).
  \end{tikzcd}
  \]
\end{cor}

\subsection{About left homotopy inverses}\label{subsec:left-htpy-inv}
\red{
    Let $(X,*_X)$ and $(Y,*_Y)$ be based spaces, and let us consider only based maps in what follows. 
}
A \textsf{left homotopy inverse} (a retraction up to homotopy) for a map $r\colon X\to Y$ is a map $l\colon Y\to X$ such that $l\circ r\simeq \Id_X$. For our purposes it is crucial to specify a homotopy $h$ from $\Id$ to $l\circ r$:
    \begin{equation}\label{data:retraction}
    \begin{tikzcd}[labels={font=\scriptsize}]
     & X\\
    X \arrow[""{name=U, below}]{ur}{\Id} \arrow{r}[swap]{r} & |[alias=D]| Y\arrow{u}[swap]{l}
    \arrow[Rightarrow, from=U, to=D, "h"]
    \end{tikzcd}
    \end{equation}
Moreover, we have $\pi_{*-1}\hofib(r)\cong\pi_*\hofib(l)\cong\ker(l_*)$ and split short exact sequences
\begin{equation}\label{eq:htpy-gps-retraction}
    \begin{tikzcd}
0\arrow[]{r}{} & \pi_*X\arrow[shift left]{r}{r_*} &\pi_*Y\arrow[]{r}{}\arrow[shift left]{l}{l_*} & \pi_{*-1}\hofib(r)\arrow[]{r}{} & 0,
    \end{tikzcd}
\end{equation}
since $l_*$ is a section in the long exact sequence of homotopy groups for $r$. Actually, more is true.
\begin{lemma}\label{lem:retractions}
    Given the data of \eqref{data:retraction} there are explicit inverse homotopy equivalences
    \[\begin{tikzcd}
        \chi\colon\hofib(r)\arrow[shift left=3pt]{rr}[swap]{\sim} && \Omega\hofib(l)\arrow[shift left=3pt]{ll}{}\colon\chi^{-1}\;,
    \end{tikzcd}
    \]
    \red{where $\chi^{-1}$ is a forgetful map, taking $(y_t,x_{s,t})\in\Omega\hofib(l)$ to $(*_X,y_t)\in\hofib(r)$.
    }
\end{lemma}
\red{
    We abuse notation and use $y_t$ to denote a path $y\colon I\to Y, t\mapsto y_t$, and $x_{s,t}$ a 2-parameter family $x\colon I^2\to X$. Thus $(y_t,x_{s,t})\in\Omega\hofib(l)$ means that $y_0=y_1$ and $x_{0,t}=y_t$ and $x_{1,t}=*_X$.  See the proof of this lemma in Appendix~\ref{app:proofs} for a formula for $\chi$. 
}

One can generalise Lemma~\ref{lem:retractions} from $1$-cubes (maps), to diagrams over $\Cube(\ul{m})$ for $m\geq1$ as follows.
\begin{defn}\label{def:left-inverse-cube}
    Let $R_{\bull}=C_{\bull\notni m}\xrightarrow{r^m} C_{\bull\ni m}$ be an $m$-cube with $m\geq 1$, seen as a $1$-cube of $(m-1)$-cubes. A \textsf{left homotopy inverse} for $R_{\bull}$ is the data of a diagram:
    \begin{equation}\label{data:1-fold-retraction}
    \begin{tikzcd}[labels={font=\scriptsize}]
     & C_{\bull\notni m}\\
    C_{\bull\notni m} \arrow[""{name=U, below}]{ur}{\Id} \arrow{r}[swap]{r^m} 
    & |[alias=D]| C_{\bull\ni m}\arrow{u}[swap]{l^m}
    \arrow[Rightarrow, from=U, to=D, "h^m"]
\end{tikzcd}
\end{equation}
In more detail, it consists of
    \begin{enumerate}[topsep=-0.5em]
        \item an $m$-cube $L_{\bull}=C_{\bull\ni m}\xrightarrow{l^m} C_{\bull\notni m}$ and
        \item for each $S\subseteq\ul{m-1}$ a homotopy
    $h^m_S\colon\Id_{C_S}\squig l^m_S\circ r^m_S$,
    that are mutually compatible in the sense that $h^m_{\bull}(t)\colon C_{\bull\notni m}\to C_{\bull\notni m}$ is an $m$-cube for each fixed $0\leq t\leq 1$.
    \end{enumerate}
\end{defn}
\begin{lemma}\label{lem:first-cube-retraction}
    Given the data of \eqref{data:1-fold-retraction}, there are inverse homotopy equivalences
    \[\begin{tikzcd}
        \chi_m\colon\tofib(R_{\bull})\arrow[shift left=3pt]{rr}{}[swap]{\sim} 
        && \Omega\tofib(L_{\bull})\arrow[shift left=3pt]{ll}:(\chi_m)^{-1}.
    \end{tikzcd}
    \]
\end{lemma}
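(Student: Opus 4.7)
The plan is to reduce to the $1$-cube case, i.e.\ to Lemma~\ref{lem:retractions}, by iterating the total homotopy fibre along the $m$-th coordinate.

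\emph{Step 1 (iterative $\tofib$).} First I would invoke the standard iterative description of total homotopy fibres: writing any $m$-cube $X_{\bull}$ as a $1$-cube of $(m-1)$-cubes $X_{\bull\notni m}\xrightarrow{x^m_{\bull}} X_{\bull\ni m}$, there is a natural homeomorphism
\[
\tofib(X_{\bull})\;\cong\;\hofib\Big(\tofib(X_{\bull\notni m})\xrightarrow{(x^m)_*}\tofib(X_{\bull\ni m})\Big),
\]
entirely analogous to the identification already used for $\pT_{n+1}(M)$ in~\eqref{eq:holim-iteratively}. Applying this to both $R_{\bull}$ and $L_{\bull}$ reduces the statement to one about the single map $r^m_*$ together with $l^m_*$.

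\emph{Step 2 (left inverse at the level of total fibres).} Next I would observe that the coherence data $h^m_{\bull}$ promotes $l^m_*$ to an honest left homotopy inverse of $r^m_*$ on total fibres. Indeed, by assumption $h^m_{\bull}(t)\colon C_{\bull\notni m}\to C_{\bull\notni m}$ is a map of $(m-1)$-cubes for every $t\in[0,1]$, equal to $\Id$ at $t=0$ and to $l^m\circ r^m$ at $t=1$. Functoriality of $\tofib$ with respect to maps of cubes therefore produces a homotopy
\[
h^m_*\colon\Id_{\tofib(C_{\bull\notni m})}\;\rightsquigarrow\; l^m_*\circ r^m_*.
\]

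\emph{Step 3 (apply the $1$-cube case).} I would then apply Lemma~\ref{lem:retractions} to the map $r^m_*$ equipped with the left homotopy inverse $(l^m_*,h^m_*)$, obtaining explicit inverse homotopy equivalences
\[
\chi\colon\hofib(r^m_*)\xrightarrow{\;\sim\;}\Omega\hofib(l^m_*),\qquad \chi^{-1}\colon\Omega\hofib(l^m_*)\xrightarrow{\;\sim\;}\hofib(r^m_*).
\]
Pre- and post-composing with the homeomorphisms of Step~1 produces the desired
\[
\chi_m\colon\tofib(R_{\bull})\cong\hofib(r^m_*)\xrightarrow{\;\sim\;}\Omega\hofib(l^m_*)\cong\Omega\tofib(L_{\bull}),
\]
together with its inverse $(\chi_m)^{-1}$.

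The main obstacle I anticipate in turning this sketch into the explicit formula needed later in Appendix~\ref{app:proofs} is verifying that the pointwise recipe for $\chi$ from Lemma~\ref{lem:retractions}, when fed the extra $(m-1)$-cubical data of the $h^m_{\bull}(t)$, really assembles into continuous families indexed coherently by $I^{m-1}$, so that its output does land in the subspace $\Omega\tofib(L_{\bull})\subseteq\Omega\hofib(l^m_*)$. The assumption that $h^m_{\bull}(t)$ is an $(m-1)$-cube of maps for each $t$ is precisely what is required for this bookkeeping; once it is carried out, the explicit formula for $\chi_m$ reduces, coordinate by coordinate, to the one already proved in the $1$-cube case.
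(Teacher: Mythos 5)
Your proposal follows exactly the same strategy as the paper: reduce to the $1$-cube case via the iterative description $\tofib(X_{\bull})\cong\hofib\big(\tofib(X_{\bull\notni m})\to\tofib(X_{\bull\ni m})\big)$, observe that condition~(2) of Definition~\ref{def:left-inverse-cube} makes $h^m_{\bull}(t)$ a map of $(m-1)$-cubes for each $t$ so that $h^m_*$ is a genuine homotopy $\Id\rightsquigarrow l^m_*\circ r^m_*$ on total fibres, and then apply Lemma~\ref{lem:retractions}. One small clarification: your closing worry about landing in ``the subspace $\Omega\tofib(L_{\bull})\subseteq\Omega\hofib(l^m_*)$'' is a non-issue, since the iterative description gives a homeomorphism $\tofib(L_{\bull})\cong\hofib(l^m_*)$ rather than a proper inclusion; the coherence of $h^m_{\bull}(t)$ you correctly identify is already exactly what makes Step~2 go through.
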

To repeat this procedure and get a homotopy equivalence from the total fibre of an $m$-cube to an $m$-fold loop space, we need a left homotopy inverse $l^k_S$ for each $r^k_S$. We also need suitable conditions for homotopies $h^k_S$, in order to avoid obtaining cubes which are commutative only up to homotopy.

\begin{defn}\label{def:m-left-inverse-cube}
An \textsf{$m$-fold left homotopy inverse} for an $m$-cube $D^m\coloneqq(C_{\bull},r)$ is given as follows.
\begin{enumerate}
    \item For each $S\subseteq\ul{m}$ and $k\in\ul{m}\sm S$ a map $l^k_S\colon C_{S k}\to C_S$ is given such that
    \begin{align}
    l^k_{S}\circ l^i_{Sk}= l^i_S\circ l^k_{S i},\quad &\forall i\notin S,\;i\neq k,\label{cond:ll}\\
    l^k_{Si}\circ r^i_{Sk}= r^i_S\circ l^k_S,\quad &\forall i\notin S,\; k> Si\coloneqq S\cup\{i\}.\label{cond:lr}
    \end{align}

\noindent These equations ensure that for $0\leq k\leq m$ there is a well-defined $m$-cube $D^k$ obtained from $D^m$ by replacing the arrows $r^i_S$ by $l^i_S$ for $k+1\leq i\leq m$.\footnote{We see $D^k$ as an $(m-k)$-cube of $k$-cubes; maps in $k$-cubes are $r$-maps, while maps between them are $l$-maps.} In particular, $D^0=(C_{\bull},l)$.

\item For each $0\leq k\leq m$ and $t\in[0,1]$ a map of diagrams
    \begin{equation}\label{cond:htpies-of-diagrams}
    h^k_{\bull}(t)\colon D^k_{\bull\notni k} \to D^k_{\bull\notni k}
    \end{equation}
    is given, such that $h^k_{\bull}(0)=\Id$ and $h^k_{\bull}(1)=l^k_{\bull}\circ r^k_{\bull}$.
\end{enumerate}
\end{defn}
For $k=m,\dots,0$ \red{if we write the cube $D^k$ as $D^k_{\bull\notni k}\xrightarrow{r^k_{\bull}}D^k_{\bull\ni k}$, then we have
}
$D^{k-1}\coloneqq D^k_{\bull\ni k}\xrightarrow{l^k_{\bull}} D^k_{\bull\notni k}$. Hence $D^k$ is a diagram over $\Cube(\ul{m-k})\times\Cube^{op}(\ul{m}\sm\ul{m-k})$; in particular, $D^0$ is a contravariant cube. However, \red{recall from Remark~\ref{rem:contravar-cubes} that all these categories are isomorphic to $\Cube(\ul{m})$, and we will not distinguish among them. 
}
Thus, an $m$-fold left homotopy inverse for $D^m$ is given by the following data for each $0\leq k\leq m$:
\begin{equation}\label{data:m-fold-retraction}
    \begin{tikzcd}[labels={font=\scriptsize}]
     & D^k_{\bull\notni k}\\
    D^k_{\bull\notni k} \arrow[""{name=U, below}]{ur}{\Id} \arrow{r}[swap]{r^k_{\bull}} & |[alias=D]| D^k_{\bull\ni k} \arrow{u}[swap]{l^k_{\bull}}
    \arrow[Rightarrow, from=U, to=D, "h^k_{\bull}"]
    \end{tikzcd}
\end{equation}
\begin{prop}\label{prop:cube-retraction}
    Given the data of \eqref{data:m-fold-retraction} there is a sequence of homotopy equivalences
\begin{equation*}
    \begin{tikzcd}
    \chi\colon\tofib(D^m)\arrow{r}{\chi_m}[swap]{\sim} & \Omega\tofib(D^{m-1})\arrow{r}{\chi_{m-1}}[swap]{\sim} & \Omega^2\tofib(D^{m-2})\arrow{r}{\chi_{m-2}}[swap]{\sim} & \cdots\arrow{r}{\chi_{1}}[swap]{\sim} & \Omega^{m}\tofib(D^{0}).
 \end{tikzcd}
\end{equation*}
Moreover, the homotopy inverse is given by $\chi^{-1}\colon\Omega^m\tofib(D^0)\xrightarrow{\Omega^m\forg}\Omega^mC_{\ul{m}}\hra\tofib(D^m)$, where
\[
\forg\colon\tofib(D^0)\ra C_{\ul{m}},\quad\quad
\big\{f^{\ul{m}\sm S}\colon I^S\to C_{\ul{m}\sm S}\big\} \mapsto f^{\ul{m}}\;.
\]
\end{prop}
Taking entrywise homotopy groups of such $D^0=(C_{\bull},l)$ gives a contravariant $m$-cube in graded groups $\pi_*D^0=(\pi_*C_{\bull},\pi_*l)$ (with $*>0$), which has a \emph{right $m$-fold inverse}. Thus, there is an injection
\begin{equation}\label{eq:htpy-gps-tofib}
    \begin{tikzcd}
    \pi_*\Big(\tofib(D^0)\Big)\cong \bigcap_{k\in\ul{m}}\ker\big(\pi_*l^k_{\ul{m}\sm k}\big)\arrow[hook]{rr}{\forg} && \pi_*C_{\ul{m}}
    \end{tikzcd}
\end{equation}
analogously to \red{the one in
}
\eqref{eq:htpy-gps-retraction}. For proofs of all these results see Appendix~\ref{app:proofs}.

\subsection{A left homotopy inverse for the layer}\label{subsec:l-h-i-for-layer}\label{subsec:delooping-initial}
Let us now turn to applying the preceding discussion in our situation.

\begin{proof}[Proof of Theorem~\ref{thm:delooping-layer}]
    \red{By Proposition~\ref{prop:cube-retraction} it suffices to 
    }
    construct an $n$-fold left homotopy inverse $(\FF^{n+1}_{S},l^k_S)$ for $(\FF^{n+1}_S,r^k_S)$. 
    Recall that by Corollary~\ref{cor:r-rho} the latter is obtained by applying $\Emb_\partial(J_0,-)$ to the $n$-cube $(M_{0S},\rho^k_S)$. 
    
    \red{
    By Theorem~\ref{thm:lambda-cube} below we have a left homotopy inverse $n$-cube $(M_{0S},\lambda^k_S)$ for $(M_{0S},\rho^k_S)$. 
    Then we get the desired cube $(\FF^{n+1}_{S},l^k_S)$ by applying $\Emb_\partial(J_0,-)$ to $(M_{0S},\lambda^k_S)$. More precisely, letting $l^k_S=\lambda^k_S\circ-$ and $h^k_{\bull}\circ-$ gives a cube that also satisfies conditions of Definition~\ref{def:m-left-inverse-cube}.
    }
\end{proof}

\red{
Therefore, it remains to show Theorem~\ref{thm:lambda-cube}, i.e.\ that such a left homotopy inverse cube $(M_{0S},\lambda^k_S)$ exists.
We first define for each map $\rho^k_S$ a left homotopy inverse  $\lambda^k_S$ 
}
in the sense of~\eqref{data:retraction}, and then revisit the construction to ensure that all conditions of Definition~\ref{def:m-left-inverse-cube} are satisfied, so that the whole cube $(M_{0S},\lambda^k_S)$ is a left homotopy inverse, giving Theorem~\ref{thm:lambda-cube}.
\begin{lemma}\label{lem:defn-of-lambda}
    For $k\notin S\subseteq \ul{n}$ the map $\rho^k_S$ has a left homotopy inverse $\lambda^k_S\colon M_{0Sk}\to M_{0S}$.
\end{lemma}
\begin{proof}
Let $S>k\coloneqq\{j\in S: j>k\}$ and let $i_{p+1}\coloneqq\min\{(S>k)\cup \{n+1\}\}$ (this is the smallest index in $S$ which is bigger than $k$, or $n+1$ if that set is empty). Consider the inclusion map
\[
e_{ki_{p+1}}\colon M_{0Sk}\hra M_{0Sk}\cup \ball_{ki_{p+1}}
\]
which adds back the ball $\ball_{ki_{p+1}}$. We visualise this by erasing $\ball_{ki_{p+1}}$ as in Figure~\ref{fig:M-erase}.

Observe that $M_{0Sk}\cup \ball_{ki_{p+1}}$ and $M_{0S}$ are isotopic as submanifolds of $M$ by an ambient isotopy
\[
    \drag_{S>k}^{k}(t)\colon M\to M,\quad \drag_{S>k}^{k}(0)=\Id_M,\quad \drag_{S>k}^{k}(1)(M_{0Sk}\cup \ball_{ki_{p+1}})=M_{0S},
\]
which, loosely speaking, \emph{elongates $\ball_{i_pk}$ by gradually dragging the right hemisphere of $\S_{k-1}$ to the right, until it equals the right hemisphere of $\S_{i_{p+1}-1}$}. We will define a specific parametrisation in the proof of Theorem~\ref{thm:lambda-cube} below (it will indeed depend on $S>k$ and not only on $i_{p+1}$).

Now let $d_S^{k}(t)\coloneqq\drag_{S>k}^{k}(t)|_{M_{0Sk}\cup \ball_{ki_{p+1}}}$ and $\lambda^k_S\coloneqq d_S^{k}(1)\circ e_{ki_{p+1}}\colon M_{0Sk}\to M_{0S}$.

\begin{figure}[!htbp]
    \centering
    \includegraphics[width=0.9\linewidth]{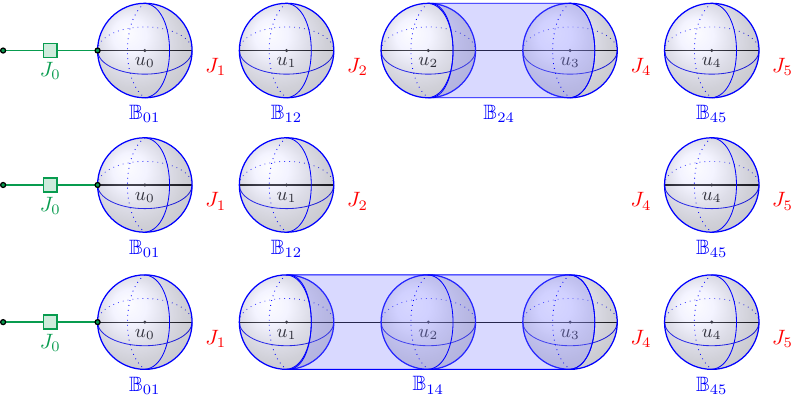}
    \caption[The map $\lambda^k_S$.]{ The map $\lambda^k_S$ for $n=5,S=\{1,4\},k=2,i_p=1,i_{p+1}=4$ takes top $M_{0124}$ to the bottom $M_{014}$. From the top to the middle apply $e_{24}$, and from the middle to the bottom apply $d^2_{1,4}(1)$.}
    \label{fig:M-erase}
\end{figure}
It remains to provide a homotopy $h^k_S$ between $\Id_{M_{0S}}$ and the composite
\[\begin{tikzcd}[row sep=tiny,column sep=small]
    \lambda^k_S\circ \rho^k_S\colon\: M_{0S}\arrow[hook]{r}{\rho^k_S}
    & M_{0Sk}\arrow[equals]{d} \arrow[hook]{r}{e_{ki_{p+1}}}
    & M_{0Sk}\cup \ball_{ki_{p+1}}\arrow[equals]{d} \arrow{rrrr}{d_S^{k}(1)}
    &&&& M_{0S}.
    \\
    & M_{0S}\cup \big(\ball_{i_pi_{p+1}}\sm (\ball_{i_pk}\cup\ball_{ki_{p+1}}\big) \arrow{r}[font={\tiny}]{\cup\ball_{ki_{p+1}}}
    & M_{0S}\cup \big(\ball_{i_pi_{p+1}}\sm\ball_{i_pk}\big)
    &&&
\end{tikzcd}
\]
The composition of the first two maps adds to $M_{0S}$ the material $\ball_{i_pi_{p+1}}\sm\ball_{i_pk}$, which is diffeomorphic to a ball (note that $\ball_{k-1,k+1}\sm (\ball_{k-1}\cup\ball_{k})=\ball_{i_pi_{p+1}}\sm (\ball_{i_pk}\cup\ball_{ki_{p+1}})$). Now adding this material gradually gives an isotopy $\mathsf{add}_t\colon M_{0S}\hra M$ such that $\im (\mathsf{add}_0)=M_{0S}$ and $\im (\mathsf{add}_1)=M_{0Sk}\cup\ball_{ki_{p+1}}$. We can parametrise this so that $\im(\mathsf{add}_t)=\im(d_S^{k}(t))$ for each $t\in[0,1]$, so the two isotopies can be composed into the desired homotopy
\[\begin{tikzcd}[column sep=large]
    h^k_S(t)\colon M_{0S}\arrow[hook]{r}[font={\footnotesize}]{\mathsf{add}_t}
    & \im(\mathsf{add}_t) \arrow{rr}[font={\footnotesize}]{d_S^{k}(t)}
    && M_{0S}.
\end{tikzcd}\qedhere
\]
\end{proof}

\begin{theorem}\label{thm:lambda-cube}
    The $n$-cube $(M_{0S},\rho^k_S)$ has an $n$-fold left homotopy inverse $(M_{0S},\lambda^k_S)$.
\end{theorem}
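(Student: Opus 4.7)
The plan is to construct each $\lambda^k_S$ as a \emph{strict} retraction (not merely a homotopy retraction) that is essentially independent of $S$, so that every coherence condition of Definition~\ref{def:m-left-inverse-cube} reduces to a disjoint-support principle, and the required homotopies can be chosen constant.

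The key geometric observation is that for any $S \subseteq \underline{n}$ with $k \notin S$ and any consecutive pair $(a,b)$ in $S \cup \{0,n+1\}$ bracketing $k$, the ``middle band''
\[
B_k \;:=\; \ball_{a,b} \setminus (\ball_{a,k} \cup \ball_{k,b}) \;=\; M_{0Sk} \setminus M_{0S}
\]
actually depends only on $k$: it is the region bounded by the east hemisphere of $\S_{k-1}$, the west hemisphere of $\S_{k}$, and the lateral cylindrical strip $L_k \cong [R_{k-1},L_k] \times \S^{d-2}$. With this in hand, I would define a self-map $\lambda^k \colon M \to M$ that is the identity outside a small neighbourhood of $B_k$, fixes both $\S_{k-1}$ and $\S_k$ pointwise, and retracts $B_k$ onto $L_k$ by an explicit radial push in cylindrical coordinates. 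Restriction to $M_{0Sk}$ then yields $\lambda^k_S := \lambda^k|_{M_{0Sk}}$, whose image lands in $M_{0S}$ since $L_k \subseteq \partial \ball_{a,b} \subseteq M_{0S}$.

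With this setup, the verifications are quick. The identity $\lambda^k_S \circ \rho^k_S = \Id_{M_{0S}}$ is actually \emph{strict}, because $M_{0S} \cap B_k = \emptyset$, so the homotopies $h^k_S(t)$ required in Definition~\ref{def:m-left-inverse-cube}(2) can be taken constant at $\Id$ and trivially assemble into maps of diagrams. The $\lambda$--$\lambda$ commutation $\lambda^k_S \circ \lambda^i_{Sk} = \lambda^i_S \circ \lambda^k_{Si}$ holds pointwise, because the interiors of $B_k$ and $B_i$ are disjoint for $i \neq k$, and each $L_i$ is disjoint from every other $B_k$, so the two local retractions simply commute. The $\lambda$--$\rho$ compatibility $\lambda^k_{Si} \circ \rho^i_{Sk} = \rho^i_S \circ \lambda^k_S$ is likewise immediate, since $\rho^i$ is an inclusion whose added material lies in $B_i$, disjoint from $B_k$.

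The only genuinely delicate point is the adjacent case $|i-k|=1$, where the closed bands $B_k$ and $B_{k+1}$ meet along the equator of $\S_k$, a $(d-2)$-sphere. To make the compositions \emph{strictly} agree there (and not merely up to homotopy, which would force us to introduce non-constant $h^k_S$'s and worry about their cubical compatibility), one must build $\lambda^k$ so that it fixes all of $\S_k$ pointwise; with this care, the explicit radial retraction is well defined and the equations of Definition~\ref{def:m-left-inverse-cube} hold on the nose.
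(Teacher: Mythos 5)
Your key geometric observation --- that the band $B_k := M_{0Sk}\setminus M_{0S}$ depends only on $k$ and not on $S$ --- is correct, and the paper also relies on it implicitly when checking the $\lambda$--$\rho$ compatibility. But the map you propose does not exist, for reasons that are homotopy-theoretic rather than a matter of inconvenience.

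Since $\lambda^k_S$ must restrict to the identity on $L_k = \overline{B_k}\cap M_{0S}$, the radial push of $B_k$ onto $L_k$ would give a retraction of $\overline{B_k}$ onto $L_k$. However, $\overline{B_k}$ is a $d$-ball and hence contractible, while $L_k\cong I\times\S^{d-2}\simeq\S^{d-2}$ is not; a contractible space admits no retraction onto a non-contractible subspace. Independently, requiring $\lambda^k$ to fix $\S_{k-1}$ and $\S_k$ pointwise is incompatible with $\lambda^k_S(M_{0Sk})\subseteq M_{0S}$: the east hemisphere of $\S_{k-1}$ (respectively the west of $\S_k$) is a boundary cap of $M_{0Sk}$, being part of the ellipsoid $\S_{a,k}$, yet its interior lies strictly inside the removed ball $\ball_{a,b}$ and so is \emph{not} in $M_{0S}$. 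Any self-map of $M$ fixing $\S_{k-1}$ pointwise therefore cannot send $M_{0Sk}$ into $M_{0S}$. Concretely, the radial push would have to fix the pole of each hemisphere while sending arbitrarily nearby interior points of $B_k$ out to $L_k$, and so is not even continuous.

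The paper's $\lambda^k_S$ is genuinely different: it is not a retraction but the composite of the inclusion $e_{ki_{p+1}}$, which fills back in a ball, with the dragging diffeomorphism $d^k_S(1)$, hence an embedding onto $M_{0S}$ that moves $M_{0S}$ itself. The identity $\lambda^k_S\circ\rho^k_S\simeq\Id_{M_{0S}}$ holds only up to the non-constant homotopy $h^k_S(t)$, and the real content of the paper's proof is the inductive choice \eqref{eq:drag-def} of parametrisations of the dragging isotopies $\drag_{S>k}^{k}(t)$ so that these homotopies assemble into maps of cubes. This is exactly the cubical bookkeeping your proposal was designed to circumvent; the obstructions above show that the shortcut of a strict retraction with constant homotopies is not available here.
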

\begin{proof}
We now ensure that the maps $\lambda^k_S$ and $h^k_S$ constructed in the previous proof satisfy conditions of Definition~\ref{def:m-left-inverse-cube}. We are still free to \emph{specify a particular parametrisation} of the ambient isotopy $\drag_{S>k}^{k}(t)\colon M\to M$, which we roughly described as a `dragging move', acting non-trivially only in a tubular neighbourhood of $\mathrm{U}|_{[R_{i_p},L_{i_{p+1}}]}$.

Firstly, for $S\subseteq\ul{n}\sm\{i,k\}$  the conditions \eqref{cond:lr} and \eqref{cond:ll} are respectively equivalent to having that the following left diagram commute for $k>Si$ and the right diagram for, say, $i<k$:
\[\begin{tikzcd}
        M_{0Si} & M_{0Sik} \arrow{l}[swap]{\lambda^k_{Si}}\\
        M_{0S}\arrow{u}[swap]{\rho^i_S} & M_{0Sk} \arrow{l}[swap]{\lambda^k_S} \arrow{u}{\rho^i_{Sk}}
    \end{tikzcd}\qquad\quad \begin{tikzcd}
        M_{0Si}\arrow{d}[swap]{\lambda^i_S} & M_{0Sik} \arrow{l}[swap]{\lambda^k_{Si}} \arrow{d}{\lambda^i_{Sk}}\\
        M_{0S} & M_{0Sk} \arrow{l}[swap]{\lambda^k_S}
    \end{tikzcd}
\]
For the left diagram this is clear. Indeed, $\rho^i_S$ and $\rho^i_{Sk}$ both add the same material $\ball_{i-1,i+1}\sm (\ball_{i-1}\cup\ball_{i})$ independently of the location of the other punctures, and as $k>S i$, both $\lambda^k_S$ and $\lambda^k_{Si}$ erase the ball $\ball_{kn}$ and then use the same flow $\drag_{S>k}^{k}=\drag_{Si>k}^{k}=\drag_\emptyset^k$. On the other hand, the commutativity of the right diagram will follow if we ensure that
\begin{equation}\label{diag:cond-ll}
  \drag_{S{>i}}^{i}(1)\circ\drag_{Si{>k}}^{k}(1)=\drag_{S{>k}}^{k}(1)\circ\drag_{Sk{>i}}^{i}(1).
\end{equation}
Lastly, the condition \eqref{cond:htpies-of-diagrams} is equivalent to having that for each $t\in[0,1]$ the following left square commute if $i>k$ and the right square if $i<k$:
\[\begin{tikzcd}
        M_{0S}\arrow{d}[swap]{h^i_S(t)}\arrow{r}{\rho^k_{S}} & M_{0Sk} \arrow{d}{h^i_{Sk}(t)}\\
        M_{0S} \arrow{r}{\rho^k_S} & M_{0Sk}
    \end{tikzcd}\qquad\quad
    \begin{tikzcd}
        M_{0S}\arrow{d}[swap]{h^i_S(t)} & M_{0Sk} \arrow{l}[swap]{\lambda^k_{S}} \arrow{d}{h^i_{Sk}(t)}\\
        M_{0S} & M_{0Sk} \arrow{l}[swap]{\lambda^k_S}
    \end{tikzcd}
\]
Again, the case on the left is clear since $\drag_{Sk>i}^{i}(t)=\drag_{S>i}^{i}(t)$), and for the right one we should ensure that
\begin{equation}\label{diag:cond-htpies}
    \drag_{S{>i}}^{i}(t)\circ\drag_{S{>k}}^{k}(1)=\drag_{S{>k}}^{k}(1)\circ\drag_{Sk{>i}}^{i}(t) \quad\quad \forall t\in[0,1].
\end{equation}
Note that \eqref{diag:cond-ll} follows from \eqref{diag:cond-htpies} by putting $t=1$ and using $\{iS{>k}\}=\{S{>k}\}$, since $i<k$.

We now define parametrisations of $\drag_{S>i}^{i}$ inductively on $|S>i|\geq0$ for $i\in\ul{n}$ and $S\subseteq\ul{n}\sm\{i\}$. Pick each $\drag_{\emptyset}^{i}$ freely and assume for some $s\geq0$ we chose $\drag_{S>i}^{i}$ for all $|S>i|<s$. Let now $|S'>i|=s$ for some $S'=Sk$ with $k\coloneqq\min\{(S'>i)\cup \{n\}\}$. Then let
\begin{equation}\label{eq:drag-def}
    \drag_{S'{>i}}^{i}(t)\coloneqq\drag_{S{>k}}^{k}(1)^{-1}\circ\drag_{S{>i}}^{i}(t)\circ\drag_{S{>k}}^{k}(1).
\end{equation}
This finishes the definition. Let us check that \eqref{diag:cond-htpies} holds for any fixed $i<k$ and $S\subseteq\ul{n}\sm\{i,k\}$  by induction on $|i<S<k|\geq0$. Firstly, if there is no $i_p\in S$ with $i<i_p<k$, then \eqref{eq:drag-def} becomes precisely \eqref{diag:cond-htpies}. Otherwise, take the smallest such $i_p$; write $S=Ri_p$. Applying \eqref{eq:drag-def} for $S'=(Rk)i_p$ and the induction hypothesis for $\drag^i_{Rk>i}(t)$ gives
\begin{align*}
\drag_{Sk{>i}}^{i}(t)&=
\drag_{Rk{>i_p}}^{i_p}(1)^{-1}\circ
\Big(\drag_{R{>k}}^{k}(1)^{-1}\circ\drag_{R{>i}}^{i}(t)\circ\drag_{R{>k}}^{k}(1)\Big)
\circ\drag_{Rk{>i_p}}^{i_p}(1)
\\
&=\drag_{Ri_p{>k}}^{k}(1)^{-1}\circ
\Big(\drag_{R{>i_p}}^{i_p}(1)^{-1}\circ\drag_{R{>i}}^{i}(t)\circ\drag_{R{>i_p}}^{i_p}(1)\Big)
\circ\drag_{Ri_p{>k}}^{k}(1).
\end{align*}
For the second equality we used that $\drag_{R{>k}}^{k}(1)\circ\drag_{Rk{>i_p}}^{i_p}(1) =\drag_{R{>i_p}}^{i_p}(1)\circ\drag_{R{>k}}^{k}(1)$ again by the induction hypothesis \eqref{diag:cond-htpies}.
Now observe that $\{Ri_p>k\}=\{R>k\}$, so the last expression equals $\drag_{S{>k}}^{k}(1)^{-1}\circ\drag_{S{>i}}^{i}(t)\circ\drag_{S{>k}}^{k}(1)$, finishing the induction step.
\end{proof}

\subsection{The derivative and the retraction}\label{subsec:delooping-final}\label{subsec:retr}

\red{
Let us observe that we can define maps
\[
    \lambda^k_S\colon M_{Sk}\to M_{S}
\]
for $k\notin S$ completely analogously to the maps $\lambda^k_S\colon M_{0Sk}\to M_{0S}$ 
}
from the proof of Lemma~\ref{lem:defn-of-lambda}. Namely, the presence of the index $0$ was irrelevant there. \red{Let us just point out that $\lambda^k_\emptyset\colon M_k=M_\emptyset\sm\ball_{kn+1}\to M_\emptyset$ not only adds the ball $\ball_{kn+1}$, but also deforms $M_\emptyset$ by `dragging', cf.\ Figure~\ref{fig:M-emptyset}. 
}
Similarly as before (see the proof of Theorem~\ref{thm:lambda-cube}), these maps form an $n$-cube 
\begin{equation}\label{eq:M-cube}
    (M_{\bull},\lambda).
\end{equation}
We let $(\Omega\S M_{\bull},\Omega\S\lambda)$ be the corresponding cube of loops on the unit tangent bundles.

\subsubsection*{The derivative}

\begin{proof}[Proof of Theorem~\ref{thm:final-delooping}]
\red{
    We need to prove that for any $n\geq 1$ the homotopy equivalences
\[
    \deriv_S\colon\FF^{n+1}_S\to\Omega\S M_S
\]
    from Corollary~\ref{cor:delooping-vertices}, given as the unit derivatives of embeddings, define a homotopy equivalence
\[
    \deriv_{\bull}\colon(\FF^{n+1}_{\bull},l)\to(\Omega\S M_{\bull},\Omega\S\lambda).
\]
    Indeed, the maps $\deriv_S$ are clearly compatible with the $\lambda$-maps, $\deriv_S\circ l^k_S=(\Omega\S\lambda^k_S)\circ\deriv_{Sk}$
    because the $l$-maps are given as the postcomposition with the $\lambda$-maps.
    
    The second claim of the theorem is that the map of cubes $\S(M_{\bull})\to M_{\bull}$ forgetting the tangent data is a homotopy equivalence on total homotopy fibres. 
    }
    The rows in the commutative diagram
\[\begin{tikzcd}
    \S^{d-1} \arrow{r}{}\arrow[equal]{d}{} & \S(M_{Sk}) \arrow{r}{}\arrow[]{d}{\S\lambda^k_S} & M_{Sk} \arrow[]{d}{\lambda^k_S}\\
    \S^{d-1}\arrow{r}{} & \S(M_{S})\arrow{r}{} & M_{S}
\end{tikzcd}
\]
    are fibre bundles, so \red{the square on the right is a pullback. Therefore, 
    }
    comparing the vertical homotopy fibres gives $\hofib(\S\lambda^k_S)\simeq\hofib(\lambda^k_S)$.
\end{proof}

\begin{remm}\label{rem:straight}
With tangent data now gone, define $\deriv_S\colon\FF^{n+1}_S\to\Omega M_S$ using simply $\wt{\U_0}\coloneqq\U_0$.
\end{remm}

\begin{remm}\label{rem:bcks}
    As mentioned in the introduction, the authors of~\cite{BCKS} use Sinha's cosimplicial model $AM_n(I^3)\coloneqq\holim\Conf'_{\bull}\langle I^3,\partial\rangle$, where $\Conf'_n\langle I^3\rangle$ is a compactified configuration space of $n$ points in $I^3$ together with tangent vectors~\cite{Sinha-cosimplicial}. To compute $\pF_n(I^3)$ they use the cosimplicial identity $\mc{s}^k\circ \mc{d}^k=\Id$, which precisely says that the codegeneracy $\mc{s}^k$ (forgetting the $k$-th point in the configuration) is a strict left inverse for $\mc{d}^k$ (doubling the $k$-th point).
\end{remm}

\begin{remm}\label{rem:no-rho-cube}
Actually, the maps $\deriv_S$ factor through spaces $\Omega M_{0S}$, giving the following diagram
\[\begin{tikzcd}
    \FF^{n+1}_{Sk}\arrow[shift left,dashed]{d}{l^k_{S}}\arrow{rr}{\deriv_{Sk}} &&
    \Omega M_{0Sk}\arrow[shift left,dashed]{d}{\Omega\lambda^k_{S}}\arrow[hook]{rr}{\rho^0} &&
    \Omega M_{Sk}\arrow[dashed]{d}{\Omega\lambda^k_{S}}\\
    \FF^{n+1}_S\arrow[shift left]{u}{r^k_S}\arrow{rr}{\deriv_S} &&
    \Omega M_{0S}\arrow[shift left]{u}{\Omega\rho^k_S}\arrow[hook]{rr}{\rho^0} &&
    \Omega M_S
\end{tikzcd}
\]
in which if either all upward or all downward arrows are omitted, the resulting diagram commutes. By Theorem~\ref{thm:final-delooping} the first and last dashed maps form cubes with homotopy equivalent total fibres. By Theorem~\ref{thm:lambda-cube} there is a cube $(\Omega M_{0\bull},\Omega\rho)$ and its $n$-fold left homotopy inverse $(\Omega M_{0\bull},\Omega\lambda)$.

However, it is important to point out that there are \emph{no $\rho$-maps} between spaces $\Omega M_S$ which would form a cube equivalent to our original cube $(\FF^{n+1}_S,r^k_S)$. The reason lies in the fact that the information about the disjointness of $J_0$ with $\ball_{0i_1}$ is lost when we pass from $M_{0S}$ to $M_S$. More precisely, there are no maps $M_S\to M_{Sk}$ when $k$ is smaller than all indices in $S$ (but otherwise we do have $\rho^k_S$, and the homotopies $h^k_S(t)$ do restrict to $M_{S}$ and also commute with $\deriv_{S}$ by construction).
\end{remm}

\subsubsection*{The retraction}
\red{
For $S=\{i_1,\dots,i_q\}\subseteq\ul{n}$ recall from Notation~\ref{notat:M-0S} and Figure~\ref{fig:M-0S} the spheres $\S_{i_pi_{p+1}}\coloneqq\partial\ball_{i_pi_{p+1}}\subseteq M$ of dimension $d-1$, and the wedge sum $M\vee\S_S\coloneqq M\vee\S_{i_1i_2}\vee\dots\vee\S_{i_qn+1}$. 
}
Let $M\vee\S_{\bull}$ be the $n$-cube with spaces $M\vee\S_{S}$ and maps
  \begin{equation}\label{eq-def:col}
  \coll^k_S\colon M\vee\S_{Sk}\ra M\vee\S_S
  \end{equation}
given by the identity on all wedge factors except on the sphere labelled by $ki_{p+1}$ which is collapsed onto the wedge point. Here $i_p<k<i_{p+1}$ are the closest neighbouring indices.

\begin{lemma}\label{lem:retr}
  For $n\geq 1$ there is a homotopy equivalence $\retr_{\bull}\colon\big(M_{\bull},\lambda\big) \to \big(M\vee\S_{\bull},\coll\big)$.
\end{lemma}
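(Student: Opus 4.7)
The plan is as follows. First, for each $S = \{i_1 < \dots < i_m\} \subseteq \underline{n}$, construct an explicit deformation retraction $\retr_S \colon M_S \to M \vee \S_S$. Pick once and for all a basepoint $p_0 \in M'$ (for instance $R_0$ on the unknot $\U$) and, for every pair of consecutive indices $(i_p,i_{p+1})$ that can occur in any such $S\cup\{n+1\}$, fix an embedded arc $\alpha_{i_pi_{p+1}}$ in $M'$ from $p_0$ to a chosen point on the sphere $\S_{i_pi_{p+1}}$; the natural choice is to run along a small parallel push-off of $\U$. Take pairwise disjoint tubular neighborhoods $N_{i_pi_{p+1}} \subseteq M'$ of these arcs, so small that each is disjoint from every other ball $\ball_{i_qi_{q+1}}$ and meets $\S_{i_pi_{p+1}}$ only near its endpoint. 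Then $\retr_S$ is defined to collapse the closure of each $N_{i_pi_{p+1}}$ appearing in $M_S$ to the wedge point of the corresponding sphere, and to be the identity elsewhere. This is a homotopy equivalence by standard deformation retract arguments, since each collapsed region is contractible.

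Second, one must verify commutativity of the square
\[
\begin{tikzcd}
M_{Sk} \arrow{r}{\retr_{Sk}} \arrow{d}[swap]{\lambda^k_S} & M \vee \S_{Sk} \arrow{d}{\coll^k_S} \\
M_S \arrow{r}{\retr_S} & M \vee \S_S
\end{tikzcd}
\]
for every $k \notin S$. Writing $i_p < k < i_{p+1}$ for the closest neighbours in $S\cup\{0,n+1\}$, the map $\lambda^k_S$ fills in the ball $\ball_{ki_{p+1}}$ and then drags so that $\ball_{i_pk} \subset M_{Sk}$ becomes $\ball_{i_pi_{p+1}} \subset M_S$, carrying $\S_{i_pk}$ onto $\S_{i_pi_{p+1}}$. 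Meanwhile $\coll^k_S$ kills precisely the wedge factor $\S_{ki_{p+1}}$, in keeping with the fact that this sphere is absorbed into the interior of $M_S$ by $\lambda^k_S$. If we require the tube $N_{i_pi_{p+1}}$ to be the image under $\drag^k_{S>k}(1)$ of the tube $N_{i_pk}$, and require the tube $N_{ki_{p+1}}$ chosen for $\S_{ki_{p+1}}$ to lie entirely inside the ball $\ball_{ki_{p+1}}$ that gets filled back in, then both composites agree on the nose: tubes not involving $k$ are unaffected, the $\S_{i_pk}$ tube is carried to the $\S_{i_pi_{p+1}}$ tube, and the $\S_{ki_{p+1}}$ region is absorbed and then collapsed to the basepoint in $M$.

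The main obstacle is ensuring this compatibility for \emph{all} pairs $(S,k)$ simultaneously. This requires fixing the arcs $\alpha_{i_pi_{p+1}}$ and their tubes once globally, independently of $S$ (possible since the set of candidate pairs is finite), and then exploiting the remaining freedom in parametrising the dragging isotopies $\drag^k_{S>k}$ from the proof of Theorem~\ref{thm:lambda-cube}. Concretely, one imposes that each $\drag^k_{S>k}(1)$ is the identity outside a neighbourhood of $W_{i_pi_{p+1}}$ that avoids all tubes $N_{i_qi_{q+1}}$ not involved in the dragging, and that it maps $N_{i_pk}$ onto $N_{i_pi_{p+1}}$; these constraints can be arranged consistently with the relations \eqref{cond:ll}, \eqref{cond:lr} because they are localised to disjoint regions corresponding to different dragging operations. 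Once this coherence is in place, the commutativity of every square reduces to the already-verified identities for the cube $(M_{0\bull},\lambda)$, and the levelwise weak equivalences assemble into the asserted homotopy equivalence of cubes.
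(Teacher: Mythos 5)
Your construction of $\retr_S$ does not produce a map into $M\vee\S_S$. Collapsing thin tubes $N_{i_pi_{p+1}}$ around arcs from $p_0$ to the spheres $\S_{i_pi_{p+1}}$ only quotients $M_S$ by a contractible set: the resulting space still has the open balls $\ball_{i_pi_{p+1}}$ removed, and the part of $M_S$ lying outside the tubes and boundary spheres is \emph{not} a copy of $M$ --- it is $M$ with several balls deleted. So the quotient is homotopy equivalent to $M\vee\S_S$ (precisely because $M_S$ is --- the very statement being proved), but it is not $M\vee\S_S$ as a space, and no map to $M\vee\S_S$ is actually defined by your description. One symptom: ``identity elsewhere'' is either discontinuous across the boundary spheres (if points of $\S_{i_pi_{p+1}}$ go to the sphere summand while nearby interior points of $M_S$ go to the $M$ summand), or it only produces the quotient $M_S/{\sim}$ rather than a map to $M\vee\S_S$.

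The geometric point you are missing, and which the paper's proof supplies, is that all the balls $\ball_{ij}$ lie inside a single tubular neighbourhood $V\cong\U\times\D^{d-1}_\epsilon$ of the whole arc $\U$. Since $V$ is itself a $d$-ball, the complement $V\setminus\ball_S$ admits an explicit deformation retraction onto the wedge of boundary spheres $\S_S$ (project onto the ``string of beads'', then contract the connecting arcs of $\U$), and $M_S=M'\cup(V\setminus\ball_S)$ then retracts onto $M'\vee\S_S$ with $M'\cong M$. You chose your tubes explicitly to be disjoint from all the balls, which prevents your collapse from ever touching the part of $M_S$ that actually differs from $M$. If you replace the individual tubes by the single tube $V$ and retract $V\setminus\ball_S$ onto $\S_S$ as above, your compatibility discussion for the squares involving $\lambda^k_S$ and $\coll^k_S$ is sound in outline and would carry through; the gap is entirely in the construction of the levelwise maps.
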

\begin{proof}
    Recall \red{from \eqref{eq-def:M-S} 
    }
    that $M_S\coloneqq M\sm\ball_S$ for $\ball_S\coloneqq\ball_{-\infty0}\sqcup\ball_{i_1i_2}\sqcup\dots\sqcup \ball_{i_qn+1}\sqcup\ball_{n+1\infty}$. Take a thin enough neighbourhood $V\coloneqq\U|_{[L_1,1]}\times\D^{d-1}_\epsilon\subseteq M$ of $\U$ containing all the balls $\ball_{ij}$. Then there is a diffeomorphism
    \[
    V\sm\ball_S\cong I^d\sm\ball_S,
    \]
    \red{The latter space is the $d$-cube $I^d=[0,1]^d$ minus $|S|$ balls and it 
    }
    clearly retracts onto $\S_S$: first project vertically (in the last coordinate of the ) onto the `collection of beads' on $\U$ (see Figure~\ref{fig:M-i}) and then contract $\U_{\wh{S}}$ and some arcs on the spheres to get one wedge point at $L_0$.
    
    Observe that $M=M'\cup V$ where $M'\coloneqq(M\sm V)\cup(\partial M\cap\partial V)$ is diffeomorphic to $M$. Thus, we can define a retraction $\retr_S\colon M_S=M'\cup(V\sm\ball_S)\to M'\vee\S_S$ by applying the above retraction on $V\sm\ball_S$ while also gradually contracting $\partial V\cap \partial M'$ onto the point $L_0\in\partial M'$.
    
    Finally, \red{with our description of the retractions,
    }
    it is not hard to see that they can be chosen so that $\retr_S\circ\lambda^k_S=\coll^k_S\circ\retr_{Sk}$, meaning that $\retr_{\bull}$ is indeed a map of cubes.
\end{proof}
For another proof of the lemma see~\cite[Cor.~5.3]{GWII}. 
The assumption $\partial M\neq\emptyset$ is essential as this does not hold for closed manifolds. 

Let us finish this section by an example for $n=1$.

\begin{example}\label{ex:n=2-part3}
    \red{
    In Examples~~\ref{ex:n=2-part1} and~\ref{ex:n=2-part2} we saw that $\pF_2(M) \simeq \hofib(r^1_\emptyset\colon\FF^2_\emptyset\to \FF^2_{1})$ for $\FF^2_\emptyset\simeq\Omega\S M$ and $\FF^2_1\simeq\Omega\S(M\sm\ball_{12})$.
    }
    Now by Theorem~\ref{thm:delooping-layer}, we have
    \begin{equation*}\begin{tikzcd}
      \pF_2(M) \simeq \hofib\big(r^1_\emptyset\colon\FF^2_\emptyset\to \FF^2_{1}\big) \arrow{r}{\chi}[swap]{\sim} & \Omega\hofib\big(l^1_\emptyset\colon \FF^2_{1}\to \FF^2_\emptyset\big).
    \end{tikzcd}
    \end{equation*}
    The map $l^1_\emptyset$ corresponds to $\lambda^1_\emptyset\colon M_{1}=M\sm\ball_{12}\to M_{\emptyset}\cong M$ which adds back the $d$-dimensional ball  $\ball_{12}\cong\ball^d$ and rescales using the map $\drag$, see the proof of Lemma~\ref{lem:defn-of-lambda}. Next, by Theorem~\ref{thm:final-delooping} the derivative and by Lemma~\ref{lem:retr} the retraction induce equivalences
    \[
    \begin{tikzcd}[column sep=small]
      \arrow{r}{\deriv}[swap]{\sim} & \Omega\hofib\Big(\Omega M_{1}\xrightarrow{\Omega\lambda^1_{\emptyset}} \Omega M_\emptyset\Big)\arrow{r}{\retr}[swap]{\sim} & \Omega\hofib\Big(\Omega(M\vee\S_{1})\xrightarrow{\Omega\coll^1_\emptyset}\Omega M\Big)\simeq\Omega^2\hofib\Big(M\vee\S_1\xrightarrow{\coll^1_\emptyset} M\Big),
    \end{tikzcd}
    \]
    where $\coll^1_\emptyset$ collapses the $(d-1)$-dimensional sphere $\S_1\cong\S^{d-1}$. In the next section we compute the homotopy type of this fibre, see Example~\ref{ex:n=2-part4}.
\end{example}


\section{The homotopy type}
\label{sec:htpy-type}
\red{
    As before, fix $n\geq 1$.
    In this section we study the homotopy type of $\pF_{n+1}(M)$ and prove Theorem~\ref{thm:W-iso}, using the homotopy equivalence $\pF_{n+1}(M)\simeq\Omega^{n+1}\tofib(M\vee\S_{\bull},\coll)$ of Corollary~\ref{cor:gather-h-e} from the previous section. In Section~\ref{subsec:proof-ThmB} we give a description of the homotopy type of $\pF_{n+1}(M)$ in terms of (loop spaces and suspensions of) products of $M$ with itself. In Section~\ref{subsec:gen-maps} we can then compute the first non-trivial homotopy group of $\pF_{n+1}(M)$, and describe its generators as maps to $\tofib\Omega(M\vee\S_{\bull})$ and as maps to $\tofib\Omega(M_{\bull})$. Then in Section~\ref{subsec:strategy} we present our strategy for the main proofs that will appear in Section~\ref{sec:main}. Finally, we give some examples which should clarify this strategy, and the unfortunately cumbersome notation.
}

\subsection{Proof of Theorem~\ref{thm:W-iso}}\label{subsec:proof-ThmB}

We now express the homotopy type of $\pF_{n+1}(M)$ in terms of suspensions of the space of loops on the iterated product of $M$ with itself. Such computations go back to~\cite[Section~5]{GWII} for the case when $M$ has the homotopy type of a suspension, and also~\cite{BCKS} for $M=I^3$. However, as mentioned in the introduction, even then, those results do not suffice for our purposes as we need \emph{a geometric interpretation} of the homotopy classes (see Section~\ref{subsec:gen-maps}): in Section~\ref{sec:main} it will be crucial to determine the class in $\pi_0\pF_{n+1}(M)$ of a geometrically given point $x\in\pF_{n+1}(M)$, for $d=3$.

\subsubsection*{Some preliminaries}
\red{We will use some 
}
classical results which we now recall, referring the reader to Appendix~\ref{app:samelson} for more details. \red{
In what follows $M,X,A$ are well-based spaces.
}

Let $\iota_X\colon X\hra M\vee X$ be the natural inclusion and $\eta_A\colon A\to \Omega\Sigma A$ the unit of the loop-suspension adjunction, taking $a\in A$ to the loop $\theta\mapsto\theta\wedge a\coloneqq[(\theta,a)]$. See Figure~\ref{fig:foliated-sphere}. Consider the composite
\begin{equation}\label{eq:basic-x}
\begin{tikzcd}
    x_{A}\colon A\arrow{r}{\eta_A} & \Omega\Sigma A\arrow[hook]{rr}{\Omega\iota_{\Sigma A}} && \Omega(M\vee\Sigma A)
\end{tikzcd}
\end{equation}
and the map $\Omega\iota_M\colon \Omega M\hra \Omega(M\vee \Sigma A)$.
We form their Samelson product (defined in Appendix~\ref{app:samelson}) $[x_A,\Omega\iota_M]\colon A\wedge\Omega M\to \Omega(M\vee\Sigma A)$, and the sum
\begin{equation}\label{eq-def:mu}
  x_A\vee[x_A,\Omega\iota_M]\colon\; 
  \red{A\wedge(\Omega M)_+= 
  }
  \;A\vee(A\wedge\Omega M)\to \Omega(M\vee\Sigma A),
\end{equation}
\red{where $(\Omega M)_+$ denotes the space $\Omega M$ with a disjoint basepoint added, and we use that the distributivity of the smash product gives a homeomorphism
\begin{align}\label{eq:smash+}
  A\wedge(\Omega M)_+
  &\cong A\wedge (\S^0\vee\Omega M)
  \cong A\vee A\wedge\Omega M
  \cong \Sigma(A\vee(A\wedge\Omega M).
\end{align}
}
\begin{figure}[!htbp]
    \centering
    \includegraphics{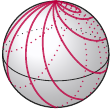}
    \caption[The unit of the loop-suspension adjunction.]{Several values $\eta_{\S^1}(t)$ of the canonical map $\eta_{\S^1}\colon\S^1\to\Omega\S^2$.}
    \label{fig:foliated-sphere}
\end{figure}
The map~\eqref{eq-def:mu} is a map to a loop space, so has a unique multiplicative extension
\[
    \mu_{M,\Sigma A}=\wt{x_A\vee[x_A,\Omega\iota_M]}
\]
to $\Omega\Sigma\big(A\vee(A\wedge\Omega M)\big)$. It maps the loop $\theta\mapsto t_\theta\wedge y_\theta$ to $\theta\to \big(x_A\vee[x_A,\Omega\iota_M]\big)(y_\theta)_{t_\theta}$, see \eqref{eq:mult-ext}.
\begin{lemma}[\cite{Gray,Spencer}]\label{lem:gray-spencer}
    For well-based spaces $M$ and $A$ there is a fibration sequence
\begin{equation}\label{eq:gray-cor}
    \begin{tikzcd}
    \Omega\Sigma\big( A\wedge(\Omega M)_+\big)\arrow{rr}{\mu_{M,\Sigma A}} && \Omega\big(M\vee\Sigma A\big) \arrow[shift left]{rr}{\Omega\coll_{\Sigma A}} && \Omega M \;.\arrow[shift left,dashed]{ll}{\Omega\iota_M}
    \end{tikzcd}
\end{equation}
    Moreover, this fibration of $H$-spaces has a section $\Omega\iota_M$, so it is trivial, \red{i.e.\ there is a homotopy equivalence $\Omega\big(M\vee\Sigma A\big)
\simeq \Omega\Sigma\big(A\wedge(\Omega M)_+\big)\times\Omega M$.
    }
\end{lemma}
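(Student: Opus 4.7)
The plan is to first identify the homotopy fibre of $\coll_{\Sigma A}\colon M\vee\Sigma A\to M$ as a suspension, and then to recognise $\mu_{M,\Sigma A}$ as the composite of the James equivalence with the inclusion of that fibre into $\Omega(M\vee\Sigma A)$. The triviality of the fibration as $H$-spaces is automatic once the section exists: since $\coll_{\Sigma A}\circ\iota_M=\Id_M$, we have $\Omega\coll_{\Sigma A}\circ\Omega\iota_M=\Id_{\Omega M}$, and $\mu_{M,\Sigma A}$ together with $\Omega\iota_M$ then assemble, by the standard argument for split fibrations of group-like $H$-spaces, into a weak equivalence $\Omega(M\vee\Sigma A)\simeq\Omega M\times\hofib(\Omega\coll_{\Sigma A})$.

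First I would identify the fibre $F:=\hofib(\coll_{\Sigma A})$ directly by forming the pullback along the path-loop fibration $PM\to M$:
\[
F=\big\{\,(z,p)\in(M\vee\Sigma A)\times PM\;:\;p(0)=\coll_{\Sigma A}(z),\;p(1)=*\,\big\}.
\]
Splitting by whether $z$ lies in $M$ or in $\Sigma A$, the $M$-piece is the full path space $PM$ (contractible), the $\Sigma A$-piece is $\Sigma A\times\Omega M$, and they are glued along $\{*\}\times\Omega M$. Collapsing the contractible piece yields
\[
F\;\simeq\;\big(\Sigma A\times\Omega M\big)/\big(\{*\}\times\Omega M\big)\;=\;\Sigma A\wedge(\Omega M)_+\;\simeq\;\Sigma\big(A\vee(A\wedge\Omega M)\big),
\]
using the standard splitting $X\wedge Y_+\simeq X\vee(X\wedge Y)$ together with $\Sigma X\wedge Y\simeq\Sigma(X\wedge Y)$. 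In particular $F$ is a suspension, so the James map induces a weak equivalence $\Omega\Sigma(A\vee(A\wedge\Omega M))\xrightarrow{\sim}\Omega F$.

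It then remains to check that $\mu_{M,\Sigma A}$ is this James equivalence post-composed with $\Omega F\hookrightarrow\Omega(M\vee\Sigma A)$. Since $\mu_{M,\Sigma A}$ is by construction the unique multiplicative extension of $x_A\vee[x_A,\Omega\iota_M]$, it suffices to check that the latter wedge map agrees with the composite $A\vee(A\wedge\Omega M)\xrightarrow{\eta}\Omega F\hookrightarrow\Omega(M\vee\Sigma A)$. On the summand $A$ both maps equal $x_A$ essentially by definition of the unit $\eta$ and of the fibre inclusion $\Sigma A\hookrightarrow F$ corresponding to the first wedge summand of $F\simeq\Sigma A\vee\Sigma(A\wedge\Omega M)$. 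On the summand $A\wedge\Omega M$ this is the classical relation between the Samelson product and the (generalised) Whitehead product / half-smash: the adjoint $\Sigma(A\wedge\Omega M)\to M\vee\Sigma A$ of $[x_A,\Omega\iota_M]$ must be identified with the inclusion of the second wedge summand of $F$ followed by $F\hookrightarrow M\vee\Sigma A$.

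The main obstacle is precisely this last Samelson-side identification: one must unwind that the commutator loop $[x_A(a),\Omega\iota_M(\gamma)]$ in $M\vee\Sigma A$ projects, under the deformation retraction of the $PM$-piece of $F$ to a point, to the class of $(a,\gamma)$ in the half-smash coordinate of $F$. Geometrically the commutator traces out a square whose horizontal edges are the $\Sigma A$-loop associated to $a$ and whose vertical edges are $\gamma$ and $\gamma^{-1}$; contracting the path-space piece collapses the vertical edges and produces exactly the $\Omega M$-translate of that $\Sigma A$-loop in the half-smash. This unwinding is the technical heart of \cite{Gray,Spencer}, and a rigorous writeup should either reproduce it explicitly or cite those sources; everything else in the proof reduces to the elementary fibre computation above and the universal property of the James multiplicative extension.
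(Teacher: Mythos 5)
The paper does not prove this Lemma; it states the result and cites Gray and Spencer, so there is no argument in the paper to compare against. Your proposal follows the standard modern strategy: form the strict fibre $F$ of $\coll_{\Sigma A}$ via the path--loop fibration, recognise it as the half-smash $\Sigma A\wedge(\Omega M)_+\simeq\Sigma\bigl(A\vee(A\wedge\Omega M)\bigr)$, and then use the section $\Omega\iota_M$ to split the fibration of group-like $H$-spaces. The fibre computation is correct, and the splitting step is the usual one.

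There is however a genuine gap, which you yourself flag: the assertion that, under the equivalence $F\simeq\Sigma\bigl(A\vee(A\wedge\Omega M)\bigr)$, the restriction of the composite $\Omega F\hookrightarrow\Omega(M\vee\Sigma A)$ to the wedge summand $A\wedge\Omega M$ is homotopic to the Samelson product $[x_A,\Omega\iota_M]$. That identification \emph{is} the content of the Gray--Spencer theorem; you only gesture at why it should hold, so as written your proposal reduces the lemma to a classical Samelson-vs-half-smash fact rather than proving it. Since the paper also stops at the citation, that is a reasonable posture; but a self-contained argument would need to carry out the unwinding of the commutator square across the contraction of the path-space piece of $F$. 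A small side-remark: the line ``$F$ is a suspension, so the James map induces $\Omega\Sigma(A\vee(A\wedge\Omega M))\xrightarrow{\sim}\Omega F$'' does not actually invoke James' theorem --- once you have $\Sigma\bigl(A\vee(A\wedge\Omega M)\bigr)\simeq F$, you simply apply $\Omega$; James' theorem about $J(X)\simeq\Omega\Sigma X$ plays no role at that step.
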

\red{
To prove this one shows for any $X$ that $\hofib(\coll_X\colon M\vee X\to M)\simeq X\wedge(\Omega M)_+$, and when $X=\Sigma A$ one uses the associativity of the smash product: $\Sigma A\wedge(\Omega M)_+\simeq\S^1\wedge A\wedge(\Omega M_+)\simeq\Sigma(A\wedge(\Omega M_+))$.
}

\subsubsection*{Computation of the homotopy type}
\red{Fix $S\subseteq\ul{n}$ and recall the wedge of spheres $\S_S$ from \eqref{eq-def:S-S}. Putting $A\coloneqq\bigvee_{i\in S}\S_i^{d-2}$ in Lemma~\ref{lem:gray-spencer}, so that $\S_S=\Sigma A$, 
}
we have
\[
    \Omega(M\vee\S_S)\simeq (\Omega\Sigma Z_S)\times\Omega M
\]
for
\begin{equation*}
    Z_S\coloneqq\big(\bigvee_{i\in S}\S_i^{d-2}\big)\wedge(\Omega M)_+\;\cong\;\bigvee_{i\in S}\big(\S_{i}^{d-2}\wedge\big(\Omega M)_+\big)\cong\bigvee_{i\in S}\Sigma^{d-2}_{i}\big(\Omega M)_+\simeq \bigvee_{i\in S}Z_i.
\end{equation*}
In $Z_i=Z_{\{i\}}\coloneqq\Sigma_i^{d-2}(\Omega M)_+$ \red{the index for the suspension means that it corresponds to smashing with the sphere $\S^{d-2}$ that has index $i$.
}

Now, since $\Omega\Sigma Z_S$ is a loop space on a wedge of suspensions, the Hilton--Milnor Theorem~\ref{thm:hm} (a generalisation of Lemma~\ref{lem:gray-spencer}) applies: there is a weak equivalence
\begin{equation}\label{eq:hm-cor}
    \begin{tikzcd}
    {hm_S}\colon\:\sideset{}{^{\circ}}\prod_{w\in \B(S)} \Omega\Sigma w(Z_i) \arrow[]{r}{\sim} & \Omega\Sigma Z_S
    \end{tikzcd}
\end{equation}
where $\B(S)$ is a \emph{Hall basis} (see Remark~\ref{rem:hall}) for the free Lie algebra $\L(x^i:i\in S)$ and the space $w(Z_i)$ is the iterated smash product of spaces $Z_i=\Sigma^{d-2}_{i}\big(\Omega M)_+$. Using the associativity again, and the identity $X_+\wedge Y_+=(X\times Y)_+$, for a Lie word $w\in\B(S)$ of length $l_w$ we have
\[
  w\big(Z_i\big) \cong \Sigma^{l_w(d-2)}\big((\Omega M)_+\big)^{\wedge l_w}\cong \Sigma^{l_w(d-2)}\big((\Omega M)^{\times l_w}\big)_+\cong \Sigma^{l_w(d-2)}\big(\Omega M^{\times l_w}\big)_+.
\]
Moreover, the Hilton--Milnor map $hm_S$ in \eqref{eq:hm-cor} is analogous to $\mu_{M,\Sigma A}$ (see \eqref{eq:hm-map}): it is the pointwise product of the multiplicative extensions $\wt{w}(z_i)$ of the Samelson products according to $w\in\B(S)$ of the maps $z_i\coloneqq\Omega\iota\circ\eta\colon Z_i\to\Omega\Sigma Z_i\to \Omega\Sigma Z_S$ similarly to the notation \eqref{eq:basic-x}.
\begin{theorem}\label{thm:thmB-1}
  For each $n\geq 1$ there is a weak equivalence
  \begin{equation}\label{eq:tofib-hm}
    \begin{tikzcd}
      \sideset{}{^{\circ}}\prod_{w\in \N\B(\ul{n})}\Omega\Sigma^{1+l_w(d-2)}\big(\Omega M^{\times l_w}\big)_+
      \arrow[]{rr}{\mu\circ hm}[swap]{\sim} && \tofib\big(\Omega(M\vee\S_{\bull}),\Omega\coll\big),
    \end{tikzcd}
  \end{equation}
  where $\N\B(\ul{n})\subseteq\B(\ul{n})$ consists of those words in which every $x^i$ for $i\in\ul{n}$ appears at least once. 
\end{theorem}
\begin{proof}
By the naturality of equivalences $\Omega(M\vee\S_S)\simeq (\Omega\Sigma Z_S)\times\Omega M$ from \eqref{eq:gray-cor} and $hm_S$ from \eqref{eq:hm-cor} there is an equivalence of contravariant $n$-cubes
\[\begin{tikzcd}
  \Big(\Omega M\times \sideset{}{^{\circ}}\prod_{w\in  \B(S)}\Omega\Sigma^{1+l_w(d-2)}\big(\Omega M^{\times l_w}\big)_+\,,
  \;\mathsf{proj}^k_S\Big)\arrow{rrr}{\Omega\iota_M\times\mu\circ hm} &&& \Big(\Omega\big(M\vee\S_{S}\big),\;\Omega\coll^k_S\Big).
\end{tikzcd}
\]
We will show that the total homotopy fibre of the first cube is the desired product over $\N\B(\ul{n})$. The map $\mathsf{proj}^k_S$ for $k\in S$ is a projection onto the factors corresponding to those words $w\in\B(S)$ which also belong to $\B(S\sm k)$. These are precisely the words in which $x^k$ does not appear.

For $T\subseteq S$ let $A_T$ be the product of factors $w\in\B(S)$ in which for each $i\in T$ the letters $x^i$ appears at least once. Now, one clearly has $\hofib(\mathsf{proj}^1_{01}\colon A_0\times A_{01}\to A_0)\simeq A_{01}$ and more generally:
\[
  \tofib_{S\subseteq\ul{n}}\Big(A_0\times\sideset{}{^{\circ}}\prod_{T\subseteq S} A_T, \mathsf{proj}\Big)\simeq A_{\ul{n}}.
\]
This follows by induction using the iterative description of total homotopy fibres \red{from Lemma~\ref{lem:iterated};     see~\cite[Ex.~5.5.5]{MV} for a proof.
}
\end{proof}

\subsubsection*{The proof}
\red{We can now prove Theorem~\ref{thm:W-iso}, which stated that for $n\geq 1$ there is a weak equivalence $\pF_{n+1}(M)\simeq\Omega^n\textstyle\prod^{\circ}_{w\in \N\B(\ul{n})}\Omega\Sigma^{1 + l_w(d-2)}(\Omega M^{\times l_w})_+$, that this space is $(n(d-3)-1)$-connected, and that there are explicit isomorphisms
}
\[\begin{tikzcd}[column sep=large]
    \Lie_{\pi_1M}(n)\arrow[]{r}{W}[swap]{\cong} & \pi_{n(d-2)}\tofib\big(\Omega(M\vee\S_{\bull}),\Omega\coll\big)\arrow[]{rr}{(\retr\circ\deriv\circ\chi)_*^{-1}}[swap]{\cong} && \pi_{n(d-3)}\pF_{n+1}(M).
\end{tikzcd}
\]
\red{
Since Corollary~\ref{cor:gather-h-e} gave the equivalence $\retr\circ\deriv\circ\chi$, and Theorem~\ref{thm:thmB-1} the equivalence $\mu\circ hm$, it remains to prove the following. In the next section we will replace $(\mu\circ hm)_*$ by a  slightly different, more convenient isomorphism~$W$.
}

\begin{prop}\label{prop:thmB-2}
  For each $n\geq 1$ the space $\tofib\Omega\big(M\vee\S_{\bull}\big)$ is $(n(d-2)-1)$-connected and the first non-trivial homotopy group admits an isomorphism
  \[\begin{tikzcd}[column sep=large]
    (\mu\circ hm)_*\colon\Lie_{\pi_1M}(n)\rar{\cong} & \pi_{n(d-2)}\tofib\Omega\big(M\vee\S_{\bull}\big).
\end{tikzcd}
  \]
\end{prop}
\begin{proof}
    Recall from Section~\ref{subsec-prelim:trees} that the group $\Lie_{\pi_1(M)}(n)\cong\Z[(\pi_1M)^{n}]^{(n-1)!}$ is generated by decorated trees $\Gamma^{g_{\ul{n}}}\in\Tree_{\pi_1M}(n)$ consisting of a tree $\Gamma$ together with decorations $g_i\in\pi_1M$, $i\in\ul{n}$.
    
    Since $\Sigma^kX$ is $(k-1)$-connected for any $X$, the group $\pi_{k}\big(\Sigma^{1+l_w(d-2)}X\big)$ is trivial for $k<n(d-2)$, if $l_w\geq n$. Hence, by Theorem~\ref{thm:thmB-1} we immediately have that $\tofib(M\vee\S_{\bull})$ is $(n(d-2)-1)$-connected and the first non-trivial homotopy group is
    \[
      \pi_{n(d-2)}\tofib\Omega\big(M\vee\S_{\bull}\big)\cong
      \bigoplus_{\substack{w\in \N\B(\ul{n}),\\ l_w=n}}\pi_{n(d-2)}\Omega\Sigma^{1+n(d-2)}\big(\Omega M^{\times n}\big)_+.
    \]
    Note that this has precisely $(n-1)!$ summands. For $k\coloneqq1+n(d-2)\geq2$ we have
    \begin{equation}\label{eq:hur-homology}
      \pi_k\big(\Sigma^k\big(\Omega M^{\times n}\big)_+\big)\cong H_k\big(\Sigma^k\big(\Omega M^{\times n}\big)_+\big)\cong
      \wt{H}_0\big(\big(\Omega M^{\times n}\big)_+\big)=H_0\big(\Omega M^{\times n}\big)=\Z\big[(\pi_1M)^{n}\big],
    \end{equation}
    by the Hurewicz theorem and a basic homology computation, finishing the proof.
\end{proof}

\subsection{Explicit generators}\label{subsec:gen-maps}

\subsubsection*{Description of $W$}
In this section we give a set of generators of $\pi_{n(d-2)}\tofib\Omega(M\vee\S_{\bull})$, 
\red{i.e.\ we write down an isomorphism $W$ replacing $(\mu\circ hm)_*$ from Proposition~\ref{prop:thmB-2}, and proving Lemma~\ref{lem:W-iso'}.
}

Firstly, recall from \eqref{eq:htpy-gps-tofib} that the map forgetting homotopies induces an injection
 \[
   \forg_*\colon\pi_{n(d-2)}\tofib\Omega\big(M\vee\S_{\bull}\big)\hra\pi_{n(d-2)}\Omega\big(M\vee\S_{\ul{n}}\big).
 \]
\red{
The desired generators are the homotopy classes of the {generating maps of the group on the right} canonically extended (by null-homotoping its image in all other vertices of the cube) to the total homotopy fibre on the left. We next determine those generating maps $\S^{n(d-2)}\to\Omega(M\vee\S_{\ul{n}})$.

Secondly, 
}
observe that the generator of $\pi_k(\Sigma^k(\Omega M^{\times n})_+)$ corresponding to $(g_1,\dots,g_n)\in(\pi_1M)^n$ under \eqref{eq:hur-homology} is represented by the $k$-fold suspension $\Sigma^k\prod\gamma_i$ of the based map $\prod\gamma_i\colon\S^0\to(\Omega M^{\times n})_+$ which picks out the loop $\gamma_1\times\dots\times\gamma_n\in\Omega M^{\times n}$, where $g_i=[\gamma_i]\in\pi_1M$.

Unravelling the definitions, the composite
\[\begin{tikzcd}
     \sideset{}{^{\circ}}\prod_{w\in \B(\ul{n})}\Omega\Sigma w\big(Z_i\big) \arrow[]{rr}{hm_{\ul{n}}}[swap]{\sim} &&
     \Omega\Sigma Z_{\ul{n}}
     \arrow[]{rr}{\mu_{M,\S_{\ul{n}}}} &&
     \Omega\big(M\vee\S_{\ul{n}}\big)
   \end{tikzcd}
\]
is the pointwise product of the multiplicative extensions of the Samelson products according to $w\in\B(S)$ of the maps $\mu_{M,\S_i}\circ z_i$ which can be equivalently written as the composites
\[\begin{tikzcd}
       Z_i \arrow{rr}{\eta_{Z_i}} &&
       \Omega\Sigma Z_i \arrow{rr}{\mu_{M,\S_i}} &&
       \Omega\big(M\vee\S_S\big).
   \end{tikzcd}
\]
But by definition of $\mu$ this is equal to $x_i\vee[x_i,\Omega\iota_M]$ from \eqref{eq-def:mu}. Recall that the canonical map:
\begin{equation}\label{eq:new-xi}
  x_i\colon\S^{d-2}_i\to\Omega\big(M\vee\S_S\big)
\end{equation}
was defined in \eqref{eq:basic-x} and the Samelson product $[x_i,\Omega\iota_M]\colon \S^{d-2}_i\wedge\Omega M\to\Omega(M\vee\S_S)$ in Appendix~\ref{app:samelson}, and that we take their wedge sum since $Z_i\coloneqq\Sigma^{d-2}_i(\Omega M)_+\cong\S^{d-2}_i\vee(\S^{d-2}_i\wedge\Omega M)$, see~\eqref{eq:smash+}. In more detail, the value of $x_i$
at $\vec{t}\in\S^{d-2}$ is the loop $\eta_{\S_i}(\vec{t})\subseteq\S_i\subseteq M\vee\S_S$ (as in Figure~\ref{fig:foliated-sphere} for $d=3$), while $[x_i,\Omega\iota_M]$ sends $\vec{t}\wedge\gamma\in\S^{d-2}_{i}\wedge\Omega M$ to the commutator of the loops $x_i(\vec{t})$ and $\iota_M\gamma\colon\S^1\to M\hra M\vee\S_S$ which we also simply denote by $\gamma$.

Therefore, the desired generating maps are of the shape
\begin{equation}\label{eq:generators}
    \begin{tikzcd}[column sep=1.4cm]
      \S^{n(d-2)}\arrow[]{r}{\wh{q}_\gamma} &
      \Omega\Sigma w(Z_i)\arrow[]{rr}{\wt{w}(x_i\vee[x_{i},\Omega\iota_M])} && \Omega\big(M\vee\S_{\ul{n}}\big),
\end{tikzcd}
\end{equation}
where we use the adjoint $\wh{q}_\gamma$ of $\Sigma^{1+n(d-2)}(\prod\gamma_i)$ which is for $\theta\in\S^1$ and $\vec{t}_i\in\S^{d-2}_i$ given by
\begin{align*}
  \Sigma^{1+n(d-2)}(\textstyle\prod\gamma_i)\colon\:\S^{1+n(d-2)}\to\Sigma^{1+n(d-2)}\big(\Omega M)^{\times n}_+&\cong\Sigma^{1+n(d-2)}((\Omega M)_+\big)^{\wedge n}\cong\Sigma w(Z_i)\\
  \theta\wedge\bigwedge_{1\leq i\leq n}\vec{t}_i \mapsto \theta\wedge\bigwedge_{1\leq i\leq n}\vec{t}_i\wedge(\gamma_1\times\dots\times\gamma_n) &\equiv \theta\wedge\bigwedge_{1\leq i\leq n}\vec{t}_i\wedge\bigwedge_{1\leq i\leq n}\gamma_i\equiv\theta\wedge\bigwedge_w\vec{t}_i\wedge\gamma_i.
\end{align*}
Thus, we have $\wh{q}_{\gamma}(\wedge\vec{t}_i)\coloneqq\theta\mapsto \theta\wedge\bigwedge_w\vec{t}_i\wedge\gamma_i$, where the wedge is according to the permutation given by the word $w$. Now, by the definition $\wt{w}(\theta\mapsto t_\theta\wedge y_\theta)\coloneqq w(y_\theta)_{t_\theta}$ of a multiplicative extension in \eqref{eq:mult-ext}, the composite \eqref{eq:generators} is given by
\begin{align*}
  \wt{w}(x_i\vee[x_i,\Omega\iota_M])\Big(\wh{q}_{\gamma}(\wedge\vec{t}_i)\Big)
  &=\theta\mapsto w\big(x_i\vee[x_i,\Omega\iota_M]\big)\Big(\bigwedge_w\vec{t}_i\wedge\gamma_i\Big)_\theta=\,\theta\mapsto w\big(x_i(\vec{t}_i)+[x_i(\vec{t}_i),\gamma_i]\big)_\theta\;.
\end{align*}
Let us now simplify these generating maps.
For a decorated tree $\Gamma^{g_{\ul{n}}}\in\Tree_{\pi_1M}(n)$ we define in \eqref{eq:samelson-Gamma-sphere} the corresponding Samelson product
\begin{equation}\label{eq:samelson-gen-x-i}
    {\Gamma}\big(x_i^{\gamma_i}\big)\colon\:\S^{n(d-2)}\to\Omega\big(M\vee\S_{\ul{n}}\big),
  \end{equation}
where $x_i^{\gamma_i}\colon\S^{d-2}_i\to\Omega\big(M\vee\S_{\ul{n}}\big)$ is defined as the pointwise conjugate $\vec{t}\;\mapsto\; x_i^{\gamma_i}(\vec{t})\coloneqq\gamma_i\cdot x_i(\vec{t})\cdot \gamma_i^{-1}$.

The following is basically Lemma~\ref{lem:W-iso'}.
\begin{prop}\label{prop:LemB'}
  The map $W\colon\Lie_{\pi_1M}(n)\to\pi_{n(d-2)}\tofib\Omega(M\vee\S_{\bull})$
  which takes $\Gamma^{g_{\ul{n}}}$ to the canonical extension to the total fibre of the Samelson product ${\Gamma}\big(x_i^{\gamma_i}\big)$ is an isomorphism.
\end{prop}
\begin{proof}
  We define $W$ on decorated trees as the canonical extensions of \eqref{eq:samelson-gen-x-i} 
  \red{to the total homotopy fibre, 
  }
  and then linearly extend to $\Z[\Tree_{\pi_1M}(n)]$. Thanks to the graded antisymmetry and Jacobi relations for Samelson products this vanishes on the relations $AS,IHX$ -- the check is the same as in the proof of Lemma~\ref{lem:lie-tree} -- so $W$ is well defined. We now show it is surjective.

  It is enough to check that any $w(x_i+[x_i,\gamma_i])$ is in the image of $W$. Using the linearity of Samelson products (see Appendix~\ref{app:samelson}), this is equal to the sum $\sum_{\sigma\subseteq \ul{n}}w^\sigma$ of Samelson products $w^\sigma$ according to the word $w$ of the maps $[x_i,\gamma_i]$ for $i\in\sigma$, and $x_i$ for $i\notin\sigma$. Using the homotopy equivalence
  \[
    [x_i,\gamma_i]\simeq x_i-x_i^{\gamma_i}\colon\;\S^{n(d-2)}\to\Omega(M\vee\S_{n})
  \]
  from \eqref{eq:samelson-identity} and the linearity of the Samelson bracket once more, each $w^\sigma$ expands as
  \[
    w^\sigma\simeq\sum_{\sigma'\subseteq \sigma}(-1)^{|\sigma'|}w(x_i^{\gamma'_i})
  \]
  where $\gamma'_i=\gamma_i$ for $i\in\sigma'$, and $\gamma_i=1$ for $i\notin\sigma'$. Now by Lemma~\ref{lem:samelson-tree} the map $w(x_i^{\gamma'_i})$ is homotopic to $\Gamma(x_i^{\gamma'_i})$ for $\Gamma=\omega_2^{-1}(w)$ (see Lemma~\ref{lem:lie-tree}), so all generators are in the image of $W$.

  The inverse $W^{-1}$ is obtained similarly, by defining $[w(x_i^{\gamma'_i})]\mapsto\Gamma^{g'_{\ul{n}}}$, extending linearly and projecting to $\Lie_{\pi_1M}(n)$. We then immediately have $W\circ W^{-1}=\Id$ and $W^{-1}\circ W=\Id$.
\end{proof}

\subsubsection*{Description of $\retr_*^{-1}$}
At this point it is not yet clear what the generating maps $\S^{n(d-3)}\to\pF_{n+1}(M)$ are. We would need to find an explicit inverse of the isomorphism 
\[
    (\retr\circ\deriv\circ\chi)_*\colon\pi_{n(d-3)}\pF_{n+1}(M)\to\pi_{n(d-2)}\tofib\Omega(M\vee\S_{\bull}),
\]
for the equivalences $\retr$, $\deriv$ and $\chi$ from Corollary~\ref{cor:gather-h-e}.
At least for $\retr\colon\tofib\Omega M_{\bull}\to\tofib\Omega(M\vee\S_{\bull})$ \red{we shall now describe the map $\retr_*^{-1}$, i.e.\ the generating maps of $\pi_{n(d-2)}\tofib\Omega M_{\bull}$. For inverting $\deriv_*$ and $\chi_*$ see Remark~\ref{rem:inverting} below.
}

Firstly, we can pick an explicit lift
\begin{equation}\label{eq:m-i}
    m_i\colon\S^{d-2}\to\Omega M_{\ul{n}}
\end{equation}
of the map $x_i\colon\S^{d-2}\to\Omega(M\vee\S_{\ul{n}})$: namely, the $(d-2)$-parameter `swing of a lasso' around the $d$-ball $\ball_i\subseteq M$, as in Figure~\ref{fig:deg-2-m-i} for $d=3$. Indeed, using the definition of $\retr$ in Lemma~\ref{lem:retr} this family of loops covers the $(d-1)$-sphere $\S_1$ exactly once, so $\retr\circ m_i\simeq x_i$ (cf.\ Figure~\ref{fig:foliated-sphere}).
\begin{figure}[!htbp]
    \centering
    \includegraphics[width=0.9\linewidth]{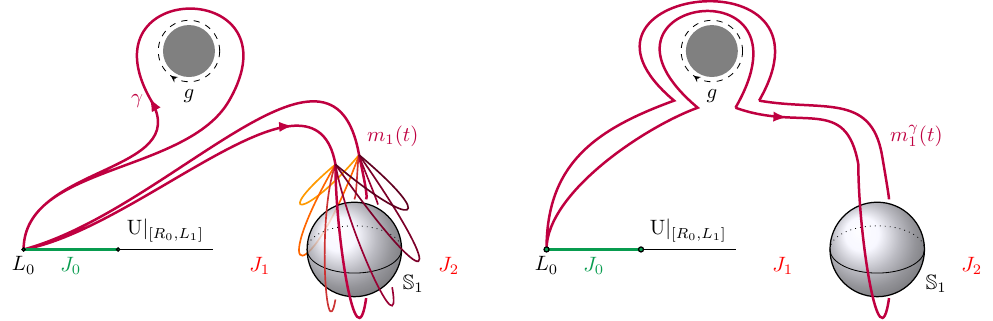}
    \caption[{The map $\protect m_1^\gamma\colon\protect \S^{d-2}\to\protect\Omega M_n$.}]{Both pictures show a part of a $3$-manifold $M$ with the basepoint $L_0$. \textit{Left}: A representative $\gamma$ of $1\neq g\in\pi_1(M)$ is depicted as a loop around the grey `hole in $M$', and the $1$-parameter family $m_1(t)\in\Omega M_1$ for several $t\in\S^1$ is depicted by a gradient of colours. \textit{Right}: One value $m_1^{\gamma}(t)=\gamma m_1(t)\gamma^{-1}\in\Omega M_1$.}
    \label{fig:deg-2-m-i}
\end{figure}

Further, $m_i^{-1}\colon\S^{d-2}\to\Omega M_{\ul{n}}$ can be obtained by reversing orientations of all loops in the family or, equivalently, by performing a twist as in Figure~\ref{fig:deg-2-twist-m-i}. Moreover, any $g_i\in\pi_1M$ can be realised by a loop $\gamma_i$ in $M$ that misses all $\ball_1,\dots,\ball_n$, so defines $\gamma_i\colon\S^0\to\Omega M_{\ul{n}}$. Thus, we can define $m_i^{\varepsilon_i\gamma_i}\colon\S^{d-2}\to\Omega M_{\ul{n}}$, the pointwise conjugate of $m_i^{\varepsilon_i}$ by $\gamma_i$, see the same figure.
\begin{figure}[!htbp]
    \centering
    \includegraphics[width=0.47\linewidth]{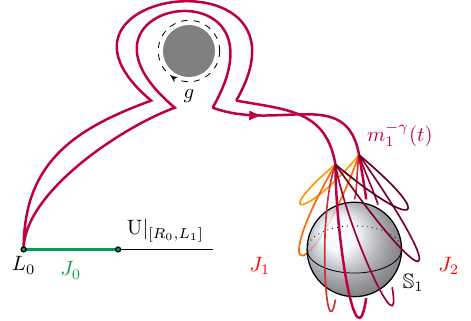}
    \caption[{The family $\protect m_1^{-\gamma}\colon\S^{d-2}\to\Omega M_{n}$.}]{The family $m_1^{-\gamma}\colon\S^1\to\Omega M_1$.}
    \label{fig:deg-2-twist-m-i}
\end{figure}

Finally, as the target is a loop space, we have their Samelson products (see Figure~\ref{fig:deg-3-m-i} for $n=2$):
\begin{equation}\label{eq:samelson-gen-m-i}
  \Gamma(m_i^{\varepsilon_i\gamma_i})\colon\S^{n(d-2)}\to\Omega M_{\ul{n}}\;.
\end{equation}
Now, since $\forg_*\colon\pi_{n(d-2)}\tofib\Omega M_{\bull}\to\pi_{n(d-2)}\Omega M_{\ul{n}}$ is injective in this setting as well, the generators of the source group are represented by the extensions of the maps \eqref{eq:samelson-gen-m-i} to the total fibre using the canonical null-homotopies of $m_i$ -- by `pulling up' through the ball $\ball_i$.
\begin{prop}\label{prop:retr-inverted}
    The isomorphism $\retr_*^{-1}\colon\pi_{n(d-2)}\tofib\Omega(M\vee\S_{\bull})\to\pi_{n(d-2)}\tofib\Omega M_{\bull}$ maps $\Gamma(x_i^{\gamma_i})$ from \eqref{eq:samelson-gen-x-i} to $\Gamma(m_i^{\gamma_i})$ from \eqref{eq:samelson-gen-m-i}. 
    Thus, $\pi_{n(d-2)}\tofib\Omega M_{\bull}$ is generated by $\Gamma(m_i^{\gamma_i})$, where $\Gamma^{g_{\ul{n}}}$ vary over generators of $\Lie_{\pi_1M}(n)$, and $\gamma_i$ is a representative of $g_i$, $i\in\ul{n}$.
\end{prop}
\begin{figure}[!htbp]
    \centering
    \includegraphics[width=0.75\linewidth]{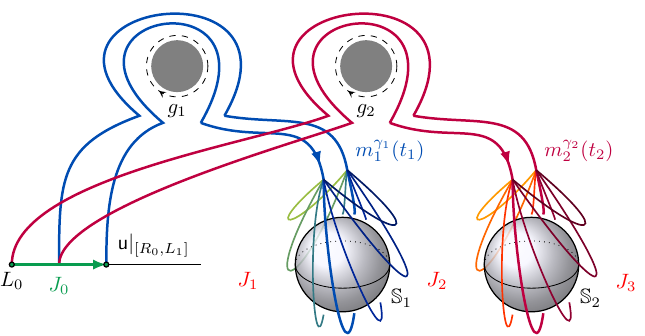}
    \caption[{The map $\protect[m_1^{\gamma_1},m_2^{\gamma_2}]\colon\S^{2(d-2)}\to\protect\Omega M_n$.}]{ The value of $\tree(m_1^{\gamma_1},m_2^{\gamma_2})=[m_1^{\gamma_1},m_2^{\gamma_2}]\colon\S^2\to\Omega M_{12}$ at $(t_1,t_2)\in \S^1\wedge \S^1=\S^2$ is the commutator of the depicted loops $m_1^{\gamma_1}(t_1)$ and $m_2^{\gamma_2}(t_2)$.}
    \label{fig:deg-3-m-i}
\end{figure}

\subsection{The strategy and examples}\label{subsec:strategy}
\subsubsection*{The strategy}
Suppose that we want to check whether the homotopy class of a given map $f\colon\S^{n(d-3)}\to\pF_{n+1}(M)$ corresponds to some class $\upvarepsilon\Gamma^{g_{\ul{n}}}\in\Lie_{\pi_1M}(n)$ under the isomorphism
\[
  W^{-1}(\retr\deriv\chi)_*\colon\pi_{n(d-3)}\pF_{n+1}(M)\to\Lie_{\pi_1M}(n)
\]
from Theorem~\ref{thm:W-iso}, with $\upvarepsilon\in\{\pm1\}$. By the \red{previous section, 
}
this is equivalent to considering
\[
  \deriv\chi(f)\colon\,\S^{n(d-3)}\to\Omega^{n}\tofib\Omega M_{\bull}
\]
and checking that the homotopy class of its adjoint $\S^{n(d-2)}\to\tofib\Omega M_{\bull}$ is the class of the canonical extension of $\upvarepsilon \Gamma(m_i^{g_i})$, see Propositions~\ref{prop:LemB'} and~\ref{prop:retr-inverted}. Actually, we saw that is instead enough to simply check this for the initial vertex (recall that $\forg\deriv\chi(f)\;=\;\big(\deriv\chi f\big)^{\ul{n}} \;=\;\deriv_{\ul{n}}\left(\chi f\right)^{\ul{n}}$):
\[
  \deriv_{\ul{n}}\left(\chi f\right)^{\ul{n}}\;\simeq\; \upvarepsilon \Gamma(m_i^{\gamma_i})\,\colon\quad \S^{n(d-2)}\to\Omega M_{\ul{n}}\;.
\]
We now claim $\upvarepsilon \Gamma(m_i^{\gamma_i})\simeq \Gamma(m_i^{\varepsilon_i\gamma_i})$, for any tuple $\varepsilon_i\in\{\pm1\}$ such that $\bigsqcap_{i=1}^{n}\varepsilon_i=\upvarepsilon$. Namely, define
\[
  m_i^{\varepsilon_i\gamma_i}(\vec{t})\coloneqq\gamma_i\cdot m_i(\vec{t})^{\varepsilon_i}\cdot\gamma_i^{-1},
\]
where $m_i(\vec{t})^{-1}$ is the inverse of the loop $m_i(\vec{t})\in\Omega M_{\ul{n}}$. Then $m_i^{\varepsilon_i\gamma_i}\simeq \varepsilon_i m_i^{\gamma_i}$ are homotopic maps by the Eckmann--Hilton argument, and $\Gamma(m_i^{\varepsilon_i\gamma_i})\simeq\upvarepsilon \Gamma(m_i^{\gamma_i})$ by the bilinearity of Samelson products.

To prove Theorem~\ref{thm:main-thm} in Section~\ref{sec:main} we will show that for $d=3$ and $f_\TG=\emap_{n+1}(\psi(\TG))\in\pF_{n+1}(M)$, the point coming from a thickened grope $\TG\colon\ball_\Gamma\to M$ on $\U$, we have
\begin{equation}\label{eq:final-goal}
  \deriv_{\ul{n}}\left(\chi f_{\TG}\right)^{\ul{n}}\;\simeq\; \Gamma(m_i^{\varepsilon_i\gamma_i})\,\colon\quad
  \S^{n(d-2)}\to\Omega M_{\ul{n}}
\end{equation}
where $(\varepsilon_i, \gamma_i)_{\ul{n}}$ is the signed decoration of $\TG$.

The proof will be based on the fact that both $\psi(\TG)$ and Samelson products are constructed inductively. For the former see Section~\ref{sec:gropes-punc-knots} and for the latter see Lemma~\ref{lem:samelson-inductive}.

Furthermore, for the proof of Theorem~\ref{thm:main-extended} we will use that $\sum_{l=1}^N\upvarepsilon_l\Gamma_l^{g^l_{\ul{n}}}$ is represented by the following pointwise product -- again by the Eckmann--Hilton argument:
\[\prod_{1\leq i\leq N}\Gamma_l(m_i^{\varepsilon_i^l\gamma_i^l})\colon\:
  \S^{n(d-2)}\to\Omega M_{\ul{n}},\quad \vec{t}\mapsto \Gamma_1(m_i^{\varepsilon_i^1\gamma_i^1})(\vec{t})\cdots \Gamma_N(m_i^{\varepsilon_i^N\gamma_i^N})(\vec{t}).
\]
We note that the same strategy should work for appropriate notion of gropes for any $d\geq4$, as mentioned in Remark~\ref{rem:other-d}. Such a generalisation of gropes can be inspired by the following observations.

\begin{remm}\label{rem:inverting}
    Let us try to directly invert the map $\deriv_{\ul{n}}\colon\FF^{n+1}_{\ul{n}}\to\Omega M_{\ul{n}}$, which closes up $J_0\hra M_{\ul{n}}$ into a loop based at $L_0$ (the tangent vectors are forgotten, see Remark~\ref{rem:straight}). \red{More precisely, let us find maps $\Gamma(\varphi_i^{\gamma_i})\colon\S^{n(d-2)}\to\FF^{n+1}_{\ul{n}}$ such that $\deriv_{\ul{n}}\circ\Gamma(\varphi_i^{\gamma_i})$ is homotopic to $\Gamma(m_i^{\gamma_i})\colon\S^{n(d-2)}\to\Omega M_{\ul{n}}$.
    }
\begin{enumerate}
    \item Firstly, there is an obvious lift $\varphi_i\colon\S^{d-2}\to\FF^{n+1}_{\ul{n}}$ of $m_i\colon\S^{d-2}\to\Omega M_{\ul{n}}$ by ensuring that each $m_i(\vec{t})\in\Omega M_{\ul{n}}$ is embedded and changing it to an arc from $L_0$ to $R_0$. See the left hand side of Figure~\ref{fig:deg-2-phi-i}. We can also ensure that different $\varphi_i$ for $i\in\ul{n}$ are mutually disjoint.

    \item Moreover, $\gamma_i$ can be chosen to be embedded in $M_{\ul{n}}$, and we may define $\varphi_i^{\gamma_i}\colon\S^{d-2}\to\FF^{n+1}_{\ul{n}}$ as an \emph{embedded conjugate}, by slightly pushing copies of $\gamma_i$ off of each other. See the right hand side of Figure~\ref{fig:deg-2-phi-i} for $d=3$.
\begin{figure}[!htbp]
    \centering
    \includegraphics[width=0.9\linewidth]{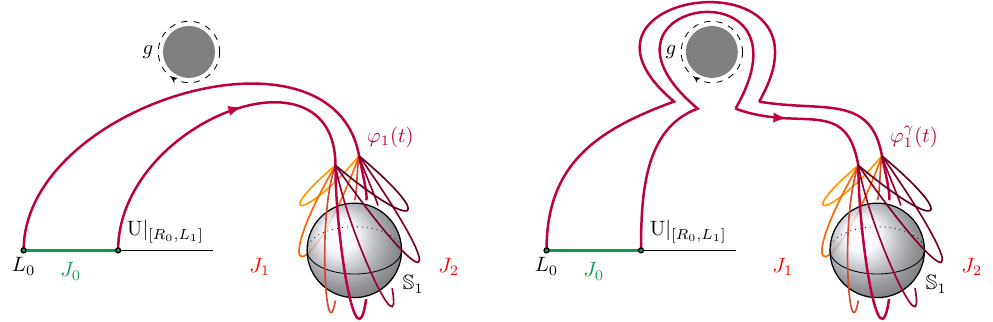}
    \caption[{The map $\protect\varphi_1^{\gamma}\colon\S^{d-2}\to\protect\FF^{n+1}_n$.}]{The $1$-parameter families of arcs $\varphi_1(t)\in\FF^2_1$ and $\varphi_1^{\gamma}(t)\in\FF^2_1$ for several values of $t\in \S^1$.}
    \label{fig:deg-2-phi-i}
\end{figure}

    \item Next, we would need to define Samelson products $\Gamma(\varphi_i^{\gamma_i})$ using some \emph{embedded analogue of commutators}. However, this is not immediate: $\FF^{n+1}_{\ul{n}}$ is not an $H$-space in an obvious way, as concatenation of arcs $J_0\hra M_{\ul{n}}$ might result in a non-embedded arc. 
    
    \item \red{Assuming (3) has been done, 
    }
    the generators of $\pi_{n(d-2)}\tofib(\FF^{n+1}_{\bull},l)$ would be canonical extensions of $\Gamma(\varphi_i^{\gamma_i})$, again by \eqref{eq:htpy-gps-tofib}.
    
    \item \red{Assuming (3) has been done, 
    }
    the map $\chi^{-1}\colon\Omega^{n}\tofib(\FF^{n+1}_{\bull},l)\to\tofib(\FF^{n+1}_{\bull},r)$ would be easy: it is given as the composition of the map $\forg$ which forgets all null-homotopies, and the inclusion $\Omega^{n}\FF^{n+1}_{\ul{n}}\hra\tofib(\FF^{n+1}_{\bull},r)$.
\end{enumerate}
    We do not, however, pursue defining such embedded commutators \red{as in (3) 
    }
    directly, as we will not need them. Namely, for $d=3$ we will in Section~\ref{sec:gropes-punc-knots} directly construct points $\psi(\TG)\in\H_n(M)$ using \emph{gropes} -- which can indeed be seen as embedded commutators, see Remark~\ref{rem:emb-comm-borr} -- and then prove that $\chi\emap_{n+1}(\psi(\TG))\colon\S^{n(d-2)}\to\tofib(\FF^{n+1}_{\bull},l)$ are generators, as explained in the strategy above.
\end{remm}

\subsubsection*{Examples}
\red{We finish this section by 
}
illustrating our computations so far and the strategy of proof of Theorem~\ref{thm:main-thm} on several examples.

\begin{example}[$n=1$]\label{ex:n=2-part4}
    \red{In Examples~\ref{ex:n=2-part1},~\ref{ex:n=2-part2} and~\ref{ex:n=2-part3} we saw that $\pF_2(M)\simeq\Omega\hofib(l^1_\emptyset)$ and $l^1_\emptyset$ corresponds to $\Omega\S\lambda^1_\emptyset$ under $\deriv_S\colon\Embp(J_0,M_{0S})\simeq\Omega\S M_S$ and $\lambda^1_\emptyset$ to $\coll^1_\emptyset\colon M\vee\S_1\to M$ under $M_S\simeq M\vee\S_S$. 
    }
    Now, $\hofib(\Omega\coll^1_\emptyset)\simeq\Omega\Sigma_1^{d-1}(\Omega M)_+\simeq\Omega(\S_1\vee(\S_1\wedge\Omega M))$ by the Grey--Spencer Lemma~\ref{lem:gray-spencer}, so we have
    \begin{align*}
        \pi_{d-3}\pF_2(M)
        &\cong\pi_{d-2}\hofib(l^1_\emptyset)
        \cong\pi_{d-2}\hofib(\Omega\lambda^1_\emptyset)
        \cong\pi_{d-2}\hofib(\Omega\coll^1_\emptyset)
        \cong\pi_{d-2}\Omega(\S^{d-1}\vee(\S^{d-1}\wedge\Omega M)))\\
        &\cong\pi_{d-1}\S^{d-1}\oplus\pi_{d-1}\left(\Sigma^{d-1}\Omega M\right)\\
        &\cong\Z\big\{x_1\big\}
        \oplus\bigoplus_{1\neq g\in\pi_1M}\Z\big\{[x_1,g]\big\}
        \xrightarrow{W^{-1}}\Lie_{\pi_1M}(1)\cong \Z[\pi_1M]
    \end{align*}
    using the isomorphism $W^{-1} x_1=\gchord{1},\;
    W^{-1}[x_1,g_1]=W^{-1} x_1 -W^{-1} x_1^{g_1} =\gchord{1}-\gchord{g_1}$.
    
    The generators for $\hofib(\Omega\coll^1_\emptyset)$ are the canonical extensions to this homotopy fibre of the maps $x_1^\gamma\colon\S^{d-2}\to\Omega(M\vee\S_1)$ (see~\eqref{eq:new-xi} and~\eqref{eq:samelson-gen-x-i}), while for $\hofib(\Omega\lambda^1_\emptyset)$ they are the canonical extensions of $m_1^\gamma\colon\S^{d-2}\to\Omega M_{12}$ (see~\eqref{eq:m-i} and~\eqref{eq:samelson-gen-m-i}), for varying $[\gamma]\in\pi_1M$. See Figure~\ref{fig:deg-2-m-i}. 
    
    Finally, the generators for $\hofib(l^1_\emptyset)$ are the canonical extensions of $\varphi_1^{\gamma}\colon\S^{d-2}\to\FF^2_1$, see Remark~\ref{rem:inverting} and Figure~\ref{fig:deg-2-phi-i}). For example, the extension of $\varphi_1^{\gamma}$ to $\hofib(l^1_\emptyset)$ uses the family of obvious null-homotopies of $l^1_\emptyset(\varphi_1^{\gamma})$ across the $d$-ball $\ball_{1}$.
\end{example}

\begin{example}[$n=2$]\label{ex:f-for-n=3}
For the punctured $4$-cube $\mc{E}^3_{\bull}$, we can draw its top subcube $\mc{E}^3_{\bull\cup 3}$:
\[\begin{tikzcd}[column sep=tiny,row sep=small]
        & \FF^3_{12}\arrow[dashed]{rr} &&
        \Emb_\partial(I\sm J_{123},M) \arrow{rr}\arrow[from=dd] &&
        \Emb_\partial(I\sm J_{0123},M)
        \\
        \FF^3_{2}\arrow[dashed]{rr} &&
        \Emb_\partial(I\sm J_{23},M)\arrow[crossing over]{rr}\arrow{ur} &&
        \Emb_\partial(I\sm J_{023},M) \arrow{ur}
        \\
        & \FF^3_{1}\arrow[dashed]{rr} &&
        \Emb_\partial(I\sm J_{13},M) \arrow{rr} &&
        \Emb_\partial(I\sm J_{013},M)  \arrow{uu}
        \\
        \FF^3_{\emptyset}\arrow[dashed]{rr} &&
        \Emb_\partial(I\sm J_{3},M) \arrow{ur}\arrow{rr}\arrow{uu} &&
        \Emb_\partial(I\sm J_{03},M) \arrow{ur} \arrow[crossing over]{uu} &
  \end{tikzcd}
\]

where the dashed arrows are fibres.

We apply Corollary~\ref{cor:gather-h-e} and Theorem~\ref{thm:thmB-1} to get homotopy equivalences
\[
\pF_3(M)=\tofib\left(
\begin{tikzcd}
    \FF^3_1\arrow{r} & \FF^3_{12}\\
    \FF^3_\emptyset\arrow{r}\arrow{u} & \FF^3_2\arrow{u}
\end{tikzcd}\right)
\xrightarrow{\chi} \Omega^2\tofib\left(\begin{tikzcd}[column sep=scriptsize]
    \FF^3_1\arrow{d} & \FF^3_{12}\arrow{l}\arrow{d}\\
    \FF^3_\emptyset & \FF^3_2\arrow{l}
\end{tikzcd}\right)
\xrightarrow{\deriv}
\Omega^2\tofib\left(\begin{tikzcd}[column sep=scriptsize]
    \Omega M_{1} \arrow{d} & \Omega M_{12} \arrow{d}\arrow{l}\\
    \Omega M & \Omega M_{2}\arrow{l}
\end{tikzcd}\right)
\]
\[
\xrightarrow{\retr}
\Omega^2\tofib\left(\begin{tikzcd}[column sep=tiny]
    \Omega(M\vee\S_1)\arrow{d} & \Omega(M\vee\S_1\vee\S_2)\arrow{d}\arrow{l}\\
    \Omega M & \Omega(M\vee\S_2)\arrow{l}
\end{tikzcd}\right)
    \simeq\Omega^2\sideset{}{^{\circ}}\prod_{w\in \N\B(\{1,2\})} \Omega\Sigma^{1+l_w(d-2)}(\Omega M^{\times l_w})_+
\]

Hence, the first non-trivial homotopy group is $\Lie_{\pi_1M}(2)$ in degree $2(d-2)$, and for the last total fibre in the first row the generating maps are the canonical extensions of the Samelson products
\begin{equation}\label{eq:ex-gen}
    \forg(\retr)^{-1}_*W\left(\gtree{g_1}{g_2}\right)=\tree(m_i^{\gamma_i}) \: = \:\left[m_1^{g_1},m_2^{g_2}\right]\:\in\pi_2\Omega M_{12}.\qedhere
\end{equation}
\end{example}

\begin{example}[Sketch of the proof of Theorem~\ref{thm:main-thm} for $n=2$]\label{ex:proof-main}
    Assume $\ut_2(\TG)=\gtree{g_1}{g_2}$ for a degree $2$ thickened grope $\TG$ in a $3$-manifold $M$, using the underling forest map from Proposition~\ref{prop:uf-map}. The construction in Section~\ref{subsec:grope-points} produces the point
    \[\emap_3\psi(\TG)=\left(
    \begin{tikzcd}[column sep=tiny,row sep=tiny]
        \Psi^\TG(-)_{J_0}^{\{1\}} & \Psi^\TG(-)_{J_0}^{\{12\}}\\
        \TG(a_0^\perp) & \Psi^\TG(-)_{J_0}^{\{2\}}
    \end{tikzcd}\right)\:\in\:\pF_3(M)
    \]
    and so we obtain a class $\forg(\deriv\chi\emap_3\psi(\TG))\in\pi_2\Omega M_{12}$. Then Theorem~\ref{thm:main-thm} asserts that this class agrees with \eqref{eq:ex-gen}. One can visualise this by comparing Figure~\ref{fig:deg-3-m-i} with Figure~\ref{fig:deg-2-grope-decorated}.
    
    In more detail, a close look at the definition of $\chi$ implies that $(\chi\emap_3\psi(\TG))^{\{12\}}$ is obtained from the square-family of loops $\Psi^\TG(-)_{J_0}^{\{12\}}$ by `reflections relative to the balls $\ball_1$ and $\ball_2$', see Proposition~\ref{prop:chi-is-glued}.
    
    In Lemma~\ref{lem:commutator} we will show that $\deriv_{12}\Psi^\TG(-)_{J_0}^{\{12\}}\colon I^2\to\FF^3_{12}\to\Omega M_{12}$ is homotopic to the commutator of certain loops corresponding to degree $1$ gropes out of which $\TG$ is built (the two caps of $\TG$). On the other hand, the Samelson product $[m_{1}^{\gamma_1},m_{2}^{\gamma_2}]\colon\S^2\to\Omega M_{12}$ is also defined inductively in terms of commutators. Hence, we will be able finish the proof by induction.
\end{example}


\section{Gropes and punctured knots}\label{sec:gropes-punc-knots}
Throughout this section $M$ is an oriented $3$-manifold with non-empty boundary.
In Section~\ref{subsec:gropes} we define grope cobordisms and their modifications, and in Section~\ref{subsec:grope-paths} we prove Theorem~\ref{thm:KST}.

\subsection{Grope cobordisms, thickened gropes and grope forests}\label{subsec:gropes}
\subsubsection*{Abstract gropes}
We saw in Definition~\ref{def:trees} the set $\Tree(S)$ of (rooted vertex-oriented uni-trivalent) trees with leaves labelled by a set $S$.
Now we recursively define certain $2$-complexes modelled on such trees, 
\red{
    using the description in terms of grafting: gluing one tree from $\Tree(S_1)$ and one from $\Tree(S_2)$ along their roots and sprouting a new root gives a tree in $\Tree(S_1\bigsqcup S_2)$.
}
\begin{figure}[!htbp]
    \centering
    \includegraphics[width=0.64\linewidth]{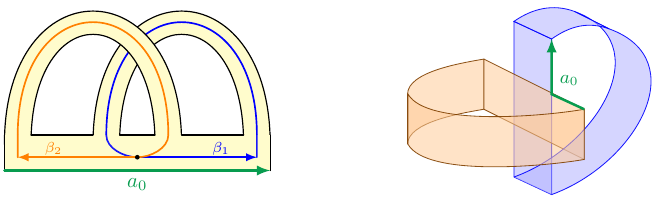}
    \caption{The model of a punctured torus as the gluing of two bands.}
    \label{fig:punctured-torus}
\end{figure}
\begin{defn}\label{def:punc-torus}
    A \textsf{punctured torus} $T$ is a genus one compact oriented surface with one boundary component $\partial T\cong \S^1$. We fix an oriented subarc $a_0\subseteq\partial T$ and 
    \red{
    two simple closed curves $\beta_{\myj}\subseteq T$, $\myj=1,2$, that freely generate the fundamental group $\pi_1T=\langle\beta_1,\beta_2\rangle$. See Figure~\ref{fig:punctured-torus}. 
    
    Moreover, we view $T$ as obtained by gluing together bands $b_1$ and $b_2$ along a square (and straightening the corners), so that $b_{\myj}\cong\beta_{\myj}\times I$ and the arc $a_0$ has one half in $\partial b_1$ and the other in $\partial b_2$. 
    We orient $\beta_1$ in the same manner as the component of $\partial b_1$ which intersects $a_0$. Thus, we have
    \[
        \partial T\simeq[\beta_1,\beta_2]=\beta_1\beta_2\beta_1^{-1}\beta_2^{-1}.
    \]
    }
\end{defn}
\begin{defn}\label{def:abstract-grope}
    An \textsf{abstract (capped) grope} $G_\Gamma$ modelled on $\Gamma\in\Tree(S)$ for a finite non-empty set $S$ is a $2$-complex with  
    \red{
        a distinguished curve $\partial G_\Gamma\cong\S^1$ called the \textsf{boundary}, which comes with a splitting into two arcs
        $\partial G_\Gamma=a_0\cup a_0^\perp$ glued along endpoints, which is defined recursively
    }
    as follows.
    \begin{itemize}[topsep=-1.5em]
    \item
        For $S=\{i\}$ the only tree is $\Gamma=\ichord{i}$ and we let $G_\Gamma$ simply be an oriented disk $G_\Gamma\coloneqq\D^2$, which we call the \textsf{$i$-th cap}; we define its boundary as the boundary of the disk, and we pick an oriented subarc~$a_0\subseteq\partial G_\Gamma$ (and let $a_0^\perp=\partial G_\Gamma\sm a_0$).
    \item
        For $|S|\geq2$ any tree $\Gamma\in\Tree(S)$ is obtained by grafting two trees of lower degrees:
        \[
        \Gamma=\grafted,\quad\quad \Gamma_{\myj}\in\Tree(S_{\myj}),\quad S_1\sqcup S_2=S.
        \]
        Thus, if abstract gropes $G_{\Gamma_1}$ and $G_{\Gamma_2}$ have been defined, we let $G_\Gamma$ be the result of attaching the two of them to a single punctured torus $T$, called the \textsf{bottom stage} of $G_\Gamma$, via orientation-preserving homeomorphisms $\partial G_{\Gamma_{\myj}}\cong\beta_{\myj}\subseteq T$, for $\myj=1,2$. Moreover, we let $\partial G_\Gamma$ be the boundary of the bottom stage $\partial G_\Gamma\coloneqq\partial T$, and $a_0\subseteq \partial G_\Gamma$ be the corresponding oriented subarc of $\partial T$.
    \end{itemize}
\end{defn}
Note that $G_\Gamma$
\red{
    can be viewed as the union of oriented surfaces, each of which is either a punctured torus $T$, which we call a \textsf{stage}, or a disk $\D^2$, called a \textsf{cap}. The caps
}
are labelled bijectively by $S$, and we say that $|S|$ is the \textsf{degree} of $G_\Gamma$. \red{See Figures~\ref{fig:deg-3-grope-abstract} and~\ref{fig:abs-grope-deg-2} for examples of abstract gropes.
}

\begin{figure}[!htbp]
    \centering
    \includegraphics[width=0.3\linewidth]{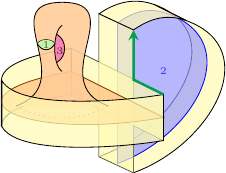}
    
    \vspace{-10pt}
    
    \caption[An abstract grope of degree $3$.]{An abstract grope of degree $3$, modelled on the tree $\begin{tikzpicture}[baseline=0.2ex,scale=0.21,every node/.style={scale=0.78,font=\bfseries}]
        \clip (-2.2,-0.21) rectangle (2.2,4.13);
        \draw[fill=black!30!white]
            (-0.15, -0.2) rectangle ++ (0.3,0.2);
        \draw[thick]
            (0,0) -- (0,1) --
            (-1,2) -- (-2,3) node[pos=1,above]{$1$}
            (-1,2) -- (0,3)    node[pos=1,above]{$3$}
                    (0,1) -- (2,3)  node[pos=1,above]{$2$};
    \end{tikzpicture}$.}
    \label{fig:deg-3-grope-abstract}
\end{figure}
Abstract gropes are \red{
    essentially 
}
combinatorial objects: there is a $1$--$1$ correspondence between them and trees. In fact, the tree $\Gamma$ on which $G_\Gamma$ was modelled can be seen as its subset using the following construction, equivalent to the one given in~\cite[Def.~16 \& Sec.~3.4]{CST}. \red{In fact, by appropriately \emph{framing} the edges of $\ut(\G)$ one obtains the clasper corresponding to $\G$, see~\cite{CT1}.
}
\begin{figure}[!htbp]
    \centering
    \includegraphics{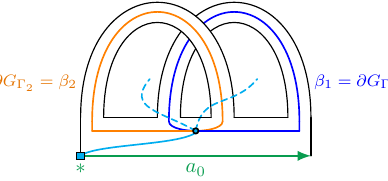}
    \caption[The underlying tree of an abstract grope.]{The underlying tree is obtained by gluing the light blue arcs for each stage.}
    \label{fig:underlying-tree}
\end{figure}
\begin{defn}\label{def:underlying-tree}
    The \textsf{underlying tree} of $G_\Gamma$ is an embedding $\Gamma\subseteq G_\Gamma$, defined as follows. The root of $\Gamma$ maps to the initial point $*$ of the arc $a_0\subseteq\partial G_\Gamma$, and each trivalent vertex of $\Gamma$ maps to the intersection point $\beta_1\cap\beta_2$ in the corresponding grope stage. The leaves of $\Gamma$ are mapped to the centres of caps. Finally, the edges are obtained at each stage as in Figure~\ref{fig:underlying-tree}. The order $(\beta_1,\beta_2)$ agrees with the corresponding vertex orientation in $\Gamma$.
\end{defn}

\subsubsection*{Grope cobordisms}
We consider particular embeddings of abstract gropes into a $3$-manifold $M$ \red{(here by an embedding we mean a homeomorphism onto the image). Recall from the beginning of Section~\ref{sec:punc-knot-model} that $\Knots(M)$ is the space of knots in $M$ and $J_0,J_1,\dots\subseteq I\coloneqq[0,1]$ are fixed subintervals.
}
\begin{defn}\label{def:grope-cob}
    Let $K\in\Knots(M)$ and $\Gamma\in\Tree(S)$ for a finite non-empty set $S$. A \textsf{(simple capped genus one) grope cobordism}\footnote{Non-simple, non-capped and higher genus gropes are also considered in the literature, but will not be needed in our discussion. However, grope forests defined below in Definition~\ref{def:forest} are related to higher genus grope cobordisms.} on~$K$ modelled on $\Gamma$ is an embedding $\G\colon G_\Gamma\to M$ into the complement of $K$ except that:
        \begin{itemize}
            \item $\G(a_0) \subseteq K(J_0)$ and the orientations of these arcs agree;
            \item for each $i\in S$, the $i$-th cap of $\G(G_\Gamma)$ intersects $K_{\wh{0}}\coloneqq K(I\sm J_0)$ transversely in exactly one point $p_i\in K_{\wh{0}}$, which is the centre of the cap and which belongs to $K(J_i)\subseteq K_{\wh{0}}$. 
        \end{itemize}
    We see $\G$ as a cobordism between $K$ and the \textsf{output knot} $\partial^\perp\G\coloneqq (K\sm\G(a_0)) \cup \G(a_0^\perp)$, smoothened at the corners and oriented compatibly with the orientation of $K$.
\end{defn}
In Figures~\ref{fig:grope-deg-1},~\ref{fig:grope-deg-2} and~\ref{fig:deg-2-grope-decorated} are depicted several examples of grope cobordisms of degree $1$ and $2$.
\begin{remm}\label{rem:emb-comm-borr}
  Note that the arc $\G(a_0^\perp)$ is oriented oppositely in $\partial^\perp\G$ than as a subset of $\G$, as usual for oriented cobordisms. The crucial observation is that $\G(a_0^\perp)$ is an `embedded commutator' of the curves $\G(\beta_1)$ and $\G(\beta_2)$, as for the Borromean link, see Figure~\ref{fig:grope-deg-2} and~\cite{Teichner-survey,Teichner-what-is}.
\end{remm}
\begin{figure}[!htbp]
    \centering
    \includegraphics{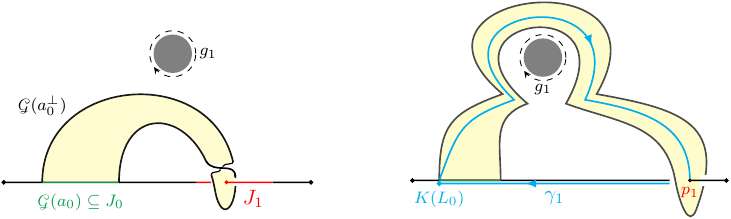}
    \caption[Two grope cobordisms of degree $1$, and a loop $\protect\gamma$ which determines the decoration.]{Two examples of a grope cobordism (shaded yellow) of degree $1$ on a knot $K$ (the horizontal line). In the first example $\G$ is contained in $I^3\subseteq M$ and  $\partial^\perp\G$ is the union of black and red arcs and is isotopic to the trefoil. The signed decoration in the first example is $-1$, and in the second $+g_1=[\gamma_1]$. }
    \label{fig:grope-deg-1}
\end{figure}
\begin{figure}[!htbp]
    \centering
    \includegraphics[width=0.6\linewidth]{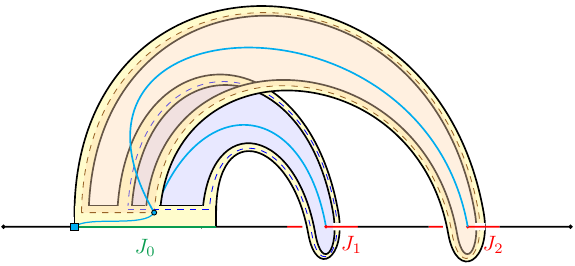}
    \vspace{-7pt}
    \caption[A grope cobordism of degree $2$ and its underlying tree.]{A grope cobordism $\G$ (shaded) in $I^3$ on the unknot (the horizontal line) modelled on $\tree$. The underlying tree is depicted in light blue. The knot $\partial^\perp\G$ is the union of $\U_{\wh{0}}$ and the long black arc $\G(a_0^\perp)$, which is the commutator of $\G(\beta_i)$, the meridian of the arc $K(J_i)$ for $i=1,2$. The signs are $\varepsilon_1=+1$, $\varepsilon_2=-1$. See also Figure~\ref{fig:trefoil}.}
    \label{fig:grope-deg-2}
\end{figure}

\subsubsection*{The underlying decorated tree}
\red{
    The underlying tree $\Gamma\in\Tree(S)$ of a grope cobordism is simply the image of $\Gamma\subseteq G_\Gamma$ from Definition~\ref{def:underlying-tree} under $\G\colon G_\Gamma\to M$. We now decorate this tree by $\pi_1M$-elements; for decorated trees see Definition~\ref{def:decorated-lie}.
}
\begin{defn}\label{def:underlying-decor-tree}
    Let $\G\colon G_\Gamma\to M$ be a grope cobordism on a knot $K\colon I\hra M$. We define a tuple $(\varepsilon_i,\gamma_i)_{i\in S}$, called the \textsf{signed decoration} of $\G$, as follows.
    
    Firstly, et $\varepsilon_i\coloneqq\sgn(p_i)\in\{\pm\}$ be the sign of the intersection of $K(J_i)$ and the $i$-the cap of $\G$. 
    Next, let $\gamma_i'\colon I\to M$ be the path from $K(L_0)$ to $p_i$ obtained as the image under $\G$ of the unique path in the tree $\Gamma\subseteq G_\Gamma$ from the root to its $i$-th leaf. Let $[p_i,K(L_0)]$ be the image of $K$ between $p_i\in K(J_i)\cap\G(G_\Gamma)$ and $K(L_0)$.
    Then we have a loop in $M$ given by $\gamma_i\coloneqq\gamma_i'\cup [p_i,K(L_0)]$.

    Lastly, the \textsf{underlying decorated tree} of $\G$ is 
    \[
        \ut(\G)=\upvarepsilon\Gamma^{g_S}\in\Z[\Tree_{\pi_1(M)}(S)],
    \]
    where $\upvarepsilon\coloneqq\bigsqcap_{i=1}^n\varepsilon_i$ and $g_S(\G)\in(\pi_1M)^{S}$ is the tuple of classes $g_i=[\gamma_i]\in\pi_1M$.
\end{defn}
In other words, $\gamma_i$ is obtained by gluing two different paths from $K(L_0)$ to $p_i$: the obvious one along $K$, and the one that goes `through the grope', following $\G(\Gamma)\subseteq\G(G_\Gamma)\subseteq M$. See Figures~\ref{fig:grope-deg-1}, ~\ref{fig:grope-deg-2},~\ref{fig:deg-2-grope-decorated}.
\begin{figure}[!htbp]
    \centering
    \includegraphics[width=0.75\linewidth]{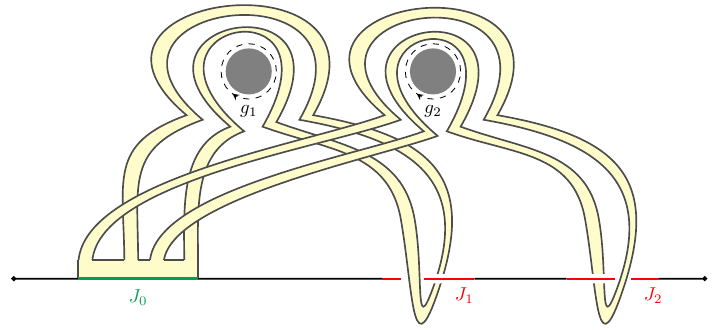}
    \vspace{-15pt}
    \caption[A grope cobordism of degree $2$ with group elements.]{A grope cobordism $\G$ whose underlying decorated tree is $\gtree{g_1}{g_2}$ (with $\varepsilon_1=\varepsilon_2=+1$). If $M=I^3$ and $K$ is the unknot, then $\partial^\perp\G$ is the figure eight knot.}
    \label{fig:deg-2-grope-decorated}
\end{figure}

\subsubsection*{Thickened gropes}
Observe that \red{
    a thickening of the $2$-complex $G_\Gamma$, that is, the union of products of all stages with an interval, is homeomorphic to $\ball^3$. This can be shown inductively, since thickened caps are like $3$-dimensional $2$-handles attached to the thickened punctured torus, which is the union of a ball and two $1$-handles. These handles cancel, see Figure~\ref{fig:thick-grope} and compare to Figure~\ref{fig:abs-grope-deg-2}. 
}

Thus, a regular neighbourhood of a grope cobordism $\G$ is homeomorphic to a $3$-ball $\TG\colon\ball^3\hra M$. This $3$-ball $\TG$ intersects the knot $K$ in the neighbourhoods $\TG(a_i)\subseteq J_i$ of the intersection points $p_i\in K(J_i)$, for some neat arcs $a_i\subseteq\ball^3$, $1\leq i\leq n$. It is convenient to fix a choice of such a neighbourhood $\TG$ as follows; we pick some $\varepsilon>0$.
\begin{figure}[!htbp]
    \centering
    \includegraphics[width=0.35\linewidth]{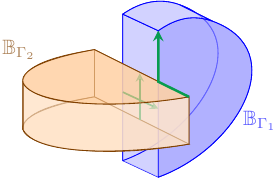}
    \caption[The thickening of an abstract grope is a $3$-ball.]{The thickening of the abstract grope $G_\Gamma$ is a $3$-ball $\ball_\Gamma$, obtained by gluing together the $3$-balls $\ball_{\Gamma_1}$ and $\ball_{\Gamma_2}$ along some distinguished squares in their boundaries.}
    \label{fig:thick-grope}
\end{figure}
\begin{defn}\label{def:thick}
    A \textsf{thickened grope} on $K\in\Knots(M)$ modelled on a tree $\Gamma\in\Tree(S)$ is an embedding
    \[
        \TG\colon\ball_\Gamma\hra M
    \]
    which does not intersect $K$ except that $\TG(a_i)\subseteq K(J_i)$ for certain arcs $a_i\subseteq\ball_\Gamma$, $i\in \{0\}\sqcup S$. Here the \textsf{model ball} $\ball_\Gamma\cong\ball^3$, its set of arcs and a subset $G_\Gamma\subseteq\ball_\Gamma$ are defined inductively on $|S|$ as follows. For the induction base, we have $c=\ichord{i}$ and we define $\ball_c \coloneqq G_c \times I\cong\D^2\times I$ and $G_c\coloneqq G_c\times\{0\}$. We let $a_i$ be the core $(0,0) \times I$ and the arc $a_0$ be the distinguished subarc of $\partial G_c$.
    
    For $\Gamma=\scalebox{0.9}{\grafted}$ define $\ball_\Gamma$ as the \red{gluing together 
    }
    the already defined model balls $\ball_{\Gamma_1}$ and $\ball_{\Gamma_2}$ along the respective squares $a_0\times [-\varepsilon,\varepsilon]\subseteq\partial \ball_{\Gamma_\myj}$ for $\myj=1,2$, using the swap map $a_0\times [-\varepsilon,\varepsilon]\to a_0\times [-\varepsilon,\varepsilon]$, $(x,y)\mapsto (y,x)$. Then let $\{a_i\}_{i\in S}$ be the disjoint union of the sets of arcs for $\ball_{\Gamma_1}$ and $\ball_{\Gamma_2}$.
    
    Define $G_\Gamma\subseteq \ball_{\Gamma}$ as the \red{result of gluing 
    }
    of the bands $\partial G_{\Gamma_{\myj}}\times [-\varepsilon,\varepsilon]\subseteq\partial \ball_{\Gamma_{\myj}}$, \red{
    $\myj=1,2$ 
    }
    along the squares $a_0\times [-\varepsilon,\varepsilon]$. Finally, let $a_0$ for $\ball_\Gamma$ be the distinguished arc in $\partial G_\Gamma$ as in Definition~\ref{def:abstract-grope}.
\end{defn}
Note that $\G\coloneqq\TG|_{G_\Gamma}$ is a grope cobordism on $K$ in the sense of Definition~\ref{def:grope-cob}. We can thus also define an underlying decorated tree $\upvarepsilon(\TG)\Gamma^{g(\TG)}$ of a thickened grope as in Definition~\ref{def:underlying-decor-tree}. Moreover, we define the \textsf{output knot of $\TG$} as $\partial^\perp\TG\coloneqq\partial^\perp\G=K_{\wh{0}}\cup\TG(a_0^\perp)$, as in Definition~\ref{def:grope-cob}. Conversely, for a given grope cobordism $\G$ and a choice of its regular neighbourhood, there is a unique thickened grope $\TG$ whose image is precisely that neighbourhood and $\TG|_{G_\Gamma}=\G$.

\subsubsection*{Grope forests}
Recall from \eqref{eq-def:n-equiv} that two knots are $n$-equivalent if there exists a \emph{sequence} of grope cobordisms between them. An analogue in our setting is a \emph{grope forest}.
\begin{defn}\label{def:forest}
    A \textsf{grope forest} of degree $n$ and cardinality $N\geq 1$ on a knot $K$ is a map
    \[
    \forest\coloneqq\bigsqcup_{l=1}^N\TG_l\colon\bigsqcup_{l=1}^N\ball_{\Gamma_l}\hra M
    \]
    such that $\TG_l\colon\ball_{\Gamma_l}\hra M$ are mutually disjoint thickened gropes on $K$ modelled on some $\Gamma_l\in\Tree(n)$, such that the arcs $\TG_l(a_0)\subseteq K(J_0)$ appear in $K(J_0)$ in the decreasing order of their indices $N\geq l\geq 1$. 
    
    The \textsf{output knot} $\partial^\perp{\forest}$ is obtained from $K$ by replacing each interval $\TG_l(a_0)$ by the arc $\TG(a_0^\perp)$ (the order in which replacements are done is irrelevant by the disjointness assumption).

    \red{The underlying tree of $\forest$ is defined as the linear combination of the corresponding underlying trees:    
    }
    \[
    \ut(\forest)\coloneqq\sum_{l=1}^N\ut(\TG_l)\coloneqq\sum_{l=1}^N\upvarepsilon(\TG_l)\cdot \Gamma_l^{g(\TG_l)}
    \quad\in\Z[\Tree_{\pi_1M}(n)].
    \]
\end{defn}
\red{
    In other words, a grope forest is a \emph{disjoint collection of thickened gropes on the given knot $K$}.
}
For a fixed $i\in\ul{n}$ we allow an arbitrary order of intersections of $K(J_i)$ with the $i$-th caps of different gropes, see Figure~\ref{fig:grope-forest} for an example with $\mathrm{cap}_1(\TG_1)<\mathrm{cap}_1(\TG_2)$, but $\mathrm{cap}_2(\TG_2)<\mathrm{cap}_2(\TG_1)$.
\begin{figure}[!htbp]
    \centering
    \includegraphics[width=0.52\linewidth]{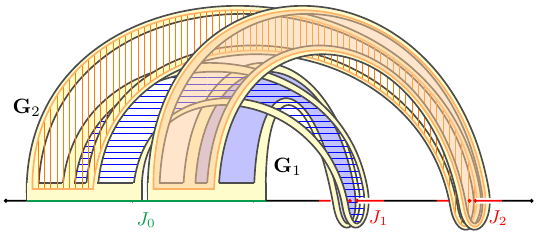}
    \caption[A grope forest of degree $2$.]{A grope forest $\forest=\TG_1\sqcup\TG_2$ of degree $2$ is a thickening of the depicted $2$-complex.}
    \label{fig:grope-forest}
\end{figure}

\red{
\begin{remm}
    Grope forests are suitable for defining `spaces of gropes', namely, as the subspace of $\Emb(\bigsqcup_N\ball^3,M)$ consisting of those embeddings that satisfy the conditions of Definitions~\ref{def:thick} and~\ref{def:forest}; then the map $\psi$ defined in Proposition~\ref{prop:extended-KST} below is in fact continuous. It is an interesting question whether the space of all grope forests can be turned (by adding points) into one which is homotopy equivalent to $\H_n(M)$. We do not pursue such problems here.
\end{remm}
}

\begin{prop}\label{prop:uf-map}
    \red{The underlying tree map is 
    }
    a surjection of sets
    \[
    \begin{tikzcd}
        \ut\colon
        \red{\big\{\text{degree $n$ grope forests}\big\}
        }
        \arrow[two heads]{r}{} & \Z[\Tree_{\pi_1M}(n)].
    \end{tikzcd}
    \]
\end{prop}
\begin{proof}
    Let $\sum_{l=1}^N\upvarepsilon^l \Gamma_l^{g^l}$ be a linear combination of decorated trees, $\upvarepsilon^l\in\{\pm1\}$. Any $g^l_{\ul{n}}\in(\pi_1M)^{n}$ can be represented by a tuple of disjointly embedded loops $\gamma^l_i\subseteq M$, $1\leq l\leq n$. Thus, there is a map $\Gamma_l\to M$ which embeds the edges mutually disjointly, maps the $i$-th leaf to a point $p_i\in K(J_i)$ and has the associated path (from $K(L_0)$ to $p_i$ along $\Gamma$ and then back along $K$) isotopic to $\gamma_i^l$. Thicken this to a ball to get a thickened grope $\TG_l$, introducing a twist to one cap if $\upvarepsilon^l=-1$.

    This can be done so that $\TG_l$ are mutually disjoint (as they are neighbourhoods of $1$-complexes), and that the order $\TG_l(a_0)$ is decreasing with $l$, so this defines a desired grope forest.
\end{proof}

\subsection{Gropes give paths in the Taylor tower}\label{subsec:grope-paths}

\red{In this section we prove Theorem~\ref{thm:KST}: for a thickened grope $\TG$ in $M$ on a knot $K$ modelled on $\Gamma\in\Tree(n)$, there is a path in $\pT_n(M)$ between the evaluation of the output and input knots:
    \[
    \Psi^\TG\colon\ev_n(\partial^\perp\TG)\squig \ev_n(K).
    \]
The idea of proof is based on discussions with Yuqing Shi and Peter Teichner. It is also inspired by Jim Conant's ideas, appearing in \cite{BCKS}.
}

We first reformulate the theorem as the following proposition. Recall that $f\in\pT_n(M)\coloneqq\holim\mc{E}_{\bull}$ is given as a collection $f^S\colon\Delta^S\to\Emb_\partial(I\sm J_S,M)$ for $S\subseteq[n]$, which is compatible under inclusions $\iota_S\colon \Delta^{S}\hra\Delta^{\ul{n}}$, see Section~\ref{subsec-prelim:holims}. Recall also that $K_0$ denotes the restriction of a knot $K$ to $J_0\subseteq I$, while $K_{\wh{S}}$ denotes the restriction to $I\sm J_S$.
\begin{prop}\label{prop:psi-def}
    For $\TG$ as above there is a continuous map
    \[\begin{tikzcd}
    \Path^{\TG}\colon\Delta^{\ul{n}}\arrow[]{r}{} & \Map_\partial\big([0,1],\,\Emb_\partial(J_0,M)\big)
    \end{tikzcd},\; \Path^{\TG}_u(0)=(\partial^\perp\TG)_{J_0}\;, \Path^{\TG}_u(1)=K_0,\,\forall u\in\Delta^{\ul{n}},
    \]
    which gives a well-defined map $\Psi^\TG\colon [0,1]\to\pT_n(M)$ taking $\theta\in[0,1]$ to
\begin{equation}\label{eq:psi-def}
    \Psi^\TG(\theta)^S\colon \Delta^{S} \to \Emb_\partial(I\sm J_S,M),\quad\quad
    \vec{t}\mapsto
\begin{cases}
    K_{\wh{S}}, & \text{if } 0\in S,\\
    K_{\wh{0 S}}\cup \Path^{\TG}_{\iota_S(\vec{t})}(\theta), & \text{if } 0\notin S.
\end{cases}
\end{equation}
\end{prop}
\red{This is clearly equivalent to the statement of Theorem~\ref{thm:KST} recalled above. 
}
Let us outline the proof of the proposition. Let $\Emb_\partial(\D^2,\ball_\Gamma)$ denote the space of embeddings of disks in the model ball with the boundary condition $\partial\D^2=\partial G_\Gamma$. Firstly, in Proposition~\ref{prop:grope-disk-corr} \red{below 
}
we construct a family of disks $\phi_\Gamma\colon\Delta^{\ul{n}}\ra\Emb_\partial\big(\D^2,\ball_\Gamma\big)$ satisfying certain condition \eqref{eq:cond-disks}. Then we choose a homeomorphism $j\colon[0,1]\times J_0\to \D^2$. This gives an isotopy $j_\theta\colon J_0\hra\D^2$, $\theta\in[0,1]$, relative to the endpoints from one half of the boundary circle to the other across $\D^2$. Finally, for $u\in\Delta^{\ul{n}}$, $\theta\in[0,1]$ we define
\begin{equation}\label{eq:proof-psi-def}
\begin{tikzcd}[column sep=1.1cm]
     \Path^{\TG}_u(\theta)\colon\;J_0\arrow[hook]{r}{j_\theta}  & \D^2\arrow[hook]{r}{\phi_\Gamma(u)} & \ball_\Gamma\arrow[hook]{r}{\TG} & M.
\end{tikzcd}
\end{equation}
In other words, the paths of arcs in $M$ are obtained by foliating embedded disks (from one half-circle to the other), that are found in the image of a thickened grope.
We will finish the proof by checking that $\Psi^\TG$ is well defined, \red{using the mentioned
}
conditions~\eqref{eq:cond-disks}. See Section~\ref{subsec:grope-points} for examples.

\subsubsection{The symmetric surgery}
Let us first construct a $1$-parameter family of disks $\D_u\subseteq\ball_\Gamma$, $u\in\Delta^1$, for $\Gamma$ an abstract grope modelled on the unique tree of degree $n=2$ (Figure~\ref{fig:abs-grope-deg-2}). This consists of a punctured torus (yellow) and two caps bounded by its core curves $\beta_1$ (blue) and $\beta_2$ (orange).

There is a classical construction of \emph{ambient surgery} on a punctured torus $T\subseteq M$, using an embedded disk $D$ whose interior is disjoint from $T$  and with boundary a simple closed curve on $T$. Namely, we take out a neighbourhood of the curve $\partial D\subseteq T$ and glue to the newly created boundary two parallel copies of $D$, so that $T$ is turned into an embedded disk.

\begin{figure}[!htbp]
    \centering
    \includegraphics{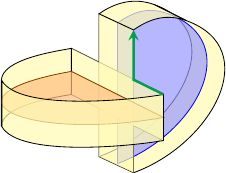}
    \vspace{-10pt}
    \caption[An abstract grope of degree $2$.]{The abstract grope modelled on $\tree$.}
    \label{fig:abs-grope-deg-2}
\end{figure}
Hence, when our abstract grope of degree $2$ is embedded as a grope cobordism we can do two different ambient surgeries on it: on the first (respectively second) cap as depicted in the leftmost (rightmost) part of Figure~\ref{fig:symm-surgery}. Note that the thickened grope specifies concrete push-offs of caps.
\begin{figure}[!htbp]
    \centering
    \includegraphics{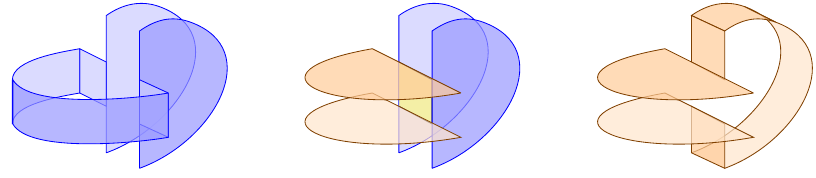}
    \caption[Three surgeries on a capped torus.]{\textit{Left}: The resulting disk $\D_1$ after the surgery along the cap $1$ on $G_\Gamma$ from Figure~\ref{fig:abs-grope-deg-2}. \textit{Middle}: The result of the symmetric surgery on $G_\Gamma$. \textit{Right}: The result $\D_2$ of the surgery along the cap $2$ on $G_\Gamma$.}
    \label{fig:symm-surgery}
\end{figure}

In addition, one can do both surgeries at once, called the \textsf{symmetric surgery} (or contraction), as depicted in the middle part of Figure~\ref{fig:symm-surgery}. The following lemma says that there actually exists a whole $1$-parameter family of disks containing the three disks we have described.
\begin{lemma}[Symmetric Isotopy]\label{lem:sym-surg-isotopy}
For $\Gamma=\tree$ there is an isotopy $\phi_\Gamma\colon[0,1]\to\Emb_\partial(\D^2,\ball_\Gamma)$ such that $\D_t$ for $t\in\{0,1\}$ is the surgery on $G_\Gamma$ using the cap labelled by $1+t$.
\end{lemma}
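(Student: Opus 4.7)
The model ball $\B_\Gamma \cong \ball^3$ is by Definition~\ref{def:thick} the plumbing of two thickened caps $\B_{\Gamma_j} \cong \D^2 \times I$, $j = 1, 2$, along squares $a_0 \times [-\varepsilon, \varepsilon]$ in their boundaries, and the bottom stage $T \subseteq G_\Gamma$ embeds as the plumbing of the two bands $\partial G_{\Gamma_j} \times [-\varepsilon, \varepsilon]$. Both surgery disks $\D_0, \D_1 \in \Emb_\partial(\D^2, \B_\Gamma)$ are proper embeddings with the common boundary $\partial G_\Gamma \subseteq \partial \B_\Gamma$: $\D_0$ is built from two parallel push-offs of cap~$1$ inside $\B_{\Gamma_1}$ (at levels $\D^2 \times \{\pm \delta\}$, say), joined along their boundary through $T \setminus N(\beta_1) \subseteq \B_{\Gamma_2}$, and $\D_1$ is the symmetric construction with the roles of the two caps exchanged.

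My plan is to first arrange for $\D_0$ and $\D_1$ to meet only along $\partial G_\Gamma$, and then to apply the $3$-dimensional Schoenflies theorem. For the disjointness, note that the two families of push-offs live in disjoint subballs of $\B_\Gamma$: the push-offs of cap~$1$ in the interior of $\B_{\Gamma_1}$, and those of cap~$2$ in the interior of $\B_{\Gamma_2}$, with the remaining ``floor'' pieces lying on $T$ at the level $z = 0$. With a careful choice of the push-off directions (in particular keeping them away from the plumbing square), one obtains $\D_0 \cap \D_1 = \partial G_\Gamma$. The union $\D_0 \cup \D_1$ is then a smoothly embedded $2$-sphere in the $3$-ball $\B_\Gamma$; by the smooth $3$-dimensional Schoenflies theorem it bounds a $3$-ball $B \subseteq \B_\Gamma$. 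A product structure $B \cong \D^2 \times [0,1]$ with $\D^2 \times \{0\} = \D_0$ and $\D^2 \times \{1\} = \D_1$ gives, via its level disks, the desired isotopy $\phi_\Gamma \colon [0,1] \to \Emb_\partial(\D^2, \B_\Gamma)$.

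The main technical hurdle is the disjointness arrangement inside the plumbing region, where the two bands cross transversely: the preferred normal directions for cap~$1$ (normal to the $\beta_1$-band) and cap~$2$ (normal to the $\beta_2$-band) differ at the central square, so one must verify that the resulting disks can be smoothly joined along $\partial G_\Gamma$ without forcing a crossing. Because the bands meet in a single plumbing square of bounded width $2\varepsilon$, shrinking $\delta$ relative to $\varepsilon$ makes this straightforward. For later use in Proposition~\ref{prop:psi-def} one additionally wants the intermediate disk $\phi_\Gamma(1/2)$ to coincide with the symmetric surgery disk depicted in the middle of Figure~\ref{fig:symm-surg}; this is arranged by observing that the symmetric surgery disk is itself properly embedded in $B$ and separates $\D_0$ from $\D_1$ there, so one may choose the product structure on $B$ so that its level disk at $t = 1/2$ is precisely this disk.
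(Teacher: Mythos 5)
Your approach -- cite the smooth three-dimensional Schoenflies theorem to find a product region between the two surgery disks -- is genuinely different from the paper's, which simply writes down an explicit isotopy passing through the symmetric surgery (first sliding the cap-$1$ push-offs across $\B_{\ichord{2}}$ until the disk coincides with the symmetric surgery, then symmetrically across $\B_{\ichord{1}}$). That said, there is a real gap in your disjointness argument. The two naive surgery disks are \emph{not} disjoint rel $\partial G_\Gamma$: both contain the part of $T$ lying outside the chosen annular neighbourhoods, i.e.\ $\D_0 \cap T = T\setminus N(\beta_1)$ and $\D_1\cap T = T\setminus N(\beta_2)$, and since $N(\beta_1)\cup N(\beta_2)$ cannot touch $\partial T$, the intersection $T\setminus\bigl(N(\beta_1)\cup N(\beta_2)\bigr)$ is an entire collar annulus of $\partial T$ in $T$. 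Your own phrasing (``the remaining `floor' pieces lying on $T$ at the level $z=0$'') puts both floor pieces at the same level, so $\D_0\cup\D_1$ is not an embedded $2$-sphere and Schoenflies does not yet apply. The worry is not localised to the plumbing square, as you suggest, but persists along the whole boundary collar. The fix is straightforward but must be stated: push the two floor pieces off $T$ in \emph{opposite} normal directions (and treat the plumbing square and the corner along $\partial G_\Gamma$ with some care), then apply Schoenflies, then precompose with the small isotopy that undoes this push.

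A second, more structural caveat: the paper uses not just the existence of this isotopy but its exact shape. Remark~\ref{rem:claspers-gropes} and the proof of Lemma~\ref{lem:commutator} depend on the symmetric surgery appearing at $t=\tfrac{1}{2}$ \emph{and} on a prescribed foliation of each intermediate disk $\D_u$ (the homotopy of the arc across $\D_u$ is done in a specified order). Your last paragraph handles the first point by choosing a product structure whose middle slice is the symmetric surgery disk, but the Schoenflies argument by itself does not produce the explicit foliation that is needed later; one would still have to write that down by hand, at which point the paper's hands-on construction and yours become roughly the same amount of work. So your route is valid as a proof of the lemma as stated, but does not by itself replace the paper's explicit model in the rest of the argument.
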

\begin{proof}
Recall that $\ball_\Gamma$ is the model ball obtained by gluing together $\B\ichord{1}$ and $\B\ichord{2}$. We now specify an isotopy from $\D_1\subseteq\ball_{\Gamma}$ to $\D_2\subseteq\ball_{\Gamma}$, which passes through the symmetric surgery, using Figure~\ref{fig:symm-surgery} as an accurate model of these disks.

First isotope the interior of the blue band of $\D_1$ by pushing it across the interior of the ball $\B\ichord{2}$, until we arrive at the symmetric surgery. In more detail, as $t$ increases from $0$ to $\frac{1}{2}$ we let the blue band `stick more and more to the bottom and top orange disks', as shown in Figure~\ref{fig:disk-u}, so that when $t=\frac{1}{2}$ the band has transformed into the union of the two orange disks and the yellow region.

\begin{figure}[!htbp]
    \centering
    \includegraphics{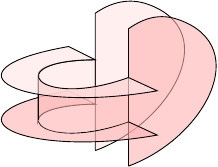}
    \caption[{A fixed time of the symmetric isotopy.}]{Disk $\D_t$ for some $t\in[0,\frac{1}{2}]$.}
    \label{fig:disk-u}
\end{figure}
The two `sticking curves' (inside of the two orange disks, copies of the cap $2$) are specified by an isotopy $j_\theta\colon J_0\hra\D^2$ which we fixed at the beginning of this section (also, smoothen the corners).

Symmetrically, for increasing $t\in[\frac{1}{2},1]$ the isotopy uses the ball $\B\ichord{1}$ to stretch the distinguished yellow region of the symmetric surgery, using the sticking curves on the blue disks as a guide, until reaching the position of the orange band for $t=1$.
\end{proof}

\begin{remm}\label{rem:claspers-gropes}
    It is precisely this isotopy that is a crucial ingredient for the connection between the geometric calculus and the Taylor tower. To construct paths in $\pT_n(M)$ using \text{claspers} instead, it would be necessary to fix a $1$-parameter family of homotopies of Borromean rings whenever one component is erased, but for trees of higher degrees these homotopies will increase in complexity.
    
    Instead, gropes precisely keep track of all necessary homotopies in a canonical way, missing in the clasper picture. Moreover, we will use our exact choice of the isotopy in the crucial Lemma~\ref{lem:commutator}.
\end{remm}

\subsubsection{Families of disks}
We now generalise the Symmetric Isotopy Lemma~\ref{lem:sym-surg-isotopy} to trees of any degree $n\geq 2$. We view $\Delta^S$ as the simplicial set obtained by barycentric subdivision from the standard simplex with the vertex set $S$ (see Figure~\ref{fig:simplex}).

\begin{prop}\label{prop:grope-disk-corr}
    For a finite set $S\neq\emptyset$ and a tree $\Gamma\in\Tree(S)$ there is a continuous map
    \[
        \phi_\Gamma\colon\Delta^S\ra\Emb_\partial\big(\D^2,\ball_\Gamma\big)
    \]
    describing a family $\D_u\coloneqq\im\phi_\Gamma(u)\subseteq\ball_\Gamma$ of neatly embedded disks in the model ball such that
    \begin{align}\label{eq:cond-disks}
        (\forall i\in S)\quad \mathrm{int}(\D_u)\pitchfork a_i\neq\emptyset &\implies i\in|\min(u)|
    \end{align}
    where $\red{\min(u)
    }
    \subseteq\Delta^S$ is the smallest simplex to which $u$ belongs and $|\min(u)|$ is its set of vertices.
\end{prop}
\begin{proof}
    We prove this by induction on $|S|$. For $|S|=1$ we have $\Gamma=\ichord{i}$ and $|\Delta^S|=\Delta^0=\{i\}$, so we need to construct only one disk $\D_{i}\subset\ball_{\Gamma}$ whose boundary is the boundary of the grope and such that $\mathrm{int}(\D_{i})\pitchfork a_1\neq \emptyset$. Clearly, we can just let $\D_{i}\coloneqq G_{\Gamma}$, since in this case the abstract grope is itself a disk, intersecting $a_1$ in one point.
    
    Assume that we have defined the desired family for any tree of degree $<k$ for some $k\geq2$, and consider $S$ with $|S|=k$ and a tree $\Gamma\in\Tree(S)$ such that
    \[
        \Gamma=\grafted, \quad \Gamma_{\myj}\in\Tree(S_{\myj}),\,S=S_1\sqcup S_2.
    \]
    Pick $u\in\Delta^S$ and let us define $\D_u\subseteq\ball_\Gamma$, using the identification $\Delta^S\cong\Delta^{S_1}\star\Delta^{S_2}$, the join of two simplices. Thus, $u$ is given as a linear combination
    \[
    u=(1-t)u_1+t u_2,\quad t\in[0,1],\quad u_{\myj}\in\Delta^{S_{\myj}}.
    \]
    The ball $\ball_\Gamma$ is by definition the gluing of the balls $\ball_{\Gamma_{\myj}}$ for $\myj=1,2$, and since $1\leq|S_{\myj}|\leq |S|-1$, by induction hypothesis we have maps $\phi_{\Gamma_{\myj}}$ satisfying \eqref{eq:cond-disks}. In particular, we have disks $\D_{u_{\myj}}\subseteq\ball_{\Gamma_{\myj}}$.
    
    Let us pick some neat tubular neighbourhoods $\nu\D_{u_{\myj}}\subseteq\ball_{\Gamma_{\myj}}$, so that $\partial(\nu\D_{u_{\myj}})\cap\partial\ball_\Gamma=\partial G_{\Gamma_\myj}\times [-\varepsilon,\varepsilon]$. Then we can plumb $\nu\D_{u_1}$ and $\nu\D_{u_2}$ together along $a_0\times [-\varepsilon,\varepsilon]$ and get a ball $\B\subseteq\ball_\Gamma$ such that $G_\Gamma\subseteq\B$. Now by Lemma~\ref{lem:sym-surg-isotopy} there is an isotopy inside of $\B$ from the disk obtained by surgery on $G_\Gamma$ along $\D_{u_1}$ to the disk obtained by surgery on $G_\Gamma$ along $\D_{u_2}$.
    
    Let $\D_u$ be the time $t$ of that isotopy. Clearly $\partial\D_u=\partial G_\Gamma$. Let us show that the property \eqref{eq:cond-disks} holds. Since $\D_u$ is contained in $\B$, which is a sufficiently small neighbourhood of the disks $\D_{u_1}$ and $\D_{u_2}$, it will intersect an arc $a_i$ only if one of those disks did. Hence, by the induction hypothesis $i$ belongs either to $|\min(u_1)|$ or $|\min(u_2)|$. Since $|\min(u)|=|\min(u_1)|\sqcup|\min(u_2)|$ by the definition of the join, \red{we have $i\in|\min(u)|$ as desired.
    }
\end{proof}

In particular, for $n=2$ we have $u=(1-t)+2t=1+t$ and so $\D_u=\D_{1+t}$ is precisely the isotopy from Lemma~\ref{lem:sym-surg-isotopy}. For an abstract grope of degree $n$ each torus stage gives one independent parameter for the family, so there are $n-1$ parameters in total (remember that $|\Delta^{\ul{n}}|=\Delta^{n-1}$).

\red{
We can now finish the proof of Proposition~\ref{prop:psi-def}: we perform isotopies across disks in the family obtained in Proposition~\ref{prop:grope-disk-corr} to get families of punctured knots.
}

\begin{proof}[Proof of Proposition~\ref{prop:psi-def}]
    \red{As announced in \eqref{eq:proof-psi-def} at the beginning of the section, we use the homeomorphism $j\colon[0,1]\times J_0\to \D^2$, the isotopy $\phi_\Gamma$ of the previous proposition for $S=\ul{n}$, and the given thickened grope $\TG$ to define for $u\in\Delta^{\ul{n}}$ and $\theta\in[0,1]$ an embedding:
    }
    \[\begin{tikzcd}
         \Path^{\TG}_u(\theta)\colon\;J_0\arrow[hook]{r}{j_\theta}  & \D\arrow[hook]{r}{\phi_\Gamma(u)} & \ball_\Gamma\arrow[hook]{r}{\TG} & M.
    \end{tikzcd}
    \]
    We clearly have $\Path^\TG_u(0)=(\partial^\perp\TG)_{J_0}=\TG(a_0^\perp)$ and $\Path^\TG_u(1)=K_0=\TG(a_0)$ for all $u\in\Delta^{\ul{n}}$. We claim that thanks to the condition \eqref{eq:cond-disks}, the map $\Psi^\TG$ as defined in \eqref{eq:psi-def} is well defined, that is:
    \[
        \Psi^\TG(\theta)^S(\vec{t})\;\in\;\Emb_\partial(I\sm J_S,M).
    \]
    This is clear for $S\subseteq[n]$ such that $0\in S$, since we then constantly have the punctured unknot $\U_{\wh{S}}$. On the other hand, for $0\notin S$ we need to check that for each $\vec{t}\in\Delta^S$ and $\theta\in[0,1]$ we have
    \[
        \Path^\TG_u(\theta)\cap K_{\wh{0S}}=\emptyset
    \]
    where $u\coloneqq\iota_S(\vec{t})$. Equivalently, if the interior of $\TG(\D_u)$ intersects some $K(J_i)$, then $i\in S$. Indeed, if $\mathrm{int}\TG(\D_u)\cap K(J_i)\neq\emptyset$, then we must have $\mathrm{int}(\D_u)\cap a_i\neq\emptyset$, since $\TG$ is an embedding. But then \eqref{eq:cond-disks} implies that $i\in|\min(u)|$. As $u$ is obtained by inclusion from the face $\Delta^S$, the maximal simplex that contains it must be contained in $\Delta^S$. Hence, $i\in|\min(u)|\subseteq S$.
\end{proof}

\subsection{Grope forests give points in the layers}\label{subsec:grope-points}
\red{
    In this section we extend the definition of paths $\Psi$ from Proposition~\ref{prop:psi-def} to any grope forest $\forest$. Moreover, we observe that if $\forest$ is a forest on the basepoint knot $\U$, then this gives a point $\psi(\forest)\in\H_n(M)$. Applying the map $\emap_{n+1}\colon\H_n(M)\to\pF_{n+1}(M)$ we obtain the point $\emap_{n+1}\psi(\forest)$, which is the crucial object in our Main Theorems~\ref{thm:main-thm} and~\ref{thm:main-extended}, proven in the next section.
}

\subsubsection*{The extension of Theorem~\ref{thm:KST} to grope forests}
\begin{prop}\label{prop:extended-KST}
    For a grope forest $\forest$ of degree $n$ on $K$ there exists a path $\Psi^{\forest}\colon[0,1]\to\pT_n(M)$ from $\ev_n(\partial^\perp\forest)$ to $\ev_nK$. 
    In particular, if $K=\U$, our basepoint knot, then we have a point
\begin{equation}\label{eq:psi}
    \psi(\forest) \coloneqq (\partial^\perp\forest,\Psi^\forest)\in \H_n(M).
\end{equation}
\end{prop}
\begin{proof}
If $\forest=\bigsqcup_{l=1}^N\TG_l\colon\bigsqcup_{l=1}^N\ball_{\Gamma_l}\hra M$, then each $\TG_l$ can be viewed as a thickened grope on $K$. Indeed, it has $\TG_l(a_0)\subseteq K_0$ and the conditions for all the arcs $a_i$, $i\in\ul{n}$, are satisfied.

Therefore, by Theorem~\ref{thm:KST} we have a path $\Psi^{\TG_l}$ in $\pT_n(M)$ from $\ev_n(\partial^\perp\TG_l)$ to $\ev_nK$, which was constructed in Proposition~\ref{prop:psi-def} using the arcs $\Path^{\TG_l}_u(\theta)\colon J_0\hra M\sm K_{\wh{0S}}$. For a fixed $\theta\in[0,1]$ and $S\subseteq\ul{n}$ these arcs are pairwise disjoint for varying $l=1,\dots,N$, because of the mutual disjointness of $\TG_l$. Hence, we can concatenate them to get an arc
\[
\Psi^{\forest}(\theta)^S_{J_0}\coloneqq\Psi^{\TG_1}(\theta)^S_{J_0}\cdot\Psi^{\TG_2}(\theta)^S_{J_0}\cdots\Psi^{\TG_N}(\theta)^S_{J_0}\;\in\;\Emb(J_0,M\sm K_{\wh{0S}}).
\]
We then define $\Psi^{\forest}$ analogously to the definition of $\Psi^\TG$ in \eqref{eq:psi-def}, by letting
\begin{equation}\label{eq:psi-forest-def}
    \Psi^{\forest}(\theta)^S\colon \Delta^{S} \to \Emb_\partial(I\sm J_S,M),\quad\quad
    \vec{t}\;\mapsto\;
    K_{\wh{0 S}}\cup \Psi^{\forest}(\theta)^S_{J_0}.
\end{equation}
for $\theta\in[0,1]$, $S\subset[n]$. As in the proof of Proposition~\ref{prop:psi-def}, this is indeed a path $\ev_n(\partial^\perp\forest)\squig\ev_nK$.

\red{Finally, if $K\coloneqq\U$ then $\psi(\forest)\coloneqq(\partial^\perp\forest,\Psi^{\forest})$ is indeed a point in $\H_n(M)\coloneqq\hofib_{\ev_n(\U)}(\ev_n)$, by the definition of the homotopy fibre (see Example~\ref{ex:mapping-path-space}).
}
%
\end{proof}

\begin{remm}\label{rem:other-choice-psi-forest}
    A perhaps more obvious choice for the definition of $\Psi^{\forest}$ would simply be
    \[
        \Psi^{\TG_1}\cdot\Psi^{\TG_2}\cdots\Psi^{\TG_N}\colon I\to\pT_n(M),
    \]
    the concatenation of the paths in $\pT_n(M)$. \red{We claim that 
    }
    this would actually give an equivalent point $\emap_n(\psi\forest)\in\pF_n(M)$. In essence this is because $\pF_n(M)$ is an iterated loop space and -- while our definition was concatenation in the $J_0$-direction, this latter definition corresponds to the concatenation in the `diagonal' $\Omega^{n}$ direction. \red{In more detail, the two choices $\deriv\chi\emap_n(\psi\forest)\in\Omega^n\tofib(M_{\bull},\rho)$ can be compared using the description of $\chi\emap_n(\psi\forest)$ in terms of the $h$-reflections of Proposition~\ref{prop:chi-is-glued}; but we omit the complete proof.
    }
    
    This definition implies that concatenation of thickened gropes into a grope forest can be seen as a partially defined $H$-space or $E_1$-structure on the space $\H_n(M)$. However, our definition makes the proof of Theorem~\ref{thm:main-extended} straightforward.
\end{remm}

\begin{example}[degree $1$]\label{ex:grope-path-deg-1} 
    We now demonstrate the map $\Psi$ on an example in the lowest degree. A grope cobordism $\G$ on $K$ of degree $1$ is simply a disk (see Figure~\ref{fig:grope-deg-1} for examples) guiding a crossing change homotopy $K(\theta)$, $\theta\in[0,1]$, from $K(0)=\partial^\perp\TG$ to $K(1)=K$. The corresponding thickened grope $\TG$ is a tubular neighbourhood of this disk and the underlying decorated tree is
    \[
    \gchord{g}
    \]
    for some element $g\in\pi_1(M)$. 
    The disk family \red{from Proposition~\ref{prop:grope-disk-corr} 
    }
    in this case consists of a single disk $\D\subseteq\ball_{\Gamma}$ and $\TG(\D)=\G\subseteq M$. The map $\Path^\TG\colon\Delta^0\to\Map_\partial\big([0,1],\Emb_\partial(J_0,M)\big)$ swings the arc $\TG(a_0^\perp)$ across $\TG(\D)$ to $K_0$. Note that the path through immersions $K_{\wh{0}}\cup\Path^\TG(\theta)$ is precisely $K_\theta$, $\theta\in[0,1]$. The path $\Psi^\TG\colon [0,1]\to\pT_1(M)=\holim_{\PCube[1]}\mc{E}^1_{\bull}$ is hence given by
\begin{align*}
\Psi^\TG(\theta)^{\{0\}}\colon \Delta^0 &\to \Emb(I\sm J_0,M),\quad pt\mapsto K_{\wh{0}},\\
\Psi^\TG(\theta)^{\{1\}}\colon \Delta^0 &\to \Emb(I\sm J_1,M),\quad pt\mapsto (K_\theta)_{\wh{1}},\\
\Psi^\TG(\theta)^{\{01\}}\colon \Delta^1 &\to \Emb(I\sm I_{01},M),\quad t \mapsto K_{\wh{01}},\quad \forall t\in\Delta^1.
\end{align*}
Only $\Psi^\TG(-)^{\{1\}}\colon\Delta^0\to\Emb_\partial(I\sm J_{1},M)$ is not constant with $\theta\in[0,1]$. It is the isotopy between $(\partial^\perp\TG)_{\wh{1}}$ and $K_{\wh{1}}$ -- the crossing change homotopy, now unobstructed since $J_1$ is gone. See also Figure~\ref{fig:deg-1-layer-pt} below for the corresponding points $\psi(\TG)\in\H_1(M)$ and $\emap_2\psi(\TG)\in\pF_2(M)$.
\end{example}
\begin{figure}[!htbp]
    \centering
    \includegraphics{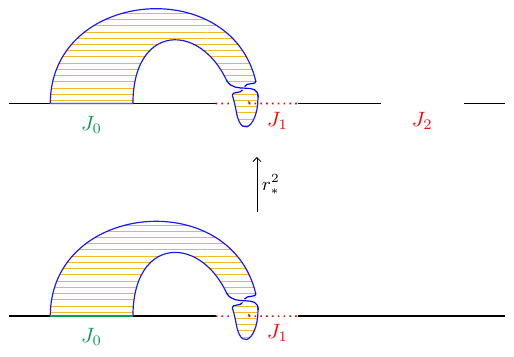}
    \caption[Points $\protect\psi(\TG)\in\H_n(M)$ and $\protect\emap_n(\psi(\TG))\in\pF_n(M)$.]{
        \textit{Bottom}: A point $\psi(\TG)\in\tofib(\FF_S)=\H_1(M)$ consists of the blue arc $\psi(\TG)^\emptyset\coloneqq \TG(a_0^\perp)\in\FF_\emptyset$ and the path $\psi(\TG)^1\coloneqq\Psi^{\TG}(-)^{1}_{J_0}\colon I^1\to\FF_{1}$ across the yellow disk.
        \textit{Top}: In order to get the point $\emap_2(\psi(\TG))\in\tofib(\FF^2_S)=\pF_2(M)$ we simply introduce one more puncture into the ambient space.}
    \label{fig:deg-1-layer-pt}
\end{figure}

\subsubsection*{Points in the layers}
For a grope forest $\forest$ of degree $n$ on $\U$ we obtained in Proposition~\ref{prop:extended-KST} a point $\psi(\forest)\coloneqq(\partial^\perp\forest,\Psi^{\forest})\in\H_n(M)$. Since $\H_n(M)\coloneqq\hofib(\ev_n)\cong\tofib_{S\subseteq[n]}(\mc{E}_S)$ (see Section~\ref{subsec:pn-surj-fib}), in the latter coordinates this point is given by
\[\psi(\forest)^S =
\begin{cases}
    \begin{tikzcd}[column sep=large]
        I^0\arrow[]{r}{\partial^\perp\forest} & \mc{E}_\emptyset,
    \end{tikzcd} &\quad S=\emptyset, \\
    \begin{tikzcd}[column sep=large]
        I^S\arrow[]{r}{\mc{h}^S} & \mathsf{C}^{bar}(\Delta^S)\arrow[]{r}{\Psi^\forest(-)^S} & \mc{E}_S,
        \end{tikzcd}
        &\quad \emptyset\neq S\subseteq\ul{n},
\end{cases}
\]
where $\mc{h}^{\bull}$ is the homeomorphism of cubes \red{from Lemma~\ref{lem:bar-cone-is-cube}, needed for the description of the total homotopy fibre (defined as a homotopy fibre) in terms of maps of cubes (see Lemma~\ref{lem:goo-tofib}).
}

Now, in Section~\ref{subsec:fn-hn} we saw that $\tofib_{S\subseteq[n]}\mc{E}_S$ is homotopy equivalent to its subspace $\tofib_{S\subseteq\ul{n}}\FF_S$.
\begin{lemma}\label{prop:grope-tofib-ffs}
    $\psi(\forest)$ lies in the subspace $\tofib(\FF_S)$ and in the coordinates $\FF_S\cong \Emb_\partial(J_0,M\sm \U_{\wh{0S}})$ it is given simply by restricting $\psi(\forest)^S$ to $J_0\subseteq I$.
\end{lemma}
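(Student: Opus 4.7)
The plan is to unravel the identification $\H_n(M)\cong\tofib_{S\subseteq[n]}(\mathcal{E}_S)$ in cubical coordinates via $\mathcal{h}^{\bull}$ from Lemma~\ref{lem:goo-tofib}, and then observe that the formula \eqref{eq:psi-def} for $\Psi^\TG$ was \emph{designed} so that the $\mathcal{E}_S$-coordinates of $\psi(\TG)$ for $0\in S$ are already collapsed to the basepoint. Concretely, a point in $\tofib_{S\subseteq[n]}(\mathcal{E}_S)$ lies in the subspace $\tofib_{S\subseteq\underline{n}}(\FF_S)$ (under the equivalence from Definition~\ref{def:FF}/\eqref{eq:fmap-final}) exactly when, for every $S\ni 0$, the map $I^S\to\mathcal{E}_S$ is the constant map at the basepoint $\U_{\widehat{S}}$; for then the compatibility relations $r^0_T\circ\psi(\TG)^T=\psi(\TG)^{0T}|_{t_0=0}$ force each $\psi(\TG)^T$ (with $T\subseteq\underline{n}$) to factor through $\fib(r^0_T)=\FF_T$.

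The first step is to verify the collapse for $0\in S$. Recalling from Proposition~\ref{prop:psi-def} that we are taking $K=\U$, formula \eqref{eq:psi-def} gives $\Psi^\TG(\theta)^S(\vec{t})=K_{\widehat{S}}=\U_{\widehat{S}}$ for every $\theta\in[0,1]$ and every $\vec{t}\in\Delta^S$. Composing with $\mathcal{h}^S\colon I^S\to\mathcal{C}^{bar}(\Delta^S)$ yields that $\psi(\TG)^S\colon I^S\to\mathcal{E}_S$ is constantly $\U_{\widehat{S}}$, as required.

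The second step is to identify the remaining coordinates. For $T\subseteq\underline{n}$ (possibly empty), apply \eqref{eq:psi-def} again: if $T\neq\emptyset$ then $\Psi^\TG(\theta)^T(\vec{t})=\U_{\widehat{0T}}\cup\Path^\TG_{\iota_T(\vec{t})}(\theta)$, while for $T=\emptyset$ we have the distinguished coordinate $\partial^\perp\TG=\U_{\widehat{0}}\cup\TG(a_0^\perp)$. In either case the resulting embedding agrees with $\U$ on $I\setminus J_{0T}$, so it lies in $\fib(r^0_T)=\FF_T$; the homeomorphism $\FF_T\cong\Emb_\partial(J_0,M\setminus\U_{\widehat{0T}})$ is precisely restriction to $J_0$, under which the coordinate becomes the arc $\Path^\TG_{\iota_T(\vec{t})}(\theta)$ (respectively $\TG(a_0^\perp)$). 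This is what \textquotedblleft restricting $\psi(\TG)^S$ to $J_0\subseteq I$\textquotedblright\ means. The compatibility with the restriction maps $r^k_T\colon\FF_T\to\FF_{Tk}$ is inherited from compatibility in the $\mathcal{E}$-cube by the fact that $\FF_T\hookrightarrow\mathcal{E}_T$ is a subspace inclusion.

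This is essentially a bookkeeping exercise rather than a theorem with a substantive obstacle; the mild subtlety is just keeping straight the homeomorphism $\mathcal{h}^{\bull}\colon I^{\bull}\to(\mathcal{C}^{bar}\Delta)^{\bull}$ and confirming that the \textquotedblleft $\theta=0$ cone point\textquotedblright\ maps to $(\partial^\perp\TG)_{\widehat{S}}$ (matching the $S=\emptyset$ datum $\partial^\perp\TG$ under restriction) while the \textquotedblleft $\theta=1$ base\textquotedblright\ maps to the basepoint $\U_{\widehat{S}}$, so that both the source and target of the path $\Psi^\TG$ are handled coherently by the cubical coordinates.
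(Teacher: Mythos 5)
Your proof is correct and takes essentially the same route as the paper: verify that the coordinates indexed by $S\ni 0$ are constantly the basepoint $\U_{\widehat{S}}$ (immediate from the defining formula \eqref{eq:psi-def}), whence the point lies in the subspace $\tofib(\FF_S)$ and the remaining data is carried by the $J_0$-restrictions. The paper's own proof is merely a terser version of this; your elaboration of why the compatibility relations force each $\psi(\TG)^T$ to land in $\fib(r^0_T)=\FF_T$, and of the identification $\FF_T\cong\Emb_\partial(J_0,M\setminus\U_{\widehat{0T}})$ via restriction, is correct bookkeeping.
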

\begin{proof}
    It is enough to check that for $S\subseteq[n-1]$ with $S\ni 0$ the map $\psi(\forest)^S\colon I^S\to\mc{E}_S$ is constantly equal to $\U_{\wh{S}}$ (since then the only non-trivial part of $\psi(\forest)^S$ for $S\notni0$ is $\psi(\forest)^S|_{J_0}$). However, this is clear from $\psi(\forest)^S\coloneqq\Psi^\forest(-)^S\circ \mc{h}^S$ and the very definition $\Psi^{\forest}(\theta)^S=\U_{\wh{S}}$ in \eqref{eq:psi-def}.
\end{proof}

Recall \red{from Section~\ref{subsec:fn-hn} 
}
that $\pF_{n+1}(M)$ is also homotopy equivalent to its subspace $\tofib_{S\subseteq\ul{n}}(\FF^{n+1}_S)$, and from Definition~\ref{def:FF} that
\[
    \emap_{n+1}\colon\H_n(M)\to\pF_{n+1}(M)
\]
corresponds to the map $r^{n+1}_*\colon\tofib(\FF_S)\to\tofib(\FF^{n+1}_S)$ induced by the postcomposition with
\[
    \rho^{n+1}_S\colon M\sm\U_{\wh{0S}}\hra M\sm\U_{\wh{0Sn+1}}
\]
which is the obvious inclusion map, adding the appropriate neighbourhood of $J_{n+1}$.

Hence, the image under the evaluation map of our grope forest point is (see Figure~\ref{fig:deg-1-layer-pt}):
\begin{equation}\label{eq:final-psi-forest}
    f_{\forest}\coloneqq\emap_n\psi(\forest)=\begin{cases}
    \begin{tikzcd}[column sep=large]
        I^0\arrow[]{r}{(\partial^\perp\forest)_{J_0}} & \FF^{n+1}_\emptyset
    \end{tikzcd}, &\quad S=\emptyset, \\
    \begin{tikzcd}[column sep=large]
        I^S\arrow[]{r}{\mc{h}^S} & \mathsf{C}^{bar}(\Delta^S)\arrow[]{r}{\Psi^\forest(-)^S_{J_0}} & \FF^{n+1}_S
        \end{tikzcd},
        &\quad \emptyset\neq S\subseteq\ul{n}.
\end{cases}
\end{equation}
\red{Note that $\Psi^\forest(-)^S_{J_0}=\Psi^{\TG_1}(-)^S_{J_0}\cdot\dotso\cdot\Psi^{\TG_N}(-)^S_{J_0}$ for the family of arcs $\Psi^{\TG_l}(-)^S_{J_0}\colon\mathsf{C}^{bar}(\Delta^S)\to\FF^{n+1}_S$ concatenated pointwise along their $J_0$ direction. In particular, $(\partial^\perp\TG)_{J_0}=\TG_1(a_0^\perp)\cdot\dotso\cdot\TG_N(a_0^\perp)$.
}

\section{Proofs of main theorems}\label{sec:main}
Let $\TG\colon\ball_\Gamma\hra M$ be a thickened grope on $\U$ with the underlying decorated tree $\upvarepsilon\Gamma^{g_{\ul{n}}}\in\Tree(n)$. In the previous section we have constructed $\psi(\TG)\in\H_n(M)$ and in \eqref{eq:final-psi-forest} we described the point
\begin{align}
    f_{\TG}\;&\coloneqq\;\emap_{n+1}\psi(\TG)\quad\in\pF_{n+1}(M).\nonumber
\\
\intertext{In this section we prove Theorem~\ref{thm:main-thm} -- namely, that}
    [f_{\TG}]\; &=\; \left[\upvarepsilon\Gamma^{g_{\ul{n}}}\right]\quad \in \pi_0\pF_{n+1}(M)\cong\Lie_{\pi_1M}(n).\nonumber
\\
\intertext{In Section~\ref{subsec:strategy} we have reduced this to checking \eqref{eq:final-goal}, that is}
    \deriv_{\ul{n}}\left(\chi f_{\TG}\right)^{\ul{n}}
    \;&\simeq\;
    \Gamma(m_i^{\varepsilon_i\gamma_i})\colon\quad \S^n\to\Omega M_{\ul{n}}.\label{eq:final-goal2}
\end{align}
where $\varepsilon_i\in\{\pm1\}$ and $\gamma_i\in \Omega M$ with $i\in\ul{n}$ are signed decorations of $\TG$, and $g_i=[\gamma_i]$.

\red{
Let us recall all the objects appearing in the last formula. For the purposes of the proof, which is by induction on $n\geq1$, let us write $R\coloneqq\ul{n}$.
\begin{remm}[Recollections]\label{rem:recollections}\hfill
\begin{itemize}
    \item 
The map $\chi\colon\pF_{n+1}(M)\simeq\tofib(\FF^{n+1}_{\bull},r)\to\Omega^{n}\tofib(\FF^{n+1}_{\bull},l)$ was defined in Proposition~\ref{prop:cube-retraction} using left homotopy inverses $l^k_S=(d^k_S(1)\circ e^k_S)\circ-$ (given as `erase then drag') and homotopies $h^k_s=(d^k_S(s)\circ\mathsf{add}_s)\circ-$ constructed in Section~\ref{subsec:delooping-initial}. 
    \item 
The map $\left(\chi f_{\TG}\right)^R\colon\S^n\to\FF^{|R|+1}_R$ is simply the coordinate of $\chi(f_{\TG})$ indexed by the initial vertex $R=\ul{n}$. By Proposition~\ref{prop:chi-is-glued} we have
\[
    \left(\chi f_{\TG}\right)^R\;=\;\glueOp_{S\subseteq R}\big(f_{\TG}^R\big)^{h^S}.
\]
This notation means that $(\chi f_{\TG})^{R}$
is obtained by gluing the $h$-reflections $\big(f_{\TG}^R\big)^{h^S}\colon I^R\to \FF^{|R|+1}_R$, for $S\subseteq R$, along their $0$-faces, so that they form a map out of `a big cube', which is constant on the boundary, so gives a map out of $\S^R=\faktor{I^R}{\partial}$. The $h$-reflections are defined inductively in Definition~\ref{def:iterated-h-reflections} by finding $k=\min S$ and letting:
\begin{equation}\label{eq:hS-reflection}
    \big(f_{\TG}^R\big)^{h^S}\coloneqq\Big(\big(f_{\TG}^R\big)^{h^{S\sm k}}\Big)^{h^k}\coloneqq r^k_{ R\sm k}\left(
h^k\left(\big(f_{\TG}^{ R\sm k}\big)^{h^{S\sm k}}_s\right)\boxbar_k l^k_{ R\sm k}\big(f_{\TG}^R\big)^{h^{S\sm k}}_s
\right).
\end{equation}
Here the notation $F_1\boxbar_kF_2$ means that a map $F_1\colon I^R\to X$ is glued along the face $t_k=1$ to the face $t_k=0$ of a map $F_2\colon I^R\to X$.
    \item 
For $R\subseteq\ul{n}$ the map $\deriv_R\colon\FF^{n+1}_S\to\Omega M_S\coloneqq\Omega( M\sm\ball_S)$ takes an embedding $f\colon J_0\hra M_R$ to the loop obtained by concatenating $\U_0\coloneqq\U|_{J_0}$ and the arc $f$ in reverse (see Remark~\ref{rem:straight}). Since $\deriv_R$ is applied pointwise, we obtain
\begin{equation}\label{eq:glueOp}
    \deriv_R\left(\chi f_{\TG}\right)^R\;=\;\glueOp_{S\subseteq R}\deriv_R\Big(\big(f_{\TG}^R\big)^{h^S}\Big).
\end{equation}
    \item The map $\Gamma(m_i^{\varepsilon_i\gamma_i})$ on the right hand side of \eqref{eq:final-goal2} is the Samelson product $\Gamma(m_i^{\varepsilon_i\gamma_i})$ of the maps $m_i^{\varepsilon_i\gamma_i}\colon\S^1\to\Omega M_{\ul{n}}$. These are given by $m_i^{\varepsilon_i\gamma_i}(\theta)\coloneqq\gamma_i\cdot m_i(\theta)^{\varepsilon_i}\cdot\gamma_i^{-1}$, where $m_i\colon\S^1\to\Omega M_{\ul{n}}$ is the `swing of a lasso' around $\ball_i$. See Section~\ref{subsec:strategy} for details.
\end{itemize}
\end{remm}
}

\begin{proof}[Proof of Theorem~\ref{thm:main-thm}]
We prove \eqref{eq:final-goal2} by induction on $n\geq 1$ (for a proof sketch see Example~\ref{ex:proof-main}).

\textbf{The induction base.}
In this case $R=\{1\}$, $\Gamma=\ichord{1}$ and $\TG$ it a thickening of a disk $\G$. We need to check that $\deriv_{\{1\}}(\chi f_{\TG})^{\{1\}}\colon \S^1\to\Omega M_1$ is homotopic to $m_1^{\varepsilon_1\gamma_1}\colon \S^1\to\Omega M_1$.

\begin{figure}[!htbp]
    \centering
    \includegraphics{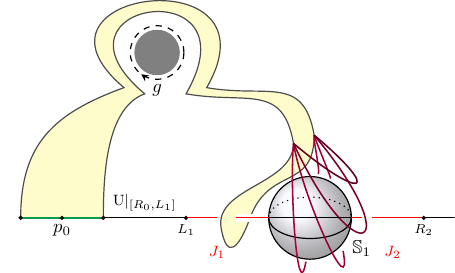}
    \caption[Main proof: the induction base.]{Merging Figures~\ref{fig:grope-deg-1} and~\ref{fig:deg-2-m-i} together: a grope cobordism $\G$ and the family $m_1$.}
    \label{fig:main-induction-base}
\end{figure}
Firstly, the loop $(\chi f_{\TG})^{\{1\}}\coloneqq\big(f_{\TG}^{\{1\}}\big)^{h^1}\cdot f_{\TG}^{\{1\}}$ is the concatenation of the path $f_{\TG}^{\{1\}}\coloneqq\Psi^\TG(t)_{J_0}^{1}$ which is the isotopy across the disk $\G$ (see Example~\ref{ex:grope-path-deg-1}), and the path
\[
\big(f_{\TG}^{\{1\}}\big)^{h^1}_t\coloneqq r^1_\emptyset\Big(\;h^1_\emptyset(s)(\TG(a_0^\perp))\cdot l^1_\emptyset\big(\Psi^\TG(s)_{J_0}^{1}\big)\;\Big)_{t=1-s}
\]
This is obtained by concatenating $h^1_\emptyset(-)(\TG(a_0^\perp))$ with $l^1_\emptyset\big(\Psi^\TG(-)_{J_0}^{1}\big)$ in $\FF^2_\emptyset\coloneqq\Emb_\partial(J_0,M_0)$, then reverse this path and include it into $M_{01}=M_0\cup\ball_{01}$. Recall from Lemma~\ref{lem:defn-of-lambda} that maps $l^1_\emptyset$ and $h^1_\emptyset$ act non-trivially only in the region $[L_1,R_2]\times\D^2\subseteq M$. Also recall that $h^1_\emptyset(s)(\TG(a_0^\perp))\coloneqq d^1_\emptyset(s)\circ\mathsf{add}_s(\TG(a_0^\perp))$ and that the isotopy $d^k_S(s)$ drags the east hemisphere of $\ball_{01}$ to that of $\S_1$.

Therefore, $h^1_\emptyset(s)(\TG(a_0^\perp))$ gradually `drags to the right' the part of $\TG(a_0^\perp)$ inside of this region, and the disk $l^1_\emptyset(\TG)$ agrees with $\G$ except having the tip shifted into $J_2$. So $l^1_\emptyset(\Psi^\TG(s)_{J_0}^{1})$ moves the shifted arc $l^1_\emptyset(\TG(a_0^\perp))=h^1_\emptyset(1)(\TG(a_0^\perp))$ back to $\U_0$ across the shifted disk $l^1_\emptyset(\TG)$. In other words, we use the puncture $J_2$ instead of $J_1$ to isotope $\TG(a_0^\perp)$ back in similar manner.

Applying $\deriv_1$ to this loop $(\chi f_{\TG})^{\{1\}}$ allows us to homotope the part of the disk $\TG$ which is in $M\sm[L_1,R_2]\times\D^2$ (equal to the part of $l^1_\emptyset(\TG)$) onto its core arc (its underlying chord). Hence, we conclude that $\deriv_1(\chi f_{\TG})^{\{1\}}$ is indeed homotopic to $\theta\mapsto\gamma_1\cdot m_1(\theta)^{\varepsilon_1}\cdot\gamma_1^{-1}$.

\subsection{Preliminaries for the induction step}
It will be convenient to consider trees labelled by a finite set $R$.
\begingroup\leqnos
\begin{equation}
    \tag{\emph{setup}}
\begin{minipage}{0.75\textwidth}\label{main-setup}
Let $\TG$ be a thickened grope on $\U$ modelled on a tree $\Gamma\in\Tree(R)$ obtained by grafting together $\Gamma_{1}\in\Tree(R_{1})$ and $\Gamma_2\in\Tree(R_2)$ with $R_1\sqcup R_2=R$. Let $\varepsilon_i\in\{\pm1\}$ and $\gamma_i\in\Omega M$ with $i\in R$ be the signed decorations of $\TG$.
\end{minipage}
\end{equation}
\endgroup
In order to prove \eqref{eq:final-goal2} for $R=\ul{n}$ we first simplify the map $\deriv_{R}\big(\chi f_{\TG}\big)^{R}\colon\S^R\to\Omega M_R$ as follows.

In the \emph{Commutator Lemma}~\ref{lem:commutator} we will show that $\deriv_R(f_{\TG}^R)$ is homotopic to a certain commutator map and in the \emph{Reflections Lemma}~\ref{lem:deriv-h-reflections} generalise this to all $h^S$-reflections $\deriv_R\big(f_{\TG}^R\big)^{h^S}$. Having these homotopies collected in Corollary~\ref{cor:glue-htpies}, we will be able to finish the proof of Theorem~\ref{thm:main-thm}.

\subsubsection*{The Commutator Lemma}
For each $S\subseteq R$ we now study the map
\[
\begin{tikzcd}[row sep=tiny]
    \deriv_Sf_{\TG}^S\colon \quad I^S\arrow{rr}{f_{\TG}^S} && \FF^{|R|+1}_S \arrow{r}{\deriv_S} & \Omega M_S
\end{tikzcd}
\]
given by $\deriv_Sf_{\TG}^S(\vec{t})= (\U_0)_t\cdot (f_{\TG}^S(\vec{t}))_{1-t}$ where $f_{\TG}^S(\vec{t})\coloneqq\Psi^\TG(\theta)^S_{J_0}(u)$ for $\mc{h}^S(\vec{t})=(\theta,u)\in \mathsf{C}^{bar}(\Delta^S)$.

Using the inductive nature of Definition~\ref{def:thick} we can write the thickened grope $\TG$ as the gluing of two thickened gropes $\TG_{\myj}\coloneqq\TG|_{\ball_{\Gamma_{\myj}}}$ modelled on trees $\Gamma_{\myj}$ for $\myj=1,2$ (see Figures~\ref{fig:punctured-torus} and~\ref{fig:thick-grope}).

More precisely, the boundary of the abstract grope $\partial G_{\Gamma_{\myj}}=\beta_{\myj}$ has its corresponding distinguished subarc $\beta_{\myj}^+\subseteq\beta_{\myj}$. Thus, the map $\TG_{\myj}\colon\ball_{\Gamma_{\myj}}\hra M$ can be seen as a thickened grope modelled on $\Gamma_{\myj}\in\Tree(R_{\myj})$ on the knot obtained from $\U$ as follows: replace $\TG(a_0)\subseteq\U_0$ by the arc $\TG(\beta_{\myj}^+)$, together with some arcs connecting their endpoints (the dotted arcs in the model $G_\Gamma\subseteq\ball_\Gamma$ on the left of Figure~\ref{fig:grope-boundaries}). Observe that the newly produced knot is isotopic to $\U$ by an isotopy across the shaded region.

Thus, we can also isotope $\TG_{\myj}$, so that the boundary of its bottom stage is as in the right picture, and hence it is instead a thickened grope from $\U$ to $\partial^\perp\TG_{\myj}\coloneqq(\U\sm\TG(\beta_{\myj}^+))\cup\TG(\beta_{\myj}\sm\beta_{\myj}^+)$. Actually, for $\TG_{\myj}$ to be a grope on $\U$ we also need to reparametrise $\U$ so that punctures indeed have labels $1\leq i\leq |R_{\myj}|$. Also, $\TG_2$ should have the orientation of all stages reversed.

\begin{figure}[!htbp]
    \centering
	\includegraphics{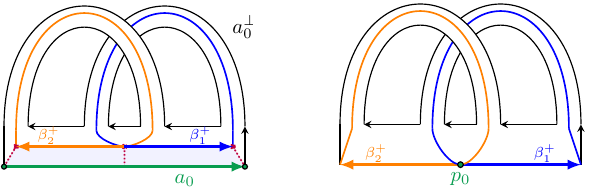}
    \caption{Modifying the bottom stage of a thickened grope.}
    \label{fig:grope-boundaries}
\end{figure}

Thanks to these modifications we have points $\psi(\TG_{\myj})\in\H_{|R_{\myj}|}(M)$ for $\myj=1,2$. However, we now immediately reparametrise back to get the analogous maps $f_{\TG_{\myj}}^{S_{\myj}}\colon I^{S_{\myj}}\to\FF^{|R_{\myj}|+1}_{S_{\myj}}$ with $S_{\myj}\coloneqq S\cap R_{\myj}$. In other words, although formally $\TG_{\myj}$ are not thickened gropes on $\U$, we can easily switch back and forth between the viewpoints. Moreover, we define
\begin{equation}\label{eq-def:deriv}
    \begin{tikzcd}
\deriv_{S}f_{\TG_{\myj}}^{S_{\myj}}\colon\:\:
    I^{S_{\myj}}\arrow{rr}{f_{\TG_{\myj}}^{S_{\myj}}} &&
    \FF^{|R_{\myj}|+1}_{S_{\myj}} \arrow{r}{ \deriv_{S_{\myj}}} &
    \Omega M_{0S_{\myj}}\arrow[]{rr}{\Omega\rho^0\rho} &&
    \Omega M_S,
\end{tikzcd}
\end{equation}
recalling from Remark~\ref{rem:no-rho-cube} that the last map is well defined on the image of $\deriv_{S_{\myj}}$.

By the following result each loop $\deriv_Rf_{\TG}^R(\vec{t})$ is either the commutator of loops $\deriv_{R}f_{\TG_{\myj}}^{R_{\myj}}(\vec{t}_{\myj})$ or some time of a canonical null-homotopy. We use the convention $[0,\frac{1}{2}]^\emptyset=I^\emptyset$ and $[\frac{1}{2},1]^\emptyset=\emptyset$.
\begin{lemma}\label{lem:commutator}
Assume $|R|\geq 2$. The map $\deriv_Rf_{\TG}^R$ is homotopic to the composition of the map $\vartheta_{(R_1,R_2)}\colon I^R\to I^{R_1}\times I^{R_2}$ which permutes the coordinates, and the map $C_\TG^R\colon I^{R_1}\times  I^{R_2}\to\Omega M_R$ given by
\begin{equation}\label{eq:comm-lemma-map}
   C_\TG^R(\vec{t}_1,\vec{t_2})\coloneqq\quad\begin{cases}
   \:\left[\,\deriv_{R}f_{\TG_1}^{R_1}(\vec{t}_1),\;\deriv_{R}f_{\TG_2}^{R_2}(\vec{t_2}) \,\right] &   ,\;(\vec{t}_1,\vec{t_2})\in [0,\frac{1}{2}]^{R_1}\times [0,\frac{1}{2}]^{R_2}\\
    \:\nu_{|\vec{t}_1|}\left(\deriv_{R}f_{\TG_2}^{R_2}(\vec{t}_2)\right) & ,\;(\vec{t}_1,\vec{t_2})\in [\frac{1}{2},1]^{R_1}\times [0,\frac{1}{2}]^{R_2}\\
    \:\nu_{|\vec{t}_2|}\left(\deriv_{R}f_{\TG_1}^{R_1}(\vec{t}_1)\right) & ,\;(\vec{t}_1,\vec{t_2})\in [0,\frac{1}{2}]^{R_1}\times [\frac{1}{2},1]^{R_2}\\
    \:\const_{p_0} &,\;(\vec{t}_1,\vec{t_2})\in [\frac{1}{2},1]^{R_1}\times [\frac{1}{2},1]^{R_2}
    \end{cases}
\end{equation}
where $\nu_{\theta}(x)$ is the time $\theta$ of the canonical null-homotopy $x\cdot x^{-1}\squig \const_{p_0}$ for a loop $x\in\Omega M_R$. In Figure~\ref{fig:schema-comm-lemma-map} these are shown as blue lines, and the subspace on which $C_\TG^R$ is constant is contracted.
\end{lemma}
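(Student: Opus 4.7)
The plan is to unravel $\deriv_R f_\TG^R(\vec{t})$ using the inductive construction of the disk family $\phi_\Gamma$ from Proposition~\ref{prop:grope-disk-corr}, and to exhibit the resulting loop, up to homotopy, as the two-stage swing through the sub-gropes $\TG_1$ and $\TG_2$. Recall that $f_\TG^R(\vec{t}) = \Psi^\TG(\theta)^R_{J_0}(u)$ with $\mathcal{h}^R(\vec{t}) = (\theta, u)$, and that under the join identification $\Delta^R \cong \Delta^{R_1}\star\Delta^{R_2}$ the point $u$ decomposes as $(1-s)u_1 + s u_2$. The coordinate change $\vartheta_{(R_1,R_2)}$ should record a pair $(\vec{t}_1, \vec{t}_2)\in I^{R_1}\times I^{R_2}$ in which the ``first halves'' $[0,\tfrac12]^{R_j}$ parametrise the active swing guided by $\phi_{\Gamma_j}$, while the ``second halves'' parametrise a trivial tail once that swing has finished.

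I would begin with the base case $|R|=2$, where $R_j=\{j\}$ and each $\TG_j$ is a thickened disk. Here Lemma~\ref{lem:sym-surg-isotopy} provides the full family $\D_u$, passing through the symmetric surgery at $s=\tfrac12$. For $s<\tfrac12$ the arc $\TG\circ\phi_\Gamma(u)\circ j_\theta(J_0)$ sticks to the $\TG_1$-side of the symmetric surgery along the curves of Figure~\ref{fig:disk-u}, so that closing it up with $\U_{J_0}$ produces essentially the loop $\deriv f_{\TG_1}$ (with the $\TG_2$-ingredients cancelling against themselves), and symmetrically for $s>\tfrac12$. Tracking the $\theta$-and-$s$ motion together, one sees, up to an evident reparametrisation of the square, the usual $\alpha\beta\alpha^{-1}\beta^{-1}$ pattern defining $[\deriv f_{\TG_1},\deriv f_{\TG_2}]$. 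This is the boundary word $[\beta_1,\beta_2]$ of a punctured torus (Definition~\ref{def:abstract-grope}), i.e. the Borromean-commutator interpretation of gropes from Remark~\ref{rem:emb-comm-borr}.

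For general $|R|\geq 2$, Proposition~\ref{prop:grope-disk-corr} builds $\D_u$ by plumbing tubular neighbourhoods of $\D_{u_1}$ and $\D_{u_2}$ along $a_0\times[-\varepsilon,\varepsilon]$ and then applying the symmetric isotopy at the bottom stage inside this plumbed ball. Pushing forward by $\TG$, the path $\Psi^\TG(\theta)^R_{J_0}(u)$ factors up to homotopy as the concatenation of a $\phi_{\Gamma_1}$-guided swing inside $\TG(\B_{\Gamma_1})$ and a $\phi_{\Gamma_2}$-guided swing inside $\TG(\B_{\Gamma_2})$, each parametrised by its own cone coordinate. The first halves of $\vec{t}_j$ perform these swings, while the second halves retrace them, producing the $x\cdot x^{-1}$ pattern that straightens to $\nu_{|\vec{t}_j|}$ on the mixed quadrants and to the constant loop on $[\tfrac12,1]^{R_1}\times[\tfrac12,1]^{R_2}$. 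Finally, identifying each half-swing with $\deriv_R f_{\TG_j}^{R_j}(\vec{t}_j)$ uses that the closed-up loop is supported in $\TG(\B_{\Gamma_j})\cup\U$ and so factors through $\Omega M_{0S_j}$ as required by \eqref{notat:deriv}.

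The main obstacle is the combinatorial translation between $I^R$ and $I^{R_1}\times I^{R_2}$ through $\vartheta_{(R_1,R_2)}$: the homeomorphism $\mathcal{h}^R\colon I^R\to\mathcal{C}^{bar}(\Delta^R)$ and the join decomposition $\Delta^R\cong\Delta^{R_1}\star\Delta^{R_2}$ must be matched so that the four pieces of the piecewise formula \eqref{eq:comm-lemma-map} correspond precisely to the four quadrants $[0,\tfrac12]^{R_j}\cup[\tfrac12,1]^{R_j}$. Equivalently, one must check that on each face of $I^R$ forced by $\mathcal{h}^R$ to the cone point, the corresponding restriction of $\Psi^\TG$ collapses to the canonical null-homotopy $\nu_\theta$ and not to some more exotic contraction. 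This parameter bookkeeping is where the technical weight of the argument lies; the geometric content is supplied entirely by the symmetric isotopy.
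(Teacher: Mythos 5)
Your overall direction is correct -- unwind $\deriv_R f_\TG^R$ through the inductive construction of $\phi_\Gamma$, use the symmetric isotopy of Lemma~\ref{lem:sym-surg-isotopy}, and see the commutator arising from the punctured-torus boundary as in Remark~\ref{rem:emb-comm-borr}. But there is a genuine gap: you leave out the two specific simplifications that the paper uses to force the exact piecewise formula \eqref{eq:comm-lemma-map}, and without them the claimed split of the cube into four quadrants is not justified.

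First, the base-case picture is off. You assert that for the join parameter $s<\tfrac12$ the swept arc ``produces essentially the loop $\deriv f_{\TG_1}$ (with the $\TG_2$-ingredients cancelling)'' and that tracking $\theta$ and $s$ together yields the commutator. This does not match what the proof needs: for any $u$, the disk $\D_u$ contains both parallel copies of $\D_{u_1}$ and pieces of $\D_{u_2}$, and the commutator comes from the $\theta$-direction across all of them, not from a split of the $s$-axis. The split $[0,\tfrac12]^{R_1}\times[0,\tfrac12]^{R_2}$ in \eqref{eq:comm-lemma-map} does not correspond to $s\lessgtr\tfrac12$; it corresponds, after the collapse described below, to a region where both sub-gropes are still swinging. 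In particular, on the face $\vec t_1\in\{\tfrac12\}^{R_1}$ the first factor is already $\const_{p_0}$, which is what glues the commutator quadrant to the null-homotopy quadrant, and this is invisible in your $s$-partition picture.

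Second, the missing technical content consists of two explicit moves that you gesture at but do not carry out: (a) choosing the foliation of $\D_u$ so that for $\theta\leq\tfrac12$ the sweep exhausts exactly the parts of $\D_u$ coming from the caps $\D_{u_1},\D_{u_2}$, and for $\theta\geq\tfrac12$ it is the vertical collapse of the remaining band $b(u)$ onto the line through $p_0$; (b) collapsing, throughout the family, the surgered-torus portion of $\D_u$ onto its one-skeleton, so that the parallel pushed-off copies of $\D_{u_j}$ become genuinely identified. Move (a) is what makes $\theta=\tfrac12$ the interface between the commutator and the null-homotopy, and move (b) is what turns the approximate ``$\alpha\beta\alpha^{-1}\beta^{-1}$ pattern'' into exactly $[\deriv_R f_{\TG_1}^{R_1}(\vec t_1),\deriv_R f_{\TG_2}^{R_2}(\vec t_2)]$ and turns the band-collapse into the \emph{canonical} null-homotopy $\nu_\theta$. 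Your proposal acknowledges the ``parameter bookkeeping'' as the main obstacle but defers it; in this lemma that bookkeeping \emph{is} the proof, and without specifying (a) and (b) you have no control over which homotopy on the complement of the green square you actually produce.
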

\begin{figure}[!htbp]
	\centering
	\includegraphics{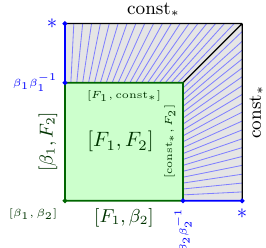}
    \caption[Schematic depiction of the commutator map.]{Schematic depiction of the map $C_\TG^R\colon I^{R_1}\times I^{R_2}\to\Omega M_{R}$, with $F_{\myj}\coloneqq\deriv_{R}f_{\TG_{\myj}}^{R_{\myj}}$ for short.}
    \label{fig:schema-comm-lemma-map}
\end{figure}
\begin{proof}
Assume $R=\emptyset$. We have $f_{\TG}^\emptyset(pt)=(\partial^\perp\TG)_{J_0}=\TG(a_0^\perp)$ and $\deriv_\emptyset f_{\TG}^\emptyset(pt)=(\U_0)_t\cdot\TG(a_0^\perp)_{1-t}$ is exactly the loop $\TG(\partial G_\Gamma)$, the boundary of the bottom stage. Since the bottom stage is a punctured torus, it collapses onto the $1$-skeleton, homotoping the boundary onto the commutator $[\TG(\beta_1),\TG(\beta_2)]\in\Omega M_R$.

 Now each $\TG(\beta_{\myj})=\TG(\beta_{\myj}^+)\cdot\TG(\beta_{\myj}\sm\beta_{\myj}^+)$ is precisely the loop $\deriv_\emptyset f_{\TG_{\myj}}^{\emptyset}(pt)\coloneqq(\U_0)_t\cdot\TG_{\myj}(a_0^\perp)_{1-t}$, so we conclude that $\deriv_\emptyset f_{\TG}^\emptyset(pt)$ is homotopic to $C_\TG^\emptyset(pt)$ as claimed.

Assume now $R\neq\emptyset$ and recall that for $\mc{h}^R(\vec{t})=(\theta,u)\in \mathsf{C}^{bar}(\Delta^R)$ the arc $f_{\TG}^R(\vec{t})\coloneqq\Psi^\TG(\theta)^R_{J_0}(u)$ is the time $\theta$ of an isotopy across the disk $\TG(\D_u)$ (see Proposition~\ref{prop:psi-def}): as $\theta\in[0,1]$ increases, the arc $a_0^\perp$ is being homotoped to $a_0$ across $\D_u$ using a foliation which we are still free to specify.

The disk $\D_u\subseteq\ball_{\Gamma}$ was in turn defined as the time $t\in[0,1]$ of the \emph{symmetric isotopy} (Lemma~\ref{lem:sym-surg-isotopy}) between the two disks obtained by surgery on the bottom stage along $\D_{u_1}\subseteq\ball_{\Gamma_1}$ or $\D_{u_2}\subseteq\ball_{\Gamma_2}$ (see Proposition~\ref{prop:grope-disk-corr}), where $u=(1-t)u_1+tu_2\in\Delta^R=\Delta^{R_1}\star\Delta^{R_2}$, with $u_{\myj}\in\Delta^{R_{\myj}}$. Without loss of generality, assume $t<\frac{1}{2}$, so $\D_{u}$ looks like in the left of Figure~\ref{fig:disk-u-band}.
Here $\vec{t}\in I^R\cong\mathsf{C}^{bar}(\Delta^R)\cong\mathsf{C}^{bar}(\Delta^{R_1}\star\Delta^{R_2})=\mathsf{C}^{bar}(\Delta^{R_1})\times\mathsf{C}^{bar}(\Delta^{R_2})$ precisely gives $\mc{h}^{R_{\myj}}(\vec{t}_{\myj})=(\theta,u_1)$.
\begin{figure}[!htbp]
	\centering
	\includegraphics{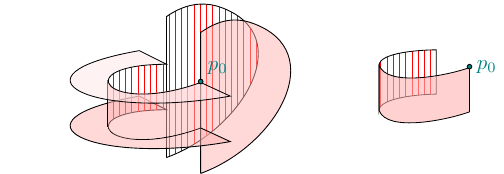}
    \caption[Homotopy across the disk $\D_u$.]{\textit{Left}: Disk $\D_u$ for some $t<\frac{1}{2}$. Two vertical disks are copies of $\D_{u_1}$. \textit{Right}: The band $b(u)$.}
    \label{fig:disk-u-band}
\end{figure}

The loop $\deriv_Rf_{\TG}^R(\vec{t})$ is obtained by closing up the arc $\Psi^\TG(\theta)^R_{J_0}(u)$ using for all $\vec{t}$ the same arc $\U_0$. Thus, we can collapse this $\U_0$ to the basepoint $p_0$ throughout the whole family, so that $\deriv_Rf_{\TG}^R(\vec{t})$ becomes, for a fixed $u$, a basepoint preserving homotopy of the loop $\TG(\partial G_\Gamma)$ to $\const_{p_0}$.

We now \emph{specify the foliation of $\D_u$} in such manner that this homotopy is first done across the two parallel copies of $\D_{u_1}$ (vertical disks in Figure~\ref{fig:disk-u-band}) and the two pieces of $\D_{u_2}$ (two regions lying flat), until for $\theta=\frac{1}{2}$ we have completely exhausted the parts of $\D_u$ which come from the caps. We are then left with a band $b(u)$ as on the right of Figure~\ref{fig:disk-u-band} and we let the \emph{rest of the homotopy} be the 'vertical' contraction onto the vertical line containing $p_0$, followed by its collapse onto $p_0$.

To further simplify these homotopies we collapse throughout the family the remaining pieces of the \emph{surgered torus} in $\D_u$ onto its skeleton. So `parallel copies of curves' get identified similarly as for $R=\emptyset$. The final result is as on the left of Figure~\ref{fig:collapse-u-band}: for any $u\in\Delta^R$, $\theta\leq\frac{1}{2}$ our $\deriv_Rf_{\TG}^R(\vec{t})$ became the commutator of the loops $\deriv_Rf_{\TG_{\myj}}^{R_{\myj}}(\vec{t}_{\myj})= \deriv_R(\Psi^{\TG_{\myj}}(\theta_{\myj})^{R_{\myj}}_{J_0}(u_{\myj}))$, with $\theta_1\in[0,1]$ and $\theta_2\in[0,c]$.
\begin{figure}[!htbp]
	\centering
	\includegraphics{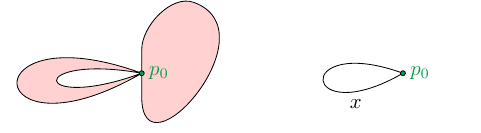}
    \caption[Homotopy after the modification.]{ \textit{Left}: Disk $\D_u$ after collapsing the punctured torus. \textit{Right}: The band $b(u)$ after the collapse.}
    \label{fig:collapse-u-band}
\end{figure}

Here $c$ is such that $\theta=\frac{1}{2}$ corresponds to $(\theta_1,\theta_2)=(1,c)$ and at this moment $\deriv_{R}f_{\TG_1}^{R_1}(\vec{t}_1)=\const_{p_0}$, while $\deriv_{R}f_{\TG_2}^{R_2}(\vec{t_2})$ is the curve $x$ on the right of Figure~\ref{fig:collapse-u-band}. Hence, for $\theta=\frac{1}{2}$ and a fixed $u$ we have $\deriv_Rf_{\TG}^R(\vec{t})=[\const_{p_0},x]$ and our null-homotopy across $b(u)$ for $\theta\geq\frac{1}{2}$ indeed becomes the canonical null-homotopy $t\mapsto x_s|_{[0,t]}\cdot x_{1-s}|_{[1-t,1]}$ after the collapse.
\end{proof}

\subsubsection*{The Reflections Lemma}
We now extend the previous result to describe each $h^S$-reflection
\[\begin{tikzcd}[row sep=tiny]
    \deriv_R\big(f_{\TG}^{R}\big)^{h^{S}}\colon \; I^R\arrow{rr}{\big(f_{\TG}^{R}\big)^{h^{S}}} && \FF^{|R|+1}_R \arrow{r}{\deriv_R} & \Omega M_R.
\end{tikzcd}
\]
\begin{lemma}\label{lem:deriv-h-reflections}
Assume $|R|\geq 2$ and $S\subseteq R$. The map $\deriv_{R}\big(f_{\TG}^{R}\big)^{h^{S}}$ is homotopic to the composition of the map $\vartheta_{(R_1,R_2)}$ as before with the map $\big(C_\TG^R\big)^{h^S}\colon I^{R_1}\times I^{R_2}\to\Omega M_R$ given by
\[\big(C_\TG^R\big)^{h^S}(\vec{t}_1,\vec{t_2})\coloneqq\quad\begin{cases}
    \:\left[\;\deriv_{R}\big(f_{\TG_1}^{R_1}\big)^{h^{S\cap R_1}}(\vec{t}_1),\;
    \deriv_{R}\big(f_{\TG_2}^{R_2}\big)^{h^{S\cap R_2}}(\vec{t}_2)\;\right]
&,\;(\vec{t}_1,\vec{t_2})\in [0,\frac{1}{2}]^{R_1}\times [0,\frac{1}{2}]^{R_2}\\
    \:\nu_{|\vec{t}_1|}\left(\deriv_{R}\big(f_{\TG_2}^{R_2}\big)^{h^{S\cap R_2}}(\vec{t}_2)\right)
& ,\;(\vec{t}_1,\vec{t_2})\in [\frac{1}{2},1]^{R_1}\times [0,\frac{1}{2}]^{R_2}\\
    \:\nu_{|\vec{t}_2|}\left(\deriv_{R}\big(f_{\TG_1}^{R_1}\big)^{h^{S\cap R_1}}(\vec{t}_1)\right)
& ,\;(\vec{t}_1,\vec{t_2})\in [0,\frac{1}{2}]^{R_1}\times [\frac{1}{2},1]^{R_2}\\
    \:\const_{p_0}
&,\;(\vec{t}_1,\vec{t_2})\in [\frac{1}{2},1]^{R_1}\times [\frac{1}{2},1]^{R_2}
    \end{cases}
\]
\end{lemma}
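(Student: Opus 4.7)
The plan is to prove the lemma by induction on $|S|$. The base case $S = \emptyset$ is precisely the Commutator Lemma~\ref{lem:commutator}, since $(f_\TG^R)^{h^\emptyset} = f_\TG^R$ and $(C_\TG^R)^{h^\emptyset} = C_\TG^R$. For the inductive step, I will fix $k := \min S$ and, by the symmetry of the $R_1 \leftrightarrow R_2$ decomposition, assume without loss of generality that $k \in R_1$; the case $k \in R_2$ is entirely analogous. The strategy is to show that both sides of the claimed homotopy transform identically when one passes from an $h^{S\setminus k}$-reflection to an $h^S$-reflection, so that the inductive hypothesis at level $S \setminus k$ gives the conclusion at level $S$.

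On the left-hand side, the recursive formula \eqref{eq:hS-reflection} expresses $\deriv_R(f_\TG^R)^{h^S}$ as the result of applying $\deriv_R$ to the $\boxbar_k$-concatenation, in the $k$-th coordinate, of $h^k\bigl((f_\TG^{R\setminus k})^{h^{S\setminus k}}_s\bigr)$ and $l^k_{R\setminus k}\bigl((f_\TG^R)^{h^{S\setminus k}}_s\bigr)$. Using the naturality of $\deriv$ with respect to the cubes $(\FF^{n+1}_\bull, l, h)$ and $(M_\bull, \lambda, \rho)$ established in Theorem~\ref{thm:final-delooping} and Remark~\ref{rem:no-rho-cube}, this rewrites as the analogous $\boxbar_k$-concatenation of $\Omega h^k$ and $\Omega\lambda^k_{R\setminus k}$ applied to $\deriv_R(f_\TG^R)^{h^{S\setminus k}}$ and to $\deriv_{R\setminus k}(f_\TG^{R\setminus k})^{h^{S\setminus k}}$. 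The inductive hypothesis, applied to the pair $(R, S\setminus k)$ directly and to $(R\setminus k, S\setminus k)$ via the face interpretation of $f_\TG^{R\setminus k}$ through the disk family $\phi_\Gamma$ of Proposition~\ref{prop:grope-disk-corr}, then identifies both pieces with the commutator/null-homotopy formulas $(C_\TG^R)^{h^{S\setminus k}}$ and $(C_\TG^{R\setminus k})^{h^{S\setminus k}}$ after the permutation $\vartheta_{(R_1,R_2)}$.

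The key observation is that, because $k \in R_1$, the coordinate $t_k$ enters the explicit piecewise formula for $(C_\TG^R)^{h^{S\setminus k}}$ only through the $\TG_1$ factor of the commutator and of the two intermediate null-homotopy regions; the $\TG_2$ factor is constant in $t_k$. Consequently, the pointwise commutator $[-,-]$, viewed as an Eckmann--Hilton operation in $\Omega M_R$, commutes with the $\boxbar_k$-gluing and with the pointwise action of $\Omega\lambda^k$ and $\Omega h^k$: the $\TG_2$-factor passes through unchanged, while the $\TG_1$-factor undergoes exactly the $h^k$-reflection recipe of Definition~\ref{def:iterated-h-reflections} applied internally to the sub-grope $\TG_1\colon\B_{\Gamma_1}\hookrightarrow M$. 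Thus the first commutator factor is upgraded from $(f_{\TG_1}^{R_1})^{h^{S\cap R_1 \setminus k}}$ to $(f_{\TG_1}^{R_1})^{h^{S\cap R_1}}$, while the $\TG_2$ factor keeps its $h^{S\cap R_2}$-reflection; the null-homotopy regions transform in the same manner. This matches the formula for $(C_\TG^R)^{h^S}$ on the nose, closing the induction.

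The main obstacle will be the combinatorial bookkeeping: rigorously justifying that $\boxbar_k$ commutes with the pointwise commutator and with the null-homotopies $\nu_{|\vec t_\myj|}$, and tracking how the permutation $\vartheta_{(R_1,R_2)}$ interacts with the nested reflections indexed by the sequence $k_1 < k_2 < \cdots$ obtained from $\min S$, $\min(S\setminus k_1)$, and so on. This is essentially the same calculation carried out in Appendix~\ref{app:proofs} when defining $\chi$ as the iterated gluing of $h^S$-reflections, and should be reducible to that setup by performing the induction in parallel inside $\B_\Gamma$ and inside $\B_{\Gamma_1}$. The symmetric case $k \in R_2$ requires only swapping $\vec t_1$ and $\vec t_2$ throughout.
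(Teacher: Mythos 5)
Your proposal is essentially the same as the paper's proof: both induct on $|S|$ with base case the Commutator Lemma, set $k=\min S$ and assume WLOG $k\in R_1$, push $\deriv_R$ through the recursive formula \eqref{eq:hS-reflection} using the compatibility established in Theorem~\ref{thm:final-delooping} and Remark~\ref{rem:no-rho-cube}, and close the induction via the key observation that $h^k$ and $\Omega\lambda^k_{R\setminus k}$ act trivially on the maps coming from $\TG_2$. The paper records the needed naturality as the chain of equalities \eqref{eq:deriv-goes-through} and then plugs the piecewise formula for $(C_\TG^{R'})^{h^{S'}}$ and $(C_\TG^R)^{h^{S'}}$ in by hand, whereas your proposal gestures at the same bookkeeping without writing it out; but the logic and the crucial insight (that only the $\TG_1$ factor is reflected in the $t_k$ direction) agree.
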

\begin{proof} The statement for $S=\emptyset$ is precisely the previous lemma (put $S\coloneqq R$ there).

Now assume $S\subseteq R$ is non-empty and that the statement is true inductively for all $\hat{R}$ of cardinality $|\hat{R}|<|R|$ and any $\hat{S}\subseteq R$ of cardinality $|\hat{S}|<|S|$. Thus, letting $k\coloneqq\min S$ and $R'\coloneqq R\sm k$ and $S'\coloneqq S\sm k$, the statement is true for the pairs $(R',S')$ and $(R,S')$.

Using the defining formula for an $h^S$-reflection from \eqref{eq:hS-reflection}, we compute
\begin{align}
\deriv_R\big(f_{\TG}^{R}\big)^{h^{S}}&=\deriv_Rr^k_{R'}\left(
h^k\big(f_{\TG}^{ R'}\big)^{h^{S'}}\boxbar_k l^k_{R'}\big(f_{\TG}^R\big)^{h^{S'}}
\right)\nonumber\\
&=\Omega\rho^k_{R'}\circ\deriv_{R'}\left(
h^k\big(f_{\TG}^{ R'}\big)^{h^{S'}}\boxbar_k l^k_{R'}\big(f_{\TG}^R\big)^{h^{S'}}
\right)\nonumber\\
&=\Omega\rho^k_{R'}\left(
\deriv_{R'}\circ h^k\big(f_{\TG}^{ R'}\big)^{h^{S'}}\boxbar_k \deriv_{R'}\circ l^k_{R'}\big(f_{\TG}^R\big)^{h^{S'}}
\right)\nonumber\\
&=\Omega\rho^k_{R'}\left(
h^k\circ\deriv_{R'}\big(f_{\TG}^{ R'}\big)^{h^{S'}}\boxbar_k (\Omega\lambda^k_{R'})\circ\deriv_R\big(f_{\TG}^R\big)^{h^{S'}}
\right).\label{eq:deriv-goes-through}
\end{align}
For the second equality we have used that $\rho^0\circ\deriv_R\circ r^k_{R'}=\rho^0\circ\Omega\rho^k_{R'}\circ\deriv_{R'}$ by Remark~\ref{rem:no-rho-cube} (recall that we omit $\rho^0$ from notation), the third equality holds since $\deriv_R$ is applied pointwise, and for the last see again Remark~\ref{rem:no-rho-cube}. The induction hypothesis now implies
\begin{align}\label{eq:deriv-inductive}
\deriv_R\big(f_{\TG}^{R}\big)^{h^{S}}\simeq\Omega\rho^k_{R'}\left(
h^k\circ\big(C_\TG^{R'}\big)^{h^{S'}}\circ\vartheta_{(R'_1,R'_2)}\;\boxbar_k\; \Omega\lambda^k_{R'}\circ\big(C_\TG^R\big)^{h^{S'}}\circ\vartheta_{(R_1,R_2)}
\right)
\end{align}
and it remains to show that the last expression is equal to $(C_\TG^R)^{h^S}\circ\vartheta_{(R_1,R_2)}$.

To this end, assume without loss of generality that $k\in S\cap R_1$, and denote $R'_1=R_1\sm k$ and $R'_2=R_2$. Then \eqref{eq:deriv-inductive} at $\vec{t}\in I^t$ equals
\[
\Omega\rho^k_{R'}\left(
h^k\big(C_\TG^{R'}\big)^{h^{S'}}(\ul{\vec{t}_1},\vec{t}_2)\;\boxbar_k\; \Omega\lambda^k_{R'}\big(C_\TG^R\big)^{h^{S'}}(\vec{t}_1,\vec{t}_2)
\right).
\]
Let us plug in the formulae for $(C_\TG^{R'})^{h^{S'}}$ and $(C_\TG^R)^{h^{S'}}$ into this. Observe that $h^k$ and $\Omega\lambda^k_{R'}$ act \emph{trivially} on the maps involving the grope $\TG_2$, as those maps interact only with the punctures indexed by $S_2\notni k$. On the other hand, for $\vec{t}_1\in[0,\frac{1}{2}]^{R_1}$ we will get maps
\begin{align*}
    &h^k\circ\deriv_{R'}\big(f_{\TG_1}^{R'_1}\big)^{h^{S\cap R'_1}}(\vec{t}_1)\boxbar_k(\Omega\lambda^k_{R'})\circ\deriv_{R}\big(f_{\TG_1}^{R_1}\big)^{h^{S\cap R'_1}}(\vec{t}_1)\\
    \text{and}\quad & h^k\circ\nu_{|\vec{t}_2|}\left(\deriv_{R}\big(f_{\TG_1}^{R_1}\big)^{h^{S\cap R_1}}(\vec{t}_1)\right)\boxbar_k(\Omega\lambda^k_{R'})\circ\nu_{|\vec{t}_2|}\left(\deriv_{R}\big(f_{\TG_1}^{R_1}\big)^{h^{S\cap R_1}}(\vec{t}_1)\right).
\end{align*}
The second expression is just $\nu_{|\vec{t}_2|}$ applied to the first, which is in turn equal to $\deriv_R\big(f_{\TG_1}^{R}\big)^{h^{S\cap R_1}}(\vec{t}_1)$, by running the equalities from \eqref{eq:deriv-goes-through} in reverse.

Therefore, we indeed have with $\deriv_R\big(f_{\TG}^{R}\big)^{h^{S}}\simeq(C_\TG^R)^{h^S}\circ\vartheta_{(R_1,R_2)}$ as claimed.
\end{proof}
\red{
Using the formula~\eqref{eq:glueOp} we conclude:
}
\begin{cor}\label{cor:glue-htpies}
    The homotopies from the last lemma glue to a homotopy
\[
    \deriv_R\left(\chi f_{\TG}\right)^R\;=\;\glueOp_{S\subseteq R}\deriv_R\big(f_{\TG}^R\big)^{h^S}\:\simeq\:\Big(\glueOp_{S\subseteq R}\big(C_\TG^R\big)^{h^S}\Big)\circ\vartheta_{(R_1,R_2)}.
\]
\end{cor}
\subsection{The end of the proof}
Assume inductively that \eqref{eq:final-goal2} is true for all $\TG$ as in \eqref{main-setup} with $|R|<n$. Let $|R|=n$ and let us prove that $\Gamma(m_i^{\varepsilon_i\gamma_i})$ and $\deriv_{R}(\chi f_{\TG})^{R}\simeq\big(\glueOp_{S\subseteq R}\big(C_\TG^R\big)^{h^S}\big)\circ\vartheta_{(R_1,R_2)}$ are homotopic.

Firstly, $\TG$ is the gluing of thickened gropes $\TG_1$ and $\TG_2$ modelled respectively on $\Gamma_{\myj}\in\Tree(R_{\myj})$ and with signed decorations $(\varepsilon_i,\gamma_i)_{i\in R_{\myj}}$. Since both $|R_{\myj}|<n$ the \emph{induction hypothesis} implies that
\begin{equation}\label{eq:ind-hyp-main}
    \deriv_{R_{\myj}}(\chi f_{\TG_{\myj}})^{R_{\myj}}\;\simeq\; \Gamma_{\myj}(m_i^{\varepsilon_i\gamma_i})\colon\: (I^{R_{\myj}},\partial)\to(\Omega M_{R_{\myj}},\const_*).
\end{equation}

Secondly, in \eqref{eq:samelson-Gamma-sphere} we have defined $\Gamma(m_i^{\varepsilon_i\gamma_i})$ inductively by
\[
    \Gamma(m_i^{\varepsilon_i\gamma_i})\;\coloneqq\;\left[\Gamma_1(m_i^{\varepsilon_i\gamma_i}),\Gamma_2(m_i^{\varepsilon_i\gamma_i})\right]\circ\vartheta_{(R_1,R_2)}.
\]
The first map in the formula is the Samelson product which was shown in Lemma~\ref{lem:samelson-inductive} to be obtained by canonically trivialising all the faces of the map\footnote{More precisely, in that lemma we had $m_{i,R_{\myj}}\coloneqq\Omega\rho^{R_{\myj}\sm i}_i\circ m_i^{\varepsilon_i\gamma_i}$ if $i\in R_{\myj}$, but we have already abused the notation when we decided to write $m_i\coloneqq\Omega\rho^{S\sm i}_i\circ \eta_{\S_i}$ (cf.\ \eqref{eq:new-xi}).}
\[\begin{tikzcd}[column sep=huge]
   I^{R_1}\times I^{R_2}\arrow{rr}{\Gamma_1(m_i^{\varepsilon_i\gamma_i})\times \Gamma_2(m_i^{\varepsilon_i\gamma_i})} &&
    \Omega M_{R_1}\times \Omega M_{R_2} \arrow{rr}{[\Omega\rho_{R_1}^{R\sm R_1},\Omega\rho_{R_2}^{R\sm R_2}]} && \Omega M_{R}.
\end{tikzcd}
\]
Plugging in \eqref{eq:ind-hyp-main} we get $\Gamma(m_i^{\varepsilon_i\gamma_i})\simeq w^{ind}\circ\vartheta_{(R_1,R_2)}$ for the map $w^{ind}$ obtained by trivialising the faces of the map\footnote{Here we denote $\deriv_{R}=\Omega\rho_{R_{\myj}}^{R\sm R_{\myj}}\circ\deriv_{R_{\myj}}$ as in \eqref{eq-def:deriv}.}
\begin{equation}\label{eq:omega-inductive}
    \begin{tikzcd}[column sep=huge]
    I^{R_1}\times I^{R_2}\arrow{rrr}{[\deriv_{R}(\chi f_{\TG_1})^{R_1},\;\deriv_{R}(\chi f_{\TG_2})^{R_2}]} &&& \Omega M_{R}.
\end{tikzcd}
\end{equation}
The map $w^{ind}$ is depicted in Figure~\ref{fig:schema-omega}, with the map \eqref{eq:omega-inductive} given as the green square with the two coordinate axes $\vec{t}_{\myj}\in I^{R_{\myj}}$ (so it is an $n$-cube). Trivialising this on the boundary corresponds to putting the green square into a bigger one and filling in the intermediate region by null-homotopies $x\cdot x^{-1}\squig *$ along straight blue lines. Here $x\in\Omega M_{R}$ is some value of \eqref{eq:omega-inductive} on the boundary of the inner $n$-cube, and so $w^{ind}$ is indeed constant on the boundary of the outer $n$-cube.

We now show that $w^{ind}$ agrees with the map $\glueOp_{S\subseteq R}\big(C_\TG^R\big)^{h^S}$, so Corollary~\ref{cor:glue-htpies} will finish the proof.
\begin{figure}[!htbp]
	\centering
	\includegraphics{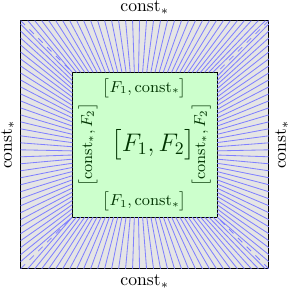}
    \caption[Main proof: the induction step.]{Schematic depiction of the map $w^{ind}$, where $F_{\myj}\coloneqq\deriv_{R}(\chi f_{\TG_{\myj}})^{R_{\myj}}$ for short.}
    \label{fig:schema-omega}
\end{figure}

Let us first show this for the `green parts' of Figures~\ref{fig:schema-comm-lemma-map} and~\ref{fig:schema-omega}, i.e. on $[0,\frac{1}{2}]^{R_1}\times [0,\frac{1}{2}]^{R_2}$ we have
\begin{align*}
w^{ind}
    &\coloneqq\quad \left[\deriv_{R}(\chi f_{\TG_1})^{R_1},\deriv_{R}(\chi f_{\TG_2})^{R_2}\right]\\
    &=\left[\glueOp_{S_1\subseteq R_1}\deriv_{R}\big(f_{\TG_1}^{R_1}\big)^{h^{S_1}},\,
    \glueOp_{S_2\subseteq R_2}\deriv_{R}\big(f_{\TG_2}^{R_2}\big)^{h^{S_2}}\right] \\
    &=\glueOp_{S\subseteq R}\left[\deriv_{R}(f_{\TG_1}^{R_1}\big)^{h^{S\cap R_1}},\,\deriv_{R}(f_{\TG_2}^{R_2}\big)^{h^{S\cap R_2}}\right]
    \quad=:\glueOp_{S\subseteq R}\big(C_\TG^R\big)^{h^S}
\end{align*}
where the second equality is by \eqref{eq:glueOp}, the third holds because the commutator bracket is applied pointwise and the last equality is by definition of $\big(C_\TG^R\big)^{h^S}$ in Lemma~\ref{lem:deriv-h-reflections}.

Similarly, for $\vec{t}_1\in [0,\frac{1}{2}]^{R_1}$ we have a blue line null-homotopy, where $\vec{t_2}$ runs in $[\frac{1}{2},1]^{R_2}$, so
\begin{align*}
w^{ind}(\vec{t}_1,\vec{t_2})
    &\coloneqq\nu_{|\vec{t}_2|}\left(\glueOp_{S_1\subseteq R_1}\deriv_{R}\big(f_{\TG_1}^{R_1}\big)^{h^{S_1}}(\vec{t}_1)\right)\\
    &=\glueOp_{S\subseteq R}\nu_{|\vec{t}_2|}\left(\deriv_{R}(f_{\TG_1}^{R_1}\big)^{h^{S\cap R_1}}(\vec{t}_1)\right) 
    \quad=:\glueOp_{S\subseteq R}\big(C_\TG^R\big)^{h^S}(\vec{t}_1,\vec{t_2}).\qedhere
\end{align*}
\end{proof}
\vspace{7pt}
\subsection{The proof of Theorem~\ref{thm:main-extended}}
\begin{proof}
We now show that for a grope forest $\forest$ of degree $n$ on $\U$ we have $[\emap_{n+1}\psi(\forest)]=[\ut(\forest)]\in\Lie_{\pi_1M}(n)$. \red{The points $\psi(\forest)\in\H_n(M)$ were defined in Proposition~\ref{prop:extended-KST}, and the underlying tree map $\ut(\forest)\in\Z[\Tree_{\pi_1M}(n)]$ in Proposition~\ref{prop:uf-map}.
}

In other words, for $\forest=\bigsqcup_{l=1}^N\TG_l\colon\bigsqcup_{l=1}^N\ball_{\Gamma_l}\hra M$ with $\ut(\TG_l)=\upvarepsilon^l\Gamma_l^{g^l}$, and denoting $f_{\forest}\coloneqq\emap_{n+1}\psi(\forest)$, we need to show
\[
\big[f_{\forest}\big]=\sum_{l=1}^N\left[\upvarepsilon^l\Gamma_l^{g^l}\right] \quad\in\pi_0\pF_{n+1}(M)\cong\Lie_{\pi_1M}(n).
\]
This was reduced in Section~\ref{subsec:strategy} to proving that $\deriv_{R}(\chi f_{\forest})^{R}\colon\S^n\to\Omega M_{R}$ is homotopic to a map realising the class on the right, namely, the pointwise product $\bigsqcap_{l=1}^N \Gamma_l(m_i^{\varepsilon_i^l\gamma_i^l})$
which takes $\vec{t}\in\S^n$ to the concatenation of the loops
\[
\Gamma_l(m_i^{\varepsilon_i^l\gamma_i^l})(\vec{t})\:\in\:\Omega M_{R},\quad  1\leq l\leq N.
\]
Since each $\TG_l$ is a thickened grope on $\U$ with the underlying decorated tree $\ut(\TG_l)=\upvarepsilon^l\Gamma_l^{g^l}$, the maps $\Gamma_l(m_i^{\varepsilon_i^l\gamma_i^l})\simeq\deriv_{R}(\chi f_{\TG_l})^{R}$ are homotopic by Theorem~\ref{thm:main-thm}. Hence, it remains to prove that $\deriv_{R}\big(\chi f_{\forest}\big)^{R}$ is homotopic to the pointwise product $\bigsqcap_{l=1}^N\deriv_{R}\big(\chi f_{\TG_l}\big)^{R}$.

Recall the definition of $f_{\forest}$ in \eqref{eq:final-psi-forest}. Similarly as in the proof of the Commutator Lemma~\ref{lem:commutator}, there is a homotopy between $\deriv_{R}\big(f_{\forest}^{R}\big)$ and the pointwise product of $\deriv_{R}(f_{\TG_l}^{R})$ -- since we can collapse $\U_0$ for all loops in the family. This extends to all $h$-reflections as in the Reflections Lemma~\ref{lem:deriv-h-reflections} -- there we had commutators of loops and here just their pointwise concatenations.

Therefore, using the same arguments as in the proof of Theorem~\ref{thm:main-thm} we can conclude
\begin{align*}
    \deriv_{R}\big(\chi f_{\forest}\big)^{R}
    &=
    \deriv_{R}\Big(\glueOp_{S\subseteq R}(f_{\forest}^{R})^{h^S}\Big)
    =
    \glueOp_{S\subseteq R}\deriv_{R}\big(f_{\forest}^{R}\big)^{h^S}\\
    &\simeq\glueOp_{S\subseteq R}\bigsqcap_{l=1}^N\deriv_{R}\big( f_{\TG_l}^{R}\big)^{h^S}
    =
    \bigsqcap_{l=1}^N\glueOp_{S\subseteq R}\deriv_{R}\big( f_{\TG_l}^{R}\big)^{h^S}=\bigsqcap_{l=1}^N\deriv_{R}\big(\chi f_{\TG_l}\big)^{R}.
    \qedhere
\end{align*}
\end{proof}

\appendix
\addtocontents{toc}{\bigskip\noindent%
  \normalfont\scshape\small{Appendices}\par}

\section{On left homotopy inverses}\label{app:proofs}

In this section we prove Lemmas~\ref{lem:retractions} and~\ref{lem:first-cube-retraction}, and Propositions~\ref{prop:cube-retraction} and~\ref{prop:chi-is-glued}.
\begin{proof}[Proof of Lemma~\ref{lem:retractions}]
Consider the commutative square
\begin{equation}
\begin{tikzcd}
    X \arrow{r}{l\circ r} & X\\
    X \arrow[equals]{u} \arrow{r}{r} & Y\arrow{u}[swap]{l}
\end{tikzcd}
\end{equation}
Its total homotopy fibre $Z$ is according to Corollary~\ref{cor:tofib-pt} given by
\begin{align*}
    \Big\{(x_0,x_s,y_t,x_{s,t})\in 
    & X\times\Path_*X\times\Path_*Y\times \Map(I^2,X)
    \,\Big|
    \\
    & x_s\colon x_0\squig*_X,\,
    y_t\colon r(x_0)\squig*_Y,\,
    x_{s,0}=lr(x_s),\,
    x_{0,t}=l(y_t),\,
    x_{s,1}=x_{1,t}=*_X\Big\},
\end{align*}
\red{
where $\cdot$ denotes the concatenation of paths (from left to right), and by abuse of notation $x_s,y_t,x_{s,t}$ stand for maps $x\colon I\to Y, s\mapsto x_s$ and $y\colon I\to Y, t\mapsto y_t$. See the picture in \eqref{eq:big-retractions-square}.
}

In this description $Z$ can easily be seen as the iterated homotopy fibre in two different ways, that is, in the next diagram all rows and columns are homotopy fibre sequences (cf.\ Lemma~\ref{lem:iterated}):
\begin{equation}\label{eq:big-retractions-square}
\begin{tikzcd}
    \hofib(l\circ r)\arrow[dashed]{r} & {\color{blue}X} \arrow{r}{l\circ r} & {\color{green!70!black}X} \\
    \hofib(r)\arrow[dashed]{u}\arrow[dashed]{r} & {\color{blue}X} \arrow{r}{r} \arrow[equals]{u} & {\color{orange}Y}\arrow{u}[swap]{l} \\
    Z\arrow[dotted]{r}{p_2}\arrow[dotted]{u}{p_1}[swap]{\simeq} & \Path_*X\arrow[dashed]{r}\arrow[dashed]{u} & \hofib(l)\arrow[dashed]{u}
\end{tikzcd}
\qquad\qquad
\begin{tikzpicture}[baseline=2.5ex,scale=0.9]
    \fill[orange,thick,draw]
        (-1,-.8) circle (0.7pt) node[left]{\tiny$r(x_0)$}
        -- (.8,-.8) circle (0.7pt) node[pos=0.5, below]{\footnotesize$y_t$} node[pos=1,right]{\tiny$*$};
    \fill[blue,thick,draw]
        (-2,0) circle (0.7pt) node[below]{\tiny$x_0$} -- (-2,1.8) circle (0.7pt) node[above]{\tiny$*$}
        (-2,0.9)
        node[left]{\footnotesize$x_s$}
        node[right=0.01cm,green!50!black]{\footnotesize$lr(x_s)$}
        node[right=2.5cm,green!50!black]{\footnotesize$\const_*$};
    \fill[green!10!white,text=green!40!black,draw=green!50!black]
        (-1,0) circle (0.7pt) node[below]{\tiny$lr(x_0)$} rectangle (.8,1.8)
        node[pos=0.5]{$x_{s,t}$}
        node[pos=0.5, below=0.8cm]{\footnotesize$l(y_t)$}
        node[pos=0.5, above=0.8cm]{\footnotesize$\const_*$};
\end{tikzpicture}
\end{equation}

The two dotted maps are $p_1(x_0,x_s,y_t,x_{s,t})=(x_0,y_t)$ and $p_2(x_0,x_s,y_t,x_{s,t})=(x_0,x_s)$. Since $l r\simeq\Id_X$, the homotopy fibre $\hofib(lr)$ is contractible, so $p_1$ is a weak equivalence. On the other hand, the path space $\Path_*X=\hofib(\Id_X)$ is also contractible, so we immediately have
\begin{equation*}
\begin{tikzcd}
    \chi^{-1}\colon\quad\Omega\hofib(l)\arrow{r}{d_Z}[swap]{\sim} & Z\arrow{r}{p_1}[swap]{\sim} & \hofib(r),
\end{tikzcd}
\end{equation*}
\red{
where $d_Z$ is the connecting map in the bottom row of the diagram. Therefore, we have
\[
    \chi^{-1}(y_t,x_{s,t})=p_1(*_X,\const_{*_X},y_t,x_{s,t})=(*_X,y_t),
\]
which is simply a forgetful map (here $y_0=y_1,x_{s,0}=x_{s,1}$). However, we will also need a formula for its homotopy inverse $\chi$, which we give next,
}
by finding explicit homotopy inverses $p_1^{-1}$ and $d_Z^{-1}$.

First recall \red{that by assumption we are given a homotopy 
}
$h\colon X\times I\to X$ from $\Id$ to $lr$. Thus, for $(x_0,y_t)\in\hofib(r)$ we have a path $h_t(x_0)\colon x_0\squig lr(x_0)$, and since $y_0=r(x_0)$, this can be concatenated with $l(y_t)$. Also, for each $t\in[0,1]$ we have a path $h_s(ly_t)\colon l(y_t)\squig lrl(y_t)$, so we let
\[
    p^{-1}_1(x_0,y_t) = \Big(\: x_0,\quad h_s(x_0)\cdot l(y_{s}),\quad y_t,\quad h_s(h_t(x_0))\wt{\boxbar} h_s(ly_t)\:\Big)\:\in Z,
\]
where $h_s(h_t(x_0))\wt{\boxbar} h_s(ly_t)$ \red{denotes the following square in $X$:
}
identify the two edges $h_t(x_0)\equiv h_s(x_0)$ of the square $(s,t)\mapsto h_s(h_t(x_0))$ to get a bigon, then glue to it $(s,t)\mapsto h_s(ly_t)$ along their common edge $h_s(lr(x_0))$; reshape this into a square with $0$-faces $lr\big(h_s(x_0)\cdot l(y_s)\big)$ and $ ly_t$ and both $1$-faces $\const_*$. \red{Equivalently, in the following rectangle glue the edges $h_t(x_0)$ and $h_s(x_0)$ together, then bend the face $\const_*$ into two and rescale to get a square:
}
\begin{equation}\label{eq-def:boxbartilde}
    \begin{tikzpicture}[scale=0.9,baseline=1.3cm,font=\small]
    \fill[green!20!white,text=green!40!black,draw=black]
        (-2,1)
        -- (0,1) node[pos=0.5, below]{$h_tx_0$}
        -- (0,3)
        -- (-2,3) node[pos=0.5, above]{$lr(h_tx_0)$} 
        -- (-2,1) node[pos=0.5, left]{$h_sx_0$} 
        node[pos=0.6, right=0.2cm,black]{\bfseries$h_s(h_tx_0)$} 
        ;
    \fill[green!10!white,text=green!40!black,draw=black]
        (0,1) 
        -- (2,1) node[pos=0.5, below]{$ly_t$} 
        -- (2,3) node[pos=0.5,right]{$\const_*$}
        -- (0,3) node[pos=0.5, above]{$lrly_t$} 
        -- (0,1) node[pos=0.3]{$h_s(lrx_0)$}
        (1,1) node[above=0.4cm,black,align=center]{ \\ $h_s(ly_t)$}
        ;
    \draw
        (-2,1)  circle (0.8pt) node[below]{\footnotesize$x_0$}
        (-2,3) circle (0.8pt) node[above]{\footnotesize$lrx_0$}
        (2,1) circle (0.7pt) node[below]{\footnotesize$*$}
        (2,3)  circle (0.8pt) node[above]{\footnotesize$*$}
        (0,3)  circle (0.8pt) node[above]{\tiny$lrlrx_0$}
        (0,1) circle (0.8pt) node[below]{\footnotesize$lrx_0$}
        ;
    \end{tikzpicture}
    \;
    \text{becomes}
    \qquad
    \begin{tikzpicture}[scale=0.9,baseline=1.3cm,font=\footnotesize]
    \fill[green!20!white,text=green!40!black,draw=black]
        (-2,1)
        -- (0,1) node[pos=0.5, below]{$ly_t$}
        -- (0,3) node[pos=0.5,right]{$\const_*$}
        -- (-2,3) node[pos=0.5, above]{$\const_*$} 
        -- (-2,1) node[pos=0.5, left]{$lr(h_tx_0)\cdot lrly_t$} 
        node[pos=0.6, right=0.22cm,black,align=center]{\bfseries$h_sh_tx_0$ \\ $\wt{\boxbar} h_sly_t$} 
        ;
    \end{tikzpicture}
\end{equation}
Since clearly $p_1\circ p^{-1}_1=\Id$ and $p_1$ is a weak equivalence, $p^{-1}_1$ is a homotopy inverse for $p_1$ (alternatively, there is a homotopy $p^{-1}_1\circ p_1\simeq\Id$ by gradually introducing back the coordinate $x_s$).

The map $d_Z^{-1}\colon Z\to\Omega\hofib(l)$ comes from comparing the bottom raw in the diagram \eqref{eq:big-retractions-square} to the fibre sequence $\Omega\hofib(l)\to\Path_*\hofib(l)\to\hofib(l)$. We define it by
\[
    d_Z^{-1}(x_0,x_s,y_t,x_{s,t})=\big\{\:t\:\mapsto\:\big(r(x_{1-t})\cdot y_t,\; lr(x_{s\geq t})\boxbar x_{s,t}\big)\:\big\},
\]
where $x_{1-t}$ is the inverse of the path $x_t$ and $x_{s\geq t}$ is the square $(s,t)\mapsto x_{1-t}|_{t\in[0,1-s]}$. Since $d_Z^{-1}\circ d_Z(y_t,x_{s,t})=(r(\const_*)\cdot y_t, lr(\const_*)_{\leq t}\cdot x_{s,t})=(y_t,x_{s,t})$, the map $d_Z^{-1}$ is indeed a homotopy inverse for $d_Z$, by the same argument as above.

Hence, the map $\chi\coloneqq d_Z^{-1}\circ p_1^{-1}\colon\hofib(r)\to\Omega\hofib(l)$ is given by
\[
    \chi(x_0,y_t)\coloneqq\big\{\:t\:\mapsto\:
    \Big( r\Big(h_s(x_0)\cdot l(y_s)\Big)_{s=1-t}\cdot y_t,\quad lr\Big(h_s(x_0)\cdot l(y_s)\Big)_{s\geq t}\boxbar \big(h_s(h_tx_0)\wt{\boxbar} h_s(ly_t)\big) \Big)\:\big\}
\]
and is the desired homotopy inverse to $\chi^{-1}$, where the notation $\boxbar$ means glue the two squares along the common edge, namely:
\begin{equation}\label{eq-def:boxbar}
    \begin{tikzpicture}[scale=0.9,font=\footnotesize,baseline=1.3cm]
        \fill[green!20!white,text=green!40!black,draw=black]
        (-2,1) circle (0.8pt) node[below left]{\tiny$*$}
        -- (0,1) node[pos=0.5, below,align=right]{$lr(h_tx_0)\cdot\quad$ \\ $ lrly_t$}
        -- (0,3) node[pos=0.5,right]{$\const_*$}
        -- (-2,3) node[pos=0.5, above]{$\const_*$}
        -- (-2,1) node[pos=0.5, left]{$\const_*$} 
        node[pos=0.5, right=0cm,black,align=right]{\bfseries$lr\Big(h_s(x_0)\cdot\quad$ \\ $ l(y_s)\Big)_{s\geq t}$} ;
    \fill[green!10!white,text=green!40!black,draw=black]
        (0,1) circle (0.8pt) node[below]{\tiny$lrx_0$}
        -- (2,1) node[pos=0.5, below]{$ly_t$} circle (0.7pt) node[below]{\tiny$*$}
        -- (2,3) node[pos=0.5,right]{$\const_*$}
        -- (0,3) node[pos=0.5, above]{$\const_*$}
        -- (0,1) node[pos=0.3,above]{$lr(h_tx_0)\cdot lrly_t$}
        (1,1) node[above=0.4cm,black,align=center]{\bfseries$h_sh_tx_0$ \\ $\wt{\boxbar} h_sly_t$} ;
    \end{tikzpicture}
    \;
    \text{becomes}
    \quad
     lr\Big(h_s(x_0)\cdot l(y_s)\Big)_{s\geq t}\boxbar \big(h_s(h_tx_0)\wt{\boxbar} h_s(ly_t)\big) \Big).
\end{equation}
\red{One should also rescale the resulting rectangle into a square, but \emph{for simplicity we draw rectangles}.
}
\end{proof}

We simplify the expression for $\chi$ using the following notation.
\begin{defn}\label{def:h-reflection}
    In the situation of the previous proof let us define the \textsf{$h$-reflection of $y\colon I\to Y$} by
    \[
    y^h_t\coloneqq r\big(h_s(x_0)\cdot ly_s\big)_{s=1-t}.
    \]
\end{defn}
Note that $y^h_t$ is a path from $*$ to $rx_0\in Y$. We now rewrite and redraw $\chi$ as
\begin{equation}\label{eq:chi-formula}
\chi(x_0,y_t)=\Big(\;y^h_t\cdot y_t,\: ly^h_{s\geq t}\boxbar \big(h_sh_tx_0\wt{\boxbar} h_sly_t\big)\;\Big).
\end{equation}
\begin{equation}\label{eq:pictorial-chi}
\begin{gathered}
	\includegraphics[width=0.8\linewidth]{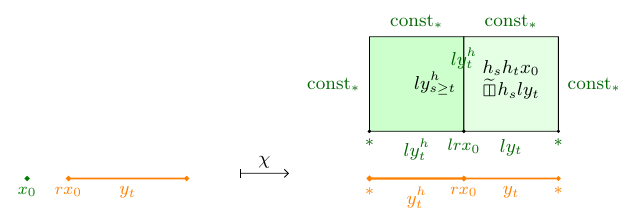}
\end{gathered}
\end{equation}
\red{
Observe that $y_t^h$ is obtained from  the path $y_t$ by `reflecting' across the starting point, 
since in $y_t^h$ we use $y_t$ reversed (and concatenate with the homotopy $h$ applied to $x_0$). We will use this language later on, see Section~\ref{subsec:chi-as-reflections}.
}

\begin{remm}
All triangles in the following diagram commute
\begin{equation}
\begin{tikzcd}[column sep=large, row sep=large]
    \{t\mapsto y_t\}\in &[-3em] \Omega Y\arrow[hook]{rr}
    &&
    \hofib(r)
    \arrow[shift left,red]{dll}{\chi}
    \arrow[shift right,red]{d}[swap]{p_1^{-1}}
    &[-3em] \ni (*,y_t)
    \\
    \{t\mapsto (y_t,x_{s,t})\}\in &[-3em]
    \Omega\hofib(l)
    \arrow[shift right,hook]{rr}[swap]{d_Z}
    \arrow[shift left]{urr}{\chi^{-1}}
    \arrow{u}{\forg}
    &&
    Z\arrow[shift right]{u}[swap]{p_1}
    \arrow[shift right,near start,red]{ll}[swap]{d_Z^{-1}}
    &[-3em] \ni (*,\const_*,y_t,x_{s,t})
\end{tikzcd}
\end{equation}
\end{remm}
As a word of caution, note that the composite map $\Omega Y\hra \hofib(r)\xrightarrow{\chi} \Omega\hofib(l)$ is not a loop map: $\chi(*,y_t)=(rly_{1-t}\cdot y_t,\; lrly_{s\geq t}\boxbar h_sly_t )$. When attempting to prove Proposition~\ref{prop:cube-retraction} by induction, one might run into needing that a similar map is a loop map, which is not the case. However, instead of delooping from `below' (first delooping $\FF^{n+1}_S$ for example), we need to start delooping from `above', using the following lemma.

\begin{proof}[Proof of Lemma~\ref{lem:first-cube-retraction}]
We have two $1$-cubes of $(m-1)$-cubes, namely the original cube $R_{\bull}$ which uses maps $r^m_S$ and the cube $L_{\bull}$ which uses $l^m_S$ instead, so we can write
\[\begin{tikzcd}
    R_{\bull}\colon & C_{\bull\notni m}\arrow[shift left]{r}{r^m} & C_{\bull\ni m}\arrow[shift left]{l}{l^m} & :L_{\bull}\;.
\end{tikzcd}
\]
Their total fibres can be computed as the homotopy fibres of the induced maps
\[\begin{tikzcd}
    \tofib(R_{\bull})\arrow[dashed]{r} & \tofib(C_{\bull\notni m})\arrow[shift left]{rr}{r^m_*} && \tofib(C_{\bull\ni m})\arrow[shift left]{ll}{l^m_*} & \tofib(L_{\bull}).\arrow[dashed]{l}
\end{tikzcd}
\]
We claim that $l^m_*\circ r^m_*=(l^m\circ r^m)_*$ is homotopic to $\Id$. Indeed, by the condition $(2)$ of Definition~\ref{def:left-inverse-cube} for each $t\in[0,1]$ the homotopies $h^m_S(t)\colon l_S^m\circ r_S^m\squig\Id$ assemble into a map of cubes
$
    h^m_{\bull}(t)\colon C_{\bull\notni m}\to C_{\bull\notni m}\;.
$
Hence, the induced map $h^m_*(t)$ on the total fibres is precisely a homotopy $(l^m\circ r^m)_*\squig \Id$. Therefore, we can simply apply the preceding Lemma~\ref{lem:retractions} to the left homotopy inverse $l^m_*$ and the homotopy $h^m_*$ to obtain the desired homotopy equivalence
\[\begin{tikzcd}
    \chi_m\colon\tofib(R_{\bull})\cong\hofib\big(r^m_*\big) \arrow{r}{\simeq} & \Omega\hofib\big(l^m_*\big)\cong\Omega\tofib(L_{\bull}).
 \end{tikzcd}\qedhere
\]
\end{proof}

\begin{proof}[Proof of Proposition~\ref{prop:cube-retraction}]
We will construct for each $0\leq k\leq m-1$ a homotopy equivalence $\chi_{m-k}\colon\tofib(D^{m-k})\to\Omega\tofib(D^{m-k-1})$, with the case $k=0$ covered in the previous lemma. The argument is actually the same: conditions \eqref{cond:ll} and \eqref{cond:lr} in Definition~\ref{def:m-left-inverse-cube} of an $m$-fold homotopy inverse ensure that for each $k$ we have two $1$-cubes of $(m-1)$-cubes:
\[\begin{tikzcd}[column sep=large]
    D^{m-k}\colon & D^{m-k}_{\bull\notni m-k}\arrow[shift left]{r}{r^{m-k}} & D^{m-k}_{\bull\ni m-k}\arrow[shift left]{l}{l^{m-k}} & :D^{m-k-1}\;.
\end{tikzcd}
\]
Moreover, the condition \eqref{cond:htpies-of-diagrams} ensures that $h^{m-k}_{\bull}(t)$ is for each $t\in[0,1]$ a map of cubes, so $h^{m-k}_*\colon l^{m-k}_*\circ r^{m-k}_*\squig\Id_{D^{m-k}}$ witnesses that $l^{m-k}_*$ is a left homotopy inverse. Therefore, by Lemma~\ref{lem:retractions}
\[\begin{tikzcd}
    \chi_{m-k}\colon \tofib(D^{m-k})\cong\hofib\big(r^{m-k}_*\big) \arrow{r}{\sim} & \Omega\hofib\big(l^{m-k}_*\big)\cong\Omega\tofib(D^{m-k-1}),
 \end{tikzcd}
\]
and the composite $\chi\coloneqq\chi_{1}\circ\dots\circ\chi_{m}\colon\tofib(C_{\bull},r)\xrightarrow{\sim}\Omega^m\tofib(C_{\bull},l)$ is the desired equivalence.
\end{proof}

\subsection{A description of $\chi$ in terms of reflections of cubes}\label{subsec:chi-as-reflections}
\subsubsection*{The first delooping}
Let us first  describe $\chi_m f\in\Omega\tofib(D^{m-1})$ for a point $f\in\tofib(R_{\bull})_{S\subseteq\ul{m}}$. The case $m=1$ is Lemma~\ref{lem:retractions} and picture \eqref{eq:pictorial-chi}. For $m=2$ we have
\begin{equation}\label{eq:2-cube}
R_{\bull}=D^2_{\notni 2}\xrightarrow[]{r^2} D^2_{\ni 2}\coloneqq\begin{tikzcd}
    {\color{blue}C^1} \arrow{r}{r^2} & {\color{green!70!black}C^{12}}\\
    {\color{red}C^\emptyset} \arrow{u}{r^1} \arrow{r}{r^2} & {\color{orange}C^2}\arrow{u}[swap]{r^1}
\end{tikzcd}
\quad\quad\quad
L_{\bull}=D^2_{\notni 2}\xleftarrow[]{l^2} D^2_{\ni 2}\coloneqq\begin{tikzcd}
    {\color{blue}C^1}  & {\color{green!70!black}C^{12}}\arrow{l}[swap]{l^2}\\
    {\color{red}C^\emptyset} \arrow{u}{r^1} & {\color{orange}C^2} \arrow{l}{l^2}\arrow{u}[swap]{r^1}
\end{tikzcd}
\end{equation}
and $\chi_2\colon\hofib(r^2_*)\to\Omega\hofib(l^2_*)$ is depicted in \eqref{eq:pictorial-chi-m}, with the colours indicating the ambient spaces from \eqref{eq:2-cube}. Note that $f^1$ is discarded, but used in the rest of the diagram. For example
\[
    (f^{12})^{h^2}\coloneqq r^2\big(h^2_s(f^1)\boxbar_2 l^2f^{12}\big),
\]
where $\boxbar_2$ denotes the concatenation $\cdot$ \red{as in \eqref{eq-def:boxbar} 
}
along the second coordinate.
\begin{equation}\label{eq:pictorial-chi-m}
\begin{gathered}
	\includegraphics[width=0.9\linewidth]{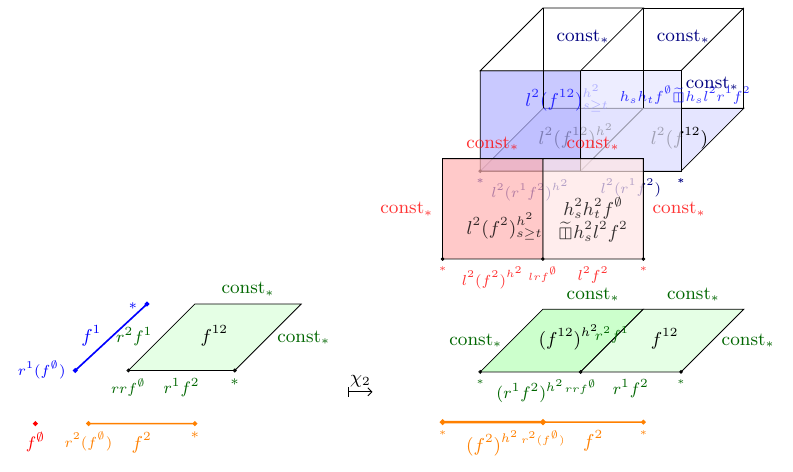}
\end{gathered}
\end{equation}
\begin{remm}
Observe that $\chi_2(f)$ is a well-defined point in $\Omega\tofib(L_{\bull})$ thanks to the conditions of Definition~\ref{def:left-inverse-cube}. For example, the orange line is indeed mapped to the bottom green line:
\[
r^1(f^2)^{h^2}=r^1r^2\big(h^2_s(f^\emptyset)\cdot l^2f^{2}\big)=r^2r^1\big(h^2_s(f^\emptyset)\cdot l^2f^{2}\big)=r^2\big(h^2_s(r^1f^\emptyset)\cdot l^2(r^1f^{2})\big)=(r^1f^2)^{h^2},
\]
since $r^1$ commutes with $l^2$ by condition $(1)$ and with $h^2_s$ by condition $(2)$ of the definition.
\end{remm}
For a general $m\geq1$, let us denote $f^{Pm}_{t_m}(t_{p\in P})\coloneqq f^{P m}(t_{p\in P},t_m)\in C_{P m}$ and rewrite $f$ as
\[
    \Big(\;\big\{f^P\big\}_{P\subseteq\ul{m-1}},\;\big\{f^{Pm}_{t_m}\big\}_{P\subseteq\ul{m-1}}\;\Big)\in\hofib(r^m_P).
\]
Then $\chi_m(f)\in\Omega\hofib(l^{m})$ is given by
\begin{equation}\label{eq:chi-m-fla}
    \chi_m(f^P,f^{Pm}_{t_m})=\Big(\; (f^{Pm})^{h^m}_{t_m}\boxbar_m f^{Pm}_{t_m},\quad l^m(f^{Pm})^{h^m}_{s\geq t_m}\boxbar h^m_sh^m_{t_m}f^P\boxbar h^m_sl^mf^{Pm}_{t_m} \;\Big),
\end{equation}
\red{
using the following notation.
}
\begin{defn}\label{def:boxbar,h-m-refl}
    For two maps $F_1,F_2$ from a cube $I^R$ to some space $X$, such that $F_1|_{t_m=1}=F_2|_{t_m=0}$ let $F_1\boxbar_mF_2\colon I^R\to X$ be the map obtained as the concatenation in the $t_m$-direction, plus rescaling.
    
    Furthermore, let $(f^{Pm})^{h^m}_{t_m}$ be the \textsf{$h^m$-reflection} as in Definition~\ref{def:h-reflection}, namely reflection of $f^{Pm}$ across the wall $I^P\times\{0\}$ in $I^{P m}$. Explicitly,
\begin{equation}\label{eq:h-m-reflection}
    (f^{Pm})^{h^m}_{t_m}\coloneqq r^m\Big(\,h^m_s(f^{P})\boxbar_m l^m(f^{Pm}_s)\,\Big)_{s=1-t_m}.
\end{equation}
\end{defn}
Therefore, we see that $\chi_m$ discards $f^P$ for $P\subseteq\ul{m-1}$ but incorporates it into its first coordinate. The second coordinate of $\chi$ is another `higher' layer of loops, in the spaces $C^{P}$.

\subsubsection*{The second delooping}
Consider again $D^2=R_{\bull}$ from \eqref{eq:2-cube} and $\chi_1\chi_2(f)\in\Omega^2\tofib(D^0)$. The right part of \eqref{eq:pictorial-iterated-chi} depicts its coordinates $S=\{1\}$ and $S=\{12\}$ (omitting $S=\emptyset,{\{2\}}$). The large green square $\forg\circ\chi (f)=\chi_1\chi_2(f)^{\{12\}}\in\Omega^2 C_{\ul{2}}$ is obtained by gluing reflections of $f^{12}$.
\begin{equation}\label{eq:pictorial-iterated-chi}
\begin{gathered}
	\includegraphics[width=0.9\linewidth]{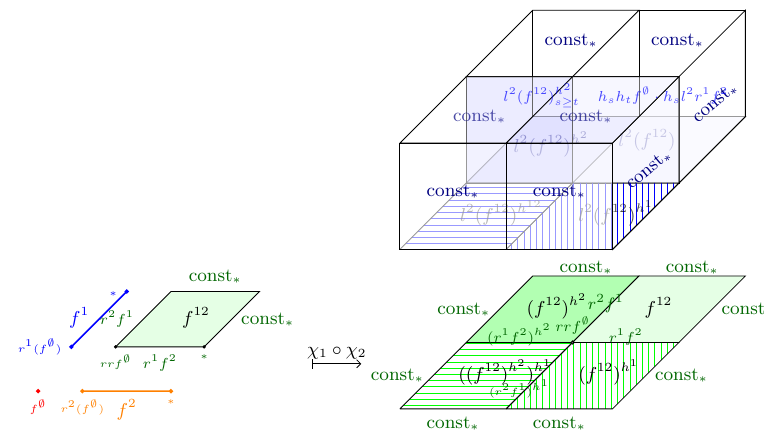}
\end{gathered}
\end{equation}

In order to generalise this observation we consider $m\geq2$ and
\begin{multline}\label{eq:double-chi}
\chi_{m-1}\chi_m(f)=\chi_{m-1}\Big((f^{Pm})^{h^m}_{t_m}\boxbar_m f^{Pm}_{t_m}\;,\;x^{P}\Big)_{P\subseteq\ul{m-1}}\\
=\Big(\;\Big((f^{Rm-1m})^{h^m}_{t_m}\boxbar_m f^{Rm-1m}_{t_m}\Big)^{h^{m-1}}_{t_{m-1}}\boxbar_{m-1} \Big((f^{Rm-1m})^{h^m}_{t_m}\boxbar_m f^{Rm-1m}_{t_m}\Big)_{t_{m-1}},\\
            (x^{Rm-1})^{h^{m-1}}\boxbar_{m-1} x^{Rm-1},\;y^{Rm}\;\Big)_{R\subseteq\ul{m-2}}.
\end{multline}
Here we have applied \eqref{eq:chi-m-fla} twice and denoted by $x$ and $y$ the remaining irrelevant coordinates. The first coordinate is $(\chi_{m-1}\chi_m(f))^{Rm-1m}\in\Omega^2C_{Rm-1m}$ where $R$ runs through subsets of $\ul{m-2}$.
\begin{lemma}\label{lem:iterate-chi-h}
$\big((f^{Rm-1m})^{h^m}_{t_m}\boxbar_m f^{Rm-1m}_{t_m}\big)^{h^{m-1}}=\big(\big(f^{Rm-1m})^{h^m}_{t_m}\big)^{h^{m-1}}\boxbar_m \big(f^{Rm-1m}_{t_m}\big)^{h^{m-1}}.$
\end{lemma}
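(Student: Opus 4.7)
The plan is to unpack both sides of the claimed identity using the defining formula \eqref{eq:h-m-reflection} for an $h$-reflection, and then to observe that every ingredient in that formula distributes over $\boxbar_m$, simply because $\boxbar_m$ is concatenation in the $t_m$ direction while the $h^{m-1}$-reflection involves only the $t_{m-1}$ direction together with operations that are pointwise in $t_m$.

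Concretely, I would set $A:=(f^{Rm-1m})^{h^m}_{t_m}$ and $B:=f^{Rm-1m}_{t_m}$ and first identify the face at $t_{m-1}=0$ needed by \eqref{eq:h-m-reflection}. Using that $r^{m-1}$ commutes with $l^m$ (condition \eqref{cond:lr} with $k=m>Si=m-1$) and with $h^m_s$ (condition \eqref{cond:htpies-of-diagrams}, which says $h^m_\bull(s)$ is a map of diagrams of $D^m$), one gets
\[
A|_{t_{m-1}=0}=r^{m-1}\bigl((f^{Rm})^{h^m}_{t_m}\bigr),\qquad B|_{t_{m-1}=0}=r^{m-1}(f^{Rm}_{t_m}),
\]
so that $(A\boxbar_m B)|_{t_{m-1}=0}=A|_{t_{m-1}=0}\boxbar_m B|_{t_{m-1}=0}$.

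Next, I would expand $(A\boxbar_m B)^{h^{m-1}}_{t_{m-1}}$ via \eqref{eq:h-m-reflection} and push $\boxbar_m$ outward using four elementary commutation identities: for any $X,Y$ (and $X_i,Y_i$),
\begin{align*}
h^{m-1}_s(X\boxbar_m Y)&=h^{m-1}_s(X)\boxbar_m h^{m-1}_s(Y),\\
l^{m-1}(X\boxbar_m Y)&=l^{m-1}(X)\boxbar_m l^{m-1}(Y),\\
r^{m-1}(X\boxbar_m Y)&=r^{m-1}(X)\boxbar_m r^{m-1}(Y),\\
(X_1\boxbar_m X_2)\boxbar_{m-1}(Y_1\boxbar_m Y_2)&=(X_1\boxbar_{m-1}Y_1)\boxbar_m(X_2\boxbar_{m-1}Y_2).
\end{align*}
The first three hold because $h^{m-1}_s$, $l^{m-1}$, $r^{m-1}$ are maps of spaces applied pointwise in the coordinates other than $t_{m-1}$, in particular pointwise in $t_m$; the last is the Eckmann--Hilton-style interchange of two orthogonal concatenation operations. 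Applying these four identities in turn to the expanded formula, and inserting the face identification from the previous paragraph, gives the desired equality $(A\boxbar_m B)^{h^{m-1}}=A^{h^{m-1}}\boxbar_m B^{h^{m-1}}$.

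The only delicate point is the bookkeeping at the $t_{m-1}=0$ face, since that is where the auxiliary data $f^P$ enters the definition of the reflection; but as above this is exactly controlled by the compatibility conditions on the $m$-fold left homotopy inverse, so no real obstacle arises. I expect no other issues: the lemma is essentially the statement that ``$h$-reflection in one coordinate commutes with concatenation in a different coordinate'', which is a formal consequence of orthogonality.
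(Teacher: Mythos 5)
Your proof is correct and takes essentially the same route as the paper's: expand the left-hand side via \eqref{eq:h-m-reflection}, distribute $h^{m-1}_s$, $l^{m-1}$, $r^{m-1}$ over $\boxbar_m$ (since these act pointwise in $t_m$), and then interchange $\boxbar_m$ with $\boxbar_{m-1}$. Your preliminary identification of $A|_{t_{m-1}=0}$ and $B|_{t_{m-1}=0}$ is a useful consistency check but is not strictly needed here: the formula \eqref{eq:h-m-reflection} takes as input the $Rm$-coordinate of the point $\chi_m(f)$, which is already given as $(f^{Rm})^{h^m}_{t_m}\boxbar_m f^{Rm}_{t_m}$ by \eqref{eq:chi-m-fla}; the compatibility you verify is what the paper records separately in the remark following \eqref{eq:pictorial-chi-m}.
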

\begin{proof}
The left hand side is by \eqref{eq:h-m-reflection} equal to $r^{m-1}$ applied to the map (denoting $s\coloneqq t_{m-1}$)
\begin{align*}
    &= h^{m-1}_s\Big((f^{Rm})^{h^m}_{t_m}\boxbar_m f^{Rm}_{t_m}\Big)\boxbar_{m-1} l^{m-1}\Big((f^{Rm-1m})^{h^m}_{t_m}\boxbar_m f^{Rm-1m}_{t_m}\Big)_s\\
    &= \Big(h^{m-1}_s(f^{Rm})^{h^m}_{t_m}\boxbar_m h^{m-1}_sf^{Rm}_{t_m}\Big)\boxbar_{m-1} \Big(l^{m-1}(f^{Rm-1m})^{h^m}_{t_m,s}\boxbar_m l^{m-1}f^{Rm-1m}_{t_m,s}\Big).
\end{align*}
The concatenation in the $t_{m-1}$-direction can be interchanged with the one in the $t_m$-direction (this is another manifestation of the Eckmann--Hilton principle), so we obtain
\begin{align*}
    &= \Big(h^{m-1}_s(f^{Rm})^{h^m}_{t_m}\boxbar_{m-1}l^{m-1}(f^{Rm-1m})^{h^m}_{t_m,s}\Big)\boxbar_m \Big(h^{m-1}_sf^{Rm}_{t_m}\boxbar_{m-1} l^{m-1}f^{Rm-1m}_{t_m,s}\Big).
\end{align*}
Finally, applying $r^{m-1}$ gives the desired right hand side in the statement of the lemma.
\end{proof}

Therefore, we make the following definition.

\begin{defn}\label{def:iterated-h-reflections}
Fix $P\subseteq\ul{m}$ and let $f^P\colon I^{P}\to C_{P}$ for an $m$-cube $C_{\bull}$. For $S\subseteq P$ we define a map $(f^P)^{h^S}\colon I^{P}\to C_{P}$ inductively on $|S|\leq |P|$ and call it the \textsf{$h^S$-reflection} of $f^P$.

For $S=\emptyset$ we let $(f^P)^{h^\emptyset}\coloneqq f^P$ and otherwise let $k=\min S$ and define
\[
\big(f^P\big)^{h^S}\coloneqq\Big(\big(f^P\big)^{h^{S\sm k}}\Big)^{h^k}
\]
using the definition of $h^k$-reflection from \eqref{eq:h-m-reflection}.
\end{defn}
Note that $(f^P)^{h^S}$ is indeed a kind of a reflection of $f^P$ across the $0$-faces $\{0\}^S\times I^{P\sm S}\subseteq\partial_0 I^P$. Indeed, the value of $f^P$ on them agrees with the value of $(f^P)^{h^S}$ on the corresponding faces $\{1\}^S\times I^{P\sm S}\subseteq\partial_1 I^{P}$. On the other hand, $(f^P)^{h^S}$ is constant on $\{0\}^S\times I^{P\sm S}\subseteq\partial_0 I^P$.

\begin{prop}\label{prop:chi-is-glued}
    For $D^m$ as above, $\forg\circ\chi\colon\tofib(D^m)\to\Omega^m C_{\ul{m}}$ maps $f\in\tofib(D^m)$ to the map 
    obtained by gluing together all $h$-reflections $(f^{\ul{m}})^{h^S}$ of $f^{\ul{m}}\colon I^m\to C_{\ul{m}}$ along the $0$-faces and $1$-faces of $I^m$. \red{We will denote this by
    \begin{equation}\label{eq-def:glueOp}
        \forg\circ\chi(f)=(\chi f)^{\ul{m}}\;=\;\glueOp_{S\subseteq\ul{m}}(f^{\ul{m}})^{h^S}.
    \end{equation}
    }
\end{prop}
\begin{proof}[Proof of Proposition~\ref{prop:chi-is-glued}]
From \eqref{eq:double-chi} and Lemma~\ref{lem:iterate-chi-h} we conclude
\begin{align*}
    \chi_{m-1}\chi_m(f)^{Rm-1m}&=\Big(\big(f^{Rm-1m})^{h^m}\big)^{h^{m-1}}\boxbar_m(f^{Rm-1m})^{h^{m-1}}\Big)\boxbar_{m-1}\Big((f^{Rm-1m})^{h^m}\boxbar_m f^{Rm-1m}\Big)\\
    &=(f^{Rm-1m})^{h^{\{m-1,m\}}}\boxbar(f^{Rm-1m})^{h^{m-1}}\boxbar(f^{Rm-1m})^{h^m}\boxbar f^{Rm-1m}.
\end{align*}
In the second line we simply omitted the brackets and used the symbol $\boxplus$ instead, since the operation $\boxbar_k$ glues unambiguously any two maps which have matching $0$- and $1$-faces indexed by $k$. Continuing in the manner of \eqref{eq:double-chi}, we find
\[
    \chi_k\circ\dots\circ\chi_{m-1}\chi_m(f)^{Rk\dots m-1m}=\glueOp_{S\subseteq\{k,\dots,m-1,m\}}(f^{Rk\dots m-1m})^{h^S}
\]
so the coordinate of $\chi_1\circ\dots\circ\chi_m(f)$ corresponding to $\ul{m}$ is given as claimed by
\[
(\chi f)^{\ul{m}}\;=\;\glueOp_{S\subseteq\ul{m}}(f^{\ul{m}})^{h^S}\,.\qedhere
\]
\end{proof}


\section{Samelson products}\label{app:samelson}
In this section we provide a reminder on Samelson products and some of their properties; the main references are~\red{\cite[Ch.~X~\&~XI]{Whitehead} and~\cite[Sec.~4~\&~6]{Neisendorfer}.
}

Let $G$ be a grouplike $H$-space, that is, an $H$-space whose multiplication $\cdot$ is homotopy associative and which has homotopy inverses, and we assume \emph{these homotopies are specified as part of the data}. 
In our applications $G\coloneqq\Omega(M\vee\bigvee_S\S^{d-1})$ with an inverse for $\gamma\in G$ given as the inverse loop $(\gamma^{-1})_t=\gamma_{1-t}$ and the canonical homotopy $\gamma\cdot\gamma^{-1}\squig\const_*$ given by $t\mapsto\gamma|_{[0,1-t]}\cdot\gamma^{-1}|_{[t,1]}$.

The homotopy groups $\pi_*G$ can be equipped with two associative operations -- on one hand, the standard multiplication on homotopy groups of a space using the co-$H$-space structure on spheres (additive except on the fundamental group), and on the other hand, using the $H$-space structure on $G$ to define the pointwise multiplication $f_1\cdot f_2\colon X_1\times X_2\to G\times G\to G$ of maps $f_{\myj}\colon X_{\myj}\to G$, and then if $X_1=X_2=\S^n$, precompose with the diagonal $\S^n\to \S^n\times \S^n$.

These operations give equivalent additive structure on $\pi_{>0}G$ (by the Eckmann--Hilton argument), but the latter also gives a group structure on $\pi_0G$. Moreover, $\gamma\in G$ acts on a map $f\colon X\to G$ by pointwise conjugation
\[
  f^\gamma(x)\coloneqq\gamma\cdot f(x)\cdot\gamma^{-1},
\]
and this defines an action of the group $\pi_0G$ on the graded group $\pi_*G$. When $G$ is a loop space this corresponds to the standard action of the fundamental group on the higher homotopy groups.

The \textsf{Samelson product} is a non-associative product of maps $f_{\myj}\colon X_{\myj}\to G$ for $\myj=1,2$, given by
\begin{equation}\label{eq:samelson-def}
    \begin{tikzcd}[column sep =large]
        [f_1,f_2]\colon\: X_1\wedge X_2 \arrow{r}{f_1\wedge f_2} &
        G\wedge G \arrow{r}{[\cdot,\cdot]} & G.
    \end{tikzcd}
\end{equation}
Here the commutator map $[\cdot,\cdot]\colon G\times G\to G$, given by $(x,y)\mapsto x\cdot y\cdot x^{-1}\cdot y^{-1}$, is null-homotopic on the wedge $G\vee G$, so factors through the smash product $G\wedge G\coloneqq\faktor{G\times G}{G\vee G}$. More precisely, since on the wedge one of the coordinates is equal to the basepoint, the word $[x,y]$ becomes of the shape $x\cdot x^{-1}$, for which there is a specified null-homotopy by the definition of a homotopy inverse.

Applying this to the case when each $X_{\myj}=\S^{n_{\myj}}$ is a sphere, and using homeomorphisms
\begin{equation}\label{eq:vartheta}
    \vartheta_{(n_1,n_2)}\colon\S^{n_1+n_2}\to \S^{n_1}\wedge\S^{n_2}
\end{equation}
we get an operation $[\cdot,\cdot]:\pi_{n_1}G\times\pi_{n_2}G\to\pi_{n_1+n_2}G$. On $\pi_0G$ this is just the group commutator, and there is an identity\footnote{To see \eqref{eq:samelson-identity}, plug $f_1=f$ and $f_2\colon\S^0\to G$ into \eqref{eq:samelson-def}. The latter simply picks out a point $\gamma\in G$, so $[f,\gamma]=f\cdot(f^{-1})^\gamma$. Here $f^{-1}$ is the pointwise inverse, homotopic to $-f$ by the mentioned Eckmann--Hilton principle.}
\begin{equation}\label{eq:samelson-identity}
    [f,\gamma]\simeq f-f^\gamma,\quad\quad f\in\pi_{>0}G\;,\;\gamma\in\pi_0G.
\end{equation}
On the abelian group $\pi_{>0}G$ the Samelson bracket is bilinear and satisfies \emph{graded} antisymmetry and Jacobi relations, making it into a graded Lie algebra over $\Z$. The origin of graded signs is in the fact that the coordinate exchange $\theta\colon\S^{n_1}\wedge\S^{n_2}\to\S^{n_2}\wedge\S^{n_1}$ induces the self-map $\vartheta_{(n_2,n_1)}^{-1}\circ\theta\circ\vartheta_{(n_1,n_2)}$ of $\S^{n_1+n_2}$ which has degree $(-1)^{n_1n_2}$.

Actually, the action of $\pi_0G$ on $\pi_{>0}G$ respects the grading and the Lie bracket
\[
    [f,g]^\gamma=[f^\gamma,g^\gamma],\quad\quad f,g\in\pi_{>0}G\;,\;\gamma\in\pi_0G,
\]
so all this structure is encapsulated by saying that $\pi_{>0}G$ is a \emph{graded Lie algebra over $\Z[\pi_0G]$}.

Furthermore, the Hurewicz homomorphism $h\colon\pi_*G\to H_*(G;\Z)$ takes the Samelson bracket to the graded commutator in the Pontrjagin Hopf algebra $H_*(G;\Z)$.

One can iteratively form the Samelson product of maps $f_i\colon X_i\to G$ for $i\in S$ according to a Lie word (a non-associative bracketing) $w(x^i)$ in the letters $x^i$, $i\in S$. To spell out this explicitly, we recursively define the space $w(X_i)$ and the map $w(f_i)\colon w(X_i)\to G$.

Firstly, for $k\in S$ let $x^k(X_i)\coloneqq X_k$ and $x^k(f_i)\coloneqq f_k$. Further, for $w=[w_1,w_2]$ let $w(X_i)=w_1(X_i)\bigwedge w_2(X_i)$ and define
\begin{equation}\label{eq:general-samelson}
    \begin{tikzcd}[column sep=large]
    w(f_i)\colon\:\: w(X_i) \arrow{rr}{w_1(f_i)\bigwedge w_2(f_i)} &&
    G\wedge G \arrow{r}{[\cdot,\cdot]} &
    G.
    \end{tikzcd}
\end{equation}
In particular, if $X_i=\S^{d-2}$ for all $i\in S$, to get a map from a sphere we need to precompose with a homeomorphism $\vartheta_{\sigma_w}\colon\S^{l_w(d-2)}\to w(\S^{d-2})$ similarly as in \eqref{eq:vartheta} ($l_w$ is the word length of~$w$).

\subsection{The Hilton--Milnor theorem}\label{subsec:HM}
Consider now $G=\Omega\bigvee_{i\in S}\Sigma X_i$. As in Section~\ref{subsec:proof-ThmB} we have the inclusion $\iota_{\Sigma X_i}\colon \Sigma X_i\hra\bigvee_{i\in S}\Sigma X_i$ and the canonical map $\eta_{X_i}\colon X_i\to \Omega\Sigma X_i$. Plugging into \eqref{eq:general-samelson} the composite maps
\[\begin{tikzcd}
    x_i\colon\: X_i\arrow{r}{\eta_{X_i}} & \Omega\Sigma X_i\arrow[hook]{rr}{\Omega\iota_i} && \Omega\bigvee_{i\in S}\Sigma X_i
\end{tikzcd}
\]
gives the Samelson product
\begin{equation}\label{eq:samelson-suspensions}
 w(x_i)\colon\: w(X_i) \to \Omega\bigvee_{i\in S}\Sigma X_i.
\end{equation}

\begin{remm}
For $G=\Omega Y$ the Samelson bracket on $\pi_*(\Omega Y)\cong\pi_{*+1}(Y)$ is adjoint to the Whitehead bracket on the latter group. The (generalised) Whitehead product of $f_i\colon\Sigma X_i\to Y$ is defined via
\[\begin{tikzcd}[]
[f_1,f_2]_W\colon\: \Sigma(X_1\wedge X_2)\arrow{r}{} & \Sigma X_1\vee\Sigma X_2\arrow{rr}{f_1\vee f_2} && Y\vee Y\to Y,
\end{tikzcd}
\]
where the last map is the fold and the first map can be explicitly defined, see~\cite{Whitehead}. It is precisely the adjoint of the Samelson product $[x_1,x_2]\colon X_1\wedge X_2\to\Omega(\Sigma X_1\vee\Sigma X_2)$ from \eqref{eq:samelson-suspensions}. If $X_i=\S^{n_i-1}$ are spheres, this is the attaching map $\S^{n_1+n_2-1}\to\S^{n_1}\vee\S^{n_2}$ of the top cell in $\S^{n_1}\times\S^{n_2}$.
\end{remm}

Actually, for $G=\Omega\bigvee_{i\in S}\Sigma X_i$ the Samelson products $w(x_i)$ `generate the homotopy type of $G$'. A more precise statement is the Hilton--Milnor theorem below, for which we need a bit more notation. Firstly, since \eqref{eq:samelson-suspensions} is a map into a loop space, there is a unique \textsf{multiplicative extension}
\begin{equation}\label{eq:mult-ext}
    \begin{tikzcd}[column sep=large]
    \wt{ w}(x_i)\colon\: \Omega\Sigma w(X_i)\arrow{r} &
    \Omega\bigvee_{i\in S}\Sigma X_i.
    \end{tikzcd}
\end{equation}
Namely, for any space $X$ the map $\eta_X\colon X\to \Omega\Sigma X$ is initial among all maps from $X$ to a loop space, so any $f\colon X\to \Omega Z$ factors as the composition of $\eta_X$ with $\wt{f}\coloneqq\Omega(ev\circ\Sigma f)\colon\Omega\Sigma X\to\Omega\Sigma\Omega Z\to\Omega Z$. Explicitly, $\wt{f}\big(\theta\mapsto t_\theta\wedge a_\theta\big)=\theta\mapsto f(a_\theta)_{t_\theta}$ for $t_\theta\in\S^1$ and $a_\theta\in X$, when $\theta$ ranges $\S^1$.

Moreover, given Lie words $w_1$ and $w_2$ we can take the pointwise product $\wt{w}_1(x_i)\cdot\wt{w}_1(x_i)$ (pointwise concatenate loops) as we saw above. Therefore, if $\B(S)$ denotes a Hall basis for the free Lie algebra $\L(x^i:i\in S)$ \red{(see~\cite[Sec.~XI.6]{Whitehead}), 
}
we can define the map
\begin{equation}\label{eq:hm-map}
\begin{tikzcd}
hm\coloneqq\prod_{ w}\wt{ w}\colon\quad \sideset{}{^{\circ}}\prod_{w\in \B(S)}\Omega\Sigma w(X_i) \arrow{rr} && \Omega\bigvee_{i\in S}\Sigma X_i,
\end{tikzcd}
\end{equation}
where the source is the weak product, defined as the filtered colimit of products over the finite subsets of $\B(S)$. Thus, points in it have all but finitely many coordinates equal to the basepoint.

\begin{theorem}[\cite{Hilton,Milnor,Gray,Spencer}]\label{thm:hm}
    If for each $i\in S$ the space $X_i$ is well-based and path-connected, then the map \eqref{eq:hm-map} is a weak homotopy equivalence.
\end{theorem}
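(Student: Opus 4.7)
The plan is to prove Hilton--Milnor by induction on $|S|$, using the Gray--Spencer splitting (Lemma~\ref{lem:gray-spencer}) to strip off one generator at a time and matching the resulting iterated decomposition with the recursive structure of the Hall basis. The base case $|S|=1$ is immediate, since the Hall basis consists of the single word $w=x^i$ and the multiplicative extension $\widetilde{x_i}$ of $\eta_{X_i}$ is by the universal property of $\eta_{X_i}$ homotopic to the identity of $\Omega\Sigma X_i$.

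For the inductive step, fix $k\in S$ maximal in the Hall ordering and set $M := \bigvee_{i\in S\setminus k}\Sigma X_i$ and $A := X_k$. Lemma~\ref{lem:gray-spencer} furnishes a trivial fibration with section $\Omega\iota_M$, hence a weak equivalence
\[
\Omega\Sigma\bigl(X_k \vee (X_k\wedge\Omega M)\bigr) \times \Omega M \;\xrightarrow{\;\sim\;}\; \Omega\bigvee_{i\in S}\Sigma X_i.
\]
By the induction hypothesis, the second factor splits as the weak product $\prod^{\circ}_{w\in\B(S\setminus k)}\Omega\Sigma w(X_i)$. For the first factor, I would substitute this inductive decomposition into $X_k\wedge\Omega M$, distribute the smash product through the James splitting $\Sigma\Omega\Sigma Y\simeq\bigvee_{n\geq 1}\Sigma Y^{\wedge n}$, and apply the theorem once more to the resulting wedge of suspensions. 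Iterating this procedure produces one factor $\Omega\Sigma w(X_i)$ for each Hall word $w\in\B(S)$ involving $x^k$ at least once, compatibly with the Samelson product $\widetilde{w}(x_i)$ because Gray--Spencer's equivalence is itself built from the Samelson bracket $[x_A,\Omega\iota_M]$ and the multiplicative extensions are uniquely determined by $\eta$.

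The main obstacle is the combinatorial bookkeeping required to verify that iterating Gray--Spencer and James $|S|-1$ times generates precisely $\B(S)\setminus\B(S\setminus k)$ exactly once and with the correct bracketing; this rests on the unique factorisation property $w=[u,v]$, $u<v$, of Hall bases, and is essentially the content of Hilton's original argument. An alternative and cleaner route, which avoids the bookkeeping entirely, is to argue via homology: the Bott--Samelson theorem identifies $H_*(\Omega\Sigma Y;\Z)$ with the tensor algebra $T(\widetilde H_*(Y;\Z))$ for path-connected well-pointed $Y$, so the target of \eqref{eq:hm-map} has homology the tensor algebra on $\bigoplus_{i\in S}\widetilde H_*(X_i;\Z)$. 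The Poincar\'e--Birkhoff--Witt theorem for the free graded Lie algebra inside this tensor algebra provides a multiplicative decomposition indexed by the Hall basis, which one checks agrees with $(hm)_*$. Since both sides of \eqref{eq:hm-map} are path-connected well-pointed $H$-spaces of the homotopy type of $CW$ complexes, the homology isomorphism upgrades to a weak equivalence by a standard Whitehead-type argument.
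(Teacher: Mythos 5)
Your first route---peeling off one generator via the Gray--Spencer splitting (Lemma~\ref{lem:gray-spencer}), distributing the smash product through the James splitting $\Sigma\Omega\Sigma Y\simeq\bigvee_{n\geq 1}\Sigma Y^{\wedge n}$, and iterating/inducting---is essentially the proof the paper indicates, namely Milnor's argument (see the sketch immediately following the statement of the theorem and the pointer to Milnor, Thm.~4). The combinatorial bookkeeping you flag, matching the nested Gray--Spencer/James factors against the Hall basis with multiplicity one, is real work but is exactly the content of the cited references, so deferring to them is appropriate for this statement.

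Your proposed ``cleaner'' homological alternative, however, has a genuine gap. The assertion that the Bott--Samelson theorem gives $H_*(\Omega\Sigma Y;\Z)\cong T(\widetilde H_*(Y;\Z))$ requires $\widetilde H_*(Y;\Z)$ to be a \emph{free} graded abelian group; for a general well-pointed path-connected $Y$ (and in particular for the spaces $Z_i=\Sigma^{d-2}(\Omega M)_+$ that arise in this paper, whose integral homology can have torsion), the James splitting gives $\widetilde H_*(\Omega\Sigma Y;\Z)\cong\bigoplus_n\widetilde H_*(Y^{\wedge n};\Z)$ and the K\"unneth $\mathrm{Tor}$ terms prevent this from being a tensor algebra. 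The invocation of PBW for the free \emph{graded} Lie algebra over $\Z$ is also delicate: the standard graded PBW theorem needs $2$ (and for some formulations $6$) to be invertible, because of odd-degree squares; over $\Z$ one needs a restricted/divided-power variant. Finally, even granting these, you would still have to verify that the map induced by $hm$ on homology coincides with the algebraic PBW decomposition, which is not automatic. The homological route can be carried out over a field, but then the passage back to an integral weak equivalence is not a one-line ``homology Whitehead'' argument; you would need to run it over $\Q$ and all $\F_p$ and assemble, which is more work than the geometric induction you began with. I would therefore keep the first argument and drop the second, or restrict its scope explicitly to the case of free integral homology.
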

This can be proven by first iterating Gray--Spencer Lemma~\ref{lem:gray-spencer} to get a weak homotopy equivalence
\[\begin{tikzcd}
\Omega{\iota_{A}} \times\wt{\bigvee_{i\geq0}[x_A,[x_A,\dots,[x_A,x_B]\dots]]}\colon\:
\Omega\Sigma A \times \Omega\Sigma \bigvee_{i\geq0} \Big(A^{\wedge i}\wedge B\Big) \arrow[]{rr}{} && \Omega\big(\Sigma A\vee\Sigma B\big),
\end{tikzcd}
\]
and then using an inductive argument on the word length. See~\cite[Thm.~4]{Milnor}.
\begin{remm}\label{rem:hall}
    \red{For an exposition on Hall bases see Theorem (6.3) in \cite{Whitehead}.
    }

    Note that the set $\B(S)$ is a Hall basis for the usual, \emph{ungraded}, free Lie algebra. This should not be confused with the fact that, if we put $X_i=\S^{n_i}$ with $n_i\geq2$, then the theorem implies that $\pi_*(\Omega\bigvee_S\S^{n_i+1})\otimes\Q\cong\L(x^{n_i}:i\in S)\otimes\Q$ is the \emph{free graded Lie algebra}, with $x^{n_i}$ having degree $n_i$.
\end{remm}

\subsection{Samelson products for trees}\label{subsec:samelson-trees}

Let us now consider Samelson products for Lie words $w(x^i)$ in which each letter $x^i$, $i\in R$, appears exactly once, for a finite ordered set $R$. This is the ungraded case for now, with $|x^i|=0$. Recall from Section~\ref{subsec-prelim:trees} there is an isomorphism $\omega_2\colon\Lie(R)\to\Lie_2(R)$ (see also next subsection).

As mentioned in \eqref{eq:general-samelson}, given maps $f_i\colon\S^{d-2}\to G$ one obtains a map out of a sphere by precomposing the Samelson product $w(f_{i})\colon w(\S^{d-2})\to G$ with $\vartheta_{\sigma_w}\colon\S^{l_w(d-2)}\to w(X_i)$, which permutes the factors according to the permutation $\sigma_w$ corresponding to the word $w$. However, in the case when letters do not repeat we can instead define this directly by induction. This was used in Section~\ref{subsec:gen-maps}.
\begin{lemma}\label{lem:samelson-tree}
    The map $w(f_{i})\circ\vartheta_{\sigma_w}$ for $w=\omega_2(\Gamma)$ is homotopic to the map $\Gamma(f_{i})$ defined inductively by $\ichord{k}(f_i)=f_k$ and for $\Gamma_{\myj}\in\Tree(R_{\myj})$ with $R_1\sqcup R_2=R$ by
    \begin{equation}\label{eq:samelson-Gamma-sphere}
        \grafted\hspace{-1em}\begin{tikzcd}
        (f_i)\colon\: (\S^{d-2})^{\wedge R}\arrow[]{rr}{\vartheta_{(R_1,R_2)}} &&
        (\S^{d-2})^{\wedge R_1}\wedge(\S^{d-2})^{\wedge R_2}\arrow[]{rrr}{\left[{\Gamma_1}(f_i),\;{\Gamma_2}(f_i)\right]} &&&
        G,
        \end{tikzcd}
    \end{equation}
    where $(R_1,R_2)$ permutes the ordered set $R$ into $R_1\sqcup R_2\coloneqq$ first all indices of $R_1$, then of $R_2$.
\end{lemma}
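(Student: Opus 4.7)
The plan is to induct on $|R|$. For the base case $|R|=1$ the tree is $\Gamma = \ichord{k}$, the word is $w = \omega_2(\Gamma) = x^k$, the space $w(\S^{d-2})$ is just $\S^{d-2}$, and $\vartheta_{\sigma_w}$ is (under standard conventions) the identity. Hence $w(f_i) \circ \vartheta_{\sigma_w} = f_k$, which coincides with $\Gamma(f_i)$ by the inductive definition.

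For the inductive step I would write $\Gamma = \grafted$ with $\Gamma_{\myj} \in \Tree(R_{\myj})$ and $R = R_1 \sqcup R_2$. By the recursive definition of $\omega_2$ we have $w = [w_1, w_2]$ with $w_{\myj} := \omega_2(\Gamma_{\myj})$, and so by~\eqref{eq:general-samelson},
\begin{equation*}
w(f_i) \;=\; [w_1(f_i),\,w_2(f_i)] \colon w_1(\S^{d-2}) \wedge w_2(\S^{d-2}) \longrightarrow G.
\end{equation*}
The key step will be to verify the factorisation
\begin{equation*}
\vartheta_{\sigma_w} \;=\; (\vartheta_{\sigma_{w_1}} \wedge \vartheta_{\sigma_{w_2}}) \circ \vartheta_{(R_1,R_2)} \colon \S^{l_w(d-2)} \longrightarrow w_1(\S^{d-2}) \wedge w_2(\S^{d-2}),
\end{equation*}
which amounts to saying that to identify the canonical ordering of the $|R|$ sphere factors of $\S^{l_w(d-2)}$ with the ordering dictated by $w$, one first sorts the $R_1$-factors before the $R_2$-factors (yielding $\vartheta_{(R_1,R_2)}$) and then permutes within each block according to $\sigma_{w_1}$ and $\sigma_{w_2}$. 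Naturality of the commutator map $[\cdot,\cdot] \colon G \wedge G \to G$ gives $[A \circ g_1, B \circ g_2] = [A, B] \circ (g_1 \wedge g_2)$ as maps out of a smash product, so
\begin{equation*}
w(f_i) \circ \vartheta_{\sigma_w} \;=\; \bigl[\,w_1(f_i) \circ \vartheta_{\sigma_{w_1}},\; w_2(f_i) \circ \vartheta_{\sigma_{w_2}}\,\bigr] \circ \vartheta_{(R_1,R_2)}.
\end{equation*}
Applying the induction hypothesis $w_{\myj}(f_i) \circ \vartheta_{\sigma_{w_{\myj}}} \simeq \Gamma_{\myj}(f_i)$ and using homotopy invariance of the Samelson bracket in each of its slots, one then concludes $w(f_i) \circ \vartheta_{\sigma_w} \simeq \Gamma(f_i)$ by direct comparison with the inductive definition~\eqref{eq:samelson-Gamma-sphere}.

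The only genuine obstacle is the bookkeeping behind the factorisation of $\vartheta_{\sigma_w}$, namely pinning down the permutation conventions so that the identity above holds on the nose, or at least up to an explicit degree one self-map of $\S^{l_w(d-2)}$. This is purely combinatorial: once a coherent system of homeomorphisms $\vartheta_{(n_1,n_2)}\colon \S^{n_1+n_2} \to \S^{n_1} \wedge \S^{n_2}$ from~\eqref{eq:vartheta} is fixed, both sides manifestly realise the same permutation of the $|R|$ sphere factors. Since the lemma is stated with the ungraded identification $\Lie(R) \cong \Lie_2(R)$, no graded signs intervene, and in particular the twist $(-1)^{(1|2)_d}$ that would appear in $\omega_d$ for general $d$ plays no role here.
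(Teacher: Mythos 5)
Your argument is correct and, up to presentation, is the paper's proof: where the paper works at the level of degrees via the recursion $\sgn\sigma_w=(-1)^{(1|2)}\sgn\sigma_{w_1}\cdot\sgn\sigma_{w_2}$ together with $\deg\vartheta_{(R_1,R_2)}=(-1)^{(1|2)_d}$, you instead factor the homeomorphism $\vartheta_{\sigma_w}=(\vartheta_{\sigma_{w_1}}\wedge\vartheta_{\sigma_{w_2}})\circ\vartheta_{(R_1,R_2)}$ on the nose and let naturality of the commutator propagate the induction hypothesis. The inductive skeleton is identical; keeping $\vartheta_{\sigma_w}$ factored as a homeomorphism rather than comparing signed degrees is the cleaner packaging, and your closing remark about graded signs not intervening is precisely what that choice buys you.
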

\begin{proof}
For $w=[w_1,w_2]$ with $w_{\myj}$ on letters in $R_{\myj}$ we have homotopies
\[
w(f_{i})\circ\vartheta_{\sigma_w}\simeq(\sgn\sigma_w)^{d-2} w(f_{i}),\quad\quad \Gamma(f_i)\:\simeq\: (-1)^{(1|2)_d}[{\Gamma_1}(f_i),{\Gamma_2}(f_i)],
\]

since $\deg\vartheta_{(R_1,R_2)}=(-1)^{(1|2)_d}$, where $(1|2)_d\coloneqq(d-2)\cdot(1|2)$ for $(1|2)\coloneqq|\{(i_1,i_2)\in R_1\times R_2: i_1>i_2\}|$
as in Lemma~\ref{lem:lie-tree} from Section~\ref{subsec-prelim:trees} (see below for its proof). The proof now follows by induction using that the sign of permutation satisfies the recursive formula \[
    \sgn\sigma_w=(-1)^{(1|2)}\sgn\sigma_{w_1}\cdot\sgn\sigma_{w_2}.\qedhere
\]
\end{proof}
In the proof of Theorem~\ref{thm:main-thm} in Section~\ref{sec:main} we will need the following observation. Let $(M_S,\rho)_{S\subseteq R}$ be a cube with maps $\rho^k_S\colon M_S\to M_{Sk}$ and assume we are given maps $f_i\colon\S^{d-2}\to \Omega M_i$ for $i\in R$. For $S\subseteq R$ with $i\in S$ let us denote
\[
\begin{tikzcd}
    f_{i,S}\coloneqq\Omega\rho^{S\sm i}_i\circ f_i\colon\;\S^{d-2}\arrow{r} & \Omega M_i\arrow{r} & \Omega M_S.
\end{tikzcd}
\]
\begin{lemma}\label{lem:samelson-inductive}
The map $[{\Gamma_1}(f_{i,R}), {\Gamma_2}(f_{i,R})]$ as in \eqref{eq:samelson-Gamma-sphere} is obtained by \emph{canonically} trivialising on the boundary the map
\[
    \begin{tikzcd}[column sep=huge]
    x\colon (I^{d-2})^{ R_1}\times (I^{d-2})^{ R_2}\arrow{rr}{{\Gamma_1}(f_{i,R_1})\times{\Gamma_2}(f_{i,R_1})} &&
    \Omega M_{R_1}\times \Omega M_{R_1}\arrow{rr}{[\Omega\rho^{R\sm R_{1}}_{R_{1}},\Omega\rho^{R\sm R_{2}}_{R_{2}}]} &&
    \Omega M_R.
\end{tikzcd}
\]
More precisely, for each $\vec{t}\in\partial (I^{d-2})^{ R}$ we glue in the standard null-homotopy $x(\vec{t})\cdot x(\vec{t})^{-1}\squig *$ of loops in $\Omega M_R$ to extend $x$ to a bigger cube on whose boundary it is constant.
\end{lemma}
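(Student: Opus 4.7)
The plan is to derive the identity from the naturality of Samelson brackets under $H$-maps, combined with the cubical description of the factorization \eqref{eq:samelson-def}. The key observation is that, for $\myj = 1, 2$, the map $\Omega\rho^{R \setminus R_\myj}_{R_\myj}\colon \Omega M_{R_\myj} \to \Omega M_R$ is (strictly) a loop map, being the loop of a continuous map of spaces. It therefore preserves pointwise concatenation, inversion, and commutators, and hence commutes with any iterated Samelson product on the nose. Starting from the tautology $f_{i, R} = \Omega\rho^{R \setminus R_\myj}_{R_\myj} \circ f_{i, R_\myj}$ for $i \in R_\myj$ and proceeding by induction on $|R_\myj|$ along the recursive recipe \eqref{eq:samelson-Gamma-sphere}, I would obtain the pointwise identity
\[
\Gamma_\myj(f_{i, R}) \;=\; \Omega\rho^{R \setminus R_\myj}_{R_\myj} \circ \Gamma_\myj(f_{i, R_\myj}).
\]
Thus the map $x$ is nothing but the pointwise commutator bracket of $\Gamma_1(f_{i, R})$ and $\Gamma_2(f_{i, R})$ on the product of cubes, prior to any smash quotient.

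Next I would invoke the cubical presentation of the Samelson bracket to match the extension prescribed in the statement with the standard Samelson construction. Present $(\S^{d-2})^{\wedge R_1} \wedge (\S^{d-2})^{\wedge R_2}$ as the quotient of a slightly enlarged product of cubes by its outer boundary: on the original product use $x$ itself, and on the attached collar use the standard null-homotopy $s \mapsto y|_{[0, 1-s]} \cdot y^{-1}|_{[s, 1]}$ fixed in Notation~\ref{notat:spaces}. By induction each $\Gamma_\myj(f_{i, R_\myj})$ sends $\partial (I^{d-2})^{R_\myj}$ to the basepoint, so on the wedge locus $\partial (I^{d-2})^{R_1} \times (I^{d-2})^{R_2} \cup (I^{d-2})^{R_1} \times \partial (I^{d-2})^{R_2}$ the commutator $x$ reduces to a word of the form $y \cdot y^{-1}$ in the surviving factor, and the collar null-homotopy is canonically defined. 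This is exactly the prescription by which the commutator $\Omega M_R \times \Omega M_R \to \Omega M_R$ factors through the smash $\Omega M_R \wedge \Omega M_R \to \Omega M_R$ in definition~\eqref{eq:samelson-def}; hence the map descending to the quotient is $[\Gamma_1(f_{i, R}), \Gamma_2(f_{i, R})]$.

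The only real obstacle is bookkeeping: one must verify that the direction in which the collar is attached, the parameterization of the standard null-homotopy, and the ordering $(R_1, R_2)$ encoded by $\vartheta_{(R_1, R_2)}$ are compatible with the conventions of Notation~\ref{notat:spaces} and Appendix~\ref{app:samelson}. All such identifications are forced by the existing conventions and require no further choices, so the two descriptions agree on the nose rather than merely up to homotopy, which is exactly what the lemma asserts.
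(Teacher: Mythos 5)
Your proof is correct and takes essentially the same approach as the paper's, which simply records that the lemma is clear from the definitions. Your unpacking — that $\Omega\rho^{R\setminus R_\myj}_{R_\myj}$ is a strict loop map, so it commutes on the nose with iterated Samelson products and with the canonical null-homotopy $t\mapsto y|_{[0,1-t]}\cdot y^{-1}|_{[t,1]}$, and that the wedge locus inside the product of cubes coincides with $\partial(I^{d-2})^R$ — makes explicit exactly the bookkeeping the paper leaves to the reader.
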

Here we defined a map on $(\S^{d-2})^{\wedge R}\cong\faktor{(I^{d-2})^{R}}{\partial}$ by giving it on the cube so that it is constant on the boundary. The proof of the lemma is clear from definitions. See Section~\ref{subsec:strategy} for how it is used.

\subsection{Proof of Lemma~\ref{lem:lie-tree}}\label{subsec:lem:lie-tree}
    We now prove Lemma~\ref{lem:lie-tree}, which says that the map $\omega_d\colon\Lie(n)\to\Lie_d(n)$ given by $\omega_d(\ichord{i})=x^i$, $\omega_d(\Gamma)=\omega_\Gamma\coloneqq(-1)^{(1|2)_d}[\omega_d(\Gamma_1),\omega_d(\Gamma_2)]$ is an isomorphism. Here $(1|2)_d\coloneqq(d-2)\cdot(1|2)$ and $(1|2)\coloneqq|\{(i_1,i_2)\in R_1\times R_2: i_1>i_2\}|$.
    
    Note that $\Lie(n)$ \emph{extends} to an $\mc{S}_{n+1}$-representation by permuting all the labels $\{0,1,\dots,n\}$; this is related to the cyclic operad structure on $\Lie(\bull)$. Then Lemma~\ref{lem:lie-tree} precisely gives $\Lie_d(n)\cong\Lie(n)\otimes\sgn_{n+1}$ as representations of $\mc{S}_{n+1}$; see~\cite[Prop.~3.4]{Robinson-part-cx} for details.
    \begin{proof}[Proof of Lemma~\ref{lem:lie-tree}]
    We define $\omega_d$ by linearly extending the definition in the lemma and check it descends to the quotient by~\eqref{eq-def:AS-IHX}. We write $\omega_\Gamma\coloneqq\omega_d(\Gamma)$ for short.
    
    To this end, let $\omega_{AS}$ and $\omega_{IHX}$ be the images under $\omega_d$ of the linear combinations as in~\eqref{eq-def:AS-IHX}, but with roots instead of dots. It suffices to show that these are trivial, since then $\omega_d$ will also vanish on any tree in which $AS$ or $IHX$ appears as a subtree.
    
    Firstly, using $|\omega_{\Gamma_{\myj}}|=|S_{\myj}|(d-2)$ and the obvious identity $(1|2)_d + (2|1)_d=|S_1||S_2|(d-2)$ we obtain:
    \begin{align*}
    \omega_{AS}&=(-1)^{(1|2)_d}\big[\omega_{\Gamma_1},\omega_{\Gamma_2}\big]+(-1)^{(2|1)_d}\big[\omega_{\Gamma_2},\omega_{\Gamma_1}\big]\\
    &=(-1)^{(1|2)_d}\Big(
    \big[\omega_{\Gamma_1},\omega_{\Gamma_2}\big]+(-1)^{|S_1||S_2|(d-2)}\big[\omega_{\Gamma_2},\omega_{\Gamma_1}\big]
    \Big)\\
    &=(-1)^{(1|2)_d}\Big(
    \big[\omega_{\Gamma_1},\omega_{\Gamma_2}\big]+(-1)^{|\omega_1||\omega_2|}\big[\omega_{\Gamma_2},\omega_{\Gamma_1}\big]
    \Big)
    \end{align*}
    which vanishes by the graded antisymmetry in $\Lie_d(S)$. Secondly, recall \red{
    from Definition~\ref{def:trees} 
    }
    that
    \[IHX:\begin{tikzpicture}[baseline=1ex,scale=0.35,every node/.style={scale=0.85}]
            \clip (-2.5,-0.6) rectangle (2.5,4.45);
            \draw (0,0) node[]{$\vdots$};
            \draw[thick]
                (0,0) -- (0,1) -- (-1,2) -- (-2,3) node[pos=1,above]{$\Gamma_3$}
                                  (-1,2) -- (0,3) node[pos=1,above]{$\Gamma_2$}
                        (0,1) -- (2,3)  node[pos=1,above]{$\Gamma_1$};
        \end{tikzpicture} -
        \begin{tikzpicture}[baseline=1ex,scale=0.35,every node/.style={scale=0.85}]
            \clip (-2.5,-0.6) rectangle (2.5,4.45);
            \draw (0,0) node[]{$\vdots$};
            \draw[thick]
                (0,0) -- (0,1) -- (-1,2) -- (-2,3) node[pos=1,above]{$\Gamma_3$}
                        (1,2) -- (0,3) node[pos=1,above]{$\Gamma_2$}
                        (0,1) -- (2,3)  node[pos=1,above]{$\Gamma_1$};
        \end{tikzpicture} + \begin{tikzpicture}[baseline=1ex,scale=0.35,every node/.style={scale=0.85}]
            \clip (-2.5,-0.6) rectangle (2.5,4.45);
            \draw (0,0) node[]{$\vdots$};
            \draw[thick]
                (0,0) -- (0,1) -- (-1,2) -- (-2,3) node[pos=1,above]{$\Gamma_1$}
                                  (-1,2) -- (0,3) node[pos=1,above]{$\Gamma_3$}
                        (0,1) -- (2,3)  node[pos=1,above]{$\Gamma_2$};
        \end{tikzpicture} =\: 0.
    \]
    and note that by $AS$ the last tree is equal to
    \[-\begin{tikzpicture}[baseline=1ex,scale=0.35,every node/.style={scale=0.85}]
            \clip (-2.5,-0.6) rectangle (2.5,4.45);
            \draw (0,0) node[]{$\vdots$};
            \draw[thick]
                (0,0) -- (0,1) -- (-1,2) -- (-2,3) node[pos=1,above]{$\Gamma_3$}
                                  (-1,2) -- (0,3) node[pos=1,above]{$\Gamma_1$}
                        (0,1) -- (2,3)  node[pos=1,above]{$\Gamma_2$};
        \end{tikzpicture}\;.
    \]
    Therefore, $\omega_{IHX}$ is equal to
\begin{align*}
    &(-1)^{(1|23)_d+(2|3)_d}
    \big[\omega_{\Gamma_1},[\omega_{\Gamma_2},\omega_{\Gamma_3}]\big]
    -
    (-1)^{(12|3)_d+(1|2)_d} \big[[\omega_{\Gamma_1},\omega_{\Gamma_2}],\omega_{\Gamma_3}\big]
    -
    (-1)^{(2|13)_d+(1|3)_d}
    \big[\omega_{\Gamma_2},[\omega_{\Gamma_1},\omega_{\Gamma_3}]\big]
    \\
    &=(-1)^{(1|2)_d+(1|3)_d+(2|3)_d}
    \Big(\big[\omega_{\Gamma_1},[\omega_{\Gamma_2},\omega_{\Gamma_3}]\big]
    -\big[[\omega_{\Gamma_1},\omega_{\Gamma_2}],\omega_{\Gamma_3}\big]-
    (-1)^{(2|1)_d+(1|2)_d}
    \big[\omega_{\Gamma_2},[\omega_{\Gamma_1},\omega_{\Gamma_3}]\big]\Big),
\end{align*}
    where we have used the identities
    \begin{align*}
        (1|23)+(2|3)&=(1|2)+(1|3)+(2|3)=(12|3)+(1|2),\\
        (2|13)+(1|3)&=(2|1)+(2|3)+(1|3).
    \end{align*}
    
    Now again plugging in $(2|13)_d+(1|3)_d=|\omega_{\Gamma_1}||\omega_{\Gamma_2}|$ we get that the terms in the parenthesis are precisely those of the graded Jacobi relation \eqref{eq:graded-jacobi}, which holds in $\Lie_d(S)$.
    
    Finally, $\omega_d$ is clearly a surjection and an inverse $\omega_d^{-1}$ can be constructed in an analogous way -- $AS$ and $IHX$ will imply it is well defined modulo graded antisymmetry and Jacobi relations.
\end{proof}

\section{Finite type knot invariants}
\label{app:finite-type}

Let us briefly review \emph{classical and geometric approaches} to finite type theory; for book treatments see~\cite{Ohtsuki,CDM}. We restrict to the case of classical (long) knots $\KK(I^3)\coloneqq\pi_0\Knots(I^3)$; \red{this is 
}
a monoid under the connected sum $\#$, with unit the unknot $\U$. For general $3$-manifolds, see~\cite{K-thesis}.

\subsection{Classical and geometric approach to finite type theory}\label{subsec:classical-finite-type-theory}

\subsubsection{Jacobi diagrams and Jacobi trees}\label{subsec:jacobi-trees}
In the theory of finite type knot invariants for $M=I^3$ one considers general \emph{uni-trivalent graphs}\red{: this is a $1$-complex with vertices of valence either one (called univalent vertices or leaves) or three (called trivalent vertices). Additionally, leaves come equipped with labels $0,\dots,n-1$, and trivalent vertices come with vertex-orientation (as in Definition~\ref{def:trees}); we draw graphs immersed in the plane (so that the vertex-orientation is the one induced from the orientation of the plane, as we had in Figure~\ref{fig:Lie-trees}), with leaves attached to the $x$-axis in the order of their label. 
}
The \textsf{degree} of a graph is one half of the total number of vertices.
\begin{defn}\label{def:jac-diagrams}
    For $n\geq0$ define the abelian group $\A_n$ of Jacobi diagrams as the quotient of the $\Z$-linear span of the set of degree $n$ \red{uni-trivalent 
    }
    graphs by the linear combinations which locally look like
\begin{equation}\label{eq-def:stu-1t}
\begin{aligned}
    \includegraphics{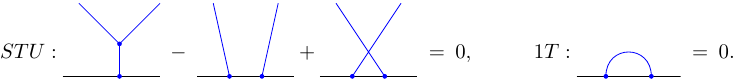}
\end{aligned}
\end{equation}
\end{defn}
\red{We will see in Section~\ref{subsec:geom-finite-type-theory} below that in the geometric approach one can reduce to considering trees only. Recall the group of Lie trees from Definition~\ref{def:trees}.
}
\begin{defn}[\cite{Conant}]\label{def:jac-trees}
Define the abelian group of \textsf{Jacobi trees} by
    \[
    \A^T_1\,\coloneqq\,\faktor{\Lie(1)}{ \scalebox{0.82}{\:\:\gchord{1}} }=0\quad\quad\text{and for}\;n\geq2\;\text{by}\quad\quad
    \A^T_n\,\coloneqq\,\faktor{\Lie(n)}{\stusq}
    \]
where the $\stusq$ relation is given by applying $STU$ in two different ways:
  \begin{equation}\label{eq:stusq-blue}
      \stusq:\quad STU(D,v_k) \:=\: STU(D,v_0).
  \end{equation}
Here a Jacobi diagram $D$ has degree $n$ and exactly one loop (i.e. the first Betti number $\beta_1(D)=1$) and $v_0$ and $v_k$ lie on the loop of $D$ and are neighbours of the leaf $0$ or $k$ respectively (see Figure~\ref{fig:stu2}).
\end{defn}
\begin{figure}[!htbp]
    \centering
    \includegraphics{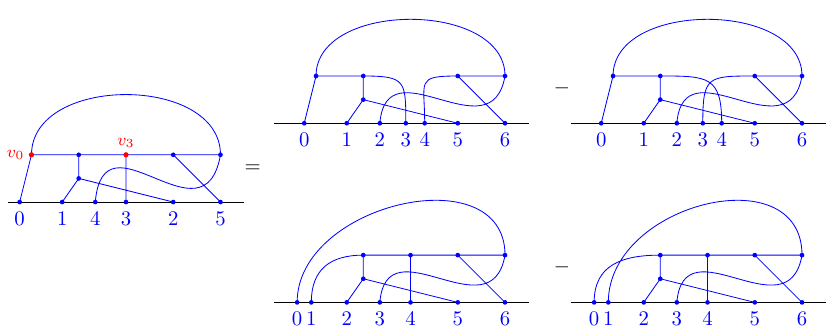}
    \caption[The \protect$\stusq$ relation.]{\textit{Left}: A $1$-loop graph $D$ with $n=6$ and $k=3$. \textit{Right}: The corresponding $\stusq$ relation.}
    \label{fig:stu2}
\end{figure}
The natural inclusion $\A^T_n\hra \A_n$ sends a Jacobi tree to itself viewed as a Jacobi diagram modulo $STU$, and lands in the primitives of the Hopf algebra $\A\coloneqq\oplus_{n\geq0}\A_n$, see~\cite{Conant,K-thesis}.
\begin{remm}\label{rem:stusq-compare}
Actually, Conant~\cite{Conant} more generally has $STU(D,v_k)=STU(D,v_j)$ for vertices $v_j,v_k$ which lie on the loop of $D$ and are neighbours of the leaves $j,k\in\{0,1,\dots,n\}$ respectively.

But this follows from \eqref{eq:stusq-blue} via $STU(D,v_j)=STU(D,v_0)=STU(D,v_k)$, since we can assume that the distance of the root to the unique loop of $D$ is one. Indeed, in the $IHX$ relation~\eqref{eq-def:AS-IHX} let $v_0$ be the vertex joining $\Gamma_2$ and $\Gamma_3$, with the root $0$ in $\Gamma_1$. Then two of the terms have distance from $0$ to the loop smaller than the third, so we can proceed by induction.
\end{remm}

\subsubsection{Classical approach: Vassiliev knot invariants}\label{subsec:vassiliev}

A \emph{singular knot} is an immersion $\sigma\colon I\imra I^3$ with finitely many transverse double points, which agrees with $\U$ near boundary. Each double point can be resolved by pushing the two strands off each other in two different ways, and all possible resolutions of $\sigma$ give $K_\sigma\in\Z[\KK(I^3)]$, a linear combination of knots. Now depending on the minimal number $n\geq 1$ of double points this defines a decreasing filtration \red{by ideals 
}
$\VV_n\subseteq\Z[\KK(I^3)]$ of the monoid ring, with $\VV_1$ precisely the augmentation ideal\red{, $\VV_1=\ker(\Z[\KK(I^3)]\ni\sum\epsilon_iK_i\mapsto\sum\epsilon_i\in\Z)$.}

The associated graded of this filtration is related to the Hopf algebra of \textsf{chord diagrams}: those Jacobi diagrams from Definition~\ref{def:jac-diagrams} which have no trivalent vertices. Namely, for a singular knot $\sigma$ with $n$ double points one has $n$ pairs of points on the source interval $I$ which are identified by $\sigma$; we record each pair by a chord to get a chord diagram $D_\sigma$ on $I$ of $\deg(D_\sigma)=n$.

This assignment is surjective, but far from being injective. However, there is a well-defined map $\realmap_n$ which takes a chord diagram $D$ to the class $[K_{\sigma(D)}]\in\faktor{\VV_n}{\VV_{n+1}}$, where $\sigma(D)$ is any singular knot with the chord diagram $D_{\sigma(D)}=D$. One can check that this is well defined and vanishes on diagrams which have an `isolated chord', as $1T$ from Figure~\ref{eq-def:stu-1t} (since our knots are not framed) and on the $4T$ relations, certain linear combinations of four chord diagrams (coming from triple points). Actually, $4T$ is a consequence of the relation $STU$ from Definition~\ref{def:jac-diagrams}, so there is a linear map
\begin{equation}\label{eq:chord-jacobi}
    \faktor{\Z\left[\textit{chord diagrams of deg }n\right]}{4T,1T}\to\:\faktor{\Z\left[\textit{Jacobi diagrams of deg }n\right]}{STU,1T}=:\A_n.
\end{equation}

Moreover, by Bar-Natan this is an \emph{isomorphism}~\cite{Bar-Natan} (the proof was given over $\Q$, but it actually applies integrally\red{, see also~\cite[Sec.~5.3]{CDM}%
}
). Combining this with the previous paragraph gives a surjection of finitely generated abelian groups
\begin{equation}\label{eq:classical-real-map}
    \begin{tikzcd}
        \realmap_n\colon\; \A_n\arrow[two heads]{r}{}&\faktor{\VV_n}{\VV_{n+1}} & D\mapsto[K_{\sigma(D)}].
    \end{tikzcd}
\end{equation}
called \textsf{the realisation map}. It is an open problem if its kernel is non-trivial, and a potential inverse is classically called a universal Vassiliev knot invariant of type $\leq n$ over $\Z$.

Namely, a knot invariant $v\colon\KK(I^3)\to T$ is \textsf{of type $\leq n$} if its linear extension $\ol{v}\colon\Z[\KK(I^3)]\to T$ vanishes on $\VV_{n+1}$. Here $T$ is an abelian group and $v$ is just a map of sets.
\begin{defn}\label{def:universal}
Let $R$ be a ring, $A$ a graded $R$-module and $\wh{A}\coloneqq\prod_{n\geq1}A_n$ its completion. A map $\zeta\colon\KK(I^3)\to\wh{A}$ is a \textsf{universal Vassiliev invariant over $R$} if the linear extension $\ol{\zeta}\colon R[\KK(I^3)]\to\wh{A}$ is a filtered $R$-linear map inducing an isomorphism of the associated graded $R$-modules. Equivalently,
\begin{enumerate}
    \item $\zeta=\prod_{n\geq1}\zeta_n$ and for each $n\geq1$ the map $\zeta_n$ is an invariant of type $\leq n$,
    \item the restriction $\ol{\zeta}_n|_{\VV_n}\colon \faktor{\VV_n}{\VV_{n+1}}\otimes R\to A_n$ is an isomorphism.
\end{enumerate}
 We say that $\zeta$ is \textsf{classical} if the composite $\ol{\zeta}_n|_{\VV_n}\circ(\realmap_n\otimes R)$ is the identity (so $\A_n\otimes R=A_n$).
\end{defn}
\begin{lemma} [justifying the `universality']
If $\zeta$ is a universal Vassiliev invariant over $R$, then any invariant $v\colon\KK(I^3)\to T$ of type $\leq n$ with values in an $R$-module $T$ can be written as a sum $\sum_{k=1}^nv_k\circ\ol{\zeta}_k$, where $v_k\coloneqq\ol{v}|_{\VV_k}\circ(\ol{\zeta}_k|_{\VV_k})^{-1}\colon A_k\to T$, called the \textsf{$k$-th symbol of $v$}.
\end{lemma}
\begin{proof}
  Indeed, $v-v_n\circ\ol{\zeta}_n$ vanishes on $\VV_n$, so it is an invariant of type $\leq n-1$ whose $(n-1)$-st symbol is equal to $v_{n-1}$, so we can proceed by induction.
\end{proof}
The Kontsevich integral, as well as the Bott--Taubes configuration space integrals~\cite{Bott-Taubes,Altschuler-Freidel}, are classical universal Vassiliev invariant \emph{over $\Q$}. It is an open problem if they agree, but some progress was made in~\cite{Lescop02} (note that there may be several universal invariants over the same coefficient ring since only the `bottom part' $\ol{\zeta}_n|_{\VV_n}$ is determined). As a consequence,
\begin{equation}\label{eq:universal-over-Q}
    \begin{tikzcd}
    \realmap_n\otimes\Q\colon\; \A_n\otimes\Q\arrow[two heads]{r}{\cong}&\faktor{\VV_n}{\VV_{n+1}}\otimes\Q.
    \end{tikzcd}
\end{equation}
Therefore, the kernel of $\realmap_n$ consists of torsion elements, but it is unknown if $\A_n$ has any.

The Bar-Natan's isomorphism \eqref{eq:chord-jacobi} gives more power to the theory as it is relatively easy to construct interesting invariants of Jacobi diagrams, called \textsf{weight systems}. For example, invariants are obtained by interpreting each trivalent vertex as the Lie bracket in a fixed semisimple Lie algebra and the horizontal line as its representation. Actually, symbols of quantum invariants of knots are precisely these weight systems, but by~\cite{Vogel} this is a strict subset of all of them.

However, introducing trivalent vertices raises the question of their \emph{geometric} interpretation, as we had for chords. Several different answers are summarised in the following theorem.
\begin{theorem}\label{thm:vassiliev}
For $K,K'\in\KK(I^3)$ and $n\geq 1$ the following are equivalent:
\begin{enumerate}[topsep=-0.2em]
    \item $K-K'\in\VV_n$ or, equivalently, $K$ and $K'$ are not distinguished by any invariant of type $\leq n-1$;
    \item $K'$ can be obtained from $K$ by a surgery on a \textit{simple strict forest clasper} of degree $n$;
    \item $K'$ can be related to $K$ by a finite sequence of \textit{simple capped genus one grope cobordisms} of degree $n$. In this case we say that $K$ and $K'$ are \textsf{$n$-equivalent} and write $K\sim_n K'$.
\end{enumerate}
\end{theorem}
The equivalence 
$(1)\Leftrightarrow(2)$ is independently by~\cite{Gusarov} and~\cite[Thm.~3.17 \&~6.18]{Habiro} and $(2)\Leftrightarrow(3)$ by~\cite[Thm.~4]{CT1}.
The idea behind \red{both of 
}
these descriptions is to view a crossing change as the simplest move, of degree one, in a whole \emph{family of moves}. Namely, a chord guides a crossing change (a homotopy passing through the corresponding singular knot), while moves of higher degrees are certain iterations of the `trivalent' move: grab three strands of the knot and tie them into the Borromean rings. We make this precise in Section~\ref{subsec:gropes} using the last approach of the theorem\red{, in terms of gropes 
}
(see Remark~\ref{rem:emb-comm-borr}). \red{For claspers we refer the reader to \cite{Habiro,CT1}.
}

\subsubsection{Geometric approach: Gusarov--Habiro filtration}
\label{subsec:geom-finite-type-theory}
Let us define the \textsf{Gusarov--Habiro filtration} by \red{submonoids 
}
$\KK_n(I^3)\coloneqq\{K\in\KK(I^3):K\sim_n\U\}\subseteq\KK(I^3)$. Then the theorem implies that it maps to the Vassiliev--Gusarov filtration:
\begin{equation}\label{eq:GH-fil}
\begin{tikzcd}
    & \KK_n(I^3)\arrow[hook]{d}{}\arrow[]{rr}{}  && \VV_n\arrow[hook]{d}{} & \\
    \pi_0\Knots(I^3)= & \KK(I^3)\arrow[]{rr}{K\mapsto K-\U} && \Z[\KK(I^3)] & =H_0(\Knots(I^3);\Z).
\end{tikzcd}
\end{equation}

This is what we call the \emph{geometric approach}, as we are back to working with knots, instead of their linear combinations -- or dually, their invariants $H^0(\Knots(I^3);T)$. In terms of invariants of finite type, the next lemma shows that we are restricting to the study of those which are \textsf{additive}, that is, monoid maps from $\KK(I^3)$ to abelian groups.
\begin{lemma}\label{lem:additive-invts}
    An additive invariant has type $\leq n$ if and only if it vanishes on $\KK_{n+1}(I^3)$. That is, $v\colon \KK(I^3)\to A$ is a monoid map vanishing on $\KK_{n+1}(I^3)$ if and only if its linear extension $\ol{v}\colon\Z[\KK(I^3)]\to A$ vanishes on $\VV_1\cdot\VV_1+\VV_{n+1}$ \red{(the product and sum of ideals).
    }
\end{lemma}
\begin{proof}
  Since $v(K_1\#K_2)-v(K_1)-v(K_2)=\ol{v}\big((K_1-\U)\#(K_2-\U)\big)$, \red{we see that 
  }
  $v$ is a monoid map if and only if its linear extension \red{$\ol{v}$ 
  }
  vanishes on $\VV_1\cdot\VV_1\subseteq\Z[\KK(I^3)]$. On one hand, by Theorem~\ref{thm:vassiliev} we have 
  $\{K-\U:K\in\KK_{n+1}(I^3)\}\subseteq\VV_{n+1}$
  and on the other, $\VV_{n+1}\subseteq\VV_1\cdot\VV_1+\{K-\U:K\in\KK_{n+1}(I^3)\}$ by a result of Habiro~\cite[Thm.~6.17]{Habiro}. Since $v(K)=\ol{v}(K-\U)$, the claim follows.
\end{proof}
In this setting we pass from Jacobi diagrams $\A_n$ to its subgroup $\A^T_n\subseteq\A_n$ of Jacobi trees, and from the realisation map $\realmap_n$ to its `tree part' $\realmap^T_n$, defined as the unique map completing the diagram
\begin{equation*}
    \begin{tikzcd}[row sep=1cm]
    \{\text{degree $n$ grope forests}\}\arrow[two heads]{d}[swap]{\partial^\perp}\arrow[two heads]{rr}{\ut} && \Z[\Tree(n)]\arrow[two heads]{d}{\:\realmap^T_n}\\
    \KK_n(I^3)\arrow[two heads]{rr}{\textrm{mod }\sim_{n+1}}  && \faktor{\KK_n(I^3)}{\sim_{n+1}}
\end{tikzcd}
\end{equation*}
Grope forests, the output knot map $\partial^\perp$ and  the underlying tree map $\ut$ were defined in Section~\ref{subsec:gropes}. The following describes the exact relation between the two realisation maps.
\begin{theorem}[\cite{Habiro,CT2,Ohtsuki}, see also~\cite{K-thesis}]\label{thm:real}
There is a structure of an abelian group on the set $\,\faktor{\KK_n(I^3)}{\sim_{n+1}}$ so that $\realmap^T_n$ is map of abelian groups.

Moreover, $\realmap^T_n$ vanishes on relations $AS$, $IHX$ and $\stusq$, and fits into the commutative diagram of abelian groups
\[\begin{tikzcd}[column sep=large]
    \A^T_n
    \arrow[hook]{rr}
    \arrow[two heads]{d}[swap]{\realmap^T_n}
    &&
    \A_n\arrow[two heads]{d}{\realmap_n}
    \\
    \faktor{\KK_n(I^3)}{\sim_{n+1}}\arrow[]{rr}{{K\mapsto K-\U}}
    &&
    \faktor{\VV_n}{\VV_{n+1}}
    \end{tikzcd}
\]
\end{theorem}
There is also a similar notion of a universal additive invariant.
\begin{defn}\label{def:additive-universal}
Let $A=F_0\supseteq F_1\supseteq\cdots$ be a filtered $R$-module and $\wh{A}\coloneqq\lim\faktor{A}{F_n}$. A \textsf{universal additive Vassiliev knot invariant over $R$} is a map of \emph{filtered monoids} $\zeta\colon\KK(I^3)\to\wh{A}$ which induces an isomorphism on the associated graded $R$-modules, that is
\[\begin{tikzcd}
\zeta_n|_{\KK_n(I^3)}\otimes R\colon\; \faktor{\KK_n(I^3)}{\sim_{n+1}}\otimes R\arrow[hook, two heads]{r}{} & \faktor{F_n}{F_{n+1}}
\end{tikzcd}\quad\forall n\geq0.
\]
We say that $\zeta$ is \textsf{classical} if $(\ol{\zeta}_n|_{\KK_n(I^3)}\otimes R) \circ(\realmap^T_n\otimes R)$ is the identity (so $\A^T_n\otimes R=\faktor{F_n}{F_{n+1}}$).
\end{defn}
\begin{remm}\label{rem:completions}
  Here we of course consider $\KK(I^3)$ is a filtered monoid with the Gusarov--Habiro filtration $\KK(I^3)=\KK_0\supseteq\KK_1\supseteq\cdots$. We can define the completion of $\KK$ over $R$ with respect to the Gusarov--Habiro filtration as the inverse limit
\[
    \wh{\KK(I^3)}_R\coloneqq\lim\big(\faktor{\KK(I^3)}{\sim_n}\otimes R\big).
\]
  Then for $\zeta$ a universal additive Vassiliev knot invariant over $R$, the induced map $\wh{\zeta}\colon\wh{\KK(I^3)}_R\to\wh{A}$ is an isomorphism of complete filtered $R$-modules.
\end{remm}
Similarly as before, a universal additive invariant $\zeta$ indeed satisfies a \emph{universality property}: any additive invariant $v\colon\KK(I^3)\to T$ of type $\leq n$ with values in an $R$-module $T$ is a sum $\sum_{k=1}^nv_k\circ\ol{\zeta}_k$.

Note however that this definition is more flexible than Definition~\ref{def:universal}, since the completion on $\wh{A}$ is with respect to a filtration instead of a grading. For instance, we could take $R=\Z$ and $A=\KK(I^3)$, so the obvious map $\KK(I^3)\to\lim\faktor{\KK(I^3)}{\sim_n}$ satisfies the conditions. This is precisely universal additive invariant $\nu$ of Habiro~\cite[Thm.~6.17]{Habiro}.

A universal additive Vassiliev invariant over $\Q$ can be constructed as the logarithm either of the Kontsevich or the Bott-Taubes integral $Z$ from \eqref{eq:universal-over-Q}, which are both grouplike~\cite{Kont-Vassiliev,Altschuler-Freidel}:
\[
  z\coloneqq\log(Z)\colon\KK(I^3)\to\prim(\wh{\A\otimes\Q})\cong\wh{\A^T\otimes\Q}.
\]
Recall that Conjecture~\ref{conj:universal} asserts that the evaluation map $\pi_0\ev_n$ factors through an isomorphism
\[
  \pi_0\ev_n\colon \faktor{\KK(I^3)}{\sim_n}\to\pi_0\pT_n(I^3),\quad\forall n\geq1.
\]
This is equivalent to the claim that
\[
  \lim\pi_0\ev_n\colon \KK(I^3)\to\lim\pi_0\pT_n(I^3)
\]
\emph{is a universal additive Vassiliev invariant over $\Z$} in the sense of Definition~\ref{def:additive-universal}, where the filtration on the target is by abelian groups $F_k\coloneqq\ker\big((\lim\pi_0\pT_n(I^3))\twoheadrightarrow\pi_0\pT_k(I^3)\big)$. See also \eqref{eq:limits}.
\begin{remm}\label{conj:habiro-vassiliev}
    The previous discussion largely generalises to knots in any oriented $3$-manifold~$M$. In particular, the equivalence of the definitions of the $n$-equivalence relation analogous to \textsl{(2, 3, 4)} of Theorem~\ref{thm:vassiliev} still hold. However, their equivalence to \textsl{(1)} remains open, see~\cite[Sec.~6]{Habiro}.
\end{remm}
\subsection{Proofs of corollaries}\label{subsec:further-cor}
As mentioned in the introduction, an important result is the identification due to Conant of the diagonal of the second page of spectral sequence $E^*_{-n,t}(M)$ for the tower of fibrations $p_{n+1}\colon\pT_{n+1}(M)\to\pT_n(M)$ in embedding calculus. \red{This is the standard spectral sequence for homotopy groups of a tower of fibrations, which we recalled in \eqref{eq-def:sp-seq}.
}
\begin{theorem}[\cite{Conant}]\label{thm:conant-d1}
  There is an isomorphism $E^2_{-(n+1),n+1}(I^3)\cong\A^T_n\coloneqq\faktor{\Lie(n)}{\stusq}$.
\end{theorem}
We saw in Corollary~\ref{cor:ss} that the first page has $\Lie(n)$ on the diagonal, so Conant identifies the image of the first differential $d^1$ as $\stusq\subseteq\Lie(n)$. \red{Roughly speaking, the reason behind this is that $d^1$ which hits the diagonal is evaluated on a homotopy class of a configuration space which corresponds to a collision of exactly two points (which can be graphically presented as on the left of Figure~\ref{fig:stu2}), and $d^1$ resolves this collision at two possible trivalent vertices in two possible ways (as on the right of Figure~\ref{fig:stu2}). 
}
See also~\cite{Shi} \red{and \cite[App.C]{K-thesis}.
}


We now summarise the discussion so far in the following diagram:
\begin{equation}\label{diag:big-diagram}
    \begin{tikzcd}
    \{\text{degree $n$ grope forests}\} \arrow[bend right=2cm]{ddd}[swap]{\partial^\perp}
    \arrow[dashed]{d}{\psi} \arrow[two heads]{rr}{\ut}
    &&
    \Z[\Tree(n)]\arrow[two heads]{d}{\text{mod}~(AS,IHX)}\arrow[dashed]{dll}[swap]{\rho_n}
    &
    \\
    \pi_0\H_n(I^3)\arrow[dashed]{rr}{\pi_0\emap_{n+1}}
    &&
    \pi_0\pF_{n+1}(I^3)
    \arrow[two heads,dashed,bend left=2cm]{dd}{\substack{\text{mod }\\\im \pi_0\delta}}
    \\
    & \A^T_n  \arrow[two heads]{ld}[swap]{\realmap^T_n}\arrow[two heads,dashed]{dr}{\substack{\text{mod}\\d^{*>1}}}  
    & \Lie(n)
    \arrow[two heads]{l}[swap]{\stusq}
    \arrow{u}{\cong}
    \\
    \faktor{\KK_n(I^3)}{\sim_{n+1}} \arrow[dashed]{rr}{\ol{e}_{n+1}}
    &&
    \ker(\pi_0p_{n+1})
    &
    \subseteq\pi_0\pT_{n+1}(I^3).
\end{tikzcd}
\end{equation}
All objects here are abelian groups except that $\{\text{degree $n$ grope forests}\}$ and $\pi_0\H_n(I^3)$ are only sets. The vertical dashed map on the right takes the quotient by the image on $\pi_0$ of the connecting map $\delta\colon\Omega\pT_n(I^3)\to\pF_{n+1}(I^3)$ for $p_{n+1}$. This factors through $\A^T_n\cong E^2_{-(n+1),n+1}$ by Theorem~\ref{thm:conant-d1}, as $\Lie(n)\cong E^1_{-(n+1),n+1}$ and the quotient by higher differentials is $\ker(\pi_0p_{n+1})=E^\infty_{-(n+1),n+1}(I^3)$.

The bottom horizontal map $\ol{e}_{n+1}$ is defined as as the map induced on the (set-theoretic) kernels
\begin{equation}\label{eq:two-ses}
    \begin{tikzcd}
    \faktor{\KK_n(I^3)}{\sim_{n+1}} \arrow[hook]{r}{}\arrow[densely dashed]{d}[swap]{\ol{e}_{n+1}} & \faktor{\KK(I^3)}{\sim_{n+1}}\arrow[two heads]{r}\arrow[]{d}{\pi_0\ev_{n+1}} & \faktor{\KK(I^3)}{\sim_n}\arrow[]{d}{\pi_0\ev_n}\\
    \ker(\pi_0p_{n+1}) \arrow[hook]{r}{} & \pi_0\pT_{n+1}(I^3)\arrow[two heads]{r}{} & \pi_0\pT_n(I^3)
\end{tikzcd}
\end{equation}
using that $\pi_0\ev_n$ factors though the quotient by $\sim_n$, which is a corollary of Theorem~\ref{thm:KST}, see \eqref{eq:kst-factor}.
\begin{remm}
  By Corollary~\eqref{eq:kst-factor} $\pi_0\ev_n$ vanishes on $\KK_n(I^3)$, so $\KK_n(I^3)\subseteq\im\big(\pi_0\H_n(I^3)\to\KK(I^3)\big)$, and Conjecture~\ref{conj:universal} claims that this inclusion is an equality. Thus, on the left side of \eqref{diag:big-diagram} there is a priori no map \red{$\pi_0\H_n(I^3)\to{\KK_n(I^3)}/{\sim_{n+1}}$, but the conjecture would give one.
  }
\end{remm}
\begin{cor}\label{cor:detect-tree}
  The diagram \eqref{diag:big-diagram} commutes.
\end{cor}
\begin{proof}
  The subdiagram comprised of solid arrows commutes by Theorem~\ref{thm:real}. The upper rectangle commutes by Theorem~\ref{thm:main-extended}, see \eqref{eq:main-extended}. It remains to check that the triangle on the right commutes.

  To this end, let $F\in\Z[\Tree(n)]$ and $[F]\in\A^T_n$ its class. By the surjectivity of $\ut$ we can find a degree $n$ grope forests $\forest$ on $\U$ in $I^3$ with $\ut(\forest)=F$. Let $K=\partial^\perp(\forest)$ and note that $[K]=\realmap^T_n[F]$ by the solid diagram. Then
  \[
    \ol{e}_{n+1}\big(\realmap^T_n[F]\big)=[\ev_{n+1}(K)]=[\emap_{n+1}\psi(\forest)]=[F]\pmod{\im \delta_*}.
  \]
  Here the second equality holds since $\ev_{n+1}(K)=i\circ\emap_{n+1}\circ\psi(\forest)$, where $i\colon\pF_{n+1}(I^3)\to\pT_{n+1}(I^3)$, by definition, see \eqref{diag:hofib-hn}. The last equality follows from the commutativity of the upper rectangle.
\end{proof}


\begin{proof}[Proof of Corollary~\ref{cor:collapse-implies-universal}]
    From the bottom triangle in the diagram~\eqref{diag:big-diagram} we deduce: if for some $n\geq1$ the map $(\mathrm{mod}~d^{*>1})$ is an isomorphism, then the other two maps are isomorphisms as well. More generally, if $A$ is a torsion-free abelian group and the map $(\mathrm{mod}~ d^{*>1})\colon\A^T_n\otimes A\to \ker(\pi_0p_{n+1})\otimes A$ is an isomorphism, then both $\realmap^T_n\otimes A$ and $\ol{e}_{n+1}\otimes A$ are isomorphisms.
    
    If for some $A$ this is the case for all degrees below a fixed degree $n$, then $\pi_0\ev_{n}$ is a universal additive Vassiliev invariant of type $\leq n-1$ over $A$, meaning that there is an isomorphism
    \begin{equation}\label{eq:ev-over-A}
        \begin{tikzcd}
    \pi_0\ev_{n}\otimes A\colon\:\faktor{\KK(I^3)}{\sim_{n}}\otimes A\arrow[]{r}{\cong} & \pi_0\pT_{n}(I^3)\otimes A.
        \end{tikzcd}
    \end{equation}
    This follows by induction by tensoring the sequences in \eqref{eq:two-ses} by $A$ and using its flatness.
\end{proof}
\begin{proof}[Proof of Corollary~\ref{cor:rational-universal}]
    The isomorphism \eqref{eq:ev-over-A} holds for $A=\Q$ for all $n\geq1$, and the $p$-adics $A=\Z_p$ in the range $n\leq p+2$, using Corollary~\ref{cor:collapse-implies-universal} and the results of~\cite{BH} (see also Remark~\ref{remm:rational-collapse}). 
    
    Furthermore, they also show that for $n\leq p+2$ there is a splitting
    \[
     \pi_0\pT_{n}(I^3)\otimes \Z_p\cong\bigoplus_{1\leq k\leq n-1}\A^T_k\otimes \Z_p.
    \]
    To deduce the integral result, we use that the kernels of both $\realmap^T_n$ and $\ol{e}_{n+1}$ must consist of torsion elements, and by~\cite{Gusarov-main} and~\cite[Sec.~3.5]{Manathunga} the group $\A^T_n$ is torsion-free for $n\leq 6$.
\end{proof}

Note that conversely, if there is a non-trivial higher differential hitting the diagonal, then not both $\realmap^T_n\otimes A$ and $\ol{e}_{n+1}\otimes A$ can be injective.
\begin{remm}\label{rem:integrals-factor}
There exists an inverse $z_n$ to $\realmap^T_n\otimes\Q$ obtained as the logarithm of either the Kontsevich integral~\cite{Kont-Vassiliev} or the Bott--Taubes integrals~\cite{Bott-Taubes,Altschuler-Freidel} (see the end of Section~\ref{subsec:classical-finite-type-theory}). Hence, $\ol{e}_{n}\otimes\Q$ agrees with these invariants, implying that the configuration space integrals factor through the embedding calculus tower:
\[\begin{tikzcd}
  & \bigoplus_{1\leq k\leq n-1}\A^T_k\otimes\Q\arrow[hook',two heads,shift left,near start]{ld}{\realmap^T_{\leq n-1}} & \\
  \faktor{\KK(I^3)}{\sim_{n}}\otimes \Q\arrow[hook,two heads]{rr}[swap]{\pi_0\ev_{n}\otimes\Q}\arrow[hook,bend left=1cm]{ur}{z_{\leq n-1}}  && \pi_0\pT_{n}(I^3)\otimes \Q\arrow[hook',two heads]{ul}{}
\end{tikzcd}
\]
where the map on the right is given by some splittings over $\Q$, making the diagram commute.
\end{remm}
\begin{remm}\label{remm:rational-collapse}
Let us remark that the collapse of the spectral sequence $E^*_{-n,t}(I^d)\otimes\Q$, converging to the rational homotopy groups of $\Knots(I^d)$, was shown earlier by~\cite{ALTV} but only for $d\geq 4$.

The collapse of the corresponding \emph{homology} spectral sequences for the Taylor tower of $\Knots(I^d)$ for any $d\geq3$ was shown by~\cite{LTV-homology,Tsopmene, Moriya}. However, it is not clear if those arguments can be extended to show that the homotopy collapse for $d=3$. This follows from the results of~\cite{FTW}, and more directly from~\cite{BH}.
\end{remm}

\red{
\begin{remm}\label{rem:graph-complexes}
    Graph complexes are chain complexes whose generators are given as graphs of some sort,  and they arise in the study of configuration spaces of points in $\R^d$ for $d\geq2$, more precisely, in proofs of their (co)formality (as operads). For example, Lambrechts and Turchin \cite{LT-GC}, and \v{S}evera and Willwacher~\cite{Severa-Willwacher} used Kontsevich's formality theorem to obtain a graph complex $ICG_d^{\bull}$ (a cosimplicial $L_\infty$-algebra)\footnote{This is the complex of `internally connected graphs', denoted by $P_{\bull}$ in \cite{LT-GC}, and the corresponding operad by $CG(\bull)$ in \cite{Severa-Willwacher}.} whose rational homology is isomorphic to $\mathfrak{t}_d^{\bull}\otimes\Q$, where $\mathfrak{t}_d^{\bull}=\pi_*\Conf_{\bull}(\R^d)$ is the Drinfeld--Kohno cosimplicial Lie algebra (in dimension $d$). Moreover, one can show that this implies that the total complex $\mathrm{Tot}(ICG_d^{\bull})$ (aka Moore complex of normalised chains) has the same rational homology as $\mathrm{Tot}(\mathfrak{t}_d^{\bull})$, see \cite{LT-GC}.
    
    On the other hand, as mentioned in Remark~\ref{rem:bcks}, there is a model due to Dev Sinha \cite{Sinha-cosimplicial} for the Taylor tower $\pT_n(I^d)$ of $\Knots(I^d)$, given as partial totalisations of a cosimplicial space built out of configuration spaces of $I^d$, with tangent vectors. For simplicity, in order to get rid of those tangent vectors, one considers the analogous Taylor tower $\ol{\pT}_n(I^d)$ for $\ol{\Knots}(I^d)\coloneqq \hofib(\Knots(I^d)\to\Imm_\partial(I,I^d))$, so that $\ol{\pT}_n(I^d)\simeq\mathrm{Tot}_n(\Conf_{\bull}(\R^d))$. Then the first page $\ol{E}_{*,*}^1(I^d)$, of the Bousfield--Kan homotopy spectral sequence for the tower $\ol{\pT}_n(I^d)$, is equal to $\mathrm{Tot}(\mathfrak{t}_d^{\bull})$, implying
    \[
    \ol{E}_{*,*}^2(I^d)\cong H_*\mathrm{Tot}(\mathfrak{t}_d^{\bull})\cong H_*\mathrm{Tot}(ICG_d^{\bull}).
    \]
    Lambrechts and Turchin point out in \cite[Sec.~7]{LT-GC} that on the diagonal this precisely corresponds to the result of Conant (Theorem~\ref{thm:conant-d1} above), and to the analogous result of Bar-Natan in homology (equation \eqref{eq:chord-jacobi} above). This explains our comment in Introduction~\ref{sec:intro} that Conant's groups of Jacobi trees $\A^T_n$ are closely related to graph complexes.
    
    Note that since this spectral sequence collapses over $\Q$ at the second page (for any $d\geq3$, see the comment after Corollary~\ref{cor:rational-universal}), we have $\pi_*\ol{\pT}_n(I^d)\otimes\Q\cong H_*\mathrm{Tot}_n(ICG_d^{\bull}\otimes\Q)$.
    Moreover, for $d\geq4$ the Goodwillie--Klein convergence theorem implies (see \cite[Thm.~9.4]{LT-GC}):
    \[
    \pi_*\ol{\Knots}(I^d)\otimes\Q\cong H_*\mathrm{Tot}(ICG_d^{\bull}\otimes\Q).
    \]
    See also \cite{Turchin-Hodge} for another graph complex (of `hairy graphs') which computes these groups and generalises another result of Bar-Natan (saying that the group of chord diagrams is isomorphic to the group of unitrivalent graphs, but not attached to a line).
\end{remm}
}


\makeatletter
\newcommand\setcurrentname[1]{\def\@currentlabelname{#1}}
\makeatother

\section*{Notation}\setcurrentname{Notation}\label{Notation}
\addcontentsline{toc}{section}{Notation} 

See also Notation~\ref{notat:punctured-model} for the notation related to the Taylor tower and Notation~\ref{notat:M-0S} for manifolds $M_{0S}$.

\begin{notation}[Basic objects]
\label{notat:basic}\hfill
\begin{longtable}{ m{1.4cm}|m{11.1cm}|m{1.4cm} }
\THICKhline
    $[n]$ & $[n]\coloneqq\{0,1,2,\dots,n\}$ for some $n\geq1$ \\ \thinhline
    $\ul{n}$ & $\ul{n}\coloneqq\{1,2,\dots,n\}$ for some $n\geq1$ \\ \thinhline
    $S$ & a subset of $\ul{n}$ or $[n]$, of cardinality $|S|$
    &\\ \thinhline
    $Sk$ & the union $Sk\coloneqq S\cup\{k\}$ for some $k\notin S$
    &\\ \thinhline
    $\Delta^S$ & the standard simplex on the vertex set $S$, of dimension $|S|-1$; we think of it both as a simplicial set and its geometric realisation
    &\\ \thinhline
    $I^S$ & the standard cube of dimension $|S|$ whose coordinates are labelled by the elements of the set $S$
    &\\ \thinhline
    $\mathsf{C}(X)$ & the cone on a simplicial set $X$, so $\mathsf{C}(X)\coloneqq X\times[0,1]/X\times\{1\}$
    &\\ \thinhline
    $X^{bar}$ & the barycentric subdivision of a simplicial set $X$
    &\\ 
\THICKhline 
    $\PCube(S)$ & the category whose objects are non-empty subsets of $S$ and morphisms are inclusions of sets, called the \textsf{punctured cube} category, of size $|S|$\\ \thinhline
    $\displaystyle\holim_{\PCube(S)}X_{\bull}$ & the homotopy limit of a punctured $|S|$-cube $X_{\bull}\colon\PCube(S)\to\mathsf{Top}_*$
    & Eq.~\eqref{eq-def:holim-punc-cube} \\ \thinhline
    $\Cube(S)$ & the category whose objects are subsets of $S$ and morphisms are inclusions of sets, called the \textsf{cube} category, of size $|S|$\\ \thinhline
    $\tofib X_{\bull}$ & the total homotopy fibre of an $|S|$-cube $X_{\bull}\colon\Cube(S)\to\mathsf{Top}_*$
    & Eq.~\eqref{eq-def:holim-cube} \\ 
\THICKhline 
    %
    $I$ & the unit interval $I\coloneqq[0,1]$\\ \thinhline
    $J_i$, $i=0,1,\dots$ & a fixed collection $J_i=[L_i,R_i]\subseteq I$ with $0<L_i<R_i<L_{i+1}<1$
    & Sec.~\ref{subsec:stages}\\ \thinhline
    $J_S$ & the union of $J_i$ for $i\in S$ 
    & Sec.~\ref{subsec:stages}\\ 
\THICKhline
    $\Path X$  & the space of all paths $I\to X$ in a space $X$ \\ \thinhline
    $\Path_*X$ & the space of paths $I\to X$ that start at the basepoint $\gamma(0)=*\in X$ \\ \thinhline
    $\Omega X$ & the space of based loops $X$, given by $\Omega X\coloneqq \{\gamma\colon I\to X: \gamma(0)=\gamma(1)=*\}$ \\ \thinhline
    $\Sigma X$ & the reduced suspension of $X$, given by 
    $\Sigma X\coloneqq {X\times I}/{X\times\{0,1\}\cup \{*\}\times I}$\\ \thinhline
    $X_+$ & the disjoint union of a space $X$ and a point, which is taken as the basepoint
    &\\ \thinhline
    $X^{\times k}$ & the $k$-fold product $X\times\dots\times X$ of a space $X$ with itself     
    &\\ \thinhline
    $\prod^{\circ}_{n\in \N}X_n$ & the weak product of spaces $X_n$, i.e.\ the topology is defined as the colimit of finite products
    &\\
\THICKhline
    $\gamma\colon a\squig b$ & a path $\gamma\colon I\to X$ from $a=\gamma(0)$ to $b=\gamma(1)$ \\ \thinhline
    $\gamma^{-1}$ & also $\gamma_{1-t}$, the inverse path, given by mapping $t\in I$ to $\gamma(1-t)$ \\ \thinhline
    $\gamma\cdot\eta$ & the concatenation of paths, given by mapping $t\in[0,1/2]$ to $\gamma(2t)$ and $t\in[1/2,1]$ to $\eta(2t-1)$ \\ \thinhline
    $[\gamma,\eta]$ & the commutator of loops $\gamma$ and $\eta$, given by $[\gamma,\eta]\coloneqq\gamma\cdot\eta\cdot\gamma^{-1}\cdot\eta^{-1}$ \\ 
\THICKhline
    $\B(\ul{n})$ & the Hall basis, an additive basis for the free Lie algebra $\L(x^i:i\in\ul{n})$
    & Rem.~\ref{rem:hall} \\\thinhline
    $\N\B(\ul{n})$ & the subset of Lie words in $\B(\ul{n})$ such that each letter $x^i$ for $i\in\ul{n}$ appears at least once
    & Thm.~\ref{thm:thmB-1} \\\thinhline
    $l_w$ & the length (the number of letters) of a word $w\in\B(\ul{n})$
    &\\\thinhline
    $\Gamma(x_i)$ & the Samelson product of maps $x_i$ according to a tree $\Gamma$
    & Sec.~\ref{subsec:samelson-trees}\\
\THICKhline
\end{longtable}
\end{notation}

\begin{notation}[Knots]
\label{notat:knots}\hfill

\begin{longtable}{ m{1.3cm}|m{11.1cm}|m{1.5cm} }   
\THICKhline
    $M$ & a fixed $d$-dimensional manifold, for some $d\geq3$ \\ \thinhline
    $\S M$ & the unit sphere subbundle of the tangent bundle of $M$ \\ \thinhline
    $-\infty,\infty$ & two fixed points in the boundary $\partial M$ \\ \thinhline
    $\hra$ & a smooth embedding\\ 
\THICKhline
    $\Knots(M)$ & the space of $K\colon I\hra M$ such that $K(I)\cap\partial M=K(\partial I)$, $K(0)=-\infty$, $K(1)=\infty$, called \textsf{knots}
    & Eq.~\eqref{eq-def:knots}\\ \thinhline
    $\U$ & an arbitrarily chosen basepoint in $\Knots(M)$, called the \textsf{unknot}
    & \\ \thinhline
    $K_{S}$ & the restriction of a knot $K\in\Knots(M)$ to $J_S$
    & \\ \thinhline
    $K_{\wh{S}}$ & the restriction of a knot $K\in\Knots(M)$ to $I\sm J_S$, so $K$ \textsf{punctured} at $J_i$, $i\in S$
    & Eq.~\eqref{eq-def:K-punctured}\\ 
\THICKhline
    $\ball_{ij}$ & a fixed neighbourhood of $\U|_{{(R_i,L_{j+1}})}$, homeomorphic to a $d$-dimensional ball
    & \\ \thinhline
    $\ball_S$ & the disjoint union $\ball_S\coloneqq\ball_{i_1i_2}\sqcup\dots\sqcup \ball_{i_qn+1}$ for $S=\{i_1<i_2<\dots<i_q\}\subseteq\ul{n}$
    & \\ \thinhline
    $\S_{ij}$ & the boundary $\S_{ij}\coloneqq\partial\ball_{ij}$ (a $(d-1)$-dimensional sphere embedded in $M$)
    & Eq.~\eqref{eq-def:S-ii}\\ \thinhline
    $\S_i$ & the sphere $\S_i\coloneqq\S_{ii+1}$
    & Fig.~\ref{fig:punctured-U}\\ \thinhline
    $\S_S$ & the wedge sum $\S_S\coloneqq\S_{i_1i_2}\vee\dots\vee\S_{i_qn+1}$ for $S=\{i_1<i_2<\dots<i_q\}\subseteq\ul{n}$
    & Eq.~\eqref{eq-def:S-S}\\ \thinhline
    $M_{0S}$ & the manifold $M$ minus the fixed tubular neighbourhood $\ball_{0S}$ of $\nu(\U_{\wh{0S}})$; so $M_{0S}$ is the union of $M\sm\nu(\U)$ and $\nu(\U_i)$ for $i\in \{0\}\sqcup S$
    & Eq.~\eqref{eq-def:M-0S}\\ \thinhline
    $M_S$ & the manifold $M_{0S}$ union the ball $\ball_{0i_1}$
    & Eq.~\eqref{eq-def:M-S}\\ 
\THICKhline
\end{longtable}
\end{notation}

\begin{notation}[Homotopy theoretic objects] 
\label{notat:htpy-th}\hfill

\begin{longtable}{ m{1.3cm}|m{11.1cm}|m{1.5cm} }
\THICKhline
    $\pT_n(M)$ & the $n$-th stage of the \textsf{punctured knots model} of the Taylor tower for $\Knots(M)$
    & Sec.~\ref{subsec:stages}\\ \thinhline
    $\mc{E}_{S}$ & the space of $I\sm J_S\hra M$ which agree on $\partial I$ with $\U$
    & Eq.~\eqref{eq-def:E-S}\\ \thinhline
    $(\mc{E}^n_{\bull},r)$ & the punctured cube with spaces $\mc{E}_S$ for $\emptyset\neq S\subseteq[n]$ and maps $r^k_S\colon\mc{E}_S\to\mc{E}_{Sk}$ for $k\notin S$ that restrict to more punctures; we have $\pT_n(M)=\holim\mc{E}^n_{\bull}$
    & Eq.~\eqref{eq-def:E-cube}\\ 
\THICKhline
    $p_{n+1}$ & the map between stages $p_{n+1}\colon\pT_{n+1}(M)\to\pT_n(M)$ (a fibration)
    & Eq.~\eqref{eq-def:p-n+1}\\ \thinhline
    $\pF_{n+1}(M)$ & the fibre of $p_{n+1}$ over $\ev_n(\U)$ (a subspace $\pF_{n+1}(M)\subseteq\pT_{n+1}(M)$)
    & Eq.~\eqref{eq-def:F-n+1}\\ \thinhline
    $\FF^{n+1}_S$ & the space of $J_0\hra M_{0Sn+1}$ which on $\partial J_0$ agree with $\U_0$
    & Eq.~\eqref{eq-def:FF-n+1-S}\\ \thinhline
    $(\FF^{n+1}_{\bull},r)$ & the cube with spaces $\FF^{n+1}_S$ for $S\subseteq\ul{n}$ and maps $r^k_{Sn+1}$ for $k\notin S$ that add back $\nu\U_k$; we have $\pF_{n+1}(M) \simeq\tofib(\FF^{n+1}_{\bull},r)$
    & Sec.~\ref{subsec:fn-hn}\\ 
\THICKhline
    $\ev_n$ & the canonical map $\ev_n\colon\Knots(M)\to\pT_n(M)$, called the \textsf{evaluation map}
    & Eq.~\eqref{eq-def:ev-n}\\ \thinhline
    $\H_n(M)$ & the homotopy fibre of $\ev_n$ over $\ev_n(\U)$, so a point $(K,\gamma)\in\H_n(M)$ is a knot $K\in\Knots(M)$ and a path $\gamma\colon I\to\pT_n(M)$ from $\gamma(0)=\ev_n(K)$ to $\gamma(1)=\ev_n(\U)$
    & Eq.~\eqref{eq-def:H-n}\\ \thinhline
    $\FF_S$ & the space of $J_0\hra M_{0S}$ which on $\partial J_0$ agree with $\U_0$
    & Eq.~\eqref{eq-def:FF-S}\\ \thinhline
    $(\FF_{\bull},r)$ & the cube with spaces $\FF_S$ for $S\subseteq\ul{n}$ and maps $r^k_S$ for $k\notin S$ that add back $\nu\U_k$; we have $\H_n(M)\simeq\tofib(\FF_{\bull},r)$
    & Sec.~\ref{subsec:fn-hn}\\ \thinhline
    $\emap_{n+1}$ & the map $\emap_{n+1}\colon\H_n(M)\to\pF_{n+1}(M)$ induced by $\ev_{n+1}$; it is also the map on total homotopy fibres induced by the map of $n$-cubes $r^{n+1}_S\colon\FF_S\to\FF^{n+1}_S$
    & Eq.~\eqref{eq-def:emap-n+1}\\ 
\THICKhline
    $(M_{0\bull},\rho)$ & the cube with spaces $M_{0S}$ for $S\subseteq\ul{n}$ and maps $\rho^k_S\colon M_{0S}\to M_{0Sk}$ that add back the material that `corresponds to the $k$-th puncture'; we have that $(\FF^{n+1}_{\bull},r)$ is homeomorphic to $\Embp(J_0,-)$ applied to $(M_{0\bull},\rho)$
    & Cor.~\ref{cor:r-rho}\\ \thinhline
    $(M_{0\bull},\lambda)$ & the cube with spaces $M_{0S}$ for $S\subseteq\ul{n}$ and maps $\lambda^k_{0S}\colon M_{0Sk}\to M_{0S}$ that `erase the $k$-th ball and drag the ball preceding it to the right'; we have that $(\FF^{n+1}_{\bull},l)$ is homeomorphic to $\Embp(J_0,-)$ applied to $(M_{0\bull},\lambda)$
    & Sec.~\ref{subsec:l-h-i-for-layer}\\ \thinhline
    $(M_{\bull},\lambda)$ & the cube with spaces $M_S$ for $S\subseteq\ul{n}$ and maps $\lambda^k_{S}\colon M_{Sk}\to M_{S}$ as above; we have that $(\FF^{n+1}_{\bull},l)$ is homotopy equivalent to $\Omega\S$ (the loop space of the unit sphere bundle) applied to $(M_{\bull},\lambda)$
    & Sec.~\ref{subsec:l-h-i-for-layer}\\ \thinhline
    $(M\vee\S_{\bull},\coll)$ & the cube with spaces $M\vee\S_S$ for $S\subseteq\ul{n}$ and maps $\coll^k_{Sk}$ that collapse the $k$-th sphere; we have that $(M_{\bull},\lambda)$ is homotopy equivalent to $(M\vee\S_{\bull},\coll)$
    & Eq.~\eqref{eq-def:col}\\ 
\THICKhline
    $x_i$ & the natural map $x_i\colon\S^{d-2}\to\Omega(M\vee\S_{S})$ that sends $p\in\S^{d-2}$ to the loop $\S^1\ni t\mapsto [t,p]\in\Sigma\S^{d-2}=\S^{d-1}\hra M\vee\S_S$, where we include the sphere as the $i$-th wedge factor in $\S_S$
    & Eq.~\eqref{eq:samelson-gen-x-i} \\ \thinhline
    $m_i$ & the map $m_i\colon\S^{d-2}\to\Omega M_{\ul{n}}$ sends $p\in\S^{d-2}$ to the loop in $M_{\ul{n}}$ that goes around the $i$-th ball $\ball_{ii+1}\subseteq M$ and corresponds to $x_i(p)$
    & Eq.~\eqref{eq:m-i} \\ \thinhline
    $\Gamma(x_i^{\gamma_i})$ &
    the Samelson product $\Gamma(x_i^{\gamma_i})\colon\S^{n(d-2)}\to \Omega(M\vee\S_{\ul{n}})$
    & Eq.~\eqref{eq:samelson-gen-x-i} \\ \thinhline
    $\Gamma(m_i^{\gamma_i})$ &
    the Samelson product $\Gamma(m_i^{\gamma_i})\colon\S^{n(d-2)}\to \Omega(M_{\ul{n}})$
    & Eq.~\eqref{eq:samelson-gen-m-i} \\ 
\THICKhline
    $\chi$ & the homotopy equivalence $\pF_{n+1}(M)\simeq\tofib(\FF^{n+1}_{\bull},r)\simeq\Omega^n\tofib(\FF^{n+1}_{\bull},l)$
    & Thm.~\ref{thm:delooping-layer}\\ \thinhline
    $\deriv_{\bull}$ & the derivative map $\deriv_S\colon\FF^{n+1}_S\to\Omega\S M_S$; it induces a homotopy equivalence $\deriv\colon\tofib\big(\FF^{n+1}_{\bull},l\big)\xrightarrow{\sim} \Omega\tofib\big( M_{\bull},\lambda\big)$.
    & Thm.~\ref{thm:final-delooping} \\ \thinhline
    $\retr_{\bull}$ & the retraction $\retr_S\colon M_S\to M\vee\S_S$; it induces a homotopy equivalence $\retr\colon \tofib(M_{\bull},\lambda)\to\tofib(M\vee\S_{\bull},\coll)$
    & Lem.~\ref{lem:retr} \\ \thinhline
    $W$ & our chosen isomorphism from $\Lie_{\pi_1M}(n)$ to $\pi_{n(d-2)}\tofib\Omega(M\vee\S_{\bull})$, sending a decorated Lie tree $\Gamma^{g_{\ul{n}}}$ to the canonical extension of  to the total homotopy fibre
    & Prop.~\ref{prop:LemB'}\\ 
\THICKhline
    $\boxbar$ & the operation of concatenating maps out of cubes along one face
    & Eq.~\eqref{eq-def:boxbar} \\ \thinhline
    $\glueOp$ & the operation of concatenating maps out of cubes along appropriate faces
    & Eq.~\eqref{eq-def:glueOp} \\
\THICKhline
\end{longtable}
\end{notation}

\begin{notation}[Geometric objects] 
\label{notat:geometric}\hfill

\begin{longtable}{ m{1.25cm}|m{11.1cm}|m{1.6cm} } 
\THICKhline
    $\Gamma$ & a connected simply connected graph with vertices of valence one (leaves, which are labelled by elements of a set $S$) or three (internal vertices, which have ordering of incident edges), called a \textsf{tree} of degree $|S|$
    & Def.~\ref{def:trees} \\ \thinhline
    $\Gamma^{g_S}$ & a tree together with an assignment of an element $g_i\in\pi$ to each vertex $i\in S$, called a \textsf{$\pi$-decorated tree}
    & Def.~\ref{def:decorated-lie}\\ \thinhline
    $\Tree_\pi(n)$ & the set of $\pi$-decorated trees of degree $n$
    & Def.~\ref{def:decorated-lie}\\ \thinhline
    $\Lie_\pi(n)$ & the quotient of the group $\Z[\Tree_\pi(n)]$ by $AS$ and $IHX$, with elements called \textsf{Lie trees}
    & Def.~\ref{def:decorated-lie}\\ \thinhline
    $\A^T_n$ & the group of degree $n$ Lie trees modulo $\stusq$ relations, with elements called \textsf{Jacobi trees}
    & Def.~\ref{def:jac-trees}\\ \thinhline
    $\A_n$ & the group of degree $n$ uni-trivalent graphs modulo $1T$ and $STU$ relations, with elements called \textsf{Jacobi diagrams}
    & Def.~\ref{def:jac-diagrams}\\ \thinhline
    chord diagram & a Jacobi diagram without trivalent vertices\\ 
\THICKhline
    $G_\Gamma$ & the abstract grope modelled on $\Gamma\in\Tree(S)$; it has boundary $\partial G_\Gamma\cong\S^1$ split into two arcs $a_0$ and $a_0^\perp$
    & Def.~\ref{def:abstract-grope}\\ \thinhline
    $\G$ & a grope cobordism $\G\colon G_\Gamma\to M$ on a knot $K$; it has $\G(a_0)\subseteq K(J_0)$
    & Def.~\ref{def:grope-cob}\\ \thinhline
    $\partial^\perp\G$ & the output knot of $\G$, given by $\partial^\perp\G\coloneqq(K\setminus \G(a_0))\cup \G(a_0^\perp)$
    & Def.~\ref{def:grope-cob}\\ \thinhline
    $\TG$ & a thickened grope $\TG\colon\ball_\Gamma\to M$ on a knot $K$
    & Def.~\ref{def:thick}\\ \thinhline
    $\forest$ & a grope forest $\forest\colon\bigsqcup_{l=1}^N\ball_{\Gamma_l}\to M$ on a knot $K$
    & Def.~\ref{def:forest}\\ \thinhline
    $(\varepsilon_i,\gamma_i)_{i\in S}$ & the signed decoration of a grope cobordism (or thickened grope)
    & Def.~\ref{def:underlying-decor-tree}\\ \thinhline
    $\ut$ & the underlying tree of a grope cobordism, or a thickened grope, or a grope forest, defined using the signed decoration $(\varepsilon_i,\gamma_i)_{i\in S}$ to decorate the model tree
    & Def.~\ref{def:underlying-decor-tree}\\ 
\THICKhline
    $\Psi^\TG$ & a path $\Psi^\TG\colon [0,1]\to\pT_n(M)$ from $\ev_n(\partial^\perp\TG)$ to $\ev_n(K)$
    & Prop.~\ref{prop:psi-def}\\ \thinhline
    $\Path^\TG$ & a family of paths in $\Embp(J_0,M)$ each going from $(\partial^\perp\TG)|_{J_0}$ and to $K|_{J_0}$
    & Prop.~\ref{prop:psi-def}\\ \thinhline
    $\phi_\Gamma$ & the family of embedded $2$-disks in the model ball $\ball_\Gamma$ for $\Gamma\in\Tree(S)$, parameterised by $\Delta^S\cong\Delta^{|S|-1}$
    & Prop.~\ref{prop:grope-disk-corr}\\ \thinhline
    $\Psi^\forest$ & the path $\Psi^\forest\colon [0,1]\to\pT_n(M)$ obtained by concatenating all $\Psi^{\TG_l}$ along their $J_0$-directions
    & Prop.~\ref{prop:extended-KST}\\ \thinhline
    $\psi(\forest)$ & the point $\psi(\forest)\coloneqq(\partial^\perp\forest,\Psi^\forest)\in\H_n(M)$, where $\partial^\perp\forest$ is  the output knot of a grope forest $\forest$ on $\U$, and the path $\Psi^\forest$ in $\pT_n(M)$ from $\ev_n(\partial^\perp\forest)$ to $\ev_n(\U)$
    & Prop.~\ref{prop:extended-KST}\\ 
\THICKhline
    $\KK(I^3)$ & the monoid of classical knots $\KK(I^3)=\pi_0\Knots(I^3)$
    & \\ \thinhline
    $\VV_n$ & the Vassiliev--Gusarov filtration of the monoid ring $\VV_n\subseteq\Z[\KK(I^3)]$, given by resolving singular knots with $n$ double points
    & Sec.~\ref{subsec:vassiliev}\\ \thinhline
    $\sim_n$ & the $n$-equivalence relation on $\KK(I^3)$
    & Thm.~\ref{thm:vassiliev}\\ \thinhline
    $\KK_n(I^3)$ & the filtration by submonoids $\KK_n(I^3)\subseteq\KK(I^3)$ consisting of those knots $K$ which are $n$-equivalent to the unknot $\U$, $K\sim_n\U$
    & Eq.~\eqref{eq:GH-fil}\\ \thinhline
    $\realmap_n$ & the classical realisation map $\realmap_n\colon\A_n\to\VV_n/\VV_{n+1}$ given by taking the linear combination of knots obtained by resolving a singular knot realising the given chord diagram 
    & Eq.~\eqref{eq:classical-real-map}\\ \thinhline
    $\realmap^T_n$ & the realisation map $\realmap^T_n\colon\A_n^T\to\KK_n(I^3)/\sim_{n+1}$ given by taking the output knot of a grope forest modelled on the given Jacobi tree
    & Thm.~\ref{thm:real}\\
\THICKhline
\end{longtable}
\end{notation}

\thispagestyle{plain}

\vspace{10pt}

\printbibliography[heading=bibintoc]

\vspace{10pt}
\hrule

\end{document}